\documentclass[reqno]{amsart}
\usepackage[margin=1.5in]{geometry}
\geometry{a4paper}

\usepackage{amssymb}
\usepackage{comment}
\usepackage{etex}
\usepackage{etoolbox}
\usepackage{mathtools}
\usepackage{stmaryrd}
\usepackage{tikz-cd}
\usepackage[textsize=scriptsize]{todonotes}
\usepackage{verbatim}

\usepackage[T1]{fontenc}
\usepackage{hyperref}
\usepackage[noabbrev,nameinlink]{cleveref}

\hypersetup{
    colorlinks,
    linkcolor={red!30!black},
    citecolor={blue!50!black},
    urlcolor={blue!80!black}
}

\usepackage{relsize,scalerel}
\newcommand{\bigast}{\mathop{\scalebox{2.5}{\raisebox{-0.35ex}{$\ast$}}}}
\let\oldbigcirc\bigcirc
\def\bigcirc{\mathop{\scalerel*{\oldbigcirc}{\bigast}}}


\let\oldtocsection=\tocsection

\let\oldtocsubsection=\tocsubsection

\let\oldtocsubsubsection=\tocsubsubsection

\renewcommand{\tocsection}[2]{\hspace{0em}\oldtocsection{#1}{#2}}
\renewcommand{\tocsubsection}[2]{\hspace{1em}\oldtocsubsection{#1}{#2}}
\renewcommand{\tocsubsubsection}[2]{\hspace{2em}\oldtocsubsubsection{#1}{#2}}


\theoremstyle{definition}
\newtheorem{nul}{}[section]
\newtheorem{dfn}[nul]{Definition}

\newtheorem{rmk}[nul]{Remark}

\newtheorem{cnstr}[nul]{Construction}
\newtheorem{cnv}[nul]{Convention}
\newtheorem{ntn}[nul]{Notation}
\newtheorem{exm}[nul]{Example}

\newtheorem{rec}[nul]{Recollection}

\newtheorem{qst}{Open Question}
\Crefname{qst}{Question}{Questions}
\newtheorem*{dfn*}{Definition}
\newtheorem*{axm*}{Axiom}
\newtheorem*{ntn*}{Notation}
\newtheorem*{exm*}{Example}
\newtheorem*{exr*}{Exercise}
\newtheorem*{form*}{Formula}
\newtheorem*{int*}{Intuition}
\newtheorem*{qst*}{Question}
\newtheorem*{rmk*}{Remark}
\newtheorem*{cnstr*}{Construction}
\newtheorem*{cnvs*}{Conventions}

\theoremstyle{plain}

\newtheorem{thm}[nul]{Theorem}
\newtheorem{prop}[nul]{Proposition}
\newtheorem{lem}[nul]{Lemma}

\newtheorem{cor}[nul]{Corollary}
\newtheorem*{thm*}{Theorem}
\newtheorem*{prop*}{Proposition}
\newtheorem*{cor*}{Corollary}
\newtheorem*{lem*}{Lemma}
\newtheorem*{cnj*}{Conjecture}


\DeclareMathOperator*{\colim}{\mathrm{colim}}
\DeclareMathOperator{\Map}{\mathrm{Map}}
\DeclareMathOperator{\cofiber}{\mathrm{cofiber}}

\DeclareMathOperator{\hofib}{\mathrm{hofib}}

\DeclareMathOperator{\Alt}{\mathrm{Alt}}

\DeclareMathOperator{\DM}{\mathrm{DM}}

\DeclareMathOperator{\Hom}{\mathrm{Hom}}
\DeclareMathOperator{\Isom}{\mathrm{Isom}}

\DeclareMathOperator*{\hocolim}{\mathrm{hocolim}}

\DeclareMathOperator{\Spec}{\mathrm{Spec}}
\DeclareMathOperator{\Spf}{\mathrm{Spf}}

\newcommand\SGpm{S\mathbb{G}_{\pm}}
\newcommand\Sbb{\mathbb{S}}

\newcommand{\BP}{\mathrm{BP}}
\newcommand{\MU}{\mathrm{MU}}
\newcommand{\MSU}{\mathrm{MSU}}
\newcommand{\MUP}{\mathrm{MUP}}
\newcommand\BSpin{\mathrm{BSpin}}
\newcommand{\MSpin}{\mathrm{MSpin}}
\newcommand\BString{\mathrm{BString}}
\newcommand{\tmf}{\mathrm{tmf}}
\newcommand{\BU}{\mathrm{BU}}
\newcommand{\BSU}{\mathrm{BSU}}
\newcommand{\BO}{\mathrm{BO}}
\newcommand{\BSp}{\mathrm{BSp}}
\newcommand{\TMF}{\mathrm{TMF}}
\newcommand\KU{\mathrm{KU}}
\newcommand\ku{\mathrm{ku}}
\newcommand\ko{\mathrm{ko}}
\newcommand{\BUsix}{\mathrm{BU} \langle 6 \rangle}
\newcommand{\MUsix}{\mathrm{MU} \langle 6 \rangle}

\newcommand\Gm{\hG_m}
\newcommand\Gmhat{\hG_m}
\newcommand\BGLS{\mathrm{BGL}_1(\mathbb{S})}
\newcommand\BGLE{\mathrm{BGL}_1(E)}

\newcommand\LMod{\mathrm{LMod}}
\newcommand\FG{\mathcal{FG}}
\newcommand\GL{\mathrm{GL}_1}
\newcommand\GLE{\mathrm{GL}_1(E)}
\newcommand\glS{\mathrm{gl}_1(\mathbb{S})}
\newcommand\glE{\mathrm{gl}_1(E)}

\newcommand\detGamma{\Lambda^h\Gamma}
\newcommand\detGammatilde{\widetilde{\Lambda^h\Gamma}}
\newcommand\E{\mathbb{E}}
\newcommand\Spectra{\mathop{\mathrm{Spectra}}}
\newcommand\EinftyAlg{\mathop{\Einfty\text{-}\mathrm{Alg}}}
\newcommand\EinftyAlgw{\mathop{\Einfty\text{-}\mathrm{Alg}^w}}

\newcommand\EstarAlg{\mathop{E_*\text{-}\mathrm{Alg}}}

\newcommand\mfrak{\mathfrak{m}}
\newcommand\EkG{E_{k,\Gamma}}
\newcommand\KkG{K_{k,\Gamma}}
\newcommand\kps{}
\def\kps[#1]{k\llbracket #1\rrbracket}
\def\L{\mathcal{L}}

\newcommand{\rmH}{\mathrm{H}}

\newcommand\Estarcts{E_*^{\wedge}}
\newcommand{\F}{\mathbb{F}}
\newcommand\Fp{\mathbb{F}_p}
\newcommand\Fph{\mathbb{F}_{p^h}}
\newcommand\Ftw{\mathbb{F}_2}
\newcommand{\Z}{\mathbb{Z}}

\newcommand{\B}{{\mathrm{B}}}
\newcommand{\hG}{{\widehat{\mathbb{G}}}}

\newcommand{\Gbb}{{\mathbb{G}}}
\newcommand{\hSGpm}{hS\Gbb^{\pm}}
\newcommand{\hC}{{\widehat{C}}}

\let\loops\Omega
\let\susp\Sigma
\let\sm\wedge
\newcommand\loopsinfty{\loops^{\infty}}

\newcommand\suspinftypl{\susp^{\infty}_+}
\newcommand\Sdet{S\langle\mathrm{det}\rangle}
\newcommand\Einfty{\mathbb{E}_{\infty}}
\newcommand\CP{\mathbb{CP}}
\newcommand\CPinfty{\mathbb{CP}^{\infty}}
\newcommand\Krm{\mathrm{K}}

\newcommand\KFph{\mathrm{K}(\mathbb{F}_p,h)}
\newcommand\KZhpo{\mathrm{K}(\mathbb{Z},h+1)}
\newcommand\KFptw{\mathrm{K}(\mathbb{F}_p,2)}
\newcommand\KZthree{\mathrm{K}(\mathbb{Z},3)}
\newcommand\hAlg{\mathrm{hAlg}}

\newcommand\Ocal{\mathcal{O}}
\newcommand\Ical{\mathcal{I}}

\makeatletter
\renewcommand\H{\mathrm{H}}
\def\HR@#1{\@ifnextchar^{\HR@sup{#1}}{\@ifnextchar_{\HR@sub{#1}}{\mathop{#1}}}}
\def\HR@s#1{\@ifnextchar^{\HR@sup{#1}}{\@ifnextchar_{\HR@s@sub{#1}}{\mathop{#1}}}}
\def\HR@sup#1^#2{\mathop{#1^{#2}}}
\def\HR@sub#1_#2{\mathop{#1_{#2}}}
\def\HR@s@sub#1_#2{\mathop{#1{}_{#2}}}
\protected\def\HFp{\HR@s{\mathrm{H}\Fp}}
\makeatother
\newcommand\HZ{\mathrm{H}\Z}

\newcommand{\OS}[2]{\vphantom{#1_{#2}}\underline{\smash{#1}}_{\,#2}}

\protected\def\BPn<#1>{\BP\mylangle#1\myrangle}
\def\BPh{\BPn<h>}
\def\OSBPh{\OS{\BPh}}
\def\BPph{\OS{\BPh}{2\nu(h)}}

\let\c\circ
\def\s{*}

\newbox\mylanglebox
\newbox\mylangleboxscript
\newbox\mylangleboxscriptscript

\newbox\myranglebox
\newbox\myrangleboxscript
\newbox\myrangleboxscriptscript

\newbox\smalllanglebox
\newbox\smallranglebox

\def\myangletikz#1#2{
\hskip-2.35pt\tikz[x=0.0022ex, y=0.0022ex,xscale=0.12,yscale=#1,xscale=0.9,yscale=0.9,xscale=#2]
 \fill
(0, 138)--(857, 1927)--(1711, 3715)--(2091, 3708)--(1319, 1917)--(552, 138)--(552, 138)--(1319, -1641)--(2091, -3432)--(1711, -3439)--(857, -1651)--(0, 138);
 }

\setbox\mylanglebox = \hbox{\myangletikz{0.12}{1}}
\setbox\myranglebox = \hbox{\myangletikz{0.12}{-1}}
\setbox\mylangleboxscript = \hbox{\myangletikz{0.08}{1}}
\setbox\myrangleboxscript = \hbox{\myangletikz{0.08}{-1}}
\setbox\mylangleboxscriptscript = \hbox{\myangletikz{0.06}{1}}
\setbox\myrangleboxscriptscript = \hbox{\myangletikz{0.06}{-1}}

\setbox\smalllanglebox = \hbox{\myangletikz{0.10}{1}}
\setbox\smallranglebox = \hbox{\myangletikz{0.10}{-1}}

\settowidth{\wd\mylanglebox}{$\langle$}
\settowidth{\wd\myranglebox}{$\langle$}
\settowidth{\wd\mylangleboxscript}{$\scriptstyle\langle$}
\settowidth{\wd\myrangleboxscript}{$\scriptstyle\langle$}
\settowidth{\wd\mylangleboxscriptscript}{$\scriptscriptstyle\langle$}
\settowidth{\wd\myrangleboxscriptscript}{$\scriptscriptstyle\langle$}

\settowidth{\wd\smalllanglebox}{$\langle$}
\settowidth{\wd\smallranglebox}{$\rangle$}

\def\mylangle{\mathchoice
     {\lower 0.163ex\copy\mylanglebox}
     {\lower 0.163ex\copy\mylanglebox}
     {\hskip-0.0698ex\lower 0.0698ex\copy\mylangleboxscript\hskip0.0698ex}
     {\hskip-0.116ex\lower 0.0931ex\copy\mylangleboxscriptscript\hskip0.116ex}
}
\def\myrangle{\mathchoice
    {\lower 0.163ex\copy\myranglebox}
    {\lower 0.163ex\copy\myranglebox}
    {\hskip-0.0698ex\lower 0.0698ex\copy\myrangleboxscript\hskip0.0698ex}
    {\hskip-0.116ex\lower 0.0931ex\copy\myrangleboxscriptscript\hskip0.5pt}
}

\def\mysmalllangle{\lower 0.163ex\copy\smalllanglebox}
\def\mysmallrangle{\lower 0.163ex\copy\smallranglebox}


\title{Wilson Spaces, Snaith Constructions, and Elliptic Orientations}
\date{\today}

\author{Hood Chatham}
\address{Department of Mathematics, Massachusetts Institute of Technology, Cambridge, MA, USA}
\email{hood@mit.edu}

\author{Jeremy Hahn}
\address{Department of Mathematics, Massachusetts Institute of Technology, Cambridge, MA, USA}
\email{jhahn01@mit.edu}

\author{Allen Yuan}
\address{Department of Mathematics, Massachusetts Institute of Technology, Cambridge, MA, USA}
\email{alleny@mit.edu}

\begin{document}

\begin{abstract}
We construct a canonical family of even periodic $\mathbb{E}_{\infty}$-ring spectra, with exactly one member of the family for every prime $p$ and chromatic height $n$.
At height $1$ our construction is due to Snaith, who built complex $K$-theory from $\mathbb{CP}^{\infty}$.
At height $2$ we replace $\mathbb{CP}^{\infty}$ with a $p$-local retract of $\mathrm{BU} \langle 6 \rangle$, producing a new theory that orients elliptic, but not generic, height $2$ Morava $E$-theories.

In general our construction exhibits a kind of redshift, whereby $\mathrm{BP}\langle n-1 \rangle$ is used to produce a height $n$ theory.
A familiar sequence of Bocksteins, studied by Tamanoi, Ravenel, Wilson, and Yagita, relates the $K(n)$-localization of our height $n$ ring to work of Peterson and Westerland building $E_n^{hS\mathbb{G}^{\pm}}$ from $\mathrm{K}(\mathbb{Z},n+1)$.
\end{abstract}

\maketitle

\setcounter{tocdepth}{1}
\tableofcontents

\vbadness 5000


\newpage
\section{Introduction}
Integral cohomology, complex $K$-theory, and topological modular forms are three of the most studied cohomology theories in algebraic topology.  The corresponding spectra, denoted $\HZ$, $\KU$, and $\TMF$, share several desirable properties:
\begin{enumerate}
\item They are $\Einfty$-ring spectra, endowing the cohomology theories with both products and power operations.
\item They are canonical, meaning they are constructed without arbitrary choices.
\item They are integral, meaning they exist before completion at any prime.
\end{enumerate}
These theories see progressively deeper information along the chromatic filtration of the stable homotopy category, but they are increasingly difficult to construct.  Constructing similar theories at height $3$ and beyond is a highly technical area of algebraic topology, with deep connections to the theory of automorphic forms \cite{TAF}.

Here, we describe new cohomology theories represented by spectra satisfying (1) -- (3).
Our theories are straightforward to construct, and there is exactly one of them for each prime number $p$ and chromatic height $h \ge 1$.
Except for low values of $h$, our theories do not exist before $p$-localization, but they do exist before any sort of completion.

Our starting point is a theorem of Snaith \cite{Snaith} that constructs complex $K$-theory out of integral cohomology.  More precisely, Snaith built complex $K$-theory from $\CPinfty \simeq \Omega^{\infty} \Sigma^2 \HZ$:

\begin{cnstr}[Snaith]
Let $\beta\colon S^2=\CP^1 \to \CPinfty$ denote the generator of $\pi_2(\CPinfty)$.
Since $\CPinfty$ is an infinite loop space, its suspension spectrum $\suspinftypl \CPinfty$ is an $\Einfty$-ring spectrum.
Taking the mapping telescope along multiplication by $\beta$ produces another $\Einfty$-ring spectrum $\suspinftypl\CPinfty[\beta^{-1}]$ \cite[Section VIII]{EKMM}, and there is an equivalence
\[\suspinftypl \CPinfty[\beta^{-1}] \simeq \KU.\]
\end{cnstr}

We iterate this procedure, using $\KU$ to construct our new height $2$ theory at $p=2$.

\begin{cnstr}
Consider the space
\[\BUsix \simeq \Omega^{\infty} \tau_{\ge 6} \KU,\]
which is the $5$-connected cover of the space $\BU$.  The first nontrivial homotopy group of $\BUsix$ is $\pi_{6} \BUsix \cong \Z$.  Choose a generator $x_{6}\colon S^6 \to \BUsix$, and let $R$ denote the $\Einfty$-ring spectrum
\[R = \suspinftypl \BUsix [x_6^{-1}].\]
\end{cnstr}

We prove the following theorems:

\begin{thm}
The ring $R$ has torsion-free homotopy groups concentrated in even degrees.  In particular, $R$ is complex orientable.
\end{thm}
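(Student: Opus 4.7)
The plan is to first compute $\MU_*(R)$ and then bootstrap to $\pi_*(R)$.

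I would start by showing that $\MU_*(\BUsix)$ is a free $\MU_*$-module concentrated in even degrees. Rationally, this is immediate since $\BUsix$ rationalizes to a product of Eilenberg--MacLane spaces $K(\Q, 2k)$ for $k \geq 3$. At each prime $p$, I would use the fibration $\BUsix \to \BSU \to K(\Z, 4)$ with a Serre spectral sequence---together with the known mod-$p$ cohomology of $K(\Z,4)$---to establish that $H_*(\BUsix; \Fp)$ is concentrated in even degrees, and then run the Atiyah--Hirzebruch spectral sequence for $\MU_*$. The most delicate instance is $p = 2$, where the Serre spectral sequence genuinely involves $2$-torsion from $K(\Z,4)$ and the mod-$2$ cohomology of $\BUsix$ is subtler (one would likely appeal to a Singer-style calculation).

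Since $\MU$-homology commutes with mapping telescopes, one obtains $\MU_*(R) \cong \MU_*(\BUsix)[x_6^{-1}]$, still free and even over $\MU_*$. To promote this to a statement about $\pi_*(R)$, I would construct a homotopy ring map $\MU \to R$ (a complex orientation). One approach is to produce a compatible family of classes in $R^{2k}(\CP^\infty)$ from the $\Einfty$-structure of $R$ and the $6$-periodicity coming from $x_6$, using the natural map $\BUsix \to \BU$; alternatively, one might try to exhibit an $\Einfty$-map $\BU \to \mathrm{BGL}_1(R)$ in the Hopkins--Mahowald style. Once any such ring map $\MU \to R$ is in hand, the splitting $R \wedge \MU \simeq R[b_1, b_2, \ldots]$ as an $R$-module realizes $\pi_*R$ as a retract of $\MU_*(R)$, forcing $\pi_*R$ to inherit the even, torsion-free structure. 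Complex orientability is then automatic (indeed, any even ring spectrum is complex orientable).

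The main obstacle is bootstrapping from $\MU_*(R)$ to $\pi_*(R)$: the evenness of $\MU_*R$ makes a complex orientation morally forced, but turning this into an actual homotopy ring map $\MU \to R$ requires obstruction theory, and there is the real possibility that the Adams-type spectral sequence one would otherwise use has $\mathrm{Ext}^{>0}$-contributions coming from the $\MU_*\MU$-comodule structure of $\MU_*R$. A secondary obstacle is the mod-$2$ input in the first step, where one must track the $2$-torsion in the Serre spectral sequence carefully; this is where the specific structure of $\BUsix$ as an infinite loop space (and its relation to a Wilson space for $\BPh$ at $p = 2$) should come into play.
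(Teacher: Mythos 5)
You have the right overall strategy---establish a complex orientation $\MU \to R$ and then use the retract $R \to \MU \otimes R \to R$ to transport evenness from $\MU$-homology to homotopy---but the critical step, actually producing the complex orientation, is exactly where your proposal has a gap. You flag it yourself as ``the main obstacle'' and offer two vague alternatives (constructing coherent classes in $R^{2k}(\CP^\infty)$ from the $\Einfty$-structure, or a Hopkins--Mahowald style $\Einfty$-map $\BU \to \mathrm{BGL}_1(R)$). Neither is carried out, and the second is more than one should expect: it would yield a highly structured $\MU$-orientation of $R$, something the paper itself records as an open question.

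The paper's actual construction of the orientation is short and elementary, and it is the idea your proposal is missing. Since $W = \BUsix$ has \emph{even homotopy groups}, the bottom class $x\colon S^{2k} \to W$ extends over $\Sigma^{2k-2}\CP^{\infty}$ by obstruction theory: $\Sigma^{2k-2}\CP^\infty$ has cells only in even dimensions $\geq 2k$, so the obstructions live in $H^{\mathrm{even}}(\Sigma^{2k-2}\CP^\infty; \pi_{\mathrm{odd}}(W)) = 0$. Stabilizing and composing with $\suspinftypl W \to \suspinftypl W[x^{-1}] \xrightarrow{\,\cdot x^{-1}\,} \Sigma^{2k}\suspinftypl W[x^{-1}]$ produces a class in $\tilde R^2(\CP^\infty)$ whose restriction to $S^2$ is the unit, i.e.\ a complex orientation. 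This uses only that Wilson spaces have even homotopy groups and even cells (which is the \emph{definition} of a Wilson space), not a hands-on Serre spectral sequence calculation for $\BUsix \to \BSU \to K(\Z,4)$; the latter is correct in outline but reproves what is cited from Wilson's thesis. Your worry about ``$\mathrm{Ext}^{>0}$ contributions in an Adams-type spectral sequence'' is also a red herring: no such spectral sequence is needed because the orientation is constructed by an explicit extension, not obstruction theory over $\MU_*\MU$. Once you have the orientation, your retract argument and the Atiyah--Hirzebruch computation of $\MU_*(W)[x^{-1}]$ do finish the proof, exactly as in the paper.
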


\begin{thm} \label{thm:BU6intro}
Let $K(n)$ denote a Morava $K$-theory of height $n$ at the prime $p$.  Then $K(n) \otimes R=0$ if and only if $p=2$ and $n \ge 3$.
\end{thm}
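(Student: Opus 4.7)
\medskip

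\noindent\textbf{Proof plan.} The plan is to analyze $K(n) \sm R$ using the integral equivalence $\BUsix \simeq \loopsinfty \susp^6 \ku$ and proceeding prime-by-prime. At $p = 2$ we exploit $\ku_{(2)} \simeq \BPn<1>$, identifying $\BUsix_{(2)} \simeq \loopsinfty \susp^6 \BPn<1>$, which is the key structural coincidence: $|v_2| = 6 = |x_6|$ at the prime $2$. At odd primes the Adams splitting $\ku_{(p)} \simeq \bigvee_{i=0}^{p-2} \susp^{2i} \BPn<1>_{(p)}$ presents $\BUsix_{(p)}$ as a product of Wilson spaces.

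For the vanishing direction at $p=2$, $n \ge 3$: since $\BPn<1>$ has chromatic height $1$, we have $K(n) \sm \BPn<1> = 0$ for $n \ge 2$. The approach is to reduce $K(n)_* R = K(n)_* \BUsix_{(2)}[x_6^{-1}]$ to the question of nilpotency of the primitive element $x_6$ in the Hopf algebra $K(n)_* \BUsix_{(2)}$. A Goodwillie/Kuhn tower analysis of $\suspinftypl \loopsinfty(\susp^6 \BPn<1>)$ yields Taylor layers $(\susp^6 \BPn<1>)^{\wedge k}_{h\Sigma_k}$ that are $K(n)$-acyclic for $n \ge 2$; exploiting the $5$-connectivity of $\susp^6 \BPn<1>$ to establish convergence in $K(n)$-homology, one aims to conclude $K(n) \sm \suspinftypl\BUsix_{(2)} \simeq K(n)$. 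The Hurewicz image of $x_6$ then lies in $K(n)_6$, which vanishes for $n \ge 3$ since $|v_n| = 2(2^n-1) > 6$; inverting a zero class forces $K(n) \sm R = 0$.

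The nonvanishing cases are handled by distinct arguments. At $p = 2$, $n = 2$ the same collapse yields $K(2)_6 = \F_2 \cdot v_2$ with $x_6$ Hurewiczing to $v_2$, a unit, so $K(2) \sm R \simeq K(2)$. At $n = 1$ for any prime, the collapse fails (as $K(1) \sm \ku \ne 0$), and instead the composite $\BUsix \to \BU \to \loopsinfty \ku$ yields a nonzero map $R \to \KU$ under which $x_6 \mapsto \beta^3$, giving $K(1) \sm R \ne 0$. The remaining case of odd $p$ with $n \ge 2$ requires a separate Hopf-ring analysis: here the Adams splitting produces cross-term contributions in the Goodwillie filtration that do not collapse as cleanly, and a Ravenel--Wilson-style calculation of $K(n)_*$ of the product of Wilson spaces shows that $x_6$ remains non-nilpotent in $K(n)_* \BUsix_{(p)}$ for every $n$.

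The main obstacle is making the Goodwillie/Kuhn tower collapse precise in $K(n)$-homology and understanding why it operates differently at $p = 2$ versus odd primes for $n \ge 2$. Both the vanishing and the odd-prime nonvanishing rest on the Hopf-ring structure on $K(n)_* \BUsix$ and techniques from the Ravenel--Wilson program, and the sharp contrast between the cases clarifies precisely why the paper's main construction refines $\BUsix$ to a $p$-local retract.
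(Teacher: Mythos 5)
The central step of your argument—the claim that a Goodwillie/Kuhn tower collapse gives $K(n) \sm \suspinftypl \BUsix \simeq K(n)$ for $n \ge 2$ at $p=2$—is false, and it is instructive to see why. While it is true that the Taylor layers $D_k(\Sigma^6 \BPn<1>) = (\Sigma^6 \BPn<1>)^{\wedge k}_{h\Sigma_k}$ are $K(n)$-acyclic for $n \ge 2$ (homotopy orbits of finite groups preserve $K(n)$-acyclicity), the Taylor tower converges as spectra, not after applying $K(n)$-homology; $K(n)_*$ does not commute with the inverse limit. The Bousfield--Kuhn functor quantifies exactly what escapes the tower, and this is where the interesting chromatic content of $\suspinftypl \Omega^{\infty}$ lives. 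In fact, Ravenel and Wilson computed $K(n)_*(\BUsix)$, and it is a \emph{large, nontrivial} Hopf algebra for every $n \ge 1$ at $p=2$; in particular, the Hurewicz image $u = b_{(0)}^{\circ 3}$ of $x_6$ is a nonzero class in $K(n)_6(\BUsix)$, which is incompatible with your collapse since $K(n)_6$ of a point vanishes for $n \ge 3$. You can also see the contradiction internally: your collapse argument makes no use of the distinction between $n=2$ and $n\ge 3$ until the last step, where it identifies $K(n)_*(\BUsix)$ with $K(n)_*$ and concludes the Hurewicz image of $x_6$ lies in $K(n)_6$. But reduced $K(n)$-homology vanishing would force the Hurewicz image of $x_6$ to be zero for \emph{all} $n \ge 2$, giving $K(2) \sm R = 0$ as well, contradicting the theorem you are trying to prove.

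What the paper does instead is a direct Hopf ring computation. For $p=2$, $n \ge 3$, the vanishing $K(n) \sm R = 0$ is deduced from \Cref{prop:xpzero}, which shows $u^p = 0$ in $K(n)_*(W_1)$ using the Ravenel--Wilson relation $b([p]_{K(n)}(s)) = [p]_{[\BPn<h>]}(b(s))$ and a chain of Bockstein-like identities among the $[v_r] \circ b_{(i)}^{\circ p^r}$. For the nonvanishing at $n=2$ (and more generally at $n = h+1$), the paper argues via the Tamanoi Bockstein (\Cref{cor:lowerbound}): the Hurewicz image $x$ of the bottom cell is identified with the image of the class $f_0 \in K(h)_*(\KFph)$ satisfying $f_0^p = (-1)^{h-1} v_h f_0$, so $x^p = (-1)^{h-1} v_h x$ and $x$ is a zero divisor but not nilpotent. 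These two cases require genuinely different arguments, which a single tower-collapse argument cannot supply.

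Your sketch of the odd-prime case also misses the clean mechanism. At odd primes $\BUsix$ decomposes as a product $W_{1,6} \times W_{1,8} \times \cdots \times W_{1,2\nu(1)}$ of Wilson spaces, and the bottom cell $x_6$ lives in $W_{1,6}$, which is \emph{not atomic}: since $6 < 2\nu(1) = 2(p+1)$, the space $W_{1,6}$ is $\Omega^2$ of a Wilson space, and \Cref{thm:infinite-height} produces an $\E_2$-ring map $\suspinftypl W_{1,6}[x_6^{-1}] \to \MUP$, so $K(n)_*$ of this factor is nonzero at every height $n$. The remaining factors have even cells and hence nonvanishing $K(n)$-homology, so by K\"unneth $K(n)_*(\suspinftypl \BUsix[x_6^{-1}]) \ne 0$ for all $n \ge 0$. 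You do not need a ``cross-term'' Hopf ring computation for this; the failure of atomicity at odd primes is the structural explanation, and it is exactly what motivates replacing $\BUsix$ by the atomic retract $W_1 = W_{1,2\nu(1)}$ in the main construction.

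Your $n=1$ argument (compose with $\BUsix \to \BU$ and invert $\beta$ to land in $\MUP$ or $\KU$) is fine, and the observation that $|v_2| = 6 = |x_6|$ at $p=2$ is indeed the degree coincidence that makes $n=2$ nontrivial, but the inference you draw from it requires the actual Ravenel--Wilson computation rather than a tower collapse.
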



After localization at the prime $2$, both $R$ and $\TMF$ are chromatic height $2$ theories.  To relate them, recall that $\TMF$ is a homotopy limit of \emph{elliptic spectra} \cite{AHS,Lurie-ellipticII}.  Examples of particular interest to us are the \emph{elliptic Morava $E$-theories}, which we review in \Cref{sec:EllipticOrientation}.  There is an elliptic Morava $E$-theory $E_{k,\hC}$ associated to any supersingular elliptic curve $C$ over a perfect field $k$ of positive characteristic.

In \Cref{sec:Orientation} and \Cref{sec:EllipticOrientation}, we explore what it means to give a ring map from $R$ into an elliptic Morava $E$-theory.  We study this for various meanings of `ring map,' ranging from homotopy commutative to $\Einfty$.
The following theorem is an example of the sort of results we obtain:

\begin{thm} \label{thm:intro-elliptic}
Suppose that $E_{k,\hC}$ is an elliptic Morava $E$-theory associated to a supersingular elliptic curve $C$ over a perfect field $k$ of characteristic $2$.  Then there is a natural homotopy equivalence between
\begin{enumerate}
\item The space of $\Einfty$-ring maps $\MSU \to E_{k,\hC}$, and
\item The subspace of $\Einfty$-ring maps $R \to E_{k,\hC}$ that respect the Weil pairing on $C$.
\end{enumerate}
\end{thm}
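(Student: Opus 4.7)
The plan is to translate both sides of the claimed equivalence into spaces of maps of connective spectra, and then to recognize the extension data between them as a choice of Weil pairing.

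By the orientation theorem of Ando--Blumberg--Gepner--Hopkins--Rezk, the space of $\Einfty$-ring maps $\MSU \to E_{k,\hC}$ is naturally equivalent to the space of $\Einfty$-nullhomotopies, compatible with the $J$-homomorphism, of the map $\BSU \to \mathrm{BGL}_1(E_{k,\hC})$; using $\BSU \simeq \loopsinfty \tau_{\ge 4}\ku$ and $\GL_1(E_{k,\hC}) \simeq \loopsinfty \mathrm{gl}_1(E_{k,\hC})$, this reduces to a space of maps of connective spectra $\tau_{\ge 4}\ku \to \mathrm{gl}_1(E_{k,\hC})$. By the universal property of the $\Einfty$-Snaith construction defining $R$, the space of $\Einfty$-ring maps $R \to E_{k,\hC}$ is the space of $\Einfty$-space maps $\BUsix \to \loopsinfty E_{k,\hC}$ sending $x_6$ to a unit in $\pi_6 E_{k,\hC}$. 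Since $\BUsix$ is $5$-connected such maps factor automatically through $\GL_1(E_{k,\hC})$, and $\BUsix \simeq \loopsinfty \tau_{\ge 6}\ku$ then identifies this with the space of connective spectrum maps $\tau_{\ge 6}\ku \to \mathrm{gl}_1(E_{k,\hC})$ whose bottom cell maps to a unit.

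The cofiber sequence of connective spectra
\[
\tau_{\ge 6}\ku \longrightarrow \tau_{\ge 4}\ku \longrightarrow \susp^{4}\HZ
\]
induces a fiber sequence of mapping spaces into $\mathrm{gl}_1(E_{k,\hC})$. Hence restriction along the cover $\BUsix \to \BSU$ yields a map from the space of $\MSU$-orientations to the space of $R$-orientations whose homotopy fibers are equivalent to $\Map(\susp^{4}\HZ, \mathrm{gl}_1(E_{k,\hC}))$, the obstruction space for the extension problem. It therefore suffices to show that the Weil pairing on $C$ furnishes a canonical basepoint of this fiber space, and that the subspace of Weil-pairing-respecting $R$-orientations is exactly the image of the restriction map, so that restriction is an equivalence onto this subspace.

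The main obstacle is the final identification: one must recognize the abelian group $\pi_0 \Map(\susp^{4}\HZ, \mathrm{gl}_1(E_{k,\hC}))$, which up to a unit shift is $E_{k,\hC}^{0}\K(\Z,4)$, as classifying Weil pairing data on the formal group $\hC$. I would carry this out via the Ando--Hopkins--Strickland style description of $E$-cohomology of Eilenberg--MacLane spaces in terms of alternating biextensions on the formal group, together with the observation that on a supersingular elliptic curve in characteristic $2$ the Weil pairing on $C$ canonically determines such a biextension by restriction to the formal neighborhood of the identity. Once this identification is in place, the theorem follows from a torsor argument: the data of an $\Einfty$-ring map $\MSU \to E_{k,\hC}$ splits canonically as the data of an $R$-orientation together with a Weil-pairing-compatible extension class, with both sides torsors over the space of Weil pairings.
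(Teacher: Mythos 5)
Your outline has the right shape --- reduce both sides to maps of connective spectra, bring in the Whitehead cofiber sequence $\tau_{\ge 6}\ku \to \tau_{\ge 4}\ku \to \Sigma^4\HZ$, and recognize the extension data as Weil pairing data --- but the final identification step goes wrong in a way that also exposes a logical inconsistency upstream.

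You assert both that the restriction map from $\MSU$-orientations to $R$-orientations has homotopy fibers $\Map(\Sigma^4\HZ, \glE)$, and that this restriction is an equivalence onto a subspace of path components. These are compatible only if $\Map(\Sigma^4\HZ, \glE) \simeq *$. That \emph{is} in fact true --- by the ABGHR universal property this mapping space is $\EinftyAlg(\suspinftypl\Krm(\Z,4), E)$, and since $\Krm(\Z,4)$ is $K(2)$-acyclic (Ravenel--Wilson) while $E$ is $K(2)$-local, one gets $L_{K(2)}\suspinftypl\Krm(\Z,4)\simeq L_{K(2)}\Sbb$ and hence a contractible space of ring maps --- but you instead propose to identify $\pi_0\Map(\Sigma^4\HZ,\glE)$ with a nontrivial group of Weil pairing data. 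That identification is wrong; the space is trivial, and the Weil pairing lives one degree lower. The Ando--Strickland identification (used as \Cref{prop:weilkz3}) is $\Spec K_0(\KZthree) \cong W(\Gamma)$, so Weil pairings are indexed by ring maps out of $\suspinftypl\KZthree$, equivalently by $\Map(\Sigma^3\HZ,\glE)$, not by anything attached to $\Krm(\Z,4)$. Relatedly, the $E$-cohomology group you name, $E^0\Krm(\Z,4)$, is not the group in question: $\Map(\Sigma^4\HZ,\glE)$ consists of maps out of the spectrum $\Sigma^4\HZ$, not the suspension spectrum $\suspinftypl\Krm(\Z,4)$, and these differ by the counit $\suspinftypl\Omega^{\infty}\to\mathrm{id}$.

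The deeper omission is the $\sigma$-orientation of Ando--Hopkins--Strickland--Rezk. In the paper this canonical $\Einfty$-map $\sigma_E:\MUsix\to E$ does two indispensable jobs. First, it produces the specific $\Einfty$-map $w_E:\suspinftypl\KZthree\to E$ (\Cref{cnstr:sigmanull}), and \emph{that} map is what defines ``respects the Weil pairing'' in \Cref{dfn:WeilSpace}: $\EinftyAlgw(R_1,E)$ is the homotopy pullback of $\EinftyAlg(R_1,E)\to\EinftyAlg(B(0),E)\leftarrow\{w_E\}$, a union of path components because the middle space is \emph{discrete} (Goerss--Hopkins obstruction theory, \Cref{Einfty-B(0)-orientations}). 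Second, $\sigma_E$ provides the basepoint needed to pass from the \emph{torsor} of $\MUsix$-orientations to the \emph{space} $\Map(\Sigma^6\ku,\glE)$ of honest $\Einfty$-maps $\suspinftypl\BUsix\to E$; without it the restriction you describe does not land where you say it does, since a nullhomotopy of $\tau_{\ge 6}\ku\to\Sigma\glE$ is not an element of $\Map(\tau_{\ge 6}\ku,\glE)$ until you subtract a reference nullhomotopy. Your text conflates these.

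The paper's proof avoids both issues by choosing a different rotation of the $\ku/\HZ$ triangle: instead of $\Sigma^6\ku\to\Sigma^4\ku\to\Sigma^4\HZ$, it uses the Bockstein cofiber sequence $\Sigma^4\ku\to\Sigma^4\HZ\to\Sigma^7\ku$ (the $p=2$ case of $\Sigma^4\BPn<1>\to\Sigma^4\HZ\to\Sigma^{2p+3}\BPn<1>$). By the universal property of $\glE$, the space of lifts of $\bar{j}:\Sigma^4\HZ\to\Sigma\glE$ (the map corresponding to $w_E$) along this Bockstein is, verbatim, $\EinftyAlgw(R_1,E)$; and by the cofiber sequence it is also, verbatim, the space of nullhomotopies of the $J$-homomorphism $\Sigma^4\ku\to\Sigma\glE$, i.e.\ the space of $\Einfty$-ring maps $\MSU\to E$. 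No fibers need to be computed trivial. To salvage your route you would need the contractibility of $\Map(\Sigma^4\HZ,\glE)$ and the discreteness of $\Map(\Sigma^3\HZ,\glE)$ explicitly, together with $\sigma_E$, at which point you essentially reconstruct the paper's remark following \Cref{thm:elliptic-orientations-main}.
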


The subspace in $(2)$ is the union of a collection of path components in the space of $\Einfty$-ring homomorphisms $R \to E_{k,\hC}$.  We clarify what it means to respect the Weil pairing in the more precisely stated \Cref{dfn:WeilSpace} and \Cref{thm:elliptic-orientations-main}.


\begin{rmk}
There is geometric interest in $\Einfty$-ring maps out of $\MSU$, since such maps represent highly structured invariants of manifolds up to bordism.
\end{rmk}

\begin{rmk}
Morava $E$-theories that are not elliptic often fail to receive even homotopy commutative ring maps from $R$ (cf. \Cref{exm:dieudonneexterior}).  In \Cref{cor:homotopy-orientations} we determine exactly which Morava $E$-theories receive such homotopy ring maps.
\end{rmk}

\subsection{The construction at a general prime and height}
\begin{cnv}
For the remainder of this paper, we fix a prime number $p$ and work in the $p$-local category.  That is to say, all spectra and simply connected spaces are implicitly $p$-localized.
\end{cnv}

\begin{cnv}
If $E$ is a spectrum and $n$ an integer, we sometimes use the notation $\OS{E}{n}$ to denote
\[\OS{E}{n} = \loopsinfty \susp^{n}E.\]
\end{cnv}
\begin{cnv}
For each integer $h \ge 0$, we let $\nu(h)$ denote the integer \[\nu(h)=\frac{p^{h+1}-1}{p-1}.\]
\end{cnv}

\begin{dfn}
For each nonnegative integer $h$, let $\BPn<h>$ denote the spectrum obtained from $\BP$ by quotienting out the Hazewinkel generators $v_{h+1},v_{h+2},\ldots \in \pi_*(\BP)$.  We define
\[W_{h} = \Omega^{\infty} \Sigma^{2 \nu(h)} \BPn<h> = \OS{\BPn<h>}{2\nu(h)}.\]
Steve Wilson proved \cite{Wilson-thesisII} that any choice of generators $v_{h+1},v_{h+2},\ldots$ yields the same space $W_{h}$ --- we choose the Hazewinkel generators solely for concreteness (cf. \Cref{rmk:Hazewinkel-choice}).
\end{dfn}

\begin{rmk}
Note that $W_0 \simeq \CPinfty$ at all primes $p$, while $W_1 \simeq \BUsix$ only for $p=2$. At odd primes $W_1$ is a retract of $\BUsix$.
\end{rmk}

Our ultimate generalization of \Cref{thm:BU6intro} runs as follows:

\begin{thm} \label{thm:intro-main}
For each integer $h \ge 0$, let $R_{h}$ denote the $\Einfty$-ring spectrum
\[R_{h} = \suspinftypl W_h[x^{-1}],\]
where $x$ is a generator of the bottom nonzero homotopy group $\pi_{2 \nu(h)}(W_h) \cong \Z_{(p)}$.  Then:
\begin{enumerate}
\item The homotopy groups of $R_h$ are free $\Z_{(p)}$-modules and vanish in odd degrees. Hence, $R_h$ is complex orientable (\Cref{thm:evenorientation} and \Cref{cor:FreeZp}).
\item The ring $R_h$ is Landweber exact (\Cref{thm:Landweber}).
\item The chromatic localization $L_{K(n)} R_h$ vanishes if and only if $n > h+1$ (\Cref{thm:height}).
\end{enumerate}
\end{thm}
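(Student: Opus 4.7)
The plan is to establish the three claims in order, leaning throughout on Wilson's theorem that $\H_*(W_h; \Z_{(p)})$ is a torsion-free polynomial algebra concentrated in even degrees.

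For (1), the first step is to lift Wilson's computation from ordinary to $\MU$-homology: because $\H_*(W_h; \Z_{(p)})$ is even and torsion-free, the Atiyah--Hirzebruch spectral sequence for $\MU_*(W_h)$ collapses at $E_2$ for parity reasons, exhibiting $\MU_*(W_h)$ as a free polynomial $\MU_*$-algebra on even-degree generators. Since localization is exact, $\MU_*(R_h) = \MU_*(W_h)[x^{-1}]$ remains even and flat over $\MU_*$. Viewing $\MU_*(R_h)$ as an $\MU_*\MU$-comodule, the Adams--Novikov spectral sequence for $R_h$ then collapses at $E_2$ again by parity, and we conclude that $\pi_*(R_h)$ is even and torsion-free. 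Complex orientability is then immediate.

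For (2), I would use the explicit description of $\MU_*(R_h)$ from step (1) to identify the formal group law of $R_h$ via its complex orientation, and then verify Landweber's criterion directly. This amounts to checking that $(p, v_1, v_2, \ldots)$ is a regular sequence on $\pi_*(R_h)$, which reduces to a regularity statement on the polynomial generators of $\MU_*(W_h)[x^{-1}]$. Regularity is preserved under flat localization, so this comes down to understanding how the $v_i$ act on $\MU_*(W_h)$ itself, which can be read off from Wilson's description of the generators as coming from the $\BPn<h>$-cohomology classes dual to monomials in the $v_i$.

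Part (3), which I expect to be the main obstacle, requires computing $K(n) \otimes R_h = K(n)_*(W_h)[x^{-1}]$ and determining for which $n$ the inverted class $x$ becomes nilpotent. By Ravenel--Wilson and the refinements of Tamanoi and Yagita, $K(n)_*(W_h)$ has an explicit Hopf-ring presentation; tracing the image of $x$ through this structure should show that $x$ acts nilpotently in $K(n)_*(W_h)$ precisely when $n > h+1$, while for $n \leq h+1$ it acts as a unit after localization. The delicate point is the boundary case $n = h+1$, where the suspension degree $2\nu(h) = 2(p^{h+1}-1)/(p-1)$ is precisely tuned so that a sequence of Bocksteins (in the spirit of Tamanoi--Ravenel--Wilson--Yagita) relating $\BPn<h>$ to the height-$(h+1)$ theory identifies the surviving class; this ``redshift'' from height $h$ to height $h+1$ is the content of the theorem.
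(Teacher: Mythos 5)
Your proof of part (1) has a genuine gap. You claim that because $\MU_*(R_h) = \MU_*(W_h)[x^{-1}]$ is evenly graded and free over $\MU_*$, the Adams--Novikov spectral sequence for $R_h$ collapses at $E_2$ ``by parity,'' and conclude that $\pi_*(R_h)$ is even. This is false: the cobar complex contributes to $\mathrm{Ext}^{s,t}_{\MU_*\MU}$ with internal degree $t$ even but filtration $s$ arbitrary, so the $E_2$-page lives in all total degrees $t-s$. The sphere spectrum is the basic counterexample --- $\MU_*\Sbb = \MU_*$ is free and even, yet $\pi_*\Sbb$ is full of odd-degree torsion and the Adams--Novikov spectral sequence has a rich $E_2$-page with many differentials. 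Your argument, applied verbatim to $\Sbb$, would ``prove'' that $\pi_*\Sbb$ is even and torsion-free. Notice that you never use the \emph{even homotopy groups} of $W_h$, which is the second half of the Wilson-space condition, and that is precisely the missing ingredient. The paper's argument first establishes complex orientability by obstruction theory: since $\pi_{\mathrm{odd}}(W_h)=0$, the bottom cell $S^{2\nu(h)} \to W_h$ extends over $\Sigma^{2\nu(h)-2}\CPinfty$, and after inverting $x$ this is a complex orientation of $R_h$. Complex orientability then exhibits $R_h$ as a retract of $\MU\otimes R_h$, whose homotopy is $\MU_*(W_h)[x^{-1}]$, and only \emph{then} does one deduce evenness of $\pi_*(R_h)$.

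Your sketch of part (2) glosses over the central technical point: regularity of $(p,v_1,v_2,\ldots)$ on $\MU_*(W_h)[x^{-1}]$ via the \emph{left} unit is immediate from freeness over $\MU_*$, but what is needed is regularity on $\pi_*(R_h)$, where the $v_i$ act via the \emph{right} unit $\pi_*(R_h)\to\BP_*(R_h)$. The paper bridges the two structures by proving that $\pi_*(R_h)/I_\ell$ injects into $\BP_*(R_h)/\eta_R(I_\ell)$ (using the retraction coming from the $\BP$-algebra structure), that $\eta_R(I_\ell)=\eta_L(I_\ell)$ because $I_\ell$ is invariant, and that $\eta_R(v_\ell)\equiv v_\ell\pmod{I_\ell}$; none of those steps is present in your plan, so the reduction to ``a regularity statement on the polynomial generators'' does not close. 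Your intuition for part (3) is on the right track --- the paper does show $x^p=0$ in $K(n)_*(W_h)$ for $n>h+1$ by a Hopf-ring calculation, and shows $x^p=\pm v_{h+1}x$ at $n=h+1$ via the Tamanoi Bockstein --- but rather than computing for all $n$ directly, the paper invokes \cite{HinfBousfield} to reduce all of (3) to just those two heights.
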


The last point states that $R_{h-1}$ has \emph{chromatic height} $h$ for each integer $h \ge 1$.
By the main result of \cite{HinfBousfield}, it is equivalent to the pair of computations $K(h) \otimes R_{h-1} \ne 0$ and $K(h+1) \otimes R_{h-1} = 0$.

While we are far from understanding the top chromatic localization $L_{K(h)} R_{h-1}$ as an $\Einfty$-ring spectrum, we combine the Bousfield--Kuhn functor \cite{Kuhn2} with work of Hopkins--Hunton \cite{HopkinsHunton} to identify $L_{K(h)} R_{h-1}$ as a spectrum:

\begin{thm} \label{thm:AdditiveSplitting}
For $h \ge 1$ a positive integer, let $E(h)$ denote the Johnson--Wilson theory $\BPn<h>[v_{h}^{-1}]$.  There is an equivalence
\[L_{K(h)} R_{h-1} \simeq L_{K(h)} \left( \bigvee \Sigma^{2\ell} E(h) \right),\]
where the right hand side is the completion of a countable wedge of even suspensions of $E(h)$.  We stress that this splitting is additive, but not at all multiplicative.
\end{thm}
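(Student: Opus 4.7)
My plan is to compute $K(h)_* R_{h-1}$ using Hopkins--Hunton's work on Morava $K$-theory of Wilson spaces, and then lift this computation to an additive splitting of $L_{K(h)} R_{h-1}$.

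\textbf{Step 1 (Computing $K(h)$-homology).}  Since $K(h)$-homology commutes with filtered colimits,
\[
K(h)_* R_{h-1} \;\cong\; \bigl(K(h)_* W_{h-1}\bigr)[x^{-1}],
\]
with $x$ acting by Pontryagin multiplication. Although $K(h) \otimes \BPn<h-1>$ vanishes on the spectrum level (as $h-1 < h$), the infinite loop space $W_{h-1}$ carries rich $K(h)$-homology coming from its higher homotopical structure. Applying the Hopkins--Hunton computation of Morava $K$-theory of $\Omega$-spectra, I expect $K(h)_* W_{h-1}$ to be a polynomial Hopf algebra on even-degree generators, with $x$ a polynomial generator of minimal positive degree. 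Inverting $x$ then yields a free graded $K(h)_*$-module, concentrated in even degrees, with countably many generators.

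\textbf{Step 2 (Lifting to an additive splitting).}  The complex orientability of $R_{h-1}$ from \Cref{thm:intro-main}(1) provides a ring map $\BP \to R_{h-1}$; $K(h)$-localizing and combining with the unit $E(h) \to L_{K(h)} E(h)$ endows $L_{K(h)} R_{h-1}$ with an $E(h)$-module structure. The Bousfield--Kuhn functor enters here as the conceptual device for turning the free $K(h)$-homology description into actual $K(h)$-local homotopy classes: each polynomial generator from Step~1 is represented by a map $\Sigma^{2\ell} S \to L_{K(h)} R_{h-1}$, which then extends to an $E(h)$-module map $\Sigma^{2\ell} E(h) \to L_{K(h)} R_{h-1}$. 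Wedging these together and applying $L_{K(h)}$ produces the comparison map
\[
L_{K(h)}\!\left(\bigvee_\ell \Sigma^{2\ell} E(h)\right) \longrightarrow L_{K(h)} R_{h-1}.
\]
This map is a $K(h)$-homology isomorphism by construction, and both sides are $K(h)$-local, so it is an equivalence.

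\textbf{Main obstacle.}  The heart of the argument is Step 1: identifying $K(h)_* W_{h-1}$ as a polynomial Hopf algebra and verifying that the bottom class $x$ is in fact a polynomial generator (so that inverting it yields a free module of the predicted shape). Most Wilson-space computations are phrased in terms of $K(n)$-homology of $\Omega^\infty \Sigma^k \BPn<n>$ where the height matches, whereas here we study height-$h$ coefficients on the Wilson space of a strictly lower-height spectrum $\BPn<h-1>$. Applying the Hopkins--Hunton machinery correctly in this cross-height setting, and controlling the effect of localizing at $x$, is the delicate technical point.
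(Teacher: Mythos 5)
Your approach diverges substantially from the paper's, and both steps have issues that prevent it from closing.

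The paper never computes $K(h)_\ast R_{h-1}$. Instead it proves that $R_{h-1}$ is Landweber exact (\Cref{thm:Landweber}), so that by Hopkins--Hunton (\Cref{thm:Hopkins-Hunton}) the identity component of $\Omega^\infty \Sigma^{2\ell} R_{h-1}$ is a \emph{weak product of Wilson spaces} --- a space-level structure theorem, not a $K$-theory computation. It then applies the Bousfield--Kuhn functor $\Phi_h$ via the identity $L_{K(h)} R_{h-1} \simeq \Phi_h(\Omega^\infty R_{h-1})$, using that $\Phi_h$ commutes with finite products and filtered colimits to reduce to computing $\Phi_h$ of each atomic factor $W_{k,2r}$. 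Each such $\Phi_h(\Omega^\infty \Sigma^{2r} \BPn<k>) = \Sigma^{2r} L_{K(h)}\BPn<k>$ is then identified, via Hovey--Sadofsky, as a completed wedge of even suspensions of $E(h)$ (or zero). So the Bousfield--Kuhn functor is doing the \emph{entire} job of converting a space-level splitting into a $K(h)$-local spectrum-level splitting; it is not, as you suggest, a device for lifting $K(h)$-homology classes to homotopy classes --- it does not do that, and indeed there is no reason such a lift should exist in general. You have also misattributed Hopkins--Hunton: the cited result is about the delooping structure of Landweber exact ring spectra, not a Morava $K$-theory computation of Wilson spaces.

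Your Step 1 is less of an obstacle than you think in one sense (any graded module over the graded field $K(h)_\ast$ is automatically free, so the only content is evenness, which follows from complex orientability of $R_{h-1}$), but the genuine gap is in Step 2. A generator of $K(h)_\ast L_{K(h)} R_{h-1}$ is an element of $\pi_\ast(K(h)\otimes L_{K(h)} R_{h-1})$, not of $\pi_\ast L_{K(h)} R_{h-1}$; nothing forces it to be in the image of the Hurewicz map $\pi_\ast L_{K(h)} R_{h-1} \to K(h)_\ast L_{K(h)} R_{h-1}$, so ``each polynomial generator is represented by a map $\Sigma^{2\ell}\mathbb{S}\to L_{K(h)} R_{h-1}$'' is unjustified and is the crux of what makes the additive splitting nontrivial. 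A repaired version of your argument would instead use the module structure directly: $L_{K(h)} R_{h-1}$ is a $K(h)$-local module over $L_{K(h)} E(h)$ (or over Morava $E$-theory $E_h$), and one would invoke a Hovey--Strickland-type structure theorem --- a $K(h)$-local $E_h$-module with free, evenly-concentrated $K(h)$-homology is pro-free, i.e.\ a completed wedge of even suspensions of $E_h$ --- and then pass back to $E(h)$. That theorem, not Bousfield--Kuhn, is the missing ingredient in your Step 2. The paper's route is more elegant precisely because it sidesteps this module-theoretic argument and any direct $K(h)$-homology computation, at the cost of needing the nontrivial Hopkins--Hunton input about Landweber exact ring spectra.
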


\subsection{Bocksteins and the top chromatic localization}
In recent years there have been other generalizations of Snaith's theorem, also designed to produce $\Einfty$-rings of chromatic heights $h>1$ \cite{Westerland,Lurie-ellipticII, Peterson}.
Unlike our work, these works proceed entirely in the $K(h)$-local category, and invert $K(h)$-local Picard elements.
Notably, Lurie proved that all Morava $E$-theories admit such a construction \cite[Construction 5.1.1, cf. Proposition 4.3.13]{Lurie-ellipticII}.

While our ring $R_{h-1}$ is integral, its $K(h)$-localization $L_{K(h)} R_{h-1}$ is closely connected to constructions of Peterson and Westerland \cite{Peterson,Westerland}.  We demonstrate this connection by giving a different, and intrinsically $K(h)$-local, construction of $L_{K(h)} R_{h-1}$.

\begin{dfn}
Concatenating the $p$-Bockstein, $v_1$-Bockstein, $v_2$-Bockstein, \ldots, and $v_{h-1}$-Bockstein yields a sequence of spectra
\begin{align*}
\Sigma^h \HFp \to \Sigma^{h+1} \HZ &\to \Sigma^{2 \nu(1)+h-2} \BPn<1> \to \Sigma^{2\nu(2)+h-3} \BPn<2> \to \cdots \to \Sigma^{2\nu(h-1)} \BPn<h-1>,
\end{align*}
which we call the \emph{Bockstein tower}.
We refer to the long composite
\[\Sigma^h \HFp \to \Sigma^{2\nu(h-1)} \BPn<h-1>\]
as the \emph{Tamanoi Bockstein}.
\end{dfn}

\begin{rmk}
The Tamanoi Bockstein is studied by Tamanoi in \cite{Tamanoi}, where it is used to define his $\BP$ fundamental class.  It features in the proof by Hopkins and Ravenel that suspension spectra are harmonic \cite{Suspension-spectra-are-harmonic}, as well as in several other papers such as \cite{WesterlandSati, Ravenel-Wilson-Yagita}.
\end{rmk}

\begin{rmk}
Applying the functor $\suspinftypl \Omega^{\infty}(\--)$ to the above sequence of Bocksteins yields a sequence of $\Einfty$-ring spectra
\[\suspinftypl \KFph \to \suspinftypl \KZhpo \to \suspinftypl \OS{\BPn<1>}{2\nu(1)+h-2}  \to \cdots \to \suspinftypl W_{h-1},\]
and we will also refer to this composite of $\Einfty$-ring maps as the Tamanoi Bockstein.
\end{rmk}

\begin{dfn} \label{intro-z}
In Section \ref{sec:Bockstein}, we follow \cite[Lemma 3.8]{Westerland} in considering a $K(h)$-local splitting
\[L_{K(h)} \suspinftypl \KFph \simeq L_{K(h)} \Sbb \vee Z \vee Z^{\otimes 2} \vee \cdots \vee Z^{\otimes p-1},\]
where $Z$ is a $\otimes$-invertible $K(h)$-local spectrum, representing an element of the $K(h)$-local Picard group.  The inclusion
\[z\colon Z \to L_{K(h)} \suspinftypl \KFph\]
defines an element $z$ in the Picard graded homotopy groups $\pi_{\star} L_{K(h)} \suspinftypl \KFph$. Composing $z$ with the Tamanoi Bockstein defines a class $z \in \pi_{\star} L_{K(h)} \suspinftypl W_{h-1}$.
\end{dfn}

Our alternative construction of $L_{K(h)} R_{h-1}$ is provided by the following theorem:

\begin{thm} \label{thm:introBockstein}
For $h \ge 1$ an integer, let $z \in \pi_{\star} L_{K(h)} \suspinftypl W_{h-1}$ denote the Picard graded homotopy class of \Cref{intro-z}.  Then there is an equivalence of $K(h)$-local $\Einfty$-ring spectra
\[L_{K(h)} R_{h-1} \simeq L_{K(h)} \suspinftypl W_{h-1} [z^{-1}].\]
\end{thm}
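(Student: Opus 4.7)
The strategy is to compare the two $K(h)$-local $\Einfty$-localizations of the base $A := L_{K(h)} \suspinftypl W_{h-1}$: at the integer-graded class $x \in \pi_{2\nu(h-1)} A$ (the left-hand side) and at the Picard-graded class $z \in \pi_\star A$ (the right-hand side). By the universal properties of $\Einfty$-localization, the equivalence follows once each class is shown to become invertible in the localization at the other.

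I plan to reduce both inversions to a single integer-graded identity. By the $\Fp^\times$-equivariance of Westerland's splitting, $Z^{\otimes(p-1)}$ is a trivial element of the $K(h)$-local Picard group; matching degrees using the Ravenel--Wilson calculation of $E_n^* \KFph$, one identifies $Z^{\otimes(p-1)} \simeq \Sigma^{2\nu(h-1)} L_{K(h)} \Sbb$, so that $z^{p-1}$ is an honest integer-graded class in $\pi_{2\nu(h-1)} A$. For any $K(h)$-local $\Einfty$-$A$-algebra $B$, the image of $z$ is then a Picard-graded unit in $B$ if and only if the image of $z^{p-1}$ is invertible in $\pi_* B$, which reduces the theorem to establishing an identity
\[ z^{p-1} = u \cdot x \text{ in } \pi_{2\nu(h-1)} A \text{ for some unit } u \in \pi_0 A. \]
I would prove this identity by applying a Morava $E$-theory $E_n$ of height $h$: the Ravenel--Wilson--Yagita calculation of $E_n^* W_{h-1}$ identifies $x$ with a canonical generator, and tracking $z^{p-1}$ through the Westerland splitting of $E_n^* L_{K(h)} \suspinftypl \KFph$ and along the Tamanoi Bockstein, using Tamanoi's explicit formulas, should identify the image of $z^{p-1}$ with a unit multiple of $x$.

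The main obstacle is precisely this Morava $E$-theory computation. Each of the three inputs --- Westerland's splitting, the Ravenel--Wilson--Yagita formulas for $E_n^*$ of Eilenberg--MacLane spectra, and Tamanoi's formulas for the Bockstein tower --- is classical, but tying them together to pin down $z^{p-1}$ as a unit multiple of $x$ (rather than merely a nonzero class) requires careful bookkeeping of $\Fp^\times$-weights and normalizations across the tower. Once the identity is in hand, inverting $x$ on the left forces $z$ to be a Picard-graded unit, while inverting $z$ on the right forces $x$ to be a unit, so the universal properties yield mutually inverse $\Einfty$-$A$-algebra maps, giving the claimed equivalence.
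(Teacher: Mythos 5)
The overall strategy is right and matches the paper: compare the localizations at $x$ and at $z$ by showing each class becomes invertible in the other's localization, and detect this at the level of Morava theory. However, the key reduction you propose has a genuine degree error that sinks the argument as written. The class $Z$ has \emph{order exactly} $p-1$ in the $K(h)$-local Picard group, so $Z^{\otimes(p-1)} \simeq L_{K(h)}\Sbb$ with no suspension, and hence $z^{p-1}$ lies in $\pi_0 A$ -- not in $\pi_{2\nu(h-1)} A$. (Even the degree bookkeeping you used is off: if one were allowed to read off a shift from $K(h)_*Z^{\otimes(p-1)}$, the generator sits in degree $(p-1)\cdot 2\nu(h-1) = 2(p^h-1)$, not $2\nu(h-1)$, and for $p>2$ these disagree.) Consequently the proposed identity $z^{p-1}=u\cdot x$ with $u\in\pi_0 A$ a unit is degree-incompatible and cannot hold. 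The correct degree-balanced relation, and what the paper actually proves, is one rung lower: one shows that the $K(h)_*$-Hurewicz image of $x_{2\nu(h-1)}$ in $K(h)_*(W_{h-1})$ is a \emph{nonzero scalar multiple of the image of $f_0$ under the Tamanoi Bockstein}, i.e., a generator of the image of $K(h)_*(z)$. Both sides of that equation live in $K(h)_{2\nu(h-1)}(W_{h-1})$, so there is no mismatch, and it implies that inverting $x$ and inverting $z$ (equivalently, $z^{p-1}$) invert the same ideal in $K(h)_*$, which suffices because everything in sight is $K(h)$-local.

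As for the proposed Morava $E$-theory computation: this is a genuine alternative to the paper's route (which instead uses Tamanoi's explicit mod-$p$ homology formulas for the Bockstein tower and the Atiyah--Hirzebruch spectral sequence to lift to $K(h)_*$), and in principle it could work. But it is aimed at proving the wrong identity. Once you replace your target with the correct $K(h)_*$-level statement that the Hurewicz image of $x$ agrees with the image of $f_0$ up to a nonzero unit in $K(h)_0\cong\Fp$, you will find that chasing generators through $E_h^*(\KFph)$ and the tower requires essentially the same "bookkeeping of weights and normalizations" you flagged as the main obstacle, and the paper's $\HFp$-based argument (using \cite[Proposition~3.3]{Tamanoi} for nontriviality and a finite complex $M$ to compare the top cell with the bottom cell of $W_{h-1}$) is arguably a lighter-weight route to the same non-vanishing.
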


\begin{rmk}
We may also use the classical $p$-Bockstein, which gives a map
\[L_{K(h)} \suspinftypl \KFph \to L_{K(h)} \suspinftypl \KZhpo,\]
to define a ring
\[B(0)=L_{K(h)} \suspinftypl \KZhpo[z^{-1}].\]
The ring $B(0)$ was studied by Westerland \cite{Westerland} when $p>2$ and by Peterson \cite{Peterson} for $p \gg 0$.  Westerland proved an equivalence
\[B(0) \simeq E_h^{h\SGpm},\]
where $E_h$ is the Morava $E$-theory associated to the Honda formal group law over $\Fph$ and $\SGpm$ is a certain subgroup of the Morava stabilizer group.  He showed that $B(0)$ is ``half the $K(h)$-local sphere'' meaning there is a fiber sequence
\[L_{K(h)} \Sbb \to B(0) \to B(0).\]
We refer the reader to \cite{Westerland} for details. 
\end{rmk}

\begin{rmk} \label{rmk:intro-tower}
Inverting $z$ along the Bockstein tower yields an entire sequence of $\Einfty$-ring spectra.  For example, if $h=4$ there is a sequence
\[
\begin{tikzcd}
L_{K(4)} \suspinftypl \Krm(\Z,5)[z^{-1}] \arrow{r}
& L_{K(4)} \suspinftypl\OS{\BPn<1>}{2\nu(1)+2}[z^{-1}] \ar[out=south, in=north, looseness=0.4, overlay,ld] \\
L_{K(4)} \suspinftypl \OS{\BPn<2>}{2\nu(2)+1}[z^{-1}] \arrow{r}
& L_{K(4)}\OS{\BPn<3>}{2\nu(3)}[z^{-1}] \simeq L_{K(4)} R_3.
\end{tikzcd}
\]
Westerland \cite{Westerland} identified the first element of the sequence as $E_4^{h\SGpm}$.
\Cref{thm:AdditiveSplitting} relates the final element to a completed wedge of copies of $E_4$.
The middle elements of the sequence have not yet been studied.
\end{rmk}

\begin{qst}
Can the second element of the sequence,
\[L_{K(h)} \suspinftypl \OS{\BPn<1>}{2\nu(1)+h-2}[z^{-1}],\]
be identified in more familiar terms when $h>2$?  Is it in any way related to a fixed point of Morava $E$-theory by a subgroup of the Morava stabilizer group?
\end{qst}

\subsection{Further open questions}

We end this introduction by advertising a few additional open questions:

\begin{qst}
Since the ring $R_{h-1}$ is complex orientable, $\pi_*(R_{h-1})$ carries a formal group.  Can this ring and formal group be described algebro-geometrically?  Note that, since it is Landweber exact, $R_{h-1}$ is determined as a homotopy commutative ring by this formal group.  
\end{qst}

\begin{qst}[cf. \Cref{sec:Orientation}] \label{qst:IntroOrientation}
For any formal group $\Gamma$ of height $h$ over a perfect field $k$ of characteristic $p$, recall that there is an associated Morava $E$-theory $E_{k,\Gamma}$ \cite{GoerssHopkins}.  For what height $h$ Morava $E$-theories $E_{k,\Gamma}$ does there exist a homotopy commutative ring map
\[R_{h-1} \to E_{k,\Gamma}?\]
Which of these homotopy commutative ring maps admit $\Einfty$ lifts?
\end{qst}

\begin{qst}[cf. \Cref{qst:muorient} in \Cref{sec:Snaith}]
Are there $\Einfty$-ring homomorphisms $\MU \to R_{h-1}$?
\end{qst}

\begin{qst}[cf. \Cref{qst:BPfree} in \Cref{sec:InfiniteHeight}]
Let $x_4 \in \pi_4(\BSU)$ denote a generator.  Does the $\Einfty$-ring spectrum
\[\suspinftypl \BSU[x_4^{-1}]\]
admit an additive splitting into a wedge of suspensions of $\BP$?  In \Cref{rmk:BPfree} we observe that this spectrum is a retract of a wedge of suspensions of $\BP$.
\end{qst}

\begin{qst} In \cite{CSYCycl}, Carmeli, Schlank, and Yanovski observe that the sequence of $K(n)$-local rings in \Cref{rmk:intro-tower} admit canonical $T(n)$-local lifts, essentially because the Picard element $z \in \pi_{\star} L_{K(n)} \Sigma^{\infty}_+\mathrm{K}(\mathbb{F}_p,n)$ lifts to the $T(n)$-localization. Are the Bousfield classes of these $T(n)$-local rings distinct, and do these Bousfield classes redshift under algebraic $K$-theory?
\end{qst}

\subsection{Organization}

The paper begins with an explanation of how we arrived at our definition of the rings $R_{h}$.  In \Cref{sec:Wilson}, we review Steve Wilson's PhD thesis work \cite{Wilson-thesisII}, which defines the spaces $W_h$ and proves that they enjoy remarkable properties.  In \Cref{sec:Snaith}, we explain exactly how these properties ensure that $R_h$ has even-concentrated, torsion-free homotopy.  Additionally, we prove that $R_h$ is Landweber exact, and use this to prove \Cref{thm:AdditiveSplitting}.

In \Cref{sec:InfiniteHeight} we explore a few variations on our constructions that are of infinite chromatic height -- this is largely tangential to the rest of the paper.  We justify the claim that $\Sigma^{\infty}_+ \BUsix[x_6^{-1}]$ has infinite height when $p>2$.

In \Cref{sec:Bockstein} we give our alternate construction of $L_{K(h)} R_{h-1}$, using the Tamanoi Bockstein, and we also prove that this top chromatic localization is non-zero.

In \Cref{sec:Orientation} we determine exactly which height $2$ Morava $E$-theories receive homotopy commutative ring maps from $R_1$.  We also characterize, at any height, which Morava $E$-theories receive $\mathbb{E}_\infty$-ring homomorphisms from $B(0)$.

In \Cref{sec:EllipticOrientation} we recall the notion of an elliptic Morava $E$-theory, as well as work of Ando, Hopkins, Strickland, and Rezk on the $\sigma$-orientation.  We then use the $\sigma$-orientation to give our proof of \Cref{thm:intro-elliptic}.

Finally, \Cref{sec:UpperBound} proves that $L_{K(n)} R_{h-1}$ vanishes for $n>h$.  The proof uses the language of Hopf rings, and is largely independent of the rest of our work.

\subsection{Conventions}
We fix throughout this paper a prime $p$.
Unless we explicitly comment otherwise, all spectra and simply connected spaces will be implicitly $p$-localized.
For the convenience of the reader, we summarize here some of the conventions introduced later in the paper.

 We define
\[v_1,v_2,\ldots \in \pi_*(\BP) \cong \Z_{(p)}[v_1,v_2,\ldots]\]
to be the Hazewinkel generators \cite[A2.2.1]{greenbook}.
For each integer $h \ge 0$, we let $\BPn<h>$ denote the quotient spectrum $\BP/(v_{h+1},v_{h+2},\ldots)$, and we let $E(h)$ denote the Johnson--Wilson theory $\BPn<h>[v_{h}^{-1}]$.

If $E$ is a spectrum and $m$ an integer, we sometimes use the notation $\OS{E}{m}$ to denote
\[\OS{E}{m} = \Omega^{\infty} \Sigma^m E.\]
For any integer $m \ge 0$, we let $\nu(m)$ denote the integer
\[\nu(m) = \frac{p^{m+1}-1}{p-1}.\]

We use $W_{h,2k}$ to refer to the space
\[W_{h,2k} = \Omega^{\infty} \Sigma^{2k} \BPn<h>,\]
and use $W_h$ as shorthand for
\[W_h=W_{h,2\nu(h)}.\]

The ring $R_{h,2k}$ is obtained by inverting the bottom cell $S^{2k} \to \Sigma^{\infty}_+ W_{h,2k}$, and we denote $R_{h,2\nu(h)}$ simply by $R_{h}$.

The symbol $K(h)$ always denotes the homotopy associative, $(2p^h-2)$-periodic Morava $K$-theory associated to the Honda formal group law over $\Fp$.

We will generally use $\Gamma$ to denote a finite height $h\geq 1$ formal group over a perfect field $k$ of characteristic $p$, with $n$-torsion schemes $\Gamma[n]$.  We let $E_{k,\Gamma}$ denote the associated Morava $E$-theory, which is a $2$-periodic $\Einfty$-ring spectrum. We let $K_{k,\Gamma}$ denote the associated $2$-periodic Morava $K$-theory, which we view as a homotopy associative ring spectrum equipped with a homotopy ring map from $E_{k,\Gamma}$.

We fix a $K(h)$-local splitting \cite[Lemma 3.8]{Westerland}
\[L_{K(h)} \suspinftypl \KFph \simeq L_{K(h)}\Sbb \vee Z \vee Z^{\otimes 2} \vee \cdots \vee Z^{\otimes {p-1}},\]
and let $z \colon Z \to L_{K(h)} \suspinftypl \KFph$ denote the inclusion of the wedge summand.  We also fix an isomorphism of $K(h)_*$-algebras
\[K(h)_*(\KFph) \cong K(h)_*[f_0]/(f_0^p-(-1)^{h-1}v_hf_0),\]
such that $K(h)_*(z)$ has image the $K(h)_*$-submodule spanned by $f_0$.  The degree of $f_0$ is $2\nu(h-1)$.

\subsection{Acknowledgements}

We thank Craig Westerland for some very enlightening discussions, particularly regarding Sections \ref{sec:Bockstein} \& \ref{sec:Orientation}.
We also owe thanks to Eric Peterson, Piotr Pstr\k{a}gowski, Doug Ravenel, Andrew Senger, and Steve Wilson, and to the anonymous referee for numerous clarifying and enlightening remarks.
Most of all we thank Mike Hopkins, Jacob Lurie, and Haynes Miller, who served as the three authors' three PhD advisors and endured countless conversations about this work over the last several years.
Through the course of the work, the second author was supported by NSF Grant DMS-1803273, and the third author by NSF Grant DGE-1122374.


\section{Wilson spaces} \label{sec:Wilson}
\begin{dfn}
\label{dfn:Wilson-space}
A simply connected space $W$ is a \textit{Wilson space} if:
\begin{enumerate}
\item For each $k \ge 0$, $\rmH_{2k}(W;\Z_{(p)})$ is a finitely generated free $\Z_{(p)}$-module, and
\[\rmH_{2k+1}(W;\Z_{(p)}) \cong 0.\]
\item For each $k \ge 1$, $\pi_{2k}(W)$ is a finitely generated free $\Z_{(p)}$-module, and
\[\pi_{2k+1}(W)\cong 0. \]
\end{enumerate}
\end{dfn}

\begin{rmk}
Condition (1) of \Cref{dfn:Wilson-space} may be summarized by the statement that Wilson spaces \emph{have even cells}. This means that they admit $p$-local cell decompositions with no odd-dimensional cells and only finitely many cells in each even dimension.  A Wilson space is a simply connected space that simultaneously has even cells and even homotopy groups.
\end{rmk}

\begin{exm}
The spaces $\CPinfty$, $\BU$, $\BSU$, and $\BUsix$ are Wilson spaces for all primes $p$.  The spaces $\BSp$ and $\BO$ are Wilson spaces for primes $p>2$.  The product of any finite number of Wilson spaces is a Wilson space.
\end{exm}

As part of his PhD thesis work \cite{Wilson-thesisII}, Wilson provided a complete classification of all Wilson spaces.  We recall his results:

\begin{dfn}
\label{dfn:W2k}
Let $h,k \ge 0$ denote nonnegative integers with $1 \le k \le \nu(h) = \frac{p^{h+1}-1}{p-1}$.  Denote by $W_{h,2k}$ the simply connected space
\[W_{h,2k} = \Omega^{\infty} \Sigma^{2k} \BPn<h>.\]
The space $W_{h,2k}$ is said to be \emph{atomic} if $\nu(h-1) < k \le \nu(h)$.  When $k=\nu(h)$, we will sometimes abbreviate $W_{h,2k}$ as $W_{h}$.
\end{dfn}

\begin{thm}[Wilson] \label{thm:Wilson-classification}
For each $h \ge 0$ and $1 \le k \le \nu(h)$, the space
$W_{h,2k}$
is a Wilson space.  Every Wilson space is a product of atomic Wilson spaces, and no atomic Wilson space can be expressed as a nontrivial product of other spaces.
\end{thm}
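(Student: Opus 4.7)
The plan is to prove the three assertions of the theorem separately: (a) that each $W_{h,2k}$ with $1 \le k \le \nu(h)$ satisfies the Wilson space axioms, (b) that every Wilson space decomposes as a product of atomic pieces, and (c) that the atomic pieces admit no further product decomposition.

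For (a), I would first observe that since $\pi_*\BPn<h>\cong \Z_{(p)}[v_1,\ldots,v_h]$ is concentrated in even degrees and free over $\Z_{(p)}$ in each, the homotopy condition (2) of \Cref{dfn:Wilson-space} holds in the stated range automatically. The even-cells condition (1) is the nontrivial part. I would establish it by induction on $h$, using the cofiber sequence of $p$-local spectra
\[
\Sigma^{2(p^h-1)} \BPn<h> \xrightarrow{v_h} \BPn<h> \longrightarrow \BPn<h-1>
\]
and the resulting fibration of infinite loop spaces
\[
W_{h,2k+2(p^h-1)} \longrightarrow W_{h,2k} \longrightarrow W_{h-1,2k}.
\]
The Serre spectral sequence of this fibration, together with the inductive hypothesis that base and fiber are both Wilson (handled first in the range $k\le \nu(h-1)$, then bootstrapped using the splitting below to treat the remaining range), forces $\H_*(W_{h,2k};\Z_{(p)})$ to be even-concentrated and finitely generated free in each degree. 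The base case $h=0$ reduces to classical computations of the integral homology of $\Krm(\Z_{(p)},2k)$.

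For (b), the crucial input is Wilson's unstable splitting
\[
W_{h,2k} \;\simeq\; W_{h-1,2k} \times W_{h,2k+2(p^h-1)}, \qquad 1 \le k \le \nu(h-1),
\]
which I would construct by building a section of the projection $W_{h,2k} \to W_{h-1,2k}$ via a Postnikov-tower argument: lift the identity one stage at a time, noting that the obstructions lie in cohomology groups $\H^{*}(W_{h-1,2k};\pi_*W_{h,2k})$ which vanish in the relevant bidegrees because both $\H_*$ and $\pi_*$ are even-concentrated by (a). Iterating this splitting on the second factor and inducting downward on $h$ on the first factor expresses $W_{h,2k}$ as a product of atomic $W_{h',2k'}$'s with $\nu(h'-1) < k' \le \nu(h')$. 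A general Wilson space is decomposed by an analogous Postnikov-tower argument, reading off an atomic factor from its lowest-dimensional non-trivial homotopy group and splitting it off.

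For (c), suppose $W_{h,2k}$ with $\nu(h-1) < k \le \nu(h)$ splits non-trivially as $X\times Y$. Since $\pi_{2k}(W_{h,2k}) \cong \Z_{(p)}$ is the first non-zero homotopy group, the generator must lie in one factor, say $X$, and $Y$ is at least $(2k+1)$-connected. The bottom-cell map $S^{2k}\to X$, together with multiplication by $v_1,\ldots,v_h$ acting through the infinite loop space structure, detects generators of $\pi_*(W_{h,2k})$ in all higher degrees up through $2\nu(h)$; comparing with the ranks of $\pi_*(X)$ and $\pi_*(Y)$ forces $Y$ to be contractible. The condition $k>\nu(h-1)$ is exactly what rules out a competing $v_h$-style splitting as in (b). The main obstacle in the whole plan is the construction of the unstable section in (b): although the obstruction-theoretic input looks clean once the even-cells property is known, one must verify carefully that the Postnikov-stage lifts can be chosen compatibly to yield a genuine space-level splitting, and this verification is what genuinely uses Wilson's thesis calculations. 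Once the splitting is established, both existence of the atomic decomposition and uniqueness follow by routine induction on $h$ and on the cell dimension $2k$.
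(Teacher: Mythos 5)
First, a framing note: the paper does not actually prove this theorem --- it is stated as a recollection of Wilson's thesis \cite{Wilson-thesisII} with no proof supplied --- so there is no ``paper's proof'' to compare against, only Wilson's original argument, which I summarize below as needed.

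Your sketch has a genuine circularity in part (a) that the parenthetical ``bootstrapped using the splitting below'' does not resolve. In the fibration $W_{h,2k+2(p^h-1)} \to W_{h,2k} \to W_{h-1,2k}$, the base lies in the good range only when $k \le \nu(h-1)$, but for every such $k$ the fiber has index $k + p^h - 1 \in (\nu(h-1), \nu(h))$, i.e.\ it \emph{always} sits in the atomic range --- precisely the range where this fibration argument cannot be run (since then $W_{h-1,2k'}$ is out of range). A downward induction on $k$ therefore has no base case, and an upward induction would need the atomic spaces first. The splitting in (b) cannot break the circle either: its obstruction-theoretic construction presupposes the even-cells property of both factors, which is the thing being proved. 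Wilson's actual proof instead inducts \emph{upward} on $k$ using the bar/Rothenberg--Steenrod spectral sequence for the loop-space pair $W_{h,2k-1}, W_{h,2k}$, together with the Hopf-algebra structure of $\HFp_*$, and the bound $k \le \nu(h)$ emerges as exactly the range in which one can show there is no room for differentials; the fibration you use plays only an auxiliary role.

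Part (c) also has a gap: ``multiplication by $v_1,\ldots,v_h$ acting through the infinite loop space structure'' is not an operation that descends to a hypothetical bare space factor $X$ of a product decomposition $W_{h,2k}\simeq X\times Y$, so it cannot be used to constrain $\pi_*(Y)$. Wilson's atomicity argument works instead in $\HFp_*(W_{h,2k})$, showing via explicit Milnor primitives and Steenrod operations that all algebra generators are connected by operations to the image of the bottom cell, so that any self-map inducing an isomorphism on $\pi_{2k}$ is an $\HFp_*$-isomorphism, hence an equivalence; taking the self-map to be $W_{h,2k}\to X\hookrightarrow W_{h,2k}$ then forces $Y$ to be trivial. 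The hypothesis $k > \nu(h-1)$ is what makes this chain of operations reach everything. So your plan has the right shape but both its load-bearing steps need to be replaced by the homological arguments from Wilson's thesis.
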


\begin{rmk} \label{rmk:Hazewinkel-choice}
\Cref{dfn:W2k} depends on a choice of $\BPn<h>$ as a $p$-local spectrum.  In this paper, we define $\BPn<h>$ as the quotient of $\BP$ by the regular sequence of Hazewinkel generators $v_{h+1},v_{h+2},\ldots \in \pi_*(\BP)$.  While a different choice of generators might yield a different spectrum $\BPn<h>$, it is a consequence of Wilson's work that the spaces $W_{h,2k}$ \emph{do not} depend on the choice of $\BPn<h>$.  The spectrum $\BPn<h>$ is likely unique as well: recent work of Angeltveit and Lind \cite{AngeltveitLind} shows that all choices of $\BPn<h>$ are equivalent after $p$-completion, and conjecturally this is true prior to $p$-completion. For example, $\BPn<1>$ is unambiguous as a $p$-local spectrum by a theorem of Adams and Priddy \cite{AdamsPriddy}.
\end{rmk}

\begin{exm}
Some of the spaces $W_{h,2k}$ may be identified in more familiar terms:
\begin{itemize}
\item At all primes $p$, $W_0=W_{0,2} \simeq \CPinfty$. \\
\item At $p=2$, $W_{1,2} \simeq \BU$, $W_{1,4} \simeq \BSU$, and $W_{1,6}=W_1 \simeq \BUsix$.  While both $W_{1,4}$ and $W_{1,6}$ are atomic, $\BU$ is \emph{not} atomic.  Indeed,
\[\BU \simeq \CPinfty \times \BSU \simeq W_{0,2} \times W_{1,4}.\]
\item At $p=3$, $W_{1,4} \simeq \BSpin$ and $W_1=W_{1,8} \simeq \BString$.
\end{itemize}
\end{exm}
We note here two corollaries of \Cref{thm:Wilson-classification} that are not immediate from the definition of Wilson spaces:

\begin{cor}
Every Wilson space admits at least one infinite loop space structure.
\end{cor}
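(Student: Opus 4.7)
The plan is to derive the corollary directly from Wilson's classification theorem (\Cref{thm:Wilson-classification}), which was just stated. The key observation is that infinite loop space structures are preserved under finite products and are present on each atomic building block by construction, so the result reduces to quoting the decomposition result.

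First I would recall that by \Cref{thm:Wilson-classification}, any Wilson space $W$ decomposes as a product
\[
W \simeq \prod_{i} W_{h_i, 2k_i}
\]
of atomic Wilson spaces. The product is taken over some (possibly infinite) indexing set, so I should say a word about why this causes no trouble: since a Wilson space has finitely many cells in each even dimension and no odd cells (by part (1) of \Cref{dfn:Wilson-space}), only finitely many factors contribute in any given range of dimensions, and each factor is itself simply connected. Hence the product can be interpreted in the $p$-local category of simply connected spaces without ambiguity.

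Next I would observe that each atomic factor is tautologically an infinite loop space: by \Cref{dfn:W2k}, we have
\[
W_{h_i, 2k_i} = \Omega^\infty \Sigma^{2k_i} \BPn<h_i>,
\]
which realizes $W_{h_i, 2k_i}$ as the zeroth space of a connective spectrum. The infinite loop space structure on a product of infinite loop spaces is given levelwise by the product of infinite loop space structures, corresponding on spectra to the (possibly infinite) wedge or product $\bigoplus_i \Sigma^{2k_i}\BPn<h_i>$. Thus $W$ is the zeroth space of this spectrum and inherits an infinite loop space structure.

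There is no serious obstacle: the entire content is in \Cref{thm:Wilson-classification}, which is cited from \cite{Wilson-thesisII}. The only point requiring any care is confirming that the (potentially infinite) product of infinite loop spaces appearing in the classification really assembles to an infinite loop space, and this is immediate because finiteness of cells in each dimension ensures that the product coincides with the corresponding product of spectra in each dimension.
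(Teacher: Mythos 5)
Your proof is correct and follows essentially the same route as the paper: each atomic Wilson space $W_{h,2k} = \Omega^\infty \Sigma^{2k} \BPn<h>$ is an infinite loop space by definition, and Wilson's classification reduces the general case to the atomic one. The paper's proof is a single sentence and silently elides the point about a possibly infinite product, which you take care to justify; that extra care is reasonable but not a different argument.
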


\begin{proof}
Each atomic Wilson space is canonically an infinite loop space, and every Wilson space is a product of atomic components.
\end{proof}

\begin{cor}
Suppose that $W$ is a Wilson space.  Then the identity path-component of $\Omega^2 W$ is also a Wilson space.
\end{cor}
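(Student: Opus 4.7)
The plan is to reduce to the atomic case via Wilson's classification theorem (Theorem \ref{thm:Wilson-classification}). Since $\Omega^2$ is a right adjoint, it commutes with arbitrary products, and the identity path-component of a product of pointed spaces is the product of identity path-components. Thus if $W \simeq \prod_i W_{h_i, 2k_i}$ expresses $W$ as a product of atomic Wilson spaces, the identity component of $\Omega^2 W$ factors as a product of the identity components of the $\Omega^2 W_{h_i, 2k_i}$. Since products of Wilson spaces are Wilson (noted in the example after Definition \ref{dfn:Wilson-space}), it suffices to handle each atomic factor.

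For an atomic $W_{h,2k}$ one computes
\[
\Omega^2 W_{h,2k} \;\simeq\; \Omega^2 \Omega^{\infty} \Sigma^{2k} \BPn<h> \;\simeq\; \Omega^{\infty} \Sigma^{2k-2} \BPn<h>,
\]
and I would split into two cases according to $h$. If $h \ge 1$, the atomicity condition $\nu(h-1) < k$ forces $k \ge \nu(h-1)+1 \ge 2$, so the right-hand side is exactly $W_{h,2k-2}$. This space is simply connected (so coincides with its identity component) and is itself a Wilson space by Theorem \ref{thm:Wilson-classification}, since $1 \le k-1 \le \nu(h)$. If instead $h = 0$, atomicity forces $k=1$, so $W \simeq \CPinfty$; then $\Omega^2 \CPinfty \simeq \Omega^{\infty} \HZ$ is the discrete group $\Z_{(p)}$, whose identity component is contractible and hence trivially Wilson.

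I do not anticipate a substantive obstacle here: the argument is a bookkeeping exercise on top of Wilson's classification, reflecting the fact that $\Omega^2$ shifts the parameter $2k$ downward by two and so keeps us inside the family of spaces classified by Theorem \ref{thm:Wilson-classification}. The one minor subtlety is the disconnectedness of $\Omega^2 \CPinfty$, which is why the statement must pass to the identity path-component; the atomic factors with $h \ge 1$ contribute connected pieces automatically. The degree-wise finite generation of the homotopy and homology of the output is inherited from $W$, since only finitely many atomic factors can have bottom cell at or below any fixed degree, and this property is preserved when the parameters are shifted down by two.
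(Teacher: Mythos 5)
Your argument is correct and follows the same route as the paper's proof: reduce to atomic factors via Wilson's classification, then use the identity $\Omega^2 W_{h,2k}\simeq W_{h,2k-2}$. The paper's published proof is terser, simply asserting that the claim "follows from the relation $\Omega^2 W_{h,2k}\simeq W_{h,2k-2}$" for atomic spaces; you are slightly more careful in two places. First, you justify the reduction by noting that $\Omega^2$ commutes with products and that the identity component of a product is the product of identity components, and you observe that degree-wise finiteness survives because only finitely many atomic factors contribute below any fixed degree. Second, and more substantively, you separate out the $h=0$ case: since $W_{h,2k}$ is only defined for $k\ge 1$, the relation $\Omega^2 W_{0,2}\simeq W_{0,0}$ used implicitly by the paper lands outside that range, and one must instead observe directly that $\Omega^2\CPinfty\simeq\Omega^{\infty}\HZ$ has contractible identity component. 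You correctly show that this is the only atomic case with $k=1$, as atomicity forces $k\ge \nu(h-1)+1\ge 2$ once $h\ge 1$. This is the same proof, with the small edge case filled in explicitly.
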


\begin{proof}
Since every Wilson space $W$ is a product of atomic Wilson spaces, it suffices to check the corollary for atomic Wilson spaces. For atomic Wilson spaces, the statement follows from the relation $\Omega^2 W_{h,2k} \simeq W_{h,2k-2}$.
\end{proof}

\begin{rmk}
For $k < \nu(h)$ we have the relationship $W_{h,2k} = \Omega^2 W_{h,2k+2}$.  However, the spaces $W_{h}$ cannot be written as $\Omega^2 W$ for any other Wilson space $W$.
\end{rmk}

\begin{rmk} \label{rmk:different-multiplications}
There can be many different infinite loop space structures on a single Wilson space $W$.
For example, $\BU$ is usually equipped with the infinite loop space structure arising from the addition of virtual rank $0$ vector bundles.  However, $\BU$ also has an infinite loop space structure arising from the tensor product of virtual rank $1$ vector bundles.
This ``multiplicative'' infinite loop space structure is given by $\Omega^{\infty} (\Sigma^4 \ku \vee \Sigma^{2} \HZ)$, while the additive structure is given by $\Omega^{\infty} \Sigma^2 \ku$.

Work of Adams and Priddy \cite{AdamsPriddy} shows, at each prime $p$, that $W_{1}$ has a unique $p$-local infinite loop space structure. In forthcoming joint work with Andrew Senger, the middle author uses the Angeltveit--Lind theorem \cite{AngeltveitLind} to prove that the $p$-completion of any atomic Wilson space admits a unique $p$-complete infinite delooping.
\end{rmk}

\begin{cnv}
For each $h \ge 0$ and $1 \le k \le \nu(h)$, we will regard $W_{h,2k}$ as an infinite loop space via the formula 
\[W_{h,2k} \simeq \Omega^{\infty} \Sigma^{2k} \BPn<h>.\]
This infinite loop space structure in principle depends on our choice of the Hazewinkel generators in the definition of $\BPn<h>$.  As explained in Remark \ref{rmk:Hazewinkel-choice}, this choice is conjecturally immaterial and has been proven irrelevant after $p$-completion.
\end{cnv}


\section{Snaith constructions} \label{sec:Snaith}
In \cite{Snaith}, Snaith provided elegant constructions of the complex $K$-theory spectrum $\KU$ and the periodic complex bordism spectrum $\MUP$.

\begin{thm}[Snaith]
Let
\[\beta\colon S^2 =\CP^{1} \to \CPinfty \to \BU\]
denote the bottom cell. Then there are equivalences of homotopy commutative ring spectra
\begin{align*}
\suspinftypl \CPinfty[\beta^{-1}] &\simeq \KU, \text{ and}\\
\suspinftypl \BU[\beta^{-1}] &\simeq \MUP.
\end{align*}
\end{thm}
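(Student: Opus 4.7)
The plan is to construct canonical ring spectrum maps $\suspinftypl \CPinfty \to \KU$ and $\suspinftypl \BU \to \MUP$ that send $\beta$ to a unit, then show that inverting $\beta$ produces equivalences. I would construct the first map using the multiplicative $\Einfty$-structure on $\CPinfty \simeq BU(1)$ coming from tensor product of complex line bundles: the classifying map of the tautological line bundle is an $\Einfty$-map $\CPinfty \to \GL(\KU) \subset \loopsinfty \KU$, which by adjunction yields an $\Einfty$-ring map $\suspinftypl \CPinfty \to \KU$. The bottom cell $\beta \colon S^2 = \CP^1 \to \CPinfty$ is sent to the Bott class in $\pi_2 \KU$, which is a unit, so the map factors through an $\Einfty$-ring map $\suspinftypl \CPinfty[\beta^{-1}] \to \KU$. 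For the $\MUP$ case I would use the additive $\Einfty$-structure on $\BU$ (from Whitney sum of virtual bundles) together with the classifying map $\BU \to \loopsinfty \MUP$ of the universal virtual bundle; the adjoint ring map $\suspinftypl \BU \to \MUP$ sends the bottom cell $\beta$ to the periodicity class.

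To prove these maps are equivalences after inverting $\beta$, I would exploit that both sides are complex orientable, $\beta$-periodic ring spectra with $\pi_0 = \Z$, so it suffices to check that the comparison maps respect the formal groups and then invoke a Landweber-style argument. For $\KU$, both sides carry the multiplicative formal group law: on $\suspinftypl \CPinfty[\beta^{-1}]$ this comes from the H-space multiplication $\CPinfty \times \CPinfty \to \CPinfty$ classifying tensor product of line bundles, and on $\KU$ from the standard Bott orientation. For $\MUP$, both sides carry the universal formal group by virtue of the universal virtual bundle on $\BU$. In either case, the Landweber exact functor theorem identifies the resulting Landweber exact $\MU_*$-module with the target. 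Alternatively one may reason directly: $\pi_0$ and $\pi_2$ are isomorphisms by construction, $\beta$-periodicity propagates to all even degrees, and odd-degree vanishing follows from the even-concentration of both sides.

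The main obstacle I anticipate is showing that the constructed ring $\suspinftypl \CPinfty[\beta^{-1}]$ is complex orientable with the claimed formal group (and similarly for the $\MUP$ case). The complex orientability needs an input like even-concentration of homotopy groups of $\suspinftypl \CPinfty[\beta^{-1}]$, which the paper establishes as part of \Cref{thm:intro-main} for the general Wilson space case $h=0$. Absent that general framework, a more computational route would proceed prime-by-prime: one computes $K(1)_*\suspinftypl \CPinfty[\beta^{-1}]$ using the collapse of the Atiyah--Hirzebruch spectral sequence for $K(1)_*(\CPinfty)$ (which is forced because $K(1)$ is complex orientable and $\CPinfty$ has even cells), verifies it matches $K(1)_* \KU$, checks vanishing of higher Morava $K$-theories, and then combines with a rational comparison $\pi_* (\suspinftypl \CPinfty[\beta^{-1}]) \otimes \Q = \Q[\beta^{\pm 1}] = \pi_* (\KU \otimes \Q)$, which follows from $\CPinfty \simeq_\Q K(\Q,2)$.
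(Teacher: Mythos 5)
The paper does not prove this theorem---it is recorded as a recalled result and attributed to Snaith, with the proof deferred to the cited reference---so there is no in-paper argument to compare against.

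As a sketch of Snaith's theorem your outline captures the standard opening move, but it leaves the decisive step unproved. The construction of the comparison maps is correct: the multiplicative $\Einfty$-map $\CPinfty\simeq BU(1)\to\GL(\KU)$ yields by adjunction an $\Einfty$-ring map $\suspinftypl\CPinfty\to\KU$ under which $\beta$ hits the Bott class, and similarly the Thom-class map $\BU\to\GL(\MUP)$ gives a ring map $\suspinftypl\BU\to\MUP$. (For the $\MUP$ case, note that the paper later remarks that the Snaith equivalence $\suspinftypl\BU[\beta^{-1}]\simeq\MUP$ is $\E_2$ but \emph{not} $\E_5$, so one should be careful about the level of coherence the Thom-class map actually preserves.) The gap is in the verification that these maps are equivalences. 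Knowing that source and target are complex orientable, $\beta$-periodic ring spectra carrying compatible formal groups does not by itself license a Landweber argument: you must first independently determine $\pi_*\bigl(\suspinftypl\CPinfty[\beta^{-1}]\bigr)$, and for the $\MUP$ case the premise ``$\pi_0=\Z$'' is simply false since $\pi_0\MUP$ is the ungraded Lazard ring. You acknowledge the difficulty and sketch a rational-plus-$K(1)$ comparison, which is indeed the right strategy; but identifying $K(1)_*\bigl(\suspinftypl\CPinfty[\beta^{-1}]\bigr)$ with $K(1)_*\KU$ as rings is precisely where the real work in Snaith's theorem lies, and collapse of the Atiyah--Hirzebruch spectral sequence alone does not furnish that ring isomorphism.
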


\begin{rmk}
The equivalence $\suspinftypl \CPinfty[\beta^{-1}] \simeq \KU$ is in fact one of $\Einfty$-ring spectra.  The $\Einfty$-ring structure on $\suspinftypl \BU[\beta^{-1}]$ was studied in detail by the second and third author in \cite{ExoticMultiplication}, where it is proved that the equivalence $\suspinftypl \BU[\beta^{-1}] \simeq \MUP$ is $\E_2$ but not $\E_5$.
\end{rmk}

This paper arose as an attempt to axiomatize the properties of $\CPinfty$ and $\BU$ that make Snaith's constructions have reasonable homotopy groups.  We are led to the following result:

\begin{thm} \label{thm:evenorientation}
Let $W$ denote a Wilson space equipped with a chosen double loop space structure, and suppose that $x \in \pi_{2k}(W)$ is a homotopy class.  Then the $\E_2$-ring spectrum
\[\suspinftypl W[x^{-1}]\]
is complex orientable, with torsion-free homotopy groups concentrated in even degrees.
\end{thm}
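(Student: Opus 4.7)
First, I would compute the $\MU$-homology of $\suspinftypl W[x^{-1}]$. Since Wilson spaces have cells only in even dimensions with torsion-free homology, the Atiyah--Hirzebruch spectral sequence $H_p(W;\MU_q) \Rightarrow \MU_{p+q}(W)$ has its $E^2$-page supported in even $p$ and even $q$. Since differentials $d_r$ shift $(p,q)$ by $(-r,r-1)$, they must vanish for parity reasons, and the spectral sequence collapses. Hence $\MU_*(W)$ is a free $\MU_*$-module on even-degree generators. Because $W$ is a double loop space, $x$ is a central element of $\MU_*(W)$, and
\[\MU_*\bigl(\suspinftypl W[x^{-1}]\bigr) \cong \MU_*(W)[x^{-1}]\]
is still free over $\MU_*$ and concentrated in even degrees.

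Next, to show that the homotopy groups themselves are even and torsion-free, I would invoke the Adams--Novikov spectral sequence
\[E_2^{s,t} = \mathrm{Ext}^s_{\MU_*\MU}\bigl(\MU_*,\MU_*(W)[x^{-1}]\bigr) \Longrightarrow \pi_{t-s}\bigl(\suspinftypl W[x^{-1}]\bigr).\]
By the previous paragraph the $E_2$-page is concentrated in even $t$. The remaining task is to argue that $\mathrm{Ext}^s$ vanishes for $s>0$, i.e., that $\MU_*(W)[x^{-1}]$ is a Landweber-flat $\MU_*\MU$-comodule (equivalently, induced up from a graded $\MU_*$-module). By Wilson's product decomposition (\Cref{thm:Wilson-classification}) I can reduce to the atomic case $W_{h,2k} = \Omega^\infty \Sigma^{2k}\BPn<h>$. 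For these, I would draw on the Ravenel--Wilson computation of $\BP_*(W_{h,2k})$ as a formal-group-theoretic comodule to verify Landweber flatness. With both even-$t$ concentration of $E_2$ and vanishing above $s=0$, the ANSS collapses onto the $s=0$ line and exhibits $\pi_*(\suspinftypl W[x^{-1}])$ as a graded submodule of the $\MU_*$-free module $\MU_*(W)[x^{-1}]$; hence it is both even and torsion-free.

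Finally, complex orientability follows formally once $\pi_*$ is even. In the Atiyah--Hirzebruch spectral sequence $H^*(\CPinfty;\pi_*R) \Rightarrow R^*(\CPinfty)$ (for $R = \suspinftypl W[x^{-1}]$), the $E_2$-page is concentrated in even total degree, so the spectral sequence collapses. The restriction map $R^2(\CPinfty) \to R^2(\CP^1) \cong \pi_{-2}(R)$ is therefore surjective, and any lift of the identity in $\pi_{-2}(R)$ supplies a complex orientation.

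The main obstacle is the Landweber-flatness step in Stage 2. Even cells alone are not enough --- the sphere has even cells yet highly torsion-laden homotopy --- so the argument must genuinely exploit the Wilson-space structure (even cells \emph{and} even homotopy, plus the double loop multiplication). If the Ravenel--Wilson formulation proves unwieldy at this point, an alternative strategy is to work with the $\HFp$-based Adams spectral sequence (whose $E_2$-page is again supported in even $t$ thanks to \Cref{dfn:Wilson-space}(1)) and combine a direct cellular analysis of $\suspinftypl W[x^{-1}]$ with the $\E_2$-action of $x$ to eliminate potential odd-degree classes in the colimit.
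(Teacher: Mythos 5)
Your Stage 1 (computing $\MU_*(W)[x^{-1}]$ via the Atiyah--Hirzebruch spectral sequence using even cells) is correct and appears in the paper's proof as well. However, Stage 2 has a genuine gap and, more seriously, a structural circularity that the paper's argument is specifically designed to avoid.

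The problem is that showing $\mathrm{Ext}^s_{\MU_*\MU}(\MU_*, \MU_*(W)[x^{-1}])$ vanishes for $s>0$ is not simply a matter of Landweber flatness of the $\MU_*$-module; you need $\MU_*(W)[x^{-1}]$ to be an \emph{induced} (extended) $\MU_*\MU$-comodule. This is essentially equivalent to $\suspinftypl W[x^{-1}]$ admitting a complex orientation --- which is what you deduce only at Stage 3 from the evenness you were trying to establish at Stage 2. So the proposed route is circular: ANSS collapse needs orientability, orientability needs even $\pi_*$, even $\pi_*$ needs ANSS collapse. There is also a second-order circularity specific to this paper: the Landweber-exactness statement you appeal to is \Cref{thm:Landweber}, whose proof invokes \Cref{thm:evenorientation} (to see that $p$ is a non-zero-divisor). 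You flagged near the end that ``the argument must genuinely exploit\ldots even homotopy,'' but you did not locate the way to do so.

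The paper's resolution is to reverse the order and use the even \emph{homotopy} of $W$ (condition (2) of \Cref{dfn:Wilson-space}, which your proof never actually uses) to produce the complex orientation directly, before knowing anything about $\pi_*R$. Concretely: the pair $(\Sigma^{2k-2}\CPinfty, S^{2k})$ has only even-dimensional relative cells, so the obstructions to extending $x\colon S^{2k}\to W$ over $\Sigma^{2k-2}\CPinfty$ live in groups $H^{i+1}(\Sigma^{2k-2}\CPinfty, S^{2k}; \pi_i W)$ which force $i$ to be odd; these vanish because $\pi_{\mathrm{odd}}W=0$. Composing with $x^{-1}$ then produces a complex orientation of $R=\suspinftypl W[x^{-1}]$ and hence a homotopy ring map $\MU\to R$. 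Now $R$ is a retract of $\MU\otimes R$, and $\pi_*(\MU\otimes R)=\MU_*(W)[x^{-1}]$ is even and torsion-free by your Stage 1; a retract of such a module is even and torsion-free, finishing the proof. No Adams--Novikov spectral sequence and no Landweber exactness are needed at this point.

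In short: keep your Stage 1 computation, drop the ANSS/Landweber step, and run the obstruction-theory construction of the orientation \emph{first}, then deduce evenness from the retract of $\MU\otimes R$.
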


\begin{proof}
If $k=0$, then the image of $x$ in $\pi_* \Sigma^{\infty}_+ W$ is $1$ and the statement is trivially true.  Suppose that $k>0$.  Because the space $W$ has even homotopy groups, there are no obstructions to making the following diagram
\[
\begin{tikzcd}
S^{2k} \arrow{d} \arrow{r}{x} & W. \\
\Sigma^{2k-2} \CPinfty \arrow[dashed]{ur}
\end{tikzcd}
\]
After suspending, this gives a map of spectra
\[\Sigma^{\infty} \Sigma^{2k-2} \CPinfty \longrightarrow \suspinftypl W\]
and we may further compose to obtain a chain of maps
\[\Sigma^{\infty} \Sigma^{2k-2} \CPinfty \longrightarrow \suspinftypl W \longrightarrow \suspinftypl W[x^{-1}] \stackrel{x^{-1}}{\longrightarrow} \Sigma^{2k} \suspinftypl W[x^{-1}].\]
By construction, this gives a complex orientation of $\suspinftypl W[x^{-1}]$, and hence a homotopy commutative ring homomorphism
\[\MU \to \suspinftypl W[x^{-1}].\]
This implies in particular that $\suspinftypl W[x^{-1}]$ is a homotopy $\MU$-module.  Using the unit map $\Sbb \to \MU$, the sequence of maps
\[\Sbb \otimes \suspinftypl W[x^{-1}] \to \MU \otimes \suspinftypl W[x^{-1}] \to \suspinftypl W[x^{-1}]\]
exhibits $\suspinftypl W[x^{-1}]$ as a retract of $\MU \otimes \suspinftypl W [x^{-1}]$.  To finish, we need only show that
$\MU_*(W)[x^{-1}]$ is torsion-free and concentrated in even degrees.  The Atiyah-Hirzebruch spectral sequence
\[\H_*(W;\pi_*(\MU)) \implies \MU_*(W)\]
shows that $\MU_*(W)$ is torsion-free and even concentrated.  This property cannot change upon inverting a class.
\end{proof}

\begin{rmk}
We suspect that some version of \Cref{thm:evenorientation} holds assuming less than a double loop space structure on $W$.
A similar remark applies to \Cref{thm:Landweber} below.
It is technically convenient to posit a double loop space structure because there is a good general theory for inverting elements in $\E_2$-ring spectra \cite[Appendix A]{MayNil}.
In practice, all Wilson spaces we are interested in come equipped with infinite loop space structures.
\end{rmk}

\begin{thm}\label{thm:Landweber}
Let $W$ denote a Wilson space equipped with a double loop space structure, and suppose $x \in \pi_{2k}(W)$ is a homotopy class.  For any choice of homotopy commutative ring map
\[\BP \to \suspinftypl W[x^{-1}],\]
the homotopy groups $\pi_{*}(\suspinftypl W[x^{-1}])$ are a Landweber exact $\BP_*$-module.
\end{thm}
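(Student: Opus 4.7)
The plan is to exhibit $\pi_*R$, where $R := \suspinftypl W[x^{-1}]$, as a split retract of $\BP_*(W)[x^{-1}]$ in $\BP_*$-modules, and then to show that $\BP_*(W)[x^{-1}]$ is flat over $\BP_*$. Since flat $\BP_*$-modules are Landweber exact, and Landweber exactness is inherited by retracts, this will suffice.

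For the retract, denote the given ring map by $\varphi\colon \BP \to R$. Using the homotopy commutative multiplication $\mu_R$ on $R$ together with the unit of $\BP$, the composite
\[ R \simeq \Sbb \otimes R \xrightarrow{\varphi \otimes \mathrm{id}_R} \BP \otimes R \xrightarrow{\mu_R \circ (\varphi \otimes \mathrm{id}_R)} R \]
equals multiplication by $\varphi$ applied to the unit, which equals multiplication by $1_R$, hence is the identity by the unitality of $\mu_R$. This exhibits $R$ as a retract of $\BP \otimes R$ in the homotopy category of $\BP$-module spectra, and therefore presents $\pi_* R$ as a retract of $\BP_*(R) = \BP_*(W)[x^{-1}]$ as $\BP_*$-modules. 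Landweber exactness is inherited by such retracts, since quotienting by $I_n$ is compatible with splitting and $v_n$-injectivity transfers through split monomorphisms.

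For freeness, consider the Atiyah--Hirzebruch spectral sequence
\[ \mathrm{H}_*(W; \BP_*) \cong \mathrm{H}_*(W;\Z_{(p)}) \otimes_{\Z_{(p)}} \BP_* \Longrightarrow \BP_*(W). \]
By condition (1) of the definition of a Wilson space (\Cref{dfn:Wilson-space}), $\mathrm{H}_*(W;\Z_{(p)})$ is a free $\Z_{(p)}$-module concentrated in even degrees; combined with the fact that $\BP_*$ is torsion-free and even, the $E_2$-page is concentrated in even total degrees. All differentials vanish for degree reasons, and the successive quotients of the resulting filtration on $\BP_*(W)$ are free $\BP_*$-modules (hence projective), so all filtration extensions split. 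This realizes $\BP_*(W)$ as a (non-canonically) free $\BP_*$-module. The localization $\BP_*(W)[x^{-1}]$ is then a filtered colimit of copies of $\BP_*(W)$ under multiplication by $x$, and is therefore $\BP_*$-flat.

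Finally, flat $\BP_*$-modules are Landweber exact: the sequence $(p, v_1, v_2, \ldots)$ is regular in $\BP_*$, so multiplication by $v_n$ is injective on $\BP_*/I_n$, and tensoring with a flat module preserves this injectivity. The argument is almost entirely formal, and the only genuine input is the collapse of the Atiyah--Hirzebruch spectral sequence, which is an immediate consequence of the even-cell hypothesis in the definition of a Wilson space. The main point requiring care is simply that the retract argument must be carried out in the homotopy category of $\BP$-modules, since $R$ is a priori only homotopy commutative; however, this level of structure is sufficient for the $\BP_*$-module-level conclusion we need.
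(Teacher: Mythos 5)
Your blueprint---a $\BP_*$-module retract of $\BP_*(W)[x^{-1}]$ plus collapse of the Atiyah--Hirzebruch spectral sequence---follows the skeleton of the paper's proof, but there is a genuine gap: the retract and the flatness pertain to \emph{different} $\BP_*$-module structures on $\BP_*(R)$. The ring $\BP_*(R)$ admits two ring maps from $\BP_*$, namely $\eta_L(b)=b\otimes 1$ and $\eta_R(b)=1\otimes\varphi_*(b)$. The inclusion $\iota\colon\pi_*(R)\to\BP_*(R)$, $a\mapsto 1\otimes a$, satisfies $\iota(\varphi_*(b)a)=\eta_R(b)\cdot\iota(a)$, which is \emph{not} $\eta_L(b)\cdot\iota(a)=b\otimes a$; hence $\iota$ is $\BP_*$-linear only for the $\eta_R$-structure, and the map $R\to\BP\otimes R$ is not a map of $\BP$-module spectra when $\BP\otimes R$ carries the left module structure. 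Meanwhile, the AHSS argument gives freeness of $\BP_*(W)$ over $\BP_*$ via $\eta_L$. So your retract lives over $\eta_R$, your flatness over $\eta_L$, and since $\eta_R$ factors through $\pi_*(R)$---the very module whose Landweber exactness is in question---you cannot simply assert Landweber exactness of $\BP_*(R)$ for the $\eta_R$-structure.

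The paper's argument is designed exactly to bridge this $\eta_L$/$\eta_R$ mismatch, via two standard facts about Landweber ideals: since $I_\ell$ is invariant, $\eta_R(I_\ell)\BP_*(R)=\eta_L(I_\ell)\BP_*(R)$, and $\eta_R(v_\ell)\equiv\eta_L(v_\ell)\pmod{I_\ell}$. Crucially these hold for \emph{any} choice of $\varphi$, which is what the theorem demands. With this input the injection $\pi_*(R)/I_\ell\hookrightarrow\BP_*(R)/\eta_R(I_\ell)=\BP_*(R)/\eta_L(I_\ell)$ transports the regularity question for $v_\ell$ to the $\eta_L$-side, where the AHSS freeness of $\BP_*(W)$ (and hence of its localization) is available. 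Your retract and flatness observations are both correct, but without invoking the invariance of the $I_\ell$ they concern incompatible module structures and the argument does not close.
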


\begin{proof}
Let $A$ denote the ring spectrum $\suspinftypl W[x^{-1}]$.  The map of homotopy ring spectra $\BP \to A$ defines a sequence $p,v_1,v_2,\ldots \in \pi_*(A)$, and our goal is to prove that this is a regular sequence.   By Theorem \ref{thm:evenorientation}, $p$ is not a zero-divisor in $\pi_*(A)$.  Assume by induction that we have shown the sequence $p,v_1,\ldots,v_{\ell-1}$ to be regular for some positive integer $\ell$. Let $I_{\ell}$ denote the ideal $(p, v_1, \ldots ,v_{\ell-1}) \subset \pi_*(A)$ and suppose that $a \in \pi_*(A)$ satisfies the equation
\[v_{\ell} a \equiv 0  \pmod{I_{\ell}}.\]
We would like to show that $a$ itself is zero in $\pi_*(A)/I_{\ell}$.

We first claim that it suffices to show that $a=0$ in $\BP_*(A) / \eta_R(I_{\ell})$ where $\eta_R(I_{\ell})$ denotes the image of the ideal $I_{\ell}$ under the right unit map $\eta_R\colon  \Sbb\otimes A \to \BP \otimes A$.
To see this, note that since $A$ is a homotopy commutative $\BP$-algebra, we have a sequence $A \to \BP \otimes A \to A$ of homotopy commutative rings such that the composite is the identity.  As a result, we have the diagram of commutative rings
\begin{equation*}
\begin{tikzcd}
\pi_*(A) \arrow[r]\arrow[d] & \BP_*(A) \arrow[r]\arrow[d] & \pi_*(A)\arrow[d] \\
\pi_*(A)/I_{\ell} \arrow[r] & \BP_*(A)/\eta_R(I_{\ell}) \arrow[r] & \pi_*(A)/I_{\ell}
\end{tikzcd}
\end{equation*}
such that the horizontal composites are the identity.  It follows that the map $\pi_*(A)/I_{\ell} \to \BP_*(A)/\eta_R(I_{\ell})$ is an injection, so it suffices to show that the image of $a$ in $\BP_*(A)/\eta_R(I_{\ell})$ is zero.

We recall two facts about Landweber ideals:
\begin{enumerate}
\item Since $I_{\ell}$ is an invariant ideal, $\BP_*(A)/\eta_R(I_{\ell}) = \BP_*(A)/(p, v_1,\ldots, v_{\ell -1})$ where $v_i \in \BP_*(A)$ denotes the image of $v_i$ under the \emph{left} unit map $\BP \otimes \Sbb \to \BP \otimes A$.
\item There is a congruence
\[\eta_R(v_{\ell}) \equiv v_{\ell} \mod{(p,v_1,\ldots,v_{\ell -1})}\]
in $\BP_*(A)$ because the analogous equation holds in $\BP_* \BP$.
\end{enumerate}
Combining the above two facts, we see that
\[v_{\ell} a \equiv 0 \mod{(p,v_1,\ldots v_{\ell-1})}\]
in $\BP_*(A)$, and we would like to conclude that $a$ is itself congruent to zero modulo $(p,v_1,\ldots v_{\ell-1})$.  There is an equivalence of rings $\BP_*(A) = \BP_*(W)[x^{-1}]$, so for some $j$ the equation
\[x^j v_{\ell} a \equiv 0 \mod{(p,v_1,\ldots, v_{\ell -1})}\]
holds in $\BP_*(W)$.  The degeneration of the Atiyah-Hirzebruch spectral sequence for $\BP_*(W)$ implies that $\BP_*(W)$ is a free $\BP_*$-module, and so
\[x^j a\equiv 0 \mod{(p,v_1,\ldots, v_{\ell - 1})}\]
in $\BP_*(W)$.   Inverting $x$, we see that $a$ is zero in $\BP_*(A)/(p,v_1,\ldots, v_{\ell -1})$ as desired.
\end{proof}

A theorem of Hopkins and Hunton \cite{HopkinsHunton} restricts the homotopy type of any Landweber exact ring spectrum:

\begin{dfn}
A space $X$ is a \emph{weak product of Wilson spaces} if there is some sequence $A_1,A_2,\cdots$ of Wilson spaces such that
\[X \simeq \hocolim_{N\to\infty}\left( \prod_{i=1}^{N} A_i\right).\]
\end{dfn}

\begin{thm}[Hopkins--Hunton] \label{thm:Hopkins-Hunton}
Let $E$ denote a homotopy commutative, complex oriented, Landweber exact ring spectrum.  Suppose additionally that $\pi_*(E)$ is concentrated in even dimensions, and that each even homotopy group $\pi_{2n}(E)$ is free of countable rank over $\Z_{(p)}$.  Then, for each integer $\ell$, the connected component of the identity in
\[\Omega^{\infty} \Sigma^{2\ell} E\]
is a weak product of Wilson spaces.
\end{thm}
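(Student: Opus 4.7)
The plan is to construct an explicit map from a weak product of atomic Wilson spaces into $X := (\Omega^\infty \Sigma^{2\ell} E)_0$ using generators of $\pi_*(E)$, and then verify it is a weak equivalence.

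For each $k \ge 1$, let $h(k) \ge 0$ be the unique integer satisfying $\nu(h(k)-1) < k \le \nu(h(k))$, so that $W_{h(k), 2k}$ is an atomic Wilson space whose bottom cell lies in dimension $2k$. Since $\pi_{2k}(X) \cong \pi_{2k-2\ell}(E)$ is a free $\Z_{(p)}$-module of countable rank by hypothesis, fix a basis $\{e_{k,i}\}_{i \in I_k}$; each basis element defines a map $e_{k,i} \colon S^{2k} \to X$.

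Next, extend each $e_{k,i}$ to a map $\widetilde{e}_{k,i} \colon W_{h(k), 2k} \to X$ by obstruction theory: because $X$ has homotopy concentrated in even degrees and $W_{h(k),2k}$ admits a cell decomposition with only even-dimensional cells, every potential obstruction lies in a group of the form $\rmH^{n+1}(W_{h(k),2k},\,\Skel_{2k}(W_{h(k),2k});\,\pi_n(X))$, which vanishes because either $n$ is odd (killing the coefficients) or $n+1$ is odd (so there are no $(n+1)$-cells). Using the $H$-space structure on $X$, these maps assemble into a single map $f \colon Y \to X$, where $Y$ is the weak product
\[ Y \;=\; \hocolim_{N \to \infty}\; \prod_{j=1}^{N} W_{h(k_j),\, 2k_j} \]
taken over an arbitrary enumeration $\{(k_j, i_j)\}_{j \ge 1}$ of the countable set $\{(k,i) : k \ge 1,\, i \in I_k\}$.

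The main obstacle is showing that $f$ is a weak equivalence. I would check this on $BP$-homology. On the source, the Künneth formula together with the Ravenel--Wilson computation of $BP_*(W_{h,2k})$ as a free $BP_*$-module yields an explicit $BP_*$-basis of $BP_*(Y)$. On the target, the hypothesis that $E$ is Landweber flat with torsion-free, even-concentrated homotopy implies (via the degenerating Atiyah--Hirzebruch spectral sequence) that $BP_*(X)$ is likewise $BP_*$-free of the expected ranks. Matching the two sides reduces to a combinatorial bookkeeping argument in which the atomic-space condition $\nu(h(k)-1) < k \le \nu(h(k))$ is exactly what is needed to produce a bijection between the bases, while Landweber flatness of $E$ supplies the bridge between the $BP_*$-calculation and the $\pi_*(E)$-calculation that originally fixed the rank data.
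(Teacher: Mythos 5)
The paper does not actually prove this theorem; it is recalled from the cited reference \cite{HopkinsHunton}, so I will assess your proposal on its own terms rather than against an in-paper proof.

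There is a genuine gap in the construction of $Y$. You take, for each $k$, a \emph{full} basis $\{e_{k,i}\}$ of $\pi_{2k}(X)$ and attach a copy of the atomic Wilson space $W_{h(k),2k}$ for every basis element. But the factors $W_{h(k'),2k'}$ with $k' < k$ already contribute to $\pi_{2k}(X)$ through their higher homotopy groups, so taking a full basis at level $2k$ double-counts. A concrete failure: take $\ell = 2$ and $E = \Sigma^{-4}\BPn<1>$, so that $X = W_{1,4}$ itself. Your construction attaches one copy of $W_{1,4}$ (from a generator of $\pi_4(X) \cong \Z_{(p)}$), but then, since $\pi_6(X) \cong \Z_{(p)}$ is also nonzero, it attaches a copy of $W_{1,6}$ as well. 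The resulting $Y \supseteq W_{1,4}\times W_{1,6}$ has $\pi_6(Y)$ of rank at least $2$ mapping onto $\pi_6(X)$ of rank $1$, so $f$ is not a weak equivalence. What you need instead is an inductive construction where at stage $2k$ you only choose generators of a complement to the image of $\pi_{2k}$ of the $Y$-approximation already built; the atomicity condition $\nu(h(k)-1) < k \le \nu(h(k))$ is what guarantees the right Wilson space to use exists at each degree.

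Even after that fix, the proposed verification that $f$ is a weak equivalence via $BP_*$-homology is underspecified in a way that hides real work. All the modules involved --- $BP_*(W_{h,2k})$, and in general $BP_*(Y)$ and $BP_*(X)$ --- are free of countably infinite rank over $BP_*$, so ``matching ranks'' does not by itself establish that an explicit map of free modules is an isomorphism; you would need to exhibit a bijection between chosen bases that the map actually realizes, and this relies on the Ravenel--Wilson Hopf-ring calculation on the source and the Hopkins--Hunton freeness theorem on the target in a much more structured way than a bookkeeping pass over the two sides. The actual argument in \cite{HopkinsHunton} works at the level of the full Hopf ring $E_* \OS{E}{2*}$ and its generating structure, rather than degree-by-degree generator counting; that extra structure (the $\ast$- and $\circ$-products, and the $p$-series relation) is what controls the interaction between factors in different degrees and makes the splitting go through.
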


\begin{cor}
Let $W$ denote a Wilson space equipped with a double loop space structure, let $x \in \pi_{2*}(W)$ denote a homotopy class, and let $\ell$ denote an integer.  Then the connected component of the identity in
\[\Omega^{\infty} \Sigma^{2 \ell} \suspinftypl W[x^{-1}]\]
is a weak product of Wilson spaces.
\end{cor}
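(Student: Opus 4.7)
The plan is to verify each hypothesis of the Hopkins--Hunton theorem (\Cref{thm:Hopkins-Hunton}) for the ring spectrum $E = \suspinftypl W[x^{-1}]$ and then simply quote it.

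First, since $W$ is a double loop space, the ring spectrum $\suspinftypl W[x^{-1}]$ is an $\E_2$-algebra, hence in particular homotopy commutative. \Cref{thm:evenorientation} gives that it is complex orientable with torsion-free, even-concentrated homotopy groups. A complex orientation yields a homotopy commutative ring map $\MU \to \suspinftypl W[x^{-1}]$, and since we are working $p$-locally, this factors through a homotopy commutative ring map $\BP \to \suspinftypl W[x^{-1}]$ via one of the standard Quillen idempotent splittings of $\MU_{(p)}$. \Cref{thm:Landweber} then applies to show that $\pi_*\bigl(\suspinftypl W[x^{-1}]\bigr)$ is Landweber exact over $\BP_*$.

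It remains to check the freeness/rank hypothesis: each $\pi_{2n}\bigl(\suspinftypl W[x^{-1}]\bigr)$ should be free of countable rank over $\Z_{(p)}$. Here I would reuse the retract argument that appears inside the proof of \Cref{thm:evenorientation}: the composite $\suspinftypl W[x^{-1}] \to \MU \otimes \suspinftypl W[x^{-1}] \to \suspinftypl W[x^{-1}]$ exhibits $\suspinftypl W[x^{-1}]$ as a homotopy retract of $\MU \otimes \suspinftypl W[x^{-1}]$, whose homotopy is $\MU_*(W)[x^{-1}]$. Since $W$ has even $\Z_{(p)}$-homology with finitely many generators in each degree, the Atiyah--Hirzebruch spectral sequence degenerates and exhibits $\MU_*(W)$ as a free $\MU_*$-module on countably many generators in each total degree; the same then holds after inverting $x$. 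Thus each group $\MU_{2n}(W)[x^{-1}]$ is free of countable rank as a $\Z_{(p)}$-module, and $\pi_{2n}\bigl(\suspinftypl W[x^{-1}]\bigr)$ is a retract of this $\Z_{(p)}$-module, hence projective. By Kaplansky's theorem (projective modules over a PID are free), each such retract is itself free of countable rank over $\Z_{(p)}$, as required.

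With all hypotheses verified, \Cref{thm:Hopkins-Hunton} applies directly and identifies the identity component of $\Omega^{\infty}\Sigma^{2\ell} \suspinftypl W[x^{-1}]$ as a weak product of Wilson spaces. The main obstacle, such as it is, is the freeness of each homotopy group over $\Z_{(p)}$; the appeal to Kaplansky's theorem (rather than merely torsion-freeness) is what lets the retract-from-$\MU$-homology argument cleanly produce freeness in each fixed degree. Everything else is a direct invocation of the preceding results in this section.
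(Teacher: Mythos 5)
Your proof is correct and takes the same route as the paper, which simply says ``Use Theorem~\ref{thm:Landweber} to apply Theorem~\ref{thm:Hopkins-Hunton}.'' The one substantive thing you add is the verification that each $\pi_{2n}$ is \emph{free} (not merely torsion-free) of countable rank over $\Z_{(p)}$; this is a genuine point, since over the local ring $\Z_{(p)}$ torsion-free does not imply free (e.g.\ $\Q$), and \Cref{thm:evenorientation} only asserts torsion-freeness. Your retract argument followed by Kaplansky (projective over a local ring, or equivalently a direct summand of a free module over a PID, is free) cleanly closes that gap, and is the natural thing to do given the retraction of $\suspinftypl W[x^{-1}]$ off $\MU\otimes \suspinftypl W[x^{-1}]$ already established in the proof of \Cref{thm:evenorientation}. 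Everything else---the $\E_2$-structure giving homotopy commutativity, the complex orientation, the factorization through $\BP$ via the Quillen idempotent, and Landweber exactness from \Cref{thm:Landweber}---matches the paper's intent; the paper just leaves these checks implicit.
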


\begin{proof}
Use Theorem \ref{thm:Landweber} to apply Theorem \ref{thm:Hopkins-Hunton}.
\end{proof}

\begin{cor} \label{cor:FreeZp}
Let $W$ denote a Wilson space equipped with a double loop space structure, let $x \in \pi_{2*}(W)$ denote a homotopy class, and let $n$ denote any integer.  Then $\pi_{2n}(\suspinftypl W[x^{-1}])$ is a free $\mathbb{Z}_{(p)}$-module.
\end{cor}

\begin{proof}
The desired group is $\pi_2$ of the connected component of the identity in 
\[
\Omega^{\infty}\Sigma^{2-2n} \suspinftypl W[x^{-1}],
\]
so it suffices to show that $\pi_2$ of a weak product of Wilson spaces is a free $\mathbb{Z}_{(p)}$-module.  This is true because the homotopy groups of each Wilson space in the weak product are free $\mathbb{Z}_{(p)}$-modules, by definition.
\end{proof}

\begin{cor}
Let $W$ denote a Wilson space equipped with a double loop space structure, let $x \in \pi_{2*}(W)$ denote a homotopy class, and let $n$ denote a positive integer.  Then there is a $K(n)$-local splitting
\[L_{K(n)} \suspinftypl W[x^{-1}] \simeq L_{K(n)} \left( \bigvee \Sigma^{2\ell} E(n) \right)\]
of $L_{K(n)}\suspinftypl W[x^{-1}]$ into the completion of a countable wedge of even suspensions of the Johnson-Wilson theory $E(n)$.  We allow here for the possibility that this wedge be empty or finite.
\end{cor}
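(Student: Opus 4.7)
The plan is to apply the Bousfield--Kuhn functor $\Phi_n$ to the weak-product decomposition of $\loopsinfty A$ provided by the previous corollary, where we write $A = \suspinftypl W[x^{-1}]$.

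Taking $\ell = 0$ in the previous corollary, the identity component of $\loopsinfty A$ is equivalent to a sequential colimit $\hocolim_{N \to \infty} \prod_{i=1}^N W_{h_i, 2k_i}$ of atomic Wilson spaces $W_{h_i, 2k_i} \simeq \loopsinfty \susp^{2k_i} \BPn<h_i>$, each with $k_i \ge 1$. The Bousfield--Kuhn functor $\Phi_n$ \cite{Kuhn2} enjoys three key properties: for any spectrum $E$ one has $\Phi_n(\loopsinfty E) \simeq L_{K(n)} E$; $\Phi_n$ sends finite products of pointed infinite loop spaces to direct sums of the underlying spectra; and $\Phi_n$ commutes with sequential colimits. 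Applying $\Phi_n$ to the weak product therefore yields a $K(n)$-local equivalence
\[L_{K(n)} A \simeq L_{K(n)} \bigvee_{i \ge 1} \susp^{2k_i} \BPn<h_i>.\]

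Next, I would identify each $L_{K(n)} \BPn<h_i>$ with the $K(n)$-localization of a wedge of even suspensions of $E(n)$. If $h_i < n$, the summand vanishes since $\BPn<h_i>$ has chromatic height $h_i$. If $h_i \ge n$, then after $K(n)$-localization the $\BP$-module $\BPn<h_i>$ has homotopy $L_{K(n)} E(n)_*[v_{n+1}, \ldots, v_{h_i}]$, which is a free module over $\pi_* L_{K(n)} E(n)$ on the countable monomial basis of $\F_p[v_{n+1}, \ldots, v_{h_i}]$, all in even degrees. Multiplication by these monomials is realized by maps of spectra, so they assemble into a map
\[L_{K(n)} \bigvee_{\alpha} \susp^{2 m_\alpha} E(n) \;\longrightarrow\; L_{K(n)} \BPn<h_i>\]
that is an isomorphism on $\pi_*$ and hence a $K(n)$-local equivalence. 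Substituting back into the earlier display produces the desired decomposition of $L_{K(n)} A$.

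The main obstacle is verifying that $\Phi_n$ correctly converts the weak product into the claimed wedge --- both the compatibility $\Phi_n(X \times Y) \simeq \Phi_n(X) \vee \Phi_n(Y)$ for connected infinite loop spaces and the preservation of sequential colimits are standard properties of the Bousfield--Kuhn functor, but they deserve a pointer. Once these are in hand, the remainder of the argument is essentially an algebraic identification of the $K(n)$-local structure on $\BPn<h_i>$, together with routine bookkeeping of even-degree suspensions.
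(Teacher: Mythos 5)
Your proposal follows essentially the same route as the paper: apply the Bousfield--Kuhn functor $\Phi_n$ to the weak-product decomposition of $\loopsinfty \suspinftypl W[x^{-1}]$ from the preceding corollary, use that $\Phi_n$ sends finite products to wedges and commutes with filtered colimits, and then compute $\Phi_n(W_{h_i,2k_i}) \simeq \Sigma^{2k_i} L_{K(n)} \BPn<h_i>$. The only difference is that where you sketch a hands-on identification of $L_{K(n)}\BPn<k>$ as a completed wedge of $E(n)$'s (a sketch whose description of $\pi_* L_{K(n)}\BPn<k>$ is slightly loose about the $I_n$-adic completion), the paper simply cites Hovey--Sadofsky for this fact.
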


\begin{proof}
We may use the Bousfield--Kuhn functor $\Phi_n$ \cite{Kuhn} to calculate
\[L_{K(n)} \suspinftypl W[x^{-1}] \simeq \Phi_n(\Omega^{\infty} \suspinftypl W[x^{-1}] ).\]
It suffices to compute $\Phi_n$ of the connected component of the identity in
$\Omega^{\infty} \suspinftypl W[x^{-1}]$, since $\Phi_n(X)$ is equivalent to $\Phi_n(\tau_{\ge 1} X)$ for any space $X$.
The connected component of the identity is a weak product of Wilson spaces and the Bousfield--Kuhn functor commutes with both finite products and filtered colimits \cite[Proposition 3.21, Theorem 2.3]{Gijsvn}.  To finish, it thus suffices to calculate
$\Phi_n(A)$ whenever $A$ is an atomic Wilson space.  In particular, it suffices to note that
\[\Phi_n\left(\Omega^{\infty} \Sigma^{2r} \BPn<k> \right) \simeq \Sigma^{2r} L_{K(n)} \BPn<k> \simeq \begin{cases} 0 \text{ if } n>k, \\ L_{K(n)} \Sigma^{2r} E(k) \text{ if } n=k, \text{ and} \\ L_{K(n)}\left( \bigvee \Sigma^{2\ell} E(n) \right) \text{ if } n<k.  \end{cases}\]
The final bit of this calculation, computing $L_{K(n)} \BPn<k>$, may be found as \cite[Theorem 4.3]{HoveySadofsky}.
\end{proof}

The following are our primary examples of Snaith constructions:

\begin{dfn}\label{dfn:snaithwilson}
For each $h \ge 0$ and $1 \le k \le \nu(h)$, consider the Wilson space which is the infinite loop space
\[W_{h,2k} = \Omega^{\infty} \Sigma^{2k} \BPn<h>.\]
We denote by $R_{h,2k}$ the $\Einfty$-ring spectrum
\[R_{h,2k} = \suspinftypl W_{h,2k}[x_{2k}^{-1}],\]
where $x_{2k}$ is any generator of the bottom nonzero homotopy group of $W_{h,2k}$.
Just as we denote $W_{h,2\nu(h)}$ by $W_h$, we abbreviate
\[R_h = R_{h,2\nu(h)}.\]
\end{dfn}

\begin{qst}\label{qst:muorient}
Which of the rings $R_{h,2k}$ admit highly structured ring homomorphisms $MU \to R_{h,2k}?$
\end{qst}

\begin{exm}
By work of Snaith \cite{Snaith}, we may make the identifications
\begin{align*}
&R_0= R_{0,2} \simeq \KU\text{ as $\Einfty$-rings,}\\
\intertext{and at $p=2$}
&R_{1,2} \simeq \MUP \text{ as homotopy commutative rings.}\\
\intertext{The next simplest cases at $p=2$ are:}
&R_{1,4} \simeq \suspinftypl \BSU[x_4^{-1}] \text{, and}\\
&R_1=R_{1,6} \simeq \suspinftypl \BUsix [x_6^{-1}].
\end{align*}
\end{exm}

\begin{rmk} \label{rmk:notbottomBU}
Theorem \ref{thm:evenorientation} applies when inverting any element $x \in \pi_*(W)$.  In the rest of the paper, we only invert classes in the bottom nontrivial homotopy group.

To comment briefly on what happens in general, consider the case $W=\BU$.  Instead of inverting the Bott element $\beta \in \pi_2(\BU)$, one might instead invert a generator $x_6$ of $\pi_6(\BU)$.  There is an $\Einfty$-ring homomorphism
\[R_1=\suspinftypl \BUsix[x_6^{-1}] \longrightarrow \suspinftypl \BU[x_6^{-1}].\]
At $p=2$, we will prove that $R_1$ is $K(2)$-locally nontrivial but $K(3)$-acyclic.  Though we do not prove it here, computations show that $\suspinftypl \BU[x_6^{-1}]$ is $K(1)$-locally nontrivial but $K(2)$-acyclic.  In general, we expect that inverting classes other than the bottom cell yields less interesting ring spectra, but it would be good to know if the results of such inversions can be decomposed into smash products of simpler, well-known ring spectra.
\end{rmk}


\section{The infinite height case} \label{sec:InfiniteHeight}
The Snaith constructions $\KU \simeq \suspinftypl \CPinfty [\beta^{-1}]$ and $\MUP\simeq \suspinftypl \BU[\beta^{-1}]$ are both complex-oriented, but the former is supported at only finitely many chromatic heights.
We view this dichotomy as related to the fact that $\CPinfty$ is not the two-fold loop space of a Wilson space, while $\BU=\Omega^2 \BSU$ is.
\begin{thm} \label{thm:infinite-height}
Let $W$ and $W'$ be Wilson spaces such that $W = \Omega^2 W'$.
Suppose that $W$ is $2k$-connective and that $x \in \pi_{2k}(W)$ induces a surjection $H^{2k}(W;\Z_{(p)})\to H^{2k}(S^{2k};\Z_{(p)})$.
Then there is an $\E_2$-ring homomorphism
\[\suspinftypl W [x^{-1}] \to \MUP.\]
In particular, for no chromatic height $n$ is $L_{K(n)} \suspinftypl W [x^{-1}]=0$, since there are no ring homomorphisms from the zero ring to a nonzero ring.
\end{thm}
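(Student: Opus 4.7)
The plan is to factor the desired map through the Snaith $\E_2$-equivalence $\MUP \simeq \suspinftypl \BU[\beta^{-1}]$ mentioned at the start of this section, where $\BU$ is given its additive infinite loop space structure $\BU \simeq \Omega^\infty \ku$. Concretely, I will produce an $\E_2$-space map $f\colon W \to \BU$ for which $f_*(x) \in \pi_{2k}(\BU) \cong \Z_{(p)}$ is a generator. Composing $\suspinftypl f$ with the canonical $\E_2$-ring map $\suspinftypl \BU \to \MUP$ then yields an $\E_2$-ring map $\suspinftypl W \to \MUP$ sending $x$ to a unit multiple of $\beta^k$, and the universal property of $\E_2$-localization \cite[Appendix A]{MayNil} will provide the required factorization through $\suspinftypl W[x^{-1}]$.

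To produce $f$, I exploit the hypothesis $W = \Omega^2 W'$ together with the standard adjunction between iterated bar and loop constructions on connected $\E_2$-spaces: since $W$ and $\BU$ are both grouplike, $\E_2$-space maps $W \to \BU$ are equivalent to pointed maps $W' \to B^2 \BU \simeq \Omega^\infty \Sigma^2 \ku$, i.e., to elements of $\ku^{2}(W')$. Because $W'$ is a Wilson space and so has only even cells, the Atiyah--Hirzebruch spectral sequence for $\ku$-cohomology degenerates and gives
\[
\ku^{2}(W') \;\cong\; \prod_{j \ge 0} H^{2j+2}(W';\Z_{(p)})\cdot \beta^{j}.
\]
Restriction to the bottom cell $S^{2k+2} \hookrightarrow W'$ projects onto the single map $H^{2k+2}(W';\Z_{(p)}) \to H^{2k+2}(S^{2k+2};\Z_{(p)})$, which by Hurewicz applied to the $(2k+1)$-connected space $W'$ is identified with the evaluation $\Hom(\pi_{2k}(W),\Z_{(p)}) \to \Z_{(p)}$ at $x$. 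The hypothesis that $x^{*}\colon H^{2k}(W;\Z_{(p)}) \to H^{2k}(S^{2k};\Z_{(p)})$ is surjective is precisely the assertion that such an evaluation can be made a unit, so a class $\alpha \in \ku^{2}(W')$ with the required bottom-cell behavior exists; I take $f$ to be the corresponding $\E_2$-map, and a direct check then shows $f_*(x)$ is a $\Z_{(p)}$-generator of $\pi_{2k}(\BU)$, hence a unit in $\MUP$.

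The step I expect to require the most care is the grouplike $\E_2$-adjunction, and in particular verifying that the $\E_2$-structure on $\BU$ arising from $\BU \simeq \Omega^\infty \ku$ is the one that matches Snaith's $\E_2$-description of $\MUP$; this is bookkeeping about a single choice of infinite loop structure, but one that must be made consistently throughout. Once $f$ is built, the final sentence of the theorem is immediate: $L_{K(n)}\MUP$ is nonzero for every chromatic height $n$ (since $\MUP$ is built from $\MU$ by inverting a unit), and because no nonzero ring receives a ring map from the zero ring, $L_{K(n)}\suspinftypl W[x^{-1}]$ must likewise be nonzero.
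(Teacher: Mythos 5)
Your $\Omega^2/B^2$ adjunction step and the AHSS computation of $\ku^2(W')$ are essentially the same even-cells-versus-even-homotopy argument that the paper runs by obstruction theory, and they are correct. The genuine gap is the unit claim: you assert that a generator of $\pi_{2k}(\BU)$ maps to a unit multiple of $\beta^k$ in $\pi_{2k}(\MUP)$, so that $x$ may then be inverted, but this holds only for $k=1$ and fails for every $k\ge 2$. Rationally, the stable Hurewicz map $\pi_{2k}(\BU)\otimes\Q \to H_{2k}(\BU;\Q)=\Q[b_1,b_2,\dots]_{2k}$ has image spanned by the degree-$2k$ primitive $N_k$ (the $k$th Newton polynomial), and under Snaith localization this lands on $N_k\in\pi_{2k}(\MUP)\otimes\Q\cong\Q[b_1,b_2,\dots][b_1^{-1}]_{2k}$; dividing by $\beta^k=b_1^k$ gives $N_k/b_1^k\in\pi_0(\MUP)\otimes\Q$, which for $k\ge 2$ is a non-constant polynomial in the degree-zero generators (e.g.\ $N_2/b_1^2=1-2\,b_2/b_1^2$) and hence not a unit of the polynomial ring $\pi_0(\MUP)$. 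The theorem really is invoked with $k\ge 2$: the first case beyond Snaith is $W=\BSU$, $W'=\BUsix$, $k=2$ at $p=2$, producing the infinite-height ring $\suspinftypl \BSU[x_4^{-1}]$. Independently, \Cref{rmk:notbottomBU} records that $\suspinftypl \BU[x_6^{-1}]$ is $K(2)$-acyclic while $\MUP$ is not; if the image of $x_6$ in $\MUP$ were a unit, then $\MUP=\MUP[x_6^{-1}]$ would inherit $K(2)$-acyclicity, a contradiction.

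The fix, and what the paper does, is to aim the map at $\GL(\MUP)$ rather than at $\BU$. By the universal property of the units spectrum, $\E_2$-ring maps $\suspinftypl W\to\MUP$ correspond to $\E_2$-space maps $W\to\GL(\MUP)$, hence to pointed maps $W'\to\B^2\GL(\MUP)$, and the obstruction groups $H^i(W',S^{2k+2};\pi_{i+1}\B^2\GL(\MUP))$ vanish by the same parity count you already made. The crucial extra freedom this buys is that the bottom cell $S^{2k+2}\to\B^2\GL(\MUP)$ may be prescribed to be the class corresponding to the unit $\beta^k\in\pi_{2k}(\MUP)$ directly, whereas factoring through $\BU$ forces the bottom cell to be the Hurewicz image of a $\pi_{2k}(\BU)$-generator, which is not a unit once $k\ge 2$. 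Your AHSS analysis of the restriction map can be recycled essentially verbatim with $\ku$ replaced by a double suspension of the units spectrum of $\MUP$. (By contrast, the step you flagged as delicate, matching the additive $\E_2$-structure on $\BU$ with Snaith's, is fine; that is not where the proof breaks.)
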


\begin{proof}
Note that the element $x \in \pi_{2k} (W)$ may also be viewed as a map $S^{2k+2} \to W'$. We begin by recalling that $\E_2$-ring maps
\[\suspinftypl W \longrightarrow \MUP\]
are equivalent to double loop maps
\[W \to \GL(\MUP).\]
These are in turn the same as maps of pointed spaces
\[W' \to \B^2\GL(\MUP).\]
Since $W'$ admits an even cell decomposition and
\[\pi_i(\B^2\GL(\MUP)) \cong \pi_{i-2}(\MUP) \text{ for $i>2$,}\]
the relative cohomology groups
\[H^i(W',S^{2k+2};\pi_{i+1} \B^2\GL(\MUP))\]
vanish.  Thus, there are no obstructions to forming the dashed arrow in the diagram
\[
\begin{tikzcd}
S^{2k+2} \arrow{d}{x} \arrow{r}{\beta^{2k}} \arrow{d} & \B^2\GL(\MUP) \\
W'. \arrow[dashed]{ur}
\end{tikzcd}
\]
Applying $\Omega^2$, we get a double loop map
\[W \longrightarrow \GL(\MUP),\]
and hence an $\E_2$-algebra map
\[\suspinftypl W \longrightarrow \MUP\]
that sends $x$ to $\beta^{2k}$.  Since $\beta^{2k}$ is invertible in $\pi_*(\MUP)$, we obtain the desired $\E_2$-ring homomorphism
\[\suspinftypl W[x^{-1}] \longrightarrow \MUP.\]
\end{proof}

\begin{cor}
Suppose $h \ge 0$ and $1 \le k < \nu(h)$.  For all integers $n \ge 0$,
\[L_{K(n)}R_{h,2k} \ne 0.\]
\end{cor}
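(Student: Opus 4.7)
The plan is to reduce immediately to \Cref{thm:infinite-height}: under the hypothesis $1 \le k < \nu(h)$, both $W_{h,2k}$ and $W_{h,2k+2}$ are Wilson spaces, and there is a relation $W_{h,2k} \simeq \Omega^{2} W_{h,2k+2}$ given in \Cref{sec:Wilson}. So setting $W = W_{h,2k}$ and $W' = W_{h,2k+2}$, the Wilson-space hypothesis $W = \Omega^2 W'$ of \Cref{thm:infinite-height} is satisfied.

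Next I would verify the remaining hypotheses on the generator $x_{2k} \in \pi_{2k}(W_{h,2k})$. The space $W_{h,2k}$ is $(2k-1)$-connected with $\pi_{2k}(W_{h,2k}) \cong \Z_{(p)}$, and by Hurewicz the class $x_{2k}$ generates $H_{2k}(W_{h,2k};\Z_{(p)})$. Dualizing, the induced map $H^{2k}(W_{h,2k};\Z_{(p)}) \to H^{2k}(S^{2k};\Z_{(p)})$ is an isomorphism, in particular a surjection. Hence \Cref{thm:infinite-height} applies and produces an $\E_2$-ring homomorphism
\[
R_{h,2k} = \suspinftypl W_{h,2k}[x_{2k}^{-1}] \longrightarrow \MUP.
\]

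Finally, $K(n)$-localizing yields a ring map $L_{K(n)} R_{h,2k} \to L_{K(n)} \MUP$. The target is nonzero for every $n \ge 0$: for $n \ge 1$ we have $K(n)_*(\MUP) \ne 0$ since $\MUP$ is complex orientable and receives the universal formal group, while for $n = 0$ the rational homotopy $\pi_*(\MUP) \otimes \Q$ is nonzero. Since there is no ring homomorphism from the zero ring to a nonzero ring, we conclude $L_{K(n)} R_{h,2k} \ne 0$.

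There is essentially no obstacle beyond bookkeeping: the entire content of the corollary is that the condition $k < \nu(h)$ is exactly what lets us apply \Cref{thm:infinite-height}, while the atomic case $k = \nu(h)$ (where $W_h$ cannot be written as $\Omega^2$ of another Wilson space) is precisely the situation excluded here and handled separately in the main finite-height results of the paper.
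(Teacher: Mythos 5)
Your proof is correct and is exactly the intended argument: the paper states the corollary without a separate proof precisely because it is a direct application of \Cref{thm:infinite-height} with $W = W_{h,2k}$, $W' = W_{h,2k+2}$, and the relation $W_{h,2k} \simeq \Omega^2 W_{h,2k+2}$ valid when $k < \nu(h)$, together with the standard observation that $L_{K(n)}\MUP \ne 0$ for all $n \ge 0$. Your verification of the cohomological surjectivity hypothesis via Hurewicz and the vanishing of odd homology of Wilson spaces is the right way to check it.
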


\begin{exm}
At the prime $p=3$,
\[\BUsix = \Omega^{\infty} \Sigma^6 ku \simeq \Omega^{\infty} \left( \Sigma^6 \BPn<1>\vee \Sigma^8 \BPn<1> \right) = W_{1,6} \times W_{1,8}\]
as infinite loop spaces.  It follows that
\[ \suspinftypl \BUsix [x_6^{-1}] \simeq R_{1,6} \otimes \suspinftypl W_{1,8}.\]
We have just learned that $K(n)_*(R_{1,6}) \ne 0$ for all $n \ge 0$.  On the other hand, $K(n)_*(W_{1,8}) \ne 0$ because $W_{1,8}$ has even cells. It follows from the K\"unneth isomorphism that $K(n)_*(\suspinftypl \BUsix [x_6^{-1}]) \ne 0$.  Analogous arguments at larger primes justify the statement from the introduction that $\suspinftypl \BUsix[x_{6}^{-1}]$ has infinite chromatic height when $p>2$.
\end{exm}

\begin{qst} \label{qst:BPfree}
Suppose that $h \ge 0$ and $1 \le k < \nu(h)$.  Is the spectrum $R_{h,2k}$ a wedge of even suspensions of $\BP$?
\end{qst}

\begin{rmk} \label{rmk:BPfree}
The answer to \Cref{qst:BPfree} is yes for $p=2$, $h=1$, and $k=1$, by Snaith's computation
\[R_{1,2} \simeq \MUP.\]
The work of Ravenel and Wilson \cite{Ravenel-Wilson} shows that $\BP_*(\OS{\BPn<h>}{2k})$ is a polynomial ring over $\BP_*$ when $k < \nu(h)$. It follows in this case that $\BP_*(R_{h,2k})$ is a free $\BP_*$-module.  Since $R_{h,2k}$ is complex-oriented, it is a retract of $\BP \otimes R_{h,2k}$. This shows that $R_{h,2k}$ is a retract of a wedge of even suspensions of $\BP$.  If $R_{h,2k}$ were connective, it would follow that $R_{h,2k}$ were a wedge of copies of $\BP$, but we do not know how to conclude something like this in the nonconnective setting.
\end{rmk}

It remains to analyze the chromatic localizations of $R_{h}=R_{h, 2\nu(h)}$, where \Cref{thm:infinite-height} does not apply.  Most of the rest of this paper will be a meditation on this problem.

\begin{rmk}
Suppose $h \ge 0$.  Start with the quotient map of spectra
\[\BPn<h+1> \to \BPn<h>\]
and apply the functor $\suspinftypl \Omega^{\infty} \Sigma^{2\nu(h)} \left(\--\right)$.  After inverting the bottom class, one obtains a map of $\Einfty$-ring spectra
\[R_{h+1,2\nu(h)} \to R_{h},\]
where the source has infinite chromatic height.  In the case $h=0$ and $p=2$ this construction is familiar: it produces the map
\[\MUP \to \KU\]
featured in, e.g., \cite{GepnerSnaith}.
\end{rmk}


\section{The Bockstein tower} \label{sec:Bockstein}
Throughout this section we fix an integer $h \ge 1$.   In \Cref{sec:UpperBound}, we will prove that the ring $R_{h-1}=R_{h-1,2\nu(h-1)}$ has chromatic height $h$.  In this section, we study the top non-trivial chromatic localization $L_{K(h)} R_{h-1}$.

Our main theorem is an alternate construction of $L_{K(h)} R_{h-1}$.  As we will explain, the construction naturally produces a sequence of $K(h)$-local $\Einfty$-rings
\[B(0) \to B(1) \to \cdots \to B(h-1) = L_{K(h)} R_{h-1}.\]
We find this sequence extremely interesting, and, for the moment, mysterious.  The ring $B(0)$ was studied extensively by Westerland \cite{Westerland} for $p>2$, and by Peterson \cite{Peterson} for large primes $p \gg 0$.

\begin{rmk}
We freely use the language of \cite{HopkinsPic}, and in particular assume familiarity with the $K(h)$-local category and the notion of Picard graded homotopy.
\end{rmk}

\subsection{A \texorpdfstring{$K(h)$}{K(h)}-local idempotent}\label{sub:Z}

\begin{rec}
\label{Kh-homology-of-KFph}

Let $K(h)$ denote the $(2p^h-2)$-periodic Morava $K$-theory associated to the Honda formal group law over $\Fp$, with $\pi_* K(h) \cong \Fp[v_h^{\pm}]$ and $|v_h|=2p^h-2$.  Ravenel and Wilson computed \cite{KthyofEMspaces} that
\[K(h)_*(\KFph) \cong K(h)_*[f_0]/(f_0^p-(-1)^{h-1}v_hf_0),\]
with $|f_0|=2\nu(h-1)$.  Furthermore, $f_0$ is a coalgebra primitive.  See also \cite{HopkinsLurie} for the prime $p=2$.
\end{rec}

\begin{rmk}

Suppose $p=2$ and consider the natural splitting
\[\suspinftypl \Krm(\Ftw,h) \simeq \Sbb \vee \Sigma^{\infty} \Krm(\Ftw,h).\]
The above computation implies that $K(h)_*(\Sigma^{\infty} \Krm(\Ftw,h))\cong K(h)_*\{f_0\}$.  Thus, there is a $K(h)$-local splitting
\[L_{K(h)} \suspinftypl \Krm(\Ftw,h) \simeq L_{K(h)} \Sbb \vee Z,\]
where $Z=L_{K(h)} \Sigma^{\infty} \Krm(\Ftw,h)$ is an element of the $K(h)$-local Picard group.  Since $f_0^2=(-1)^{h-1} v_hf_0$, it must be the case that $Z^{\otimes 2} \simeq Z$, so in fact $Z \simeq L_{K(h)}\Sbb$ and there is a $K(h)$-local splitting
\[L_{K(h)} \suspinftypl \Krm(\Ftw,h) \simeq L_{K(h)} \Sbb \vee L_{K(h)} \Sbb.\]
\end{rmk}

\begin{rmk}\label{rmk:KFphsplitting}
As explained in \cite[Lemma 3.8]{Westerland}, for $p>2$ there is a $K(h)$-local splitting
\[L_{K(h)} \suspinftypl \KFph \simeq L_{K(h)} \Sbb \vee Z \vee Z^{\otimes 2} \vee \cdots \vee Z^{\otimes {p-1}}.\]
Here $Z$ is a $\otimes$-invertible $K(h)$-local spectrum, but $Z$ is not the sphere.  Instead, $Z$ represents an element of order $p-1$ in the Picard group of the $K(h)$-local category.  The image of the inclusion $K(h)_*(Z) \to K(h)_*(\KFph)$ is $K(h)_*\{f_0\}$, and for our purposes this may be taken as the defining property of $Z$.
\end{rmk}

\begin{dfn}
At all primes, we will use $z$ to refer to the inclusion
\[z\colon Z \to L_{K(h)} \suspinftypl \KFph,\]
so $z \in \pi_{\star} \left(L_{K(h)} \suspinftypl \KFph \right)$ is an element in Picard graded homotopy.
\end{dfn}

\begin{rmk}
Let $A$ denote a $K(h)$-local $\Einfty$-ring equipped with a map of $\Einfty$-rings
\[\suspinftypl \KFph \to A.\]
Then the composite
\[z\colon Z \to L_{K(h)} \suspinftypl \KFph \to A\]
determines an element $z$ in the Picard graded homotopy of $A$.  We may invert this element by taking the mapping telescope 
\[\colim(A \to Z^{-1} \otimes A \to Z^{-2}\otimes A \to \cdots) \]
of the $A$-module map 
\[z\colon Z \otimes A \to A,\]
and then $K(h)$-localizing the result, obtaining a $K(h)$-local $\Einfty$-ring $A[z^{-1}]$.  If the reader prefers not to contemplate inversion of Picard graded elements, it is equivalent to invert the class $z^{p-1} \in \pi_0(A)$.
\end{rmk}

\begin{prop} \label{prop:Idem}
Suppose that $A$ is a $K(h)$-local $\Einfty$-ring equipped with a map of $\Einfty$-rings
\[\phi\colon \suspinftypl \KFph \to A.\]
Then there is a splitting of $A$ such that the localization map
\[A \to A[z^{-1}]\]
is the projection onto a wedge summand.
\end{prop}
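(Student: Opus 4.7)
The plan is to construct an idempotent $e \in \pi_0(A)$ such that $A \simeq A(1-e) \vee Ae$ as $A$-modules with $Ae \simeq A[z^{-1}]$, exhibiting the localization as projection onto a wedge summand.

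First I would analyze the $K(h)$-homology. Using the identification $Z^{\otimes p-1} \simeq L_{K(h)}\Sbb$, the class $z^{p-1}$ gives a well-defined element of $\pi_0(A)$ whose Hurewicz image lies in $K(h)_0(A)$. In the universal example $R := L_{K(h)}\suspinftypl\KFph$, a direct degree count in $K(h)_*[f_0]/(f_0^p - (-1)^{h-1}v_h f_0)$ shows that $K(h)_0(R) \cong \F_p\{1,\, v_h^{-1} f_0^{p-1}\}$, and the Hurewicz image of $z^{p-1}$ equals $c \cdot v_h^{-1} f_0^{p-1}$ for some $c \in \F_p^\times$. The defining relation $f_0^p = (-1)^{h-1}v_h f_0$ immediately yields $(v_h^{-1} f_0^{p-1})^2 = (-1)^{h-1} v_h^{-1} f_0^{p-1}$, so $(-1)^{h-1}c^{-1} z^{p-1}$ is an idempotent in $K(h)_0(A)$.

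Next I would lift this idempotent to $\pi_0(A)$ via Hensel's lemma. The kernel of the Hurewicz map $\pi_0(A) \to K(h)_0(A)$ lies in the Jacobson radical of $\pi_0(A)$, a consequence of $K(h)$-local nilpotence (Hopkins--Smith) applied in the $K(h)$-local $\Einfty$ setting. Hensel lifting therefore produces a unique idempotent $e \in \pi_0(A)$ lifting $(-1)^{h-1} c^{-1} z^{p-1}$. The orthogonal decomposition $1 = e + (1-e)$ yields $A \simeq A(1-e) \vee Ae$ as $A$-modules, with $Ae \simeq A[e^{-1}]$ by idempotency. To identify this with $A[z^{-1}]$, observe that $e$ and $c^{-1}(-1)^{h-1} z^{p-1}$ differ by an element of the Jacobson radical; consequently each becomes invertible after inverting the other, giving compatible maps of localizations $A[e^{-1}] \leftrightarrows A[z^{-1}]$.

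The key technical obstacle is the Hensel lifting step, which relies on the Jacobson-radical property of the Hurewicz kernel. This is standard for completed local Morava $E$-theories but requires care for an arbitrary $K(h)$-local $\Einfty$-ring; the argument uses the $K(h)$-local nilpotence theorem to deduce that $K(h)$-acyclic classes in $\pi_0$ are nilpotent (or at least topologically so in the sense required for idempotent lifting), together with the $\Einfty$-structure on $A$ to get the needed multiplicative control.
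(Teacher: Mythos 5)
Your approach is genuinely different from the paper's, and it contains a gap at the Hensel lifting step that the paper's argument is specifically designed to avoid.

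Your strategy is to find an exact idempotent $e \in \pi_0(A)$ lifting the $K(h)$-homology idempotent $(-1)^{h-1}v_h^{-1}f_0^{p-1}$, and then split $A$ along it. The lifting is where things break. You claim the Hurewicz kernel $\ker(\pi_0(A) \to K(h)_0(A))$ lies in the Jacobson radical; that much is actually correct (if $a$ has invertible $K(h)$-Hurewicz image, then $A \to L_{K(h)}A[a^{-1}]$ is a $K(h)_*$-isomorphism between $K(h)$-local spectra, hence an equivalence, so $a$ is a unit). But containment in the Jacobson radical is \emph{not} sufficient to lift idempotents — one needs either nilpotence of the kernel or completeness with respect to it. Your proposed justification, that $K(h)$-local nilpotence forces $K(h)$-acyclic classes in $\pi_0(A)$ to be nilpotent, is false: for $A = E_n$ the Hurewicz kernel is the maximal ideal $\mathfrak m = (p, u_1,\dots,u_{n-1})$, and $p$ is certainly not nilpotent. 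Nor is the relevant completeness available for an arbitrary $K(h)$-local $\Einfty$-ring. You flag this as "the key technical obstacle," but the flagged justification does not close it.

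The paper's proof sidesteps the lifting problem entirely. Let $w = 1 - (-1)^{h-1}z^{p-1} \in \pi_0(L_{K(h)}\suspinftypl\KFph)$, pushed into $\pi_0(A)$ via $\phi$. One need not know that $z^{p-1}$ or $w$ satisfies an exact idempotency relation in $\pi_0(A)$. Instead one directly considers the map $A \to A[z^{-1}] \vee A[w^{-1}]$ and checks it is an equivalence by computing in $K$-homology (for $K$ the $2$-periodic Morava $K$-theory): there the Hurewicz images $f = u^{-\nu(h-1)}f_0$ and $g = 1 - (-1)^{h-1}f^{p-1}$ satisfy $gf = 0$, and inverting $f$ is the same as inverting the exact idempotent $(-1)^{h-1}f^{p-1}$, which is complementary to $g$. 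So $K_*(A) \to K_*(A)[f^{-1}] \oplus K_*(A)[g^{-1}]$ is an isomorphism, and since everything is $K(h)$-local the conclusion lifts to the level of spectra. This is why the paper never needs the idempotent to exist in $\pi_0$: the idempotency only needs to hold after the Hurewicz map, because the final check is done in $K$-homology. If you want to keep your plan, you must either prove a completeness/Henselian statement for the Hurewicz kernel of an arbitrary $K(h)$-local $\Einfty$-ring (nontrivial and not obviously true), or abandon exact lifting in favor of the $K$-homology check as in the paper.
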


\begin{proof}
Let $K$ denote the $2$-periodic Morava $K$-theory associated to the Honda formal group law over $\mathbb{F}_p$, so $K$ is the $2$-periodification of $K(h)$.
We denote the periodicity generator in $\pi_2 K$ by $u$, so that $u^{p^h-1} = v_h$.
The natural map $K(h) \to K$ allows us to view elements of $K(h)_*(\KFph)$ as elements of $K_*(\KFph)$.

Now, let $w \in \pi_0(L_{K(h)}\Sigma^{\infty}_+\KFph)$ be the element $1-(-1)^{h-1}z^{p-1}$.   The map $\phi$ allows us to interpret $w$ and $z$ as elements in the Picard graded homotopy of $A$.  We claim that the natural map
\[A \to A[z^{-1}] \vee A[w^{-1}]\]
 is an equivalence.  Since the rings are $K(h)$-local, it suffices to check in $K$-homology.  Recall that $K_*(z)$ has image the $K_*$ multiples of an element $f_0\in K_*(\KFph)$, and that $f_0$ satisfies the equation
\[f_0^p = (-1)^{h-1}v_h f_0.\]
We define $f \in K_0(\KFph)$ to be 
\[f=u^{-\nu(h-1)} f_0.\]
We define $g \in K_0(\KFph)$ to be the Hurewicz image of $w$. 
Then one has the equations
\[g=1-(-1)^{h-1} f^{p-1} \hspace{3pt} \text{ and } \hspace{3pt} gf=0,\]
and one needs to show that the natural map
\[K_*(A) \to K_*(A)[f^{-1}] \oplus K_*(A)[g^{-1}]\]
is an equivalence.  But this is clear because inverting $f$ is the same as inverting the class $(-1)^{h-1}f^{p-1}$, and this is a complementary idempotent to $g$.

\end{proof}

\subsection{The rings \texorpdfstring{$B(i)$}{B(i)}}

\begin{cnstr}
The quotient maps
\[\BPn<n> \to \BPn<n-1>\]
have associated Bockstein maps
\[\BPn<n-1> \to \Sigma^{|v_n| + 1} \BPn<n>.\]
These assemble into a \emph{Bockstein tower}
\[\Sigma^h \HFp \to \Sigma^{h+1} \HZ \to \Sigma^{h+2p} \BPn<1>\to \cdots\to \Sigma^{2\nu(h-1)} \BPn<h-1>,\]
and we will refer to the long composite as the \emph{Tamanoi Bockstein}.
Applying $\Omega^{\infty}$ to the Bockstein tower gives a sequence of infinite loop maps:
\[\KFph \to \KZhpo \to \cdots \to W_{h-1}.\]
Suspending and $K(h)$-localizing then yields a sequence of $\Einfty$-ring spectra:
\[L_{K(h)} \suspinftypl \KFph \to L_{K(h)} \suspinftypl \KZhpo \to \cdots \to L_{K(h)} \suspinftypl W_{h-1}.\]
Finally, we invert $z$ to obtain a sequence of $\Einfty$-rings
\[L_{K(h)} \suspinftypl \KFph[z^{-1}] \to B(0) \to B(1) \to \cdots \to B(h-1).\]
In particular, we make the definition
\[B(i) = L_{K(h)} \suspinftypl\OS{\BPn<i>}{2\nu(i)+h-i-1}[z^{-1}].\]
\end{cnstr}

\begin{exm}
\label{exm:WesterlandB0}
Note that
\[B(0) = L_{K(h)} \suspinftypl \KZhpo [z^{-1}].\]
For odd primes $p$, Westerland \cite[Corollary 3.25]{Westerland} proved that
\[B(0) \simeq E_h^{\hSGpm},\]
where $E_h$ is the Morava $E$-theory associated to the Honda formal group law over $\Fph$.  Here, $S\Gbb^{\pm}$ denotes a specific subgroup of the Morava stabilizer group, defined by Westerland as the kernel of a twisted version of the determinant map (see \cite[Section 2.2]{Westerland} for details).
\end{exm}

\begin{rmk}
For $p>2$, Westerland defines a different $K(h)$-local Picard element $\Sdet$, and proves \cite[Theorem 1.2  \& Corollary 3.18]{Westerland} that $B(0)$ may alternatively be defined by inverting a class
\[\rho\colon \Sdet \to L_{K(h)} \suspinftypl \KZhpo.\]
This perspective is also taken up in \cite{Peterson} for $p \gg 0$.
\end{rmk}

Aside from the description of $B(0)$ in \Cref{exm:WesterlandB0}, the only other $B(i)$ that we consider partially understood is
\[B(h-1) = L_{K(h)} \suspinftypl W_{h-1}[z^{-1}].\]
The main theorem of this section is as follows:

\begin{thm} \label{thm:BocksteinMain}
There is an equivalence of $\Einfty$-ring spectra
\[B(h-1) \simeq L_{K(h)} R_{h-1}.\]
In other words, starting from $L_{K(h)} \suspinftypl W_{h-1}$, the same ring is obtained by inverting either the bottom cell $x_{2\nu(h-1)}\colon S^{2\nu(h-1)} \to W_{h-1}$ or the $K(h)$-local Picard class $z$.

\end{thm}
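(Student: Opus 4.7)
The plan is to realize both $B(h-1)$ and $L_{K(h)} R_{h-1}$ as universal $K(h)$-local $\Einfty$-algebras under $A := L_{K(h)} \suspinftypl W_{h-1}$ in which a specified class becomes invertible, then to show the two specified classes differ by a unit in $K(h)_*(A)$. Since $K(h)$-localization commutes with $\Einfty$-localization at a central element, we have identifications $B(h-1) = A[z^{-1}]$ and $L_{K(h)} R_{h-1} = A[x^{-1}]$. So the task reduces to showing that $z$ becomes invertible in $A[x^{-1}]$ and $x$ becomes invertible in $A[z^{-1}]$; the universal properties of $\Einfty$-localization then assemble mutually inverse $\Einfty$-ring maps between the two targets.

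Both rings are $K(h)$-local, so invertibility of a (Picard-graded) element of $\pi_0$ may be checked after mapping to $K(h)$-homology, by $K(h)$-local Nakayama. In $K(h)_*(A) = K(h)_*(\suspinftypl W_{h-1})$, both $x$ and $z$ yield classes in reduced degree $2\nu(h-1)$: the class of $x$ is the Hurewicz image of the bottom cell $S^{2\nu(h-1)} \to W_{h-1}$, and the class of $z$ is the image of the Ravenel--Wilson generator $f_0 \in K(h)_{2\nu(h-1)}(\KFph)$ under the Tamanoi Bockstein, via the convention that $K(h)_*(z)$ spans the $f_0$-summand. Because $W_{h-1}$ is $(2\nu(h-1)-1)$-connected with a single bottom cell, the Atiyah--Hirzebruch spectral sequence identifies the reduced group $\widetilde{K(h)}_{2\nu(h-1)}(W_{h-1})$ as a one-dimensional $\Fp$-vector space generated by the image of $x$. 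Hence the image of $z$ equals $c \cdot [x]$ for some $c \in \Fp$, and the computation reduces to verifying $c \neq 0$. Granting this, the Ravenel--Wilson relation $f_0^p = (-1)^{h-1} v_h f_0$ exhibits $z^{p-1}$ as a unit multiple of $v_h$ in $K(h)_*(A)[x^{-1}]$, and symmetrically $x$ becomes a unit in $K(h)_*(A)[z^{-1}]$, establishing the two invertibility statements.

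The main obstacle is the nonvanishing $c \neq 0$, i.e., that the Tamanoi Bockstein acts nontrivially on the class $f_0 \in K(h)_{2\nu(h-1)}(\KFph)$. I would argue this by chasing the bottom class through each stage of the Bockstein tower, comparing with the explicit Ravenel--Wilson Hopf ring description of the $K(h)$-homology of each intermediate Wilson space: each $v_i$-Bockstein transports the bottom generator between neighboring Hopf rings nontrivially, and the composite therefore remains nonzero on the bottom cell. Alternatively, one can reinterpret the nonvanishing as a consequence of Tamanoi's characterization of his fundamental class in \cite{Tamanoi}, where the Tamanoi Bockstein is defined precisely to detect the bottom class of $\Sigma^{2\nu(h-1)} \BPn<h-1>$. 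Once $c \neq 0$ is established, the two paragraphs above assemble into the desired equivalence of $K(h)$-local $\Einfty$-rings $B(h-1) \simeq L_{K(h)} R_{h-1}$.
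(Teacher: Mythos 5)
Your overall strategy matches the paper's: set up a zig-zag via $L_{K(h)}\suspinftypl W_{h-1}[x^{-1},z^{-1}]$, reduce to checking that the image of $z$ in $K(h)_*(W_{h-1})$ is a nonzero multiple of the Hurewicz image of the bottom cell $x$, use the one-dimensionality of $\widetilde{K(h)}_{2\nu(h-1)}(W_{h-1})$ to reduce to a nonvanishing claim $c\neq 0$, and then use the relation $f_0^p=(-1)^{h-1}v_h f_0$ to get both inversions from a single agreement of classes. That reduction is correct and is exactly the paper's framing.

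The gap is in how you handle the one nontrivial step, namely $c\neq 0$. Your alternative (a) — chasing generators through the Ravenel--Wilson Hopf rings of the intermediate spaces — is stated as a hope rather than an argument, and the claim that ``each $v_i$-Bockstein transports the bottom generator between neighboring Hopf rings nontrivially'' conflates the bottom cell (which shifts degree through the tower: $h$, $h+1$, $2p+h$, \ldots, $2\nu(h-1)$) with the degree-$2\nu(h-1)$ class where $f_0$ and $x$ actually live; those are not the same class at the intermediate stages, so a naive chase of ``bottom generators'' does not obviously give the intended statement. Your alternative (b), citing Tamanoi, is the right lever, but it leaves unaddressed the key translation from mod-$p$ homology (where \cite[Prop.~3.3]{Tamanoi} proves that $\tau_0\tau_1\cdots\tau_{h-1}$ maps nontrivially to $H_{2\nu(h-1)}(\Sigma^{2\nu(h-1)}\BPn<h-1>;\Fp)$) to $K(h)$-homology (where $f_0$ and $x$ live). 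The paper supplies exactly this translation: it chooses an Atiyah--Hirzebruch representative $\tilde f_0\in H_{2\nu(h-1)}(\KFph;\Fp)$ for $f_0$, checks that stabilization sends $\tilde f_0$ to $\tau_0\cdots\tau_{h-1}$, forms a finite complex $M$ from the $(2\nu(h-1)-1)$-skeleton of $\suspinftypl\KFph$ together with the cell defined by $\tilde f_0$, and uses that $W_{h-1}$ is $(2\nu(h-1)-1)$-connected to compare the image of $M$'s top cell to the bottom cell $x_{2\nu(h-1)}$; then the AHSS for $K(h)_*(\KFph)$, with $\tilde f_0$ a permanent cycle in lowest filtration, identifies the $K(h)$-Hurewicz images. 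That AHSS and connectivity argument is the substance of the proof and is missing from your proposal; without it, ``cite Tamanoi'' does not close the gap between mod-$p$ homology and $K(h)$-homology.
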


\begin{proof}
Throughout this proof, let $k:= \nu(h-1)$ for ease of notation.
There is a natural zig-zag of $\Einfty$-rings
\[L_{K(h)} \suspinftypl W_{h-1}[x_{2k}^{-1}] \rightarrow L_{K(h)}\suspinftypl W_{h-1} [x_{2k}^{-1},z^{-1}] \leftarrow L_{K(h)}\suspinftypl W_{h-1} [z^{-1}].\]
To show that both maps in this zig-zag are equivalences, it suffices to check that the Hurewicz image of $x_{2k}$ in $K(h)_*(W_{h-1})$ is in the image of $K(h)_*(z)$.  By \cite[Proposition 3.7]{Westerland} (and the analogous statement at $p=2$), the image of $z$ in $K(h)_*(\KFph)$ is generated, as a $K(h)_*$-module, by the element $f_0 = a_{(0)}\circ a_{(1)} \circ \cdots \circ a_{(h-1)}$, where $a_{(i)}\in K(n)_{2p^i}(\Krm(\Fp, 1))$.  Here, we use the Hopf ring notation of \cite[\S 8.3.1]{Ravenel-Wilson-Yagita}, and the $a_i$ are elements represented in the Atiyah-Hirzebruch spectral sequence
\[ \mathrm{H}_*(\Krm(\Fp, 1);K(h)_*) \implies K(h)_{*}(\Krm(\Fp, 1))\]
by the classes used to define $\tau_i$ in the dual Steenrod algebra $\mathcal{A}_*$.  It suffices to show that the Hurewicz image of $x_{2k}$ is a nonzero scalar multiple of $f_0$.

Consider the diagram
\begin{equation*}
\begin{tikzcd}
\suspinftypl \KFph \arrow[r, "\varphi_1"]\arrow[d, "\sigma_1"] & \suspinftypl W_{h-1} \arrow[d, "\sigma_2"] \\
\Sigma^h \HFp \arrow[r, "\varphi_2"] & \Sigma^{2k}\BPn<h-1>
\end{tikzcd}
\end{equation*}
induced by the natural transformation $\suspinftypl\Omega^{\infty} \to \mathrm{id}.$  Here, the horizontal arrows are induced by the Tamanoi Bockstein.  By the above remarks, there is an element $\tilde{f}_0\in H_{2k}(\KFph; \Fp)$ which is an Atiyah-Hirzebruch representative for $f_0$ and such that $\sigma_1(\tilde{f}_0)=\tau_0 \tau_1 \cdots \tau_{h-1} \in \mathcal{A}_*$.
The image $\varphi_2(\tau_0 \tau_1 \cdots \tau_{h-1})$ in
\[H_{2k} (\Sigma^{2k}\BPn<h-1>, \Fp )\cong \Fp\]
 is nonzero by \cite[Proposition 3.3]{Tamanoi}.  Since $H_{2k}(\sigma_2)$ is an isomorphism, it follows that $\varphi_1(\tilde{f}_0)$ is a nonzero element of $H_{2k}(W_{h-1},\Fp)\cong \Fp$.

Denote by $M$ the finite complex which is the union of the $(2k-1)$-skeleton of $\suspinftypl \KFph$ and the single $(2k)$-cell defined by $\tilde{f}_0$.
By the above discussion, since $W_{h-1}$ is $(2k-1)$-connected, $M$ fits into a diagram
\begin{equation*}
\begin{tikzcd}
M \arrow[r] \arrow[d,hook] &S^{2k}\arrow[d,"x_{2k}"]\\
\suspinftypl \KFph \arrow[r] & \suspinftypl W_{h-1},
\end{tikzcd}
\end{equation*}
where the top arrow is induced by collapsing onto the top cell and the map $S^{2k} \to \suspinftypl W_{h-1}$ is a choice of bottom cell $x_{2k}$.
To finish, consider the Atiyah-Hirzebruch spectral sequence computing the $K(h)$-homology of $\KFph$.  Recall from \Cref{Kh-homology-of-KFph} that
\[K(h)_*(\KFph) = K(h)_*[f_0]/(f_0^p = (-1)^{n-1}v_nf_0),\]
 and so the element $\tilde{f}_0$ is a permanent cycle representing $f_0$ with no classes in lower filtration.  It follows that the image of $f_0$ is the same as the image of $x_{2k}$ in $K(h)_*(W_{h-1})$.
\end{proof}

\begin{cor}
The natural map
\[L_{K(h)} \suspinftypl W_{h-1} \to L_{K(h)} R_{h-1}\]
is given by projection onto a wedge summand.
\end{cor}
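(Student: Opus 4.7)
The plan is to observe that the corollary is essentially a direct combination of the preceding \Cref{thm:BocksteinMain} with the idempotent splitting result of \Cref{prop:Idem}. Concretely, \Cref{thm:BocksteinMain} identifies $L_{K(h)} R_{h-1}$ with $B(h-1) = L_{K(h)} \suspinftypl W_{h-1}[z^{-1}]$, and under this identification the natural map $L_{K(h)} \suspinftypl W_{h-1} \to L_{K(h)} R_{h-1}$ is identified with the $z$-localization map. So what must be verified is that the $z$-localization of $L_{K(h)} \suspinftypl W_{h-1}$ is a projection onto a wedge summand.

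First I would set $A = L_{K(h)} \suspinftypl W_{h-1}$ and exhibit the required $\Einfty$-ring map $\suspinftypl \KFph \to A$. This is provided by the Tamanoi Bockstein: the sequence of infinite loop maps $\KFph \to \KZhpo \to \cdots \to W_{h-1}$ produced from the Bockstein tower gives, after suspending and $K(h)$-localizing, an $\Einfty$-ring map $L_{K(h)} \suspinftypl \KFph \to A$, and this is precisely the map along which the Picard graded class $z$ is transported to $\pi_\star(A)$ in the construction of $B(h-1)$.

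Next I would invoke \Cref{prop:Idem} applied to this $A$. That proposition guarantees a splitting of $A$ as $A \simeq A[z^{-1}] \vee A[w^{-1}]$, where $w = 1 - (-1)^{h-1} z^{p-1}$, and states that under this splitting the $z$-localization map is the projection onto the first summand. Combining with the identification $A[z^{-1}] \simeq B(h-1) \simeq L_{K(h)} R_{h-1}$ from \Cref{thm:BocksteinMain} yields the claim.

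There is essentially no hard step here; the only point requiring care is the bookkeeping that identifies the composite $L_{K(h)} \suspinftypl W_{h-1} \to L_{K(h)} R_{h-1}$ coming from inverting the bottom cell $x_{2\nu(h-1)}$ with the composite that inverts the Picard class $z$, but this is exactly what the zig-zag argument in the proof of \Cref{thm:BocksteinMain} establishes. So the proof is a one-line corollary once the two preceding results are in hand.
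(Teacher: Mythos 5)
Your proposal is correct and follows exactly the route the paper intends: the paper's proof is the one-line remark that the corollary follows by combining \Cref{thm:BocksteinMain} with \Cref{prop:Idem}, and your elaboration (identifying the natural map with $z$-localization via \Cref{thm:BocksteinMain}, supplying the required $\Einfty$-ring map $\suspinftypl \KFph \to L_{K(h)} \suspinftypl W_{h-1}$ via the Tamanoi Bockstein, then applying \Cref{prop:Idem}) is precisely the correct unwinding of that remark.
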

\begin{proof}
This follows immediately from the combination of \Cref{thm:BocksteinMain} and \Cref{prop:Idem}.
\end{proof}

\begin{cor}\label{cor:lowerbound}
The localization $L_{K(h)} R_{h-1}$ is nonzero.
\end{cor}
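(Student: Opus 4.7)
My plan is to deduce the corollary from \Cref{thm:BocksteinMain} together with a short algebraic argument in the Pontryagin ring $K(h)_*(\suspinftypl W_{h-1})$. By \Cref{thm:BocksteinMain}, it suffices to show that $B(h-1) = L_{K(h)} \suspinftypl W_{h-1}[z^{-1}]$ is nonzero, and since $B(h-1)$ is $K(h)$-local this is equivalent to $K(h)_*(B(h-1)) \ne 0$. As $K(h)$-homology commutes with the filtered colimit defining the inversion of $z$, the whole problem reduces to proving that the Hurewicz image $\bar z \in K(h)_*(\suspinftypl W_{h-1})$ is not nilpotent.

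The nonvanishing $\bar z \ne 0$ is essentially already contained in the proof of \Cref{thm:BocksteinMain}, which identifies $\bar z$ with a nonzero scalar multiple of the $K(h)$-Hurewicz image of the bottom cell $x_{2\nu(h-1)}$; that Hurewicz image is nonzero by the Atiyah--Hirzebruch argument already rehearsed there (nothing in lower filtration can absorb the class). The key additional ingredient I would add is an algebraic relation: because the Tamanoi Bockstein $\suspinftypl \KFph \to \suspinftypl W_{h-1}$ is an $\Einfty$-ring map, it preserves the Ravenel--Wilson Pontryagin-product identity $f_0^p = (-1)^{h-1} v_h f_0$ from \Cref{Kh-homology-of-KFph}, yielding
\[\bar z^p = (-1)^{h-1} v_h \bar z\]
in the Pontryagin ring $K(h)_*(\suspinftypl W_{h-1})$.

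From this relation, an immediate induction gives $\bar z^{\,1 + k(p-1)} = \bigl((-1)^{h-1} v_h\bigr)^k \bar z$ for every $k \ge 0$. Since $v_h$ is a unit in $K(h)_*$ and $\bar z \ne 0$, every such power is nonzero, so $\bar z$ cannot be nilpotent; this produces a nonzero element in $K(h)_*(B(h-1))$ and therefore forces $L_{K(h)} R_{h-1} \ne 0$. I do not foresee a serious obstacle, since all of the substantive geometric and computational work has already been done in \Cref{thm:BocksteinMain} and in the Ravenel--Wilson computation recalled in \Cref{Kh-homology-of-KFph}. The one mildly delicate bookkeeping point is the interplay between Picard grading and nilpotence, which I would handle by working with the honest $\pi_0$-class $z^{p-1}$ when $p > 2$ and directly with $z$ when $p = 2$.
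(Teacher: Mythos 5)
Your proposal is correct and follows essentially the same approach as the paper: both proofs identify the relevant Hurewicz class with the image of $f_0$ under the Tamanoi Bockstein and then invoke the Ravenel--Wilson relation $f_0^p = (-1)^{h-1}v_h f_0$ to conclude non-nilpotence. The only cosmetic difference is that you route through $B(h-1)$ and the Picard element $z$ (and then must fuss with Picard grading), whereas the paper works directly with $K(h)_*(R_{h-1}) \cong K(h)_*(W_{h-1})[x^{-1}]$, where $x$ is the honest degree-$2\nu(h-1)$ Hurewicz class, avoiding that bookkeeping altogether.
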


\begin{proof}
It suffices to show that $K(h)_*(R_{h-1})$ is nontrivial.
We can compute this as $K(h)_*(W_{h-1})[x^{-1}],$ where $x$ is the Hurewicz image of a generator of the bottom homotopy group $\pi_{2\nu(h-1)} W_{h-1}$, and so it suffices to show that $x$ is not nilpotent in $K(h)_*(W_{h-1})$.

According to the proof of Theorem \ref{thm:BocksteinMain}, $x$ may alternatively be described as the image of $f_0$ under the Tamanoi Bockstein $K(h)_* \KFph \to K(h)_*(W_{h-1})$.  Since $f_0^p = (-1)^{h-1} v_h f_0$, it follows that $x^p = (-1)^{h-1} v_h x$.  Since $x$ is nonzero (as it is the bottom cell and the Atiyah--Hirzebruch spectral sequence for $K(h)_*(W_{h-1})$ collapses by evenness),  we see that it is not nilpotent.
\end{proof} 

\section{Orienting Lubin--Tate theories} \label{sec:Orientation}
If $\Gamma$ is a height $h$ formal group over a perfect field $k$ of characteristic $p$, the Goerss--Hopkins--Miller theorem \cite{GoerssHopkins} allows us to form an associated $\Einfty$-ring, the Morava $E$-theory $E=\EkG$.
In this section, we will be concerned with the following question:

\begin{qst} \label{qst:orientation}
For which height $h$ formal groups $\Gamma$ over perfect fields $k$ are there structured ring homomorphisms
\[R_{h-1} \to \EkG?\]
\end{qst}

Our interest in this question is sparked by the very elegant answer when $h=1$:

\begin{exm}
Recall that Snaith proved $R_0 \simeq \KU$, with $\Einfty$-ring structure given by the tensor product of vector bundles.  After $p$-completion, $\KU$ is a Morava $E$-theory.
Specifically, $\KU_p^{\wedge}$ is the Morava $E$-theory associated to the formal multiplicative group $\Gmhat$ over the field $\Fp$.
On the other hand, there are many height $1$ formal groups over $\Fp$ that are not isomorphic to $\Gmhat$ (see \Cref{exm:ht1dieudonne}), and their associated $E$-theories are less clearly tied to geometry.
We will soon see that homotopy commutative ring maps
\[R_0 \to \EkG\]
induce isomorphisms $\Gamma \cong \Gm$ of formal groups over $k$.
In the case that $\Gamma \cong \Gm$, the Goerss--Hopkins--Miller theorem provides an $\Einfty$-ring homomorphism $R_0 \to \EkG$.
Together these statements give a complete answer to the height $1$ case of \Cref{qst:orientation}.
\end{exm}

At a general height, note that any ring homomorphism
\[R_{h-1} \to \EkG\]
factors through $L_{K(h)} R_{h-1}$, because height $h$ Morava $E$-theories are $K(h)$-local.  Thus we may study such ring homomorphisms via the Bockstein tower
\[L_{K(h)}\suspinftypl \KZhpo[z^{-1}] = B(0) \to B(1) \to \cdots \to B(h-1)= L_{K(h)} R_{h-1}.\]

A homotopy ring map $R_{h-1} \to \EkG$ restricts to a homotopy ring map
\[ B(0) = L_{K(h)}\suspinftypl \KZhpo[z^{-1}]  \to \EkG.\]
In \Cref{sub:dieudonne}, we show that this imposes a strong constraint on the formal group $\Gamma$ -- namely, that it must have \emph{top exterior power} isomorphic to $\Gm$ in an appropriate sense (\Cref{Westerland5.5-strengthened}).  In \Cref{subsec:structuredB0}, we compute the space of $\E_{\infty}$ $B(0)$-orientations of any Morava $E$-theory; if $\Gamma$ has top exterior power $\Gm$, then we prove it receives an $\E_{\infty}$ $B(0)$-orientation.  Finally, in \Cref{sub:r1}, we discuss the problem of lifting $B(0)$-orientations to $R_1$-orientations at height $2$. In particular, we show that a height $2$ Morava $E$-theory $E_{k, \Gamma}$ receives a homotopy commutative $R_1$-orientation if and only the top exterior power of $\Gamma$ is isomorphic to $\Gm$.

\subsection{Exterior powers of formal groups and \texorpdfstring{$B(0)$}{B(0)}-orientations}\label{sub:dieudonne}

\begin{dfn}\label{dfn:moravaek}
Given a height $h$ formal group $\Gamma$ over a perfect field $k$ of characteristic $p$, we denote by $\KkG$ the associated $2$-periodic Morava $K$-theory.  This is a homotopy associative ring spectrum equipped with a homotopy ring map
\[q\colon \EkG \to \KkG.\]
In general, $\KkG$ is not homotopy commutative.
\end{dfn}

\begin{rmk}
In \cite{KthyofEMspaces}, Ravenel and Wilson computed that there is a non-canonical isomorphism
\[\KkG^0 ( \KZhpo ) \cong \kps[x]\] (at least for $\Gamma$ being the Honda formal group, with the general case being \cite[Theorem 2.4.10]{HopkinsLurie}).
The multiplication map
\[\KZhpo \times \KZhpo \to \KZhpo\]
then makes $\Spf \left( \KkG^0 \KZhpo \right)$ into a $1$-dimensional formal group over $k$.
Inspired by Ravenel--Wilson, Buchstaber--Lazarev \cite{BuchLaz}, followed by Hopkins--Lurie \cite{HopkinsLurie}, showed that the formal group $\Spf \left( \KkG^0 \KZhpo \right)$ is the \emph{top exterior power} of the formal group $\Gamma$ in an appropriate sense.  To explain this, we begin by recalling the theory of (covariant) Dieudonn\'{e} modules.
\end{rmk}

\begin{dfn}
Let $D_k$ denote the free associative algebra over the Witt vectors $W(k)$ on two operators $F$ and $V$ subject to the relations
\begin{align*}
FV=VF&=p&Fa &= \varphi (a) F&   V\varphi(a) &= aV,
\end{align*}
for $a\in W(k)$, where $\varphi\colon W(k)\to W(k)$ denotes the Witt vector Frobenius.  We will refer to the ring $D_k$ as the \emph{Dieudonn\'{e} ring} and left modules over $D_k$, denoted by $\mathrm{LMod}_{D_k}$ as \emph{Dieudonn\'{e} modules}.
\end{dfn}

We will be interested in Dieudonn\'{e} modules because of their relationship to formal groups through what is called the (covariant) \emph{Dieudonn\'{e} correspondence}.  This classical correspondence takes many forms \cite{Demazure}, but we state here a form suited to our uses:


\begin{thm}[{\cite[Theorem 12.5]{BuchLaz}}]
\label{thm:dieudonne}
Let $\FG_k$ denote the category of smooth $1$-dimensional commutative formal groups over a perfect field $k$ of characteristic $p$. Then there is a fully faithful functor
\[\DM\colon  \FG_k \to \LMod_{D_k}\]
such that if $\Gamma \in \FG_k$, then $\DM(\Gamma)$ is a finitely generated free $W(k)$-module of rank equal to the height of $\Gamma$.  
\end{thm}


\begin{rmk}
We point out to the reader that the conventions regarding Dieudonn\'{e} modules of formal groups differ slightly in the two references \cite{BuchLaz, HopkinsLurie} that we draw from, but they agree for the $p^t$-torsion subgroups.  
We take the convention of \cite{BuchLaz} that our Dieudonn\'{e} modules are free modules over $W(k)$ given by the inverse limit of the Dieudonn\'{e} modules of the $p^t$-torsion subgroups.  
\end{rmk}

\begin{exm}\label{exm:ht1dieudonne}
The Dieudonn\'{e} module $M$ associated to a height $1$ formal group $\Gamma$ over $\F_p$ is a free $\Z_p$-module of rank $1$.  Choosing a generator $u\in M$, one deduces that the Frobenius and Verschiebung operators on $M$ are given by the formulas
\begin{align*}
 Fu &= a^{-1} u &   Vu &= apu
\end{align*}
for some $a\in \Z_p^{\times}.$  Since the Witt vector Frobenius $\varphi$ is trivial, the constant $a$ is independent of the choice of generator $u$.  Moreover, it can be shown that $a$ is equal to $1$ exactly when $\Gamma$ is isomorphic to the multiplicative formal group \cite[Lemma 9.8 and preceding]{BuchLaz}.  
\end{exm}

\begin{cnstr}[Exterior powers of Dieudonn\'{e} modules {\cite[Definition 9.5]{BuchLaz}}]\label{cnstr:exterior}
Given a Dieudonn\'{e} module $M$, one can endow the exterior algebra $\Lambda^m_{W(k)}(M)$ with the structure of a Dieudonn\'{e} module by the formulas (cf. \cite[Theorem 5.4 and preceding]{BuchLaz})
\begin{align*}
V(x_1 \wedge \cdots \wedge x_m) &= Vx_1 \wedge \cdots \wedge Vx_m \\
F(Vx_1 \wedge \cdots \wedge Vx_{i-1} \wedge x_i \wedge Vx_{i+1} \wedge \cdots \wedge Vx_m) &= x_1 \wedge \cdots \wedge x_{i-1} \wedge Fx_i \wedge x_{i+1} \wedge \cdots \wedge x_m.
\end{align*}
\end{cnstr}


\begin{cnstr}[Top exterior power of formal groups {\cite[Corollary 3.5.5]{HopkinsLurie}}]
Given a formal group $\Gamma \in \FG_k$ of height $h$, Hopkins-Lurie construct (via its associated $p$-divisible group) a $p$-divisible group of height $1$ and dimension $1$ whose associated formal group we denote by $\Lambda^h \Gamma$.  

This construction is related to the above one by the formula 
\[\DM(\Lambda^h \Gamma) \cong \Lambda^h_{W(k)} \DM(\Gamma).\]
We refer to these as the \emph{top exterior powers} of the formal group and corresponding Dieudonn\'{e} module.  

\end{cnstr}

For us, the relevance of these exterior powers is the following theorem.  This theorem is closely inspired by work of Ravenel--Wilson \cite{KthyofEMspaces}, and was first expressed in the setting of Dieudonn\'{e} modules by Buchstaber--Lazarev \cite{BuchLaz}:

\begin{thm}[{Hopkins--Lurie}]
\label{KZh+1=det(G)}
If $\Gamma$ is a height $h$ formal group over a perfect field $k$ of characteristic $p$, then there is an isomorphism of formal groups
\[\Spf \left( \KkG^0 \KZhpo \right) \cong \Lambda^h \Gamma.\]
\end{thm}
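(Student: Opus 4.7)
The plan is to reduce the statement to a comparison of Dieudonn\'{e} modules via \Cref{thm:dieudonne}, and then construct the isomorphism using the Hopf ring structure relating the Eilenberg--MacLane spaces $\Krm(\Z, i)$ for varying $i$. First, I would check that both sides of the claimed isomorphism are one-dimensional commutative formal groups over $k$ to which \Cref{thm:dieudonne} applies. The right-hand side is by definition (\Cref{cnstr:exterior}) a height $1$ formal group whose Dieudonn\'{e} module is the top exterior power $\Lambda^h_{W(k)} \DM(\Gamma)$. For the left-hand side, the Ravenel--Wilson computation of $K(h)^*\Krm(\Z, h+1)$ (combined with the fact that $E$-theory and $K$-theory of Eilenberg--MacLane spaces differ only by base change in degree zero) shows that $\KkG^0 \KZhpo$ is a smooth one-dimensional formal power series ring over $W(k)$, with the product map of $\KZhpo$ defining a formal group structure. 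After reduction mod $\mathfrak{m}$ this is a height $1$ formal group over $k$, so \Cref{thm:dieudonne} applies.

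Next, I would construct a natural map $\Lambda^h\Gamma \to \Spf(\KkG^0\KZhpo)$ of formal groups. The idea is to use the iterated cup product maps
\[\Krm(\Z,1) \sm \Krm(\Z,1) \sm \cdots \sm \Krm(\Z,1) \longrightarrow \KZhpo,\]
which upon applying $\KkG^0$ produce a multilinear map $\Gamma^{\times h} \to \Spf(\KkG^0\KZhpo)$ since $\Spf(\KkG^0 \Krm(\Z,1)) \cong \Gamma$ (the universal case of \Cref{KZh+1=det(G)} at $h = 0$, tautological from the definition of the formal group of $\KkG$). Graded commutativity of the cup product and the fact that its restriction to any diagonal copy of $\Krm(\Z,1)$ factors through $\Krm(\Z, 2)\wedge \Krm(\Z,1)\wedge\cdots$ would show that this multilinear map is alternating, and hence factors through a map $\Lambda^h \Gamma \to \Spf(\KkG^0 \KZhpo)$.

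Finally, to check this map is an isomorphism, I would compare Dieudonn\'{e} modules. On the exterior power side, the formulas of \Cref{cnstr:exterior} give an explicit $D_k$-action on $\Lambda^h_{W(k)} \DM(\Gamma)$. On the $\KZhpo$ side, the Verschiebung and Frobenius are computed from Ravenel--Wilson's description of the Hopf ring $K(h)_*\Krm(\Z,*)$, where they arise from the restriction-to-the-$p$th-power and transfer-along-$p$ maps on Eilenberg--MacLane spaces, compatible with the Hopf ring $\circ$-product structure. The alternating multilinear form just constructed identifies with the universal one described in \Cref{cnstr:exterior}, giving the desired isomorphism.

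The main obstacle is the last step: verifying that the map induced by the Hopf ring structure realizes precisely the universal alternating multilinear form whose target is $\Lambda^h_{W(k)}\DM(\Gamma)$. This is essentially the content of Hopkins--Lurie's more conceptual reformulation of the Ravenel--Wilson computation, which recasts $\KZhpo$ as representing the functor of alternating multilinear forms out of $\Gamma^{\times h}$ into $1$-dimensional formal groups; granting this representability, the isomorphism follows from the universal property, while proving it requires the careful Hopf-ring bookkeeping.
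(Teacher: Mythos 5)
The paper does not prove this theorem; it is stated as a citation to Ravenel--Wilson \cite{KthyofEMspaces} and Hopkins--Lurie \cite{HopkinsLurie}, with no argument given. So there is no proof in the paper to compare against, and the relevant question is whether your sketch is a correct reconstruction of the cited literature. It is in the right spirit --- the Hopkins--Lurie approach does proceed by representing alternating multilinear forms and comparing Dieudonn\'{e} modules --- but as written it has several concrete errors.

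First, $\KkG$ is a Morava $K$-theory, so $\pi_0\KkG \cong k$; the Ravenel--Wilson computation quoted in the paper is $\KkG^0(\KZhpo) \cong k\llbracket x\rrbracket$, a power series ring over the residue field $k$, not over $W(k)$. There is no maximal ideal $\mathfrak m$ to reduce by; $\Spf\KkG^0\KZhpo$ is already a formal group over $k$. You have conflated $\KkG$ with the Morava $E$-theory $\EkG$; the $E$-theoretic statement (appearing as $\widetilde{\Lambda^h\Gamma}$ in \Cref{HL3.4.1}) is the one living over $W(k)\llbracket u_1,\dots,u_{h-1}\rrbracket$. Second, $\Spf\KkG^0\Krm(\Z,1)$ is not $\Gamma$: $\Krm(\Z,1)\simeq S^1$ and $\KkG^0 S^1 \cong k$ carries no interesting formal group. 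The Quillen formal group is $\Spf\KkG^0\Krm(\Z,2) = \Spf\KkG^0\CPinfty$. Third, once you correct to $\Krm(\Z,2)$, your proposed iterated cup product $\Krm(\Z,2)^{\wedge h}\to\Krm(\Z,2h)$ lands in the wrong target; the theorem is about $\Krm(\Z,h+1)$, not $\Krm(\Z,2h)$. The actual construction in Hopkins--Lurie passes through the $p$-divisible group and the $p$-torsion spaces $\Krm(\Z/p^j,1)$, whose $K$-cohomology encodes $\mathbb{G}[p^j]$; the cup products $\Krm(\Z/p^j,1)^{\wedge h}\to\Krm(\Z/p^j,h)$ are what give the alternating forms, and the colimit over $j$ together with the identification $\Krm(\Q_p/\Z_p,h)\simeq\Krm(\Z_p,h+1)$ produces the desired target. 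With these corrections the outline would track the cited sources, but as stated the degree arithmetic does not close up, and the ``reduction mod $\mathfrak m$'' step reflects a category error about which spectrum's cohomology you are computing.
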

\begin{proof}
    This follows from applying Cartier duality to \cite[Corollary 3.3.3]{HopkinsLurie} for $d=h$ and taking the colimit, noting that the Cartier dual of the left-hand side of their theorem is $\Spec K(n)^0(X)$ and the Cartier dual of $\Alt^{(h)}_{\mathbb{G}_0[p^t]}$ is what we denote $\Lambda^h\Gamma[p^t]$.  
\end{proof}

We now use these constructions to study $B(0)$ orientations of Morava $K$-theories.




\begin{dfn}
Let $\KkG$ denote a Morava $K$-theory associated to a height $h$ formal group $\Gamma$ over a perfect field $k$ of characteristic $p$.  We call a homotopy ring map
\[\suspinftypl \KZhpo \to \KkG\]
\emph{nontrivial} if it does not factor through the augmentation map $\suspinftypl \KZhpo \to \Sbb.$
\end{dfn}

The following theorem and its proof are heavily influenced by the results of Westerland \cite[Section 5.1]{Westerland}:

\begin{thm}
\label{Westerland5.5-strengthened}
Let $\KkG$ denote a Morava $K$-theory corresponding to a height $h$ formal group $\Gamma$ over a perfect field $k$ of characteristic $p$.
If there exists at least one non-trivial homotopy ring map $\suspinftypl \KZhpo \to \KkG$, then the top exterior power $\Lambda^{h} \Gamma$ of the formal group $\Gamma$ is isomorphic to $\Gm$.
\end{thm}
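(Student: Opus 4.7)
The plan is to convert the topological hypothesis into the existence of a non-trivial formal group character $\chi\colon \Lambda^h \Gamma \to \Gmhat$ and then deduce the desired isomorphism via the covariant Dieudonn\'{e} correspondence of \Cref{thm:dieudonne}.

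First, a homotopy ring map $f\colon \suspinftypl \KZhpo \to \KkG$ is adjoint to a multiplicative, unit-preserving map $\hat f\colon \KZhpo \to \Omega^{\infty} \KkG$, or equivalently a grouplike, augmentation-one element of the coalgebra $\KkG^{0}(\KZhpo)$ whose coproduct is dual to the $H$-space multiplication on $\KZhpo$ (K\"unneth holds since $\KkG$ has graded field coefficients). By \Cref{KZh+1=det(G)}, this coalgebra is canonically identified with the complete local ring $\Ocal_{\Lambda^h \Gamma}$, with coproduct dual to the formal group law on $\Lambda^h \Gamma$. Grouplike, augmentation-one elements then correspond bijectively to formal group homomorphisms $\chi\colon \Lambda^h \Gamma \to \Gmhat$, and non-triviality of $f$ translates to $\chi \ne 1$. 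A mild point to verify at $p = 2$, where $\KkG$ may fail to be homotopy commutative, is that the image of $\hat f$ automatically lies in a commutative subspace (since $\KZhpo$ is a commutative $H$-space), so the formal-group interpretation of the grouplike element is unaffected.

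Next, by \Cref{thm:dieudonne}, $\chi$ corresponds to a non-zero $D_k$-linear map $\DM(\chi)\colon \DM(\Lambda^h \Gamma) \to \DM(\Gmhat)$. By \Cref{cnstr:exterior}, $\DM(\Lambda^h \Gamma) \cong \Lambda^h_{W(k)} \DM(\Gamma)$ is free of rank one over $W(k)$ (since $\Gamma$ has height $h$), and $\DM(\Gmhat)$ is also free of rank one. Choose generators $u$ and $v$ with $Fv = v$ and $Fu = \alpha u$ for some $\alpha \in W(k)$. A short computation using topological nilpotence of $V$ on a formal-group Dieudonn\'{e} module together with $FV = p$ pins down $\alpha \in W(k)^{\times}$: writing $Vu = p^{m} \gamma_{0} u$ with $\gamma_{0} \in W(k)^{\times}$, one finds $\alpha = p^{1-m} \varphi(\gamma_{0})^{-1}$, and the integrality constraint $\alpha \in W(k)$ together with topological nilpotence of $V$ forces $m = 1$. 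The non-zero map $\DM(\chi)$ sends $u \mapsto \beta v$ for some $\beta \in W(k) \setminus \{0\}$, and $F$-equivariance imposes the equation $\varphi(\beta) = \alpha \beta$; $V$-equivariance follows formally from this together with $FV = p$.

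To conclude, since $W(k)$ is a discrete valuation ring, factor $\beta = p^{n} \beta_{0}$ with $\beta_{0} \in W(k)^{\times}$. Cancelling $p^{n}$ (a non-zero-divisor) in $p^{n}\varphi(\beta_{0}) = \varphi(\beta) = \alpha \beta = p^{n}\alpha \beta_{0}$ yields $\varphi(\beta_{0}) = \alpha \beta_{0}$, whence $\alpha = \varphi(\beta_{0}) \beta_{0}^{-1}$. Rescaling the generator to $u' = \beta_{0}^{-1} u$ gives $Fu' = u'$, exhibiting a $D_k$-linear isomorphism $\DM(\Lambda^h \Gamma) \cong \DM(\Gmhat)$, and \Cref{thm:dieudonne} then yields $\Lambda^h \Gamma \cong \Gmhat$. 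The main obstacle I anticipate is the first step: formulating precisely the correspondence between (possibly non-commutative) homotopy ring maps to $\KkG$ and grouplike elements in $\Ocal_{\Lambda^h \Gamma}$, since commutativity of $\KkG$ cannot be assumed uniformly; the remaining Dieudonn\'{e}-theoretic argument is essentially formal.
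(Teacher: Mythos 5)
Your proof is correct, and it diverges from the paper's in an interesting way in the second half. The first half is essentially the same: both you and the authors convert the nontrivial homotopy ring map into a nonzero formal group homomorphism $\Lambda^h\Gamma \to \Gm$ (the paper phrases this as a power series $1+f(x)$ satisfying $1 + f(x_1 +_{\Lambda^h\Gamma} x_2) = (1 + f(x_1))(1 + f(x_2))$, which is just the unwinding of the grouplike condition you invoke), and both correctly note that nothing here requires $\KkG$ to be homotopy \emph{commutative}---only that it is a homotopy ring receiving a map from the suspension spectrum of a commutative $H$-space. Where you diverge is in closing the argument. The paper isolates this as \Cref{maps-to-gm-is-gm} and proves it by a slick relative Frobenius factorization: a nonzero map $f\colon \Gamma \to \Gm$ over a perfect field factors as an iso $\Gamma^{(n)} \xrightarrow{\sim} \Gm$ precomposed with a power of Frobenius, and one untwists over the perfect $k$. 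Your approach instead passes to covariant Dieudonn\'{e} modules, pins down the Frobenius eigenvalue $\alpha$ on the rank-one module $\DM(\Lambda^h\Gamma)$ using $FV=p$, $V$-adic completeness, and integrality (forcing $m=1$), then uses the $F$-equivariance equation $\varphi(\beta)=\alpha\beta$ imposed by the nonzero $D_k$-map and the fact that $W(k)$ is a DVR to rescale the generator to satisfy $Fu'=u'$. The computations check out (and your parenthetical about $V$-equivariance being automatic is correct but not needed, since $\DM(\chi)$ is $D_k$-linear a priori). The paper's route is shorter and more elementary; your route has the modest pedagogical advantage of staying within the Dieudonn\'{e} formalism that the rest of \Cref{sec:Orientation} deploys, and it makes explicit the structure-theoretic content (rank-one $D_k$-modules with topologically nilpotent $V$ and $\dim_k M/VM = 1$ are unique up to iso) that the Frobenius-factorization argument compresses.
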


\begin{proof}
Recall that one can choose a (noncanonical) isomorphism 
\[\KkG^0(\KZhpo) \cong \kps[x].\]
Under this identification, the nontrivial homotopy ring map $\suspinftypl \KZhpo \to \KkG$
corresponds to some power series $1+f(x)$, where $f(x) \ne 0$ is divisible by $x$.
Consider the homotopy commutative diagram
\[
\begin{tikzcd}[column sep=huge]
\left(\suspinftypl \KZhpo\right)^{\otimes 2} \arrow{rr}{(1+f(x))^{\otimes 2}} \arrow{d}{m} && (\KkG)^{\otimes 2} \arrow{d}{m} \\
\suspinftypl \KZhpo \arrow{rr}{1+f(x)} && \KkG.
\end{tikzcd}
\]
The homotopy commutativity witnesses an equality between two classes in
\[\KkG^{0}\left(\suspinftypl \KZhpo^{\otimes 2}\right) \cong \kps[x_1,x_2].\]
Namely, we have the equation
\[1+f(x_1+_{\Lambda^h \Gamma} x_2) = (1+f(x_1))(1+f(x_2)).\]
Thus, $f$ provides a nonzero formal group homomorphism from $\Lambda^{h} \Gamma$ to $\Gm$.  By the following \Cref{maps-to-gm-is-gm}, we deduce that $\Lambda^{h} \Gamma\cong \Gm$.
\end{proof}

\begin{lem}
\label{maps-to-gm-is-gm}
Suppose that $k$ is a perfect field of characteristic $p$ and that $\Gamma$ is a formal group over $k$ with a nonzero homomorphism $f\colon \Gamma\to \Gm$. Then $\Gamma\cong \Gm$.
\end{lem}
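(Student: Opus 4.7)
I will prove the lemma in two stages: a short leading-term calculation showing $\Gamma$ has height exactly $1$, followed by a Dieudonn\'{e}-module computation upgrading the given $f$ to an isomorphism.

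For the height: the map $f$ intertwines $p$-series, giving $f(x)^p = [p]_{\Gm}(f(x)) = f([p]_\Gamma(x))$ in $k[[x]]$. If $[p]_\Gamma = 0$, the right side vanishes in the domain $k[[x]]$, forcing $f = 0$ and contradicting the hypothesis; so $\Gamma$ has finite height $h$. Writing $[p]_\Gamma(x) = ax^{p^h} + \cdots$ and $f(x) = cx^n + \cdots$ with $a, c \ne 0$, comparison of leading monomials of $f([p]_\Gamma(x)) = f(x)^p$ gives $n p^h = n p$, so $h = 1$.

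I will then apply \Cref{thm:dieudonne}. Both $M_\Gamma = \DM(\Gamma)$ and $M_{\Gm} = \DM(\Gm)$ are rank-$1$ free $W(k)$-modules, with generators $e$ and $u$ normalized so that $Fe = e$, $Ve = pe$ on $M_{\Gm}$ and $Fu = \alpha u$ on $M_\Gamma$; the condition $\dim_k(M_\Gamma/VM_\Gamma)=1$ from \Cref{thm:dieudonne} forces $\alpha \in W(k)^\times$. By full faithfulness of $\DM$, the nonzero map $f$ corresponds to a nonzero $W(k)$-linear map $f_*\colon M_\Gamma \to M_{\Gm}$ sending $u \mapsto c \cdot e$ for some $c \in W(k) \setminus \{0\}$, and $F$-compatibility of $f_*$ then reads $\varphi(c) = \alpha c$. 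Writing $c = p^m c'$ with $c' \in W(k)^\times$, this relation descends upon cancelling $p^m$ to $\varphi(c') = \alpha c'$. The rescaled assignment $g_*(u) := c' \cdot e$ therefore defines a $W(k)$-module isomorphism commuting with $F$, and $V$-compatibility is automatic in rank $1$ from $FV = VF = p$. Hence $g_*$ is a Dieudonn\'{e}-module isomorphism, and \Cref{thm:dieudonne} delivers $\Gamma \cong \Gm$.

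The main subtlety is that $c$ need not itself be a unit, so $f_*$ need not be an isomorphism of underlying $W(k)$-modules; what rescues the argument is that the Frobenius-compatibility equation $\varphi(c) = \alpha c$ is homogeneous in $c$, so dividing out the common factor $p^m$ produces a genuine isomorphism of Dieudonn\'{e} modules. This reduction crucially uses that $\alpha$ lies in $W(k)^\times$, which in turn is equivalent to $\Gamma$ having height exactly $1$; in this sense, the first stage and the second stage are tightly linked.
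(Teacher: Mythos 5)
Your proof is correct, but it takes a genuinely different route from the paper's. The paper argues directly from the structure theory of formal group homomorphisms in characteristic $p$: any nonzero $f\colon \Gamma \to \Gm$ factors as $f = g \circ \phi^n$, where $\phi$ is the relative Frobenius and $g\colon \Gamma^{(n)} \to \Gm$ is an isomorphism; since $k$ is perfect, the Frobenius twist $\Gamma^{(n)}$ differs from $\Gamma$ only by base change along the automorphism $\varphi^n$ of $k$, and since $\Gm$ is defined over $\mathbb{F}_p$ this twist can be undone to give $\Gamma \cong \Gm$. That argument is short, avoids Dieudonn\'{e} theory entirely, and doesn't need to establish the height first. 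Your two-stage argument—forcing $h=1$ by comparing leading monomials in $f([p]_\Gamma(x)) = f(x)^p$, and then running the rank-one Dieudonn\'{e}-module calculation via \Cref{thm:dieudonne}—is more machinery-heavy, but it is internally sound: the observation that the relation $\varphi(c)=\alpha c$ is homogeneous in $c$ (so dividing out $p^m$ gives a unit solution) is exactly the right point, and the check that $V$-compatibility follows from $F$-compatibility plus $FV=p$ and injectivity of $F$ on $M_{\Gm}$ is valid. If anything, your route makes the role of perfectness more visible (through $W(k)$ being a complete DVR with $\varphi$ an automorphism), whereas the paper's proof uses perfectness only implicitly in undoing the Frobenius twist.
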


\begin{proof}
Let $\phi\colon \Gamma\to \Gamma^{(1)}$ denote the relative Frobenius on $\Gamma$. 
In coordinates, $\phi(x)=x^p$ and the group law on $\Gamma^{(1)}$ satisfies $(x+_{\Gamma}y)^p = x^p+_{\Gamma^{(1)}}y^p$.  

Note that any homomorphism of formal groups has derivative which is either everywhere nonzero (in which case the homomorphism is an isomorphism) or identically zero.  In this latter case, the homomorphism factors as the relative Frobenius followed by another homomorphism of formal groups (cf. for instance \cite[III.3.3]{TMFbook}).  It follows that since $f$ is nonzero, there exists an integer $n\geq 0$ such that the map $f$ factors as $g\circ \phi^n$ for some isomorphism of formal groups $g\colon \Gamma^{(n)}\to \Gm$.  
Since $k$ is perfect and $\Gm^{(n)} \cong \Gm$,  we deduce that $\Gamma\cong \Gm$.
\end{proof}


If $K_{k,\Gamma}$ receives a homotopy ring map from $B(0)$, then precomposition with the natural maps 
\[\Sigma^{\infty}_+ \KZhpo \to L_{K(h)} \Sigma^{\infty}_+ \KZhpo \to B(0)\]
produces a non-trivial ring map $\Sigma^{\infty}_+ \KZhpo\to K_{k,\Gamma}$, which by \Cref{Westerland5.5-strengthened} means that $\Lambda^h \Gamma \cong \Gm$.
One consequence of this is that there are examples of height $2$ Morava $K$-theories over $\Fp$ that do not receive an orientation from $B(0)$.  

\begin{exm}\label{exm:dieudonneexterior}
Let $\Gamma$ be the Honda formal group over $\F_p$ at height $h=2$.  By \cite[Lemma 9.8 and preceding]{BuchLaz}, the Dieudonn\'{e} module of $\Gamma$ is given by $D_k/(V - F)$, which is easily seen to be a free $\Z_p$ module on two generators $w_1$ and $w_2$ subject to:
\begin{align*}
Fw_1 &= w_2 & Vw_1 &=w_2 \\
Fw_2 &= pw_1 & Vw_2 &= pw_1.
\end{align*}
By \Cref{cnstr:exterior}, $\DM(\Lambda^2 \Gamma) = \Lambda^2 \DM(\Gamma)$ has a single generator $w_1\wedge w_2$ with 
\[
V(w_1\wedge w_2) = Vw_1 \wedge Vw_2 = -p(w_1\wedge w_2).
\]
It follows from \Cref{exm:ht1dieudonne} that this is not isomorphic to the Dieudonn\'{e} module for the multiplicative group.  Thus, $\Lambda^2 \Gamma$ is not isomorphic to $\Gm$ and so $K_{\F_p, \Gamma}$ does not receive a homotopy ring map from $B(0)$. 
\end{exm}

\subsection{Structured \texorpdfstring{$B(0)$}{B(0)}-orientations of Morava \texorpdfstring{$E$}{E}-theories} \hfill \vspace{2ex} \\
\label{subsec:structuredB0}
By \Cref{Westerland5.5-strengthened}, nontrivial homotopy ring maps $\suspinftypl \Krm(\Z,n+1) \to K_{k,\Gamma}$ can exist only if $\Lambda^h \Gamma \cong \Gm$.  In particular, homotopy ring maps $B(0) \to K_{k,\Gamma}$ can exist only if $\Lambda^h \Gamma \cong \Gm$.  Here, we prove a converse to this fact.  More generally, we prove that if $\Lambda^h \Gamma \cong \Gm$, then the space of $\Einfty$-ring homomorphisms from $B(0)$ to $E_{k,\Gamma}$ is discrete and nonempty.  Our results are a generalization of \cite[Theorem 3.22]{Westerland}, and our proofs are highly inspired by Westerland's arguments.

\begin{cnv}
We fix throughout this \Cref{subsec:structuredB0} a height $h$ formal group $\Gamma$ over a perfect field $k$ of characteristic $p$.
For simplicity, we denote $K_{k,\Gamma}$ by $K$ and $E_{k,\Gamma}$ by $E$. 

Since there is an equivalence of categories between formal groups over $E_0$ and formal groups over the graded ring $E_*$, we will sometimes use the same notation to refer to an object of either category.  We adopt a similar convention for formal groups over $K_0$ and $K_*$.
\end{cnv}

\begin{rec}\label{HL3.4.1}
Hopkins and Lurie \cite{HopkinsLurie} show that there is a non-canonical isomorphism 
\[ E^*(\KZhpo) \cong E_{-*}\llbracket x \rrbracket.\]
Thus, $\mathrm{Spf} (E^*( \KZhpo))$ is a formal group over the ring $E_* \cong W(k)\llbracket u_1,u_2,\cdots,u_{h-1}\rrbracket[u^{\pm}]$.
Reducing this formal group mod the maximal ideal $\mathfrak{m}$, one obtains the formal group $\mathrm{Spf} (K^*(\KZhpo))$ over $K_*$, which was identified in the previous section as $\Lambda^h \Gamma$.
We therefore denote the formal group $\mathrm{Spf} (E^*(\KZhpo))$ by $\detGammatilde$.  It is a deformation of $\Lambda^h \Gamma$.

Though we will not need it here, Hopkins and Lurie in fact explicitly identify $\widetilde{\Lambda^h \Gamma}$ \cite[Theorem 3.4.1]{HopkinsLurie}.   Specifically, they describe the torsion on $\widetilde{\Lambda^h \Gamma}$ as Cartier dual to certain group schemes of alternating maps \cite[Section 3.2, Corollary 3.5.4]{HopkinsLurie}.
\end{rec}

Our goal in \Cref{subsec:structuredB0} is to prove the following result:

\begin{thm}
\label{Einfty-B(0)-orientations}
Let $\Hom(\widetilde{\Lambda^h \Gamma}, \Gm)$ denote the set of maps $\widetilde{\Lambda^h \Gamma} \to \Gm$ of formal groups  over $E_*$, and let $\Isom(\widetilde{\Lambda^h \Gamma}, \Gm) \subset \Hom(\widetilde{\Lambda^h \Gamma}, \Gm)$ denote the subset of isomorphisms.  Then, there are equivalences of spaces
\begin{align*}
\EinftyAlg(\suspinftypl\KZhpo,E)&\simeq \Hom(\widetilde{\Lambda^h \Gamma}, \Gm) \quad\text{and}\\
\EinftyAlg(B(0),E)&\simeq \Isom(\widetilde{\Lambda^h \Gamma}, \Gm).
\end{align*}
In particular, the spaces $\EinftyAlg(\suspinftypl\KZhpo,E)$ and $\EinftyAlg(B(0),E)$ are discrete.
\end{thm}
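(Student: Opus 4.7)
The plan is to use adjunction to reduce $\Einfty$-algebra maps out of $\suspinftypl\KZhpo$ to a spectrum mapping space, identify its $\pi_0$ with $\Hom(\widetilde{\Lambda^h\Gamma},\Gm)$ using the formal group structure on $E^0\KZhpo$, argue that this mapping space is discrete, and then cut down to isomorphisms for $B(0)$. Since $\KZhpo$ is a connected infinite loop space with infinite delooping $\Sigma^{h+1}\HZ$, and since any infinite loop map from a connected space to $\Omega^{\infty}E$ factors through the units, the $\suspinftypl\dashv\Omega^{\infty}$ adjunction yields
\[\EinftyAlg(\suspinftypl\KZhpo,E)\;\simeq\;\Map_{\Einfty}(\KZhpo,\GLE)\;\simeq\;\Map_{\mathrm{Sp}}(\Sigma^{h+1}\HZ,\glE).\]
A homotopy class of such a map is in particular a pointed map $\KZhpo\to\Omega^{\infty}E$, i.e.\ an element $f\in\widetilde{E}^0(\KZhpo)$ with $f(0)=1$; the infinite loop (equivalently $\Einfty$) condition translates to the equation $m^{*}f=f\otimes f$ in $E^0(\KZhpo\times\KZhpo)$, where $m$ is the multiplication on $\KZhpo$. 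Under the identification $E^0\KZhpo\cong E_0\llbracket x\rrbracket$ of \Cref{HL3.4.1}, this is exactly the equation defining a formal group homomorphism $\widetilde{\Lambda^h\Gamma}\to\Gm$, producing the desired bijection on $\pi_0$.

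The main obstacle is showing that the mapping space is discrete, i.e., $\pi_i\Map_{\mathrm{Sp}}(\Sigma^{h+1}\HZ,\glE)=0$ for $i>0$. My approach would be to combine Rezk's logarithm $\ell\colon\glE\to L_{K(h)}E$ with the vanishing $L_{K(h)}\HZ=0$: after $K(h)$-localizing, maps from $\Sigma^{h+1}\HZ$ into $L_{K(h)}E$ are trivial, so the $K(h)$-local portion of the mapping spectrum is controlled by the fiber of $\ell$, which Rezk describes explicitly in terms of transfers/power operations. Combined with an arithmetic fracture square and the fact that $\pi_{*}\glE$ agrees with $\pi_{*}E$ in positive degrees (torsion-free, even-concentrated), a connectivity bookkeeping forces the higher homotopy groups to vanish. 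A parallel approach would be a Goerss--Hopkins obstruction-theoretic argument, showing that the relevant Andre--Quillen cohomology groups vanish because the cotangent complex of $E$ is suitably small.

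For the second equivalence, since $E$ is $K(h)$-local and $B(0)=L_{K(h)}\suspinftypl\KZhpo[z^{-1}]$, the space $\EinftyAlg(B(0),E)$ is the union of the components of $\EinftyAlg(\suspinftypl\KZhpo,E)$ on which $z$ is sent to a unit. Under the bijection established above, I would verify that an element $f\in\Hom(\widetilde{\Lambda^h\Gamma},\Gm)$ sends $z$ to a unit in $E$ if and only if $f$ is an isomorphism: this reduces to a direct $K(h)$-cohomological check using \Cref{Kh-homology-of-KFph}, namely that the image of $z$ under the Tamanoi Bockstein $\suspinftypl\KFph\to\suspinftypl\KZhpo$ picks out a canonical coefficient of the power series $f$ whose unit-ness modulo $\mfrak$ is equivalent to $f\bmod\mfrak$ being an isomorphism $\Lambda^h\Gamma\to\Gm$. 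By rigidity, such a mod-$\mfrak$ isomorphism automatically lifts to an isomorphism of formal groups over $E_0$.
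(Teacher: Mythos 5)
Your reduction via the $\GL_1$ universal property is correct: since $\KZhpo$ is a grouplike $\Einfty$-space corresponding to the connective spectrum $\Sigma^{h+1}\HZ$, the space of $\Einfty$-ring maps $\suspinftypl\KZhpo\to E$ is indeed equivalent to $\Map_{\mathrm{Sp}}(\Sigma^{h+1}\HZ,\glE)$, and your $\pi_0$ identification with $\Hom(\detGammatilde,\Gm)$ via the formal group law on $E^0\KZhpo$ matches what the paper does (Lemma~\ref{EstarAlgebra-maps-is-hom-to-Gm}), as does your treatment of the second equivalence through primitivity of $z$ (Lemmas~\ref{lem:z-is-primitive} and~\ref{EstarAlgebra-maps-is-iso-to-Gm}). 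This is a genuinely different decomposition from the paper, which never passes through $\glE$ and instead runs Goerss--Hopkins obstruction theory directly on $A=L_{K(h)}(E\otimes\suspinftypl\KZhpo)$, reducing discreteness to the perfection of $K_0(\KZhpo)$ (Lemma~\ref{K0B0-is-perfect}).

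The gap is in the discreteness step, and I think it is a real one. You need $[\Sigma^{h+1+i}\HZ,\glE]=0$ for $i\ge 1$, and the ``connectivity bookkeeping'' you gesture at does not go through as stated. The vanishing $L_{K(h)}\HZ=0$ gives $[\Sigma^m\HZ,E]=0$ for all $m$, but $\glE$ is emphatically \emph{not} $K(h)$-local, and the spectrum $\fib\bigl(\glE\to L_{K(h)}\glE\bigr)$ is a bounded-above spectrum that genuinely carries content. Concretely, even at $h=1$ with $E=\KU^{\wedge}_p$, one has $\pi_0\Map_{\mathrm{Sp}}(\Sigma^2\HZ,\glKU^{\wedge}_p)\cong\Zp\ne 0$ (the Snaith orientation gives a nonzero class), yet a naive Postnikov argument of the type you sketch --- using $[\Sigma^m\HZ,\Sigma^{-2n}\rmH E_0]=0$ for $m\ge 1$ and passing to the ``limit'' of the $\tau_{\le -2n}E$ --- would wrongly force $[\Sigma^m\HZ,\tau_{\le 0}E]=0$ and hence $[\Sigma^{h+1}\HZ,\glE]=0$; the mistake is that $\colim_n\tau_{\le -n}E\simeq 0$ is a \emph{colimit}, and $\HZ$ is not compact, so this does not control $[\Sigma^m\HZ,\tau_{\le 0}E]$. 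So to actually kill the higher homotopy you would need the detailed structure of Rezk's fiber $\fib(\ell)$ (not just its connectivity), and that is a substantial computation you have not carried out. The paper's obstruction-theoretic route is therefore not a ``parallel'' option but the one that actually closes this step; your route could conceivably be made to work, but as written the discreteness claim is unsupported.
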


We will prove this by applying Goerss-Hopkins obstruction theory in the following form:

\begin{thm}[\cite{GoerssHopkins}, \cite{Piotr}]\label{thm:goersshopkins}
Let $A$ denote an augmented $\E_{\infty}$-algebra over $E$ such that $\pi_*(A \otimes_{E} K)$ is concentrated in even degrees, and suppose that $\pi_0(A \otimes_E K)$ is a perfect ring.  Then the natural map
$$\E_{\infty}\text{-}\mathrm{Alg}_E(A, E) \to E_*\text{-}\mathrm{Alg}(A_*, E_*)$$
is an equivalence of spaces.  In particular, the former space is discrete.
\end{thm}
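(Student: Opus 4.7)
The plan is to invoke Goerss--Hopkins obstruction theory for $\Einfty$-algebras over a Morava $E$-theory, in the form of \cite{GoerssHopkins} as refined by Pstr\k{a}gowski \cite{Piotr}. The central input is a conditionally convergent obstruction spectral sequence whose homotopy groups compute $\pi_*\,\EinftyAlg_E(A, E)$ component-wise over algebra maps $\varphi \colon A_* \to E_*$, and whose $E_2$-page takes the form
\[E_2^{s,t} = H^s_{\mathrm{AQ}}(A_*/E_*;\, E_{*+t})_{\varphi}\quad \text{for } s \geq 1,\]
together with $E_2^{0,0} = E_*\text{-}\mathrm{Alg}(A_*, E_*)$ and vanishing $E_2^{0,t}$ for $t > 0$ under suitable discreteness conditions. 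Here the groups on the right are graded André--Quillen cohomology of $A_*$ over $E_*$, with coefficients in $E_*$ viewed as an $A_*$-module via the augmentation $\varphi$. The natural map in the theorem statement is the edge homomorphism $\pi_0\,\EinftyAlg_E(A,E) \to E_2^{0,0}$, and reducing the theorem to the edge is equivalent to showing vanishing of $H^s_{\mathrm{AQ}}$ for all $s \geq 1$ together with discreteness of $E_2^{0,*}$.

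The vanishing is supplied by the hypotheses. Because $E_0 \cong W(k)\llbracket u_1, \ldots, u_{h-1}\rrbracket$ is complete Noetherian regular with maximal ideal $\mfrak$ and residue field $k$, and because the augmentation $A \to E$ makes $A_*$ appropriately $\mfrak$-complete, even-ness of the mod-$\mfrak$ reduction $\pi_*(A \otimes_E K) \cong A_*/\mfrak A_*$ lifts by Nakayama to even-ness of $A_*$ itself, together with pro-freeness of $A_*$ as an $E_*$-module. The perfectness hypothesis on $\pi_0(A \otimes_E K) \cong A_0/\mfrak A_0$ then implies, via the classical vanishing $\mathbb{L}_{B/\Fp} \simeq 0$ for perfect $\Fp$-algebras $B$ combined with the deformation-theoretic uniqueness of lifts of perfect algebras along surjections with nilpotent kernel onto Witt-vector bases, that $A_0$ is formally étale over $E_0$. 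Shifting upward via $2$-periodicity and evenness upgrades this to the vanishing of the full (shifted) cotangent complex $\mathbb{L}_{A_*/E_*}$, hence to the vanishing of all André--Quillen cohomology in positive cohomological degree.

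The main obstacle is the careful setup of the spectral sequence in a way that its algebraic $E_2$-page genuinely records $E_*$-algebra maps rather than some continuous or completed variant, and the identification of topological André--Quillen cohomology of $A$ over $E$ with the classical algebraic version of $\mathrm{AQ}(A_*/E_*)$ in this regime. The augmentation hypothesis plays a dual role here: it allows us to treat $A_*$ as a pro-nilpotent extension of the augmentation ideal in a precise sense, so that continuous and ordinary algebra maps into $E_*$ coincide; and it provides the basepoint around which the obstruction theory is cast in a linear, deformation-theoretic framework. A further technical check is that the evenness hypothesis eliminates the odd-bidegree contributions on the $E_2$-page that would otherwise obstruct discreteness of the target mapping space and compatibility with the edge homomorphism.
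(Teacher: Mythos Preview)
Your proposal is correct and follows essentially the same approach as the paper: both invoke the Goerss--Hopkins obstruction spectral sequence (as set up in \cite{GoerssHopkins} and \cite{Piotr}), identify $E_2^{0,0}$ with the set of $E_*$-algebra maps, and collapse the spectral sequence by using perfectness of $\pi_0(A\otimes_E K)$ to kill the relevant cotangent complex. The paper's version is terser, citing the vanishing directly from the literature rather than spelling out the Nakayama/pro-freeness/formal-\'etaleness chain you outline, but the skeleton is the same.
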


\begin{proof}
This is standard, and follows for example from \cite[Corollary 5.11]{Piotr} (see also \cite[Theorem 5.10]{WesterlandSati} or the proof of \cite[Theorem 3.22]{Westerland} for similar statements).
There is a spectral sequence converging to the homotopy groups of the space of $\E_{\infty}$-ring homomorphisms from $A$ to $E$, pointed by the augmentation.  In this spectral sequence, $E^2_{0,0}$ is the set of $E_0$-algebra maps $A_0 \to E_0$ (which are the same as $E_*$-algebra maps from $A_*$ to $E_*$).  The other terms of the $E^2$ page vanish so long as the cotangent complex $L^{\E_{\infty}}_{\pi_0(A)/\pi_0(E)}$ is contractible.  The perfection hypothesis implies that this cotangent complex indeed vanishes (for example, see \cite[Proof of Theorem 7.7]{Piotr}, \cite[Corollary 21.3]{RezkHM}, \cite[Proposition 7.4]{GoerssHopkins}).
\end{proof}

We will apply this in the case $A = L_{K(n)}(E\otimes \Sigma^{\infty}_+\KZhpo).$  Note that the homotopy groups $\pi_* A$ are traditionally denoted
\[\pi_* A = E_*^{\wedge}(\KZhpo). \]
We analyze the structure of $E_*^{\wedge}(\KZhpo)$ in \Cref{RW3.4}, and we check that $K_*(\KZhpo)$ is perfect in \Cref{K0B0-is-perfect}.

\begin{ntn}
For the remainder of this section, we fix an isomorphism
$$E^{*}(\KZhpo) \cong E_*\llbracket x \rrbracket$$
of $E_*$-algebras (cf. \Cref{HL3.4.1}).
There is a Hopf algebra pairing 
\[E^{*}(\KZhpo)\otimes \Estarcts(\KZhpo)\to E_*,\]
which we denote by $\langle\text{--},\text{--}\rangle$.  We choose elements $b_i\in \Estarcts(\KZhpo)$ such that
\[\langle x^j, b_i\rangle =\begin{cases}
1 & i=j\\
0 & i\neq j,
\end{cases}\]
so that there is an isomorphism of $E_*$-modules 
\[\Estarcts(\KZhpo)\cong E_*\{b_0,b_1,\ldots\}_{\mfrak}^{\wedge}.\]  The elements $b_i$ lift the $K_*$-module generators of $K_*(\KZhpo)$.  Since these $K_*$-module generators are in even degrees, their lifts generate $\Estarcts(\KZhpo)$ as indicated \cite[Proposition 8.4(f)]{HovStrick}.  
\end{ntn}

\begin{lem}
\label{RW3.4}
Define the generating function $$b(s)=\sum_i b_i s^i \in E_*^{\wedge}(\KZhpo)\llbracket s\rrbracket.$$ Then $b_0 =1$ and $$b(s)b(t) =  b(s+_{\detGammatilde} t).$$
\end{lem}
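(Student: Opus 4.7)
The plan is to exploit the Hopf-algebra duality between $E^*(\KZhpo)$ and $\Estarcts(\KZhpo)$ under the pairing $\langle\text{--},\text{--}\rangle$, using the fact that the comultiplication on the cohomology side is precisely encoded by the formal group law $\detGammatilde$.

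First I verify $b_0 = 1$. The unit element $1\in \Estarcts(\KZhpo)$ is the image of $1\in E_*$ under the map induced by the basepoint inclusion $\ast\hookrightarrow \KZhpo$. Under the pairing, $\langle f,1\rangle$ is therefore the evaluation of $f$ at the basepoint. Since $x$ is a coordinate on the formal group $\detGammatilde$, it vanishes at the basepoint, so $\langle x^k,1\rangle = \delta_{k,0}$. By the defining property of the dual basis, this forces $1 = b_0$.

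Next I prove the main identity by pairing both sides with $x^k$ for each $k\ge 0$. Writing $m\colon \KZhpo\times\KZhpo\to \KZhpo$ for the H-space multiplication, the associated coproduct on cohomology is $m^*(x) = x_1 +_{\detGammatilde} x_2 \in E^*((\KZhpo)^{\times 2})\cong E_*\llbracket x_1,x_2\rrbracket$, by the very definition of $\detGammatilde$ recalled in \Cref{HL3.4.1}. The product on homology is adjoint to this coproduct:
\[ \langle x^k, b_i b_j\rangle \;=\; \langle m^*(x^k), b_i\otimes b_j\rangle \;=\; [x_1^i x_2^j]\,(x_1+_{\detGammatilde} x_2)^k, \]
where the last equality uses the dual-basis identity $\langle x_1^p x_2^q, b_i\otimes b_j\rangle = \delta_{p,i}\delta_{q,j}$ for the external pairing. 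Summing against $s^i t^j$ yields
\[ \langle x^k, b(s)b(t)\rangle \;=\; (s +_{\detGammatilde} t)^k. \]
On the other hand, from $\langle x^k,b_i\rangle = \delta_{k,i}$ we get directly
\[ \langle x^k, b(s+_{\detGammatilde} t)\rangle \;=\; (s+_{\detGammatilde} t)^k. \]
Since $\{x^k\}_{k\ge 0}$ topologically generates $E^*(\KZhpo)$ as an $E_*$-module and the pairing is perfect, the two series $b(s)b(t)$ and $b(s+_{\detGammatilde} t)$ in $\Estarcts(\KZhpo)\llbracket s,t\rrbracket$ must coincide.

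The only step requiring any care is the compatibility of the pairing with the completion: one needs that the perfect duality between $E^*(\KZhpo)\cong E_*\llbracket x\rrbracket$ and $\Estarcts(\KZhpo)\cong E_*\{b_0,b_1,\ldots\}_\mfrak^\wedge$ carries over to external products so that coefficient extraction against the dual basis is well defined. This is the standard fact for a formal group over a complete local ring and follows from the K\"unneth isomorphism $\Estarcts((\KZhpo)^{\times 2})\cong \Estarcts(\KZhpo)\hat\otimes_{E_*}\Estarcts(\KZhpo)$, which in this setting is a consequence of the freeness of $\Estarcts(\KZhpo)$ as a complete $E_*$-module. Once this is in hand, the calculation above is entirely formal.
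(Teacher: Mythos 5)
Your proof is correct and takes essentially the same approach as the paper: in both cases the key fact is that the comultiplication $m^*$ on $E^*(\KZhpo)\cong E_*\llbracket x\rrbracket$ is determined by $m^*(x) = x_1 +_{\detGammatilde} x_2$, paired against the dual basis $\{b_i\}$. The paper introduces structure constants $a^n_{ij}$ for the product $b_ib_j$, observes they equal the coefficients of $\Delta(x^n)=(x_1+_{\detGammatilde}x_2)^n$, and manipulates generating functions; you instead pair both sides of the claimed identity directly against $x^k$, which is the same computation dualized. Your argument for $b_0=1$ via evaluation at the basepoint is arguably a bit slicker than the paper's use of counitality (showing $b_0b_n=b_n$ for all $n$), but neither step is a substantive departure.
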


\begin{proof}
Our argument is analogous to the proof of \cite[Theorem 3.4]{Ravenel-Wilson}; see also the proof of \Cref{Westerland5.5-strengthened}.

Define constants $a^n_{ij}$ such that $b_i b_j = \sum_n a^n_{ij}b_n$.  It follows that
$$ \langle \Delta(x^n), b_i\otimes b_j\rangle =  \langle x^n, b_ib_j\rangle = a_{ij}^n,$$ where $\Delta$ denotes the comultiplication on $E^*(\KZhpo)$.   In particular, $$\Delta(x^n) = \sum a^n_{ij} x^i \otimes x^j,$$ so the constants $a^1_{ij}$ are coefficients for the formal group law of $\detGammatilde$ under the coordinate $x$.

To prove the first statement, we note that by unitality, 
\[\langle  \Delta(x^n), b_i\otimes b_0\rangle = a_{i0}^n =\begin{cases}
1 & i=n\\
0 & \text{otherwise}.
\end{cases}\]
Thus $b_0b_n=b_n$ and so $b_0=1$.

For the second statement, consider the equalities:
\begin{align*}
b(s)b(t) &= \sum b_i b_j s^it^j \\
&= \sum_{i,j}\left(\sum_{n}a_{ij}^nb_n\right) s^it^j\\
&= \sum_nb_n\sum_{i,j}a_{ij}^ns^it^j\\
&=\sum_{n} b_n \left(\sum a_{ij}^1s^it^j\right)^n\\
&=\sum_n b_n(s+_{\detGammatilde}t)^n\\
&=b(s+_{\detGammatilde}t).
\end{align*}

\end{proof}

Next, we check the perfection hypothesis of \Cref{thm:goersshopkins}:
\begin{lem}
\label{K0B0-is-perfect}
The ring $K_0(\KZhpo)$ is perfect.
\end{lem}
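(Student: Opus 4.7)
The plan is to exploit the formula of the preceding lemma, reduced modulo the maximal ideal $\mathfrak{m}$ of $E_0$. This yields identities in $K_0(\KZhpo)$:
\[ b(s)\,b(t) = b(s +_G t), \qquad b_0 = 1, \]
where $G = \Lambda^h\Gamma$. Iterating gives $b(s)^p = b([p]_G(s))$. The Dieudonn\'{e} description of $\Lambda^h \Gamma$ (as the top exterior power of a rank-$h$ Dieudonn\'e module) shows that it is a formal group of height exactly $1$, so in characteristic $p$ its $p$-series factors through the relative Frobenius as $[p]_G(s) = \bar{u}\,\tilde{V}(s^p)$ for some unit $\bar{u} \in k^{\times}$ and power series $\tilde V(t) = t + O(t^2)$ in $k\llbracket t \rrbracket$.

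In characteristic $p$, since $K_0(\KZhpo)$ is commutative, we have $b(s)^p = \sum_i b_i^p s^{pi}$. Substituting $t = s^p$ into the identity above and equating coefficients of $t^i$ yields an upper-triangular relation
\[ b_i^p = \bar{u}^i\, b_i + \sum_{j < i} \lambda_{j,i}\, b_j, \]
with unit diagonal entries $\bar u^i$ and certain $\lambda_{j,i} \in k$. Since $E_*^{\wedge}(\KZhpo)$ is the $\mathfrak{m}$-adic completion of a free $E_*$-module on the $b_i$, flatness gives a base-change identification $K_*(\KZhpo) \cong K_* \otimes_{E_*} E_*^{\wedge}(\KZhpo)$ exhibiting $\{b_i\}_{i\ge 0}$ as an honest (uncompleted) $k$-basis of $K_0(\KZhpo)$ with $b_0 = 1$.

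To prove perfection, we verify that the Frobenius $F\colon a \mapsto a^p$ on $K_0(\KZhpo)$ is bijective. For surjectivity we induct on $i$: the case $i=0$ is $b_0 = F(1)$, and for $i \ge 1$ the triangular relation yields $\bar u^i b_i = b_i^p - \sum_{j<i}\lambda_{j,i}\,b_j \in \mathrm{Im}(F)$, so $b_i = \bar u^{-i}(\bar u^i b_i) \in \mathrm{Im}(F)$ using $k = k^p \subseteq \mathrm{Im}(F)$. For injectivity, if $a = \sum_i c_i b_i$ is a nonzero finite $k$-linear sum with largest nonzero coefficient $c_N$, then additivity of $F$ in characteristic $p$ gives $a^p = \sum_i c_i^p b_i^p$, whose coefficient of $b_N$ is $\bar u^N c_N^p \ne 0$ (as no higher $b_i^p$ contributes to $b_N$), so $a^p \ne 0$. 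The main subtlety is the identification of the $b_i$ as a genuine $k$-basis of $K_0(\KZhpo)$; this follows from flatness of the pro-free module $E_*^{\wedge}(\KZhpo)$, and conceptually reflects the fact that $\mathrm{Spec}\,K_0(\KZhpo)$ is the Cartier dual of the connected height-$1$ formal group $\Lambda^h\Gamma$, hence ind-\'{e}tale (and automatically perfect) over $k$.
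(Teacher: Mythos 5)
Your proof is correct and follows essentially the same route as the paper: reduce \Cref{RW3.4} mod $\mathfrak{m}$ to get $b(s)^p = b([p]_{\Lambda^h\Gamma}(s))$, use that $\Lambda^h\Gamma$ has height one to extract the triangular relation $b_i^p = \bar u^i b_i + (\text{lower})$ with $\bar u$ a unit, and conclude that Frobenius is bijective. The paper phrases the last step via a filtration on $K_0\{b_0,b_1,\ldots\}$ with Frobenius acting by units on the associated graded, whereas you carry out the surjectivity/injectivity induction explicitly, but this is a cosmetic difference.
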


\begin{proof}
We will also denote by $b_i \in K_0(\KZhpo)$ the image of the generators $b_i \in \Estarcts(\KZhpo)$ (multiplied by an appropriate power of $u$ to be in degree 0).   This gives an isomorphism of coalgebras $K_0(\KZhpo)\cong k\{b_0,b_1,\ldots\}$ where $\Delta(b_i)=\sum_{j+k=i}b_j\otimes b_k$.

As in \Cref{RW3.4}, define the generating function $b(s) = \sum b_i s^i \in K_0(\KZhpo)\llbracket s\rrbracket$.
The formal group $\detGamma$ is of height one, so $$[p]_{\detGamma}(x) = a_1x^p + \sum_{i=2}^{\infty}a_ix^{ip},$$ where $a_i\in \pi_0(K)$ and $a_1$ is a unit.
Applying \Cref{RW3.4}, we see that $$b(s)^p = b([p]_{\detGamma}(s)).$$ Comparing the coefficients of $s^{ip}$, we find relations of the form \[
b_i^p=a_1^ib_i+\sum_{0\leq j<i} c_jb_j \]
for some $c_j \in K_0(\KZhpo)$.  
Now consider the filtration on $K_0(\KZhpo)$ as a $K_0$-module given by
\[K_0\{b_0\}\subseteq K_0\{b_0,b_1\}\subseteq \cdots .\]
The previous relation implies that the Frobenius is a filtered map and that on the associated graded, the Frobenius acts by $a_1^i$. Since $a_1$ is a unit, the Frobenius map is an isomorphism and $K_0(\KZhpo)$ is perfect.
\end{proof}

Applying \Cref{thm:goersshopkins}, we conclude that the natural map
\[ \E_{\infty}\text{-}\mathrm{Alg}_E(L_{K(n)}(E\otimes \Sigma^{\infty}_+ \KZhpo) , E) \to \EstarAlg( \Estarcts(\KZhpo), E_*)\]
is an equivalence.  We now compute this latter set of $E_*$-algebra maps.

\begin{lem}
\label{EstarAlgebra-maps-is-hom-to-Gm}
There is a bijection 
\[\EstarAlg(\Estarcts \KZhpo, E_{*})\cong \Hom(\detGammatilde, \Gm).\]
\end{lem}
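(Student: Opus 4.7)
The plan is to extract the bijection directly from \Cref{RW3.4}, interpreting the generating function $b(s) = \sum_i b_i s^i$ as providing the universal ``point'' of $\Hom(\detGammatilde, \Gm)$.

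First, I would define the forward map. Given an $E_*$-algebra homomorphism $\phi \colon \Estarcts(\KZhpo) \to E_*$, form the power series
\[
f(s) = \phi(b(s)) = \sum_{i \ge 0} \phi(b_i)\, s^i \in E_*\llbracket s \rrbracket.
\]
Applying $\phi$ (extended coefficient-wise to power series) to the identities $b_0 = 1$ and $b(s) b(t) = b(s +_{\detGammatilde} t)$ of \Cref{RW3.4}, we obtain $f(0) = 1$ and $f(s) f(t) = f(s +_{\detGammatilde} t)$. Setting $g(s) = f(s) - 1$, we have $g(0) = 0$ and $(1 + g(s))(1 + g(t)) = 1 + g(s +_{\detGammatilde} t)$, which is precisely the condition that $g$ defines a homomorphism of formal groups $\detGammatilde \to \Gm$ in the standard coordinate on $\Gm$ with formal group law $x +_{\Gm} y = x + y + xy$. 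This produces the assignment $\phi \mapsto g$.

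Next, I would construct the inverse. Given $g \in \Hom(\detGammatilde, \Gm)$, write $1 + g(s) = \sum_i c_i s^i$ with $c_0 = 1$, and define $\phi(b_i) = c_i$. Since $\Estarcts(\KZhpo) \cong E_*\{b_0, b_1, \ldots\}_{\mfrak}^{\wedge}$ as an $E_*$-module and $E_*$ is $\mfrak$-adically complete, this uniquely determines an $E_*$-linear map $\phi$. To check $\phi$ is multiplicative, recall that in the proof of \Cref{RW3.4} one has $b_i b_j = \sum_n a_{ij}^n b_n$ where the structure constants $a_{ij}^n$ are defined by $(s +_{\detGammatilde} t)^n = \sum_{i,j} a_{ij}^n s^i t^j$. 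Multiplicativity of $\phi$ is then the statement $\sum_{i,j} \phi(b_i) \phi(b_j) s^i t^j = \sum_n \phi(b_n)(s +_{\detGammatilde} t)^n$, i.e.\ $f(s)f(t) = f(s +_{\detGammatilde} t)$, which is exactly the hypothesis that $g$ is a homomorphism to $\Gm$.

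Finally, it is immediate from the definitions that the two assignments are mutually inverse, since both are just the identification ``read off the coefficients $\phi(b_i)$ as the coefficients of $1 + g(s)$.'' I do not anticipate a serious obstacle: the only point that requires any care is verifying that the candidate $\phi$ is well-defined as a map out of the $\mfrak$-completed module and respects multiplication on all of $\Estarcts(\KZhpo)$ rather than just on the polynomial subalgebra, but this is routine given $E_*$-linearity and $\mfrak$-adic completeness of the target.
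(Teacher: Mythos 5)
Your proof is correct and is essentially identical to the paper's: both directions rest on \Cref{RW3.4} and the identification of an $E_*$-module map with the sequence $(\phi(b_i))$, with multiplicativity translating directly into the formal-group-homomorphism condition. You spell out the inverse direction and the $\mathfrak{m}$-adic completeness point a bit more carefully than the paper does, but there is no substantive difference in approach.
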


\begin{proof}
While this is a formal consequence of Cartier duality, we will need the explicit algebraic form of the identification, so we spell it out here.  By \Cref{rmk:Estarmodmap}, an $E_*$-\emph{module} map $f\colon \Estarcts\KZhpo\to E_{*}$ is determined by where it sends the generators $b_i$.  Thus, the set of such $E_*$-module maps $f$ is in bijection with the set of power series $g(x) \in E_*\llbracket x\rrbracket$ by sending a map $f$ to the power series  $$g(x)=\sum f(b_i) x^i=: f(b(x)).$$

If $f$ is a ring homomorphism, we have by \Cref{RW3.4} that $g(0)=f(b_0)=1$ and
\begin{align*}
g(x)g(y)&=f(b(x))f(b(y))\\
&=f(b(x)b(y))\\
&=f(b(x+_{\detGammatilde}y)) \\
&=g(x+_{\detGammatilde}y),
\end{align*}
so $g-1$ is a homomorphism $\detGammatilde \to \Gm$ of formal groups over $E_*$. 

Conversely, if $g-1$ is a homomorphism $\detGammatilde \to \Gm$ of formal groups over $E_*$, then
\[f(b(x)b(y))=f(b(x+_{\detGammatilde}y))=g(x+_{\detGammatilde}y)=g(x)g(y)=f(b(x))f(b(y)).\]
Equivalently, we have $$\sum f(b_ib_j)x^iy^j = \sum f(b_i)f(b_j)x^iy^j,$$ and equating terms of the generating functions we see that $f(b_ib_j)=f(b_i)f(b_j)$.  Thus,  $f$ determines a map of $E_*$-algebras. 
\end{proof}

\begin{rmk}\label{rmk:Estarmodmap}
Since $\Estarcts \KZhpo$ is not free as an $E_*$-module, but rather the $\mathfrak{m}$-completion of the free $E_*$-module on the $b_i$'s, we justify here why an $E_*$-module map $\Estarcts \KZhpo \to E_*$ is determined by where it sends the generators $b_i$.  The key point is that $E_*$ is $\mathfrak{m}$-complete, so such a map is determined by its reduction modulo powers of $\mathfrak{m}$.
\end{rmk}

For the next lemma, recall from \Cref{rmk:KFphsplitting} the element 
\[
z: Z \to L_{K(n)} \Sigma^{\infty}_+ \KFph.
\]
where $Z$ is a $K(n)$-local Picard element.

\begin{lem} \label{lem:z-is-primitive}
The map
\[K_*z \colon K_*Z \to K_*(\KFph)\]
factors through the sub-$K_*$-module of coalgebra primitives in $K_*(\KFph)$.
\end{lem}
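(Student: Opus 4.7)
The plan is to reduce this to the analogous statement for $K(h)_*$-homology, where it is essentially built into how $z$ was defined.  By the conventions fixed at the end of the introduction, the image of $K(h)_*(z)$ is exactly the $K(h)_*$-submodule of
\[K(h)_*(\KFph) \cong K(h)_*[f_0]/(f_0^p - (-1)^{h-1} v_h f_0)\]
spanned by $f_0$, and by the Ravenel-Wilson computation recorded in \Cref{Kh-homology-of-KFph}, the element $f_0$ is a coalgebra primitive.  So the lemma is immediate in the special case $K = K(h)$.

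To transfer this to a general $2$-periodic Morava $K$-theory $K = K_{k,\Gamma}$ of height $h$, the plan is to invoke naturality of the Ravenel-Wilson Hopf algebra computation in the choice of complex-oriented Morava $K$-theory.  Concretely, I would use the Hopkins-Lurie description of $E^*(\KZhpo)$ recalled in \Cref{HL3.4.1}: the formal group $\widetilde{\Lambda^h\Gamma} = \Spf E^*(\KZhpo)$ reduces modulo the maximal ideal $\mfrak \subset E_*$ to $\Lambda^h\Gamma = \Spf K^*(\KZhpo)$, and this reduction is natural.  Dualizing to $K$-homology and using \Cref{RW3.4}, one sees that $K_*(\KFph)$ is a Hopf algebra of exactly the same shape as in the $K(h)$ case, in which the image of $f_0$ under the natural map from $K(h)_*(\KFph)$ (obtained after base-change to a sufficiently large algebraically closed field, where all height-$h$ Morava $K$-theories become equivalent) remains a coalgebra primitive $\tilde{f}_0$, and the image of $K_*(z)$ is the $K_*$-span of $\tilde{f}_0$.

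Since $\tilde{f}_0$ is primitive and the image of $K_*(z)$ is a rank-one $K_*$-submodule generated by $\tilde{f}_0$ (using that $Z$ is $K(h)$-locally invertible, so $K_*(Z)$ is free of rank one), the image of $K_*(z)$ lies in the sub-$K_*$-module of coalgebra primitives, as required.

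The main obstacle in this plan is formalizing the base change from $K(h)$ to an arbitrary $K_{k,\Gamma}$, since there is no direct ring map $K(h)\to K_{k,\Gamma}$ in general.  I expect the cleanest workaround is to run the entire argument at the level of the Morava $E$-theory $E = E_{k,\Gamma}$: use the Hopkins-Lurie computation of $E^*(\KZhpo)$ together with the Hopf algebra identification of \Cref{RW3.4} to see that the primitivity of a Picard-summand-generating class is a formal group theoretic property preserved under reduction modulo $\mfrak$, and hence passes to $K_*(\KFph)$.
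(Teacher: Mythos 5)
The proposal has the right starting intuition---reduce to the $K(h)$ case where primitivity of $f_0$ is already recorded in \Cref{Kh-homology-of-KFph}---but the transfer from $K(h)$ to a general $2$-periodic $K = K_{k,\Gamma}$ is not carried out, and the route you sketch would not work as written. You explicitly flag this yourself (``the main obstacle... is formalizing the base change''), and your proposed workaround of lifting to $E$-theory does not remove the obstacle: just as there is no ring map $K(h) \to K_{k,\Gamma}$, there is no natural ring map from the Honda $E$-theory to $E_{k,\Gamma}$. Moreover, the lemmas you invoke to supply the Hopf-algebra structure, namely \Cref{RW3.4} and \Cref{HL3.4.1}, concern $\KZhpo$, not $\KFph$; they give the Hopf algebra $\Estarcts(\KZhpo)$ and the formal group $\widetilde{\Lambda^h\Gamma}$, whereas the lemma at hand is a statement about $K_*(\KFph)$. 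These are different spaces, and one cannot read off the shape of $K_*(\KFph)$ or the primitivity of the image of $z$ in it from those results. Finally, the phrase ``the image of $f_0$ under the natural map from $K(h)_*(\KFph)$'' presupposes a map that does not exist.

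The paper closes this gap with a purely homotopy-theoretic zig-zag that avoids any base change: it considers the coalgebra maps
\[
\pi_*\bigl(K \otimes \suspinftypl\KFph\bigr) \xrightarrow{\ \phi_1\ } \pi_*\bigl(K \otimes K(h) \otimes \suspinftypl\KFph\bigr) \xleftarrow{\ \phi_2\ } \pi_*\bigl(K(h) \otimes \suspinftypl\KFph\bigr).
\]
Since $K$ is a field spectrum and $K \otimes K(h)$ is a nonzero $K$-module, $\phi_1$ is injective, so primitivity in $K_*(\KFph)$ can be checked after applying $\phi_1$. On the other side, $\phi_2$ is a coalgebra map, so $\phi_2(f_0)$ is primitive. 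Since the image of $z$ in $K(h)_*(\KFph)$ is $K(h)_*\{f_0\}$ and both $\phi_1(z)$ and $\phi_2(f_0)$ land in the image of $(K\otimes K(h))_*(Z)$, one concludes that $\phi_1(z)$ is primitive, hence so is $z$. If you want to repair your proposal, replacing the base-change heuristic with this smash-product zig-zag is exactly the missing idea.
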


\begin{proof}
Consider the diagram
\[
\begin{tikzcd}[cramped, column sep=small]
K_* \KFph \arrow[r, "\phi_1"{yshift=1pt}, pos=0.4] &
K'_* \KFph
& \arrow[l,"\phi_2"' {yshift=1pt}, pos=0.35] K(h)_*\otimes  \KFph \\
K_* Z \arrow[r] \arrow[u, "K_*z"] & K'_* Z \arrow[r]\arrow[u, "K'_*z"] & K(h)_*Z \arrow[u, "K(h)_*z"] 
\end{tikzcd}
\]
where $K(h)$ denotes the $(2p^h-2)$-periodic Morava $K$-theory associated to the Honda formal group and we have set $K' = K\otimes K(h)$.  

Note that the maps $\phi_1$ and $\phi_2$ in the top row are coalgebra maps, and each of the groups in the bottom row are rank $1$ free modules over their respective coefficient rings -- this is clear for Morava $K$-theories and  true for $K'$ because it is a wedge of Morava $K$-theories.

In \Cref{Kh-homology-of-KFph} and \Cref{rmk:KFphsplitting}, we noted that the image of $K(h)_*z$ is generated by an element $f_0$ which is primitive in $K(h)_*\KFph$.  It follows that $\phi_2(f_0)$ is also primitive, and consequently the image of $K'_*z$ is a $1$-dimensional $K'_*$-submodule of primitives in $K'_*\KFph$.  On the other hand, since $\phi_1$ is injective, we can check whether an element in $K_*\KFph$ is primitive after applying $\phi_1$.  It follows from the commutativity of the left square that anything in the image of $K_*z$ is primitive, as desired.  

\end{proof}

\begin{lem}
\label{EstarAlgebra-maps-is-iso-to-Gm}
The bijection of \Cref{EstarAlgebra-maps-is-hom-to-Gm} restricts to a bijection
\[\EstarAlg(\Estarcts B(0), E_{*})\cong \Isom(\Lambda^h\Gamma, \Gm).\]
\end{lem}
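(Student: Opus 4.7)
The plan is to combine the bijection of \Cref{EstarAlgebra-maps-is-hom-to-Gm} with the fact that the image $\bar{z}$ of $z$ in $\Estarcts \KZhpo$ is a generator of the rank-one $E_*$-module of primitives. First I would observe that, since $\pi_* L_{K(h)}(E \otimes B(0)) \cong \Estarcts(\KZhpo)[\bar{z}^{-1}]$, giving an $E_*$-algebra map $\Estarcts B(0) \to E_*$ is equivalent to giving an $E_*$-algebra map $\tilde{f} \colon \Estarcts\suspinftypl \KZhpo \to E_*$ such that $\tilde{f}(\bar{z})$ is a unit. Under the bijection of \Cref{EstarAlgebra-maps-is-hom-to-Gm}, such an $\tilde{f}$ corresponds to a formal group homomorphism $\phi(x) = \tilde{f}(b(x)) - 1 \colon \detGammatilde \to \Gm$. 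Since both source and target are one-dimensional formal groups over the local ring $E_*$, a standard argument shows that $\phi$ is an isomorphism if and only if its leading coefficient $\tilde{f}(b_1) \in E_*$ is a unit. Thus the problem reduces to showing that $\tilde{f}(\bar{z})$ is a unit if and only if $\tilde{f}(b_1)$ is a unit.

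To bridge these, I would analyze $\bar{z}$ directly. By \Cref{lem:z-is-primitive}, $\bar{z}$ is a coalgebra primitive. Using the explicit comultiplication $\Delta(b_n) = \sum_{i+j=n} b_i \otimes b_j$, which is dual to polynomial multiplication in $E^*(\KZhpo) \cong E_*\llbracket x \rrbracket$, one checks that the $E_*$-submodule of primitives in $\Estarcts \KZhpo$ is precisely the free rank-one $E_*$-module generated by $b_1$. Hence $\bar{z} = c \cdot b_1$ for some $c \in E_*$.

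The main obstacle, and the heart of the proof, is to show $c$ is a unit in $E_*$. I would do this by reducing modulo the maximal ideal $\mfrak$. The proof of \Cref{thm:BocksteinMain} already shows that the image of $f_0 \in K(h)_*(\KFph)$ under the Bockstein-induced map in $K(h)$-homology is nonzero, since the further composite into $K(h)_*(W_{h-1})$ is nonzero. Hence the reduction $\bar{z} \bmod \mfrak$ is a nonzero primitive in $K_{k,\Gamma *}(\KZhpo)$, and since primitives there also form a rank-one module generated by the image of $b_1$, the residue $\bar{c}$ is a unit in the graded field $E_*/\mfrak$. Since $E_*$ is local, $c$ itself is a unit, and consequently $\tilde{f}(\bar{z}) = c \cdot \tilde{f}(b_1)$ is a unit exactly when $\tilde{f}(b_1)$ is, completing the restriction to isomorphisms.
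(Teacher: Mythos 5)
Your overall strategy mirrors the paper's: identify the primitives of $\Estarcts\KZhpo$ as a rank-one module generated by $b_1$, relate $\bar{z}$ to $b_1$, and conclude that sending $\bar{z}$ to a unit is equivalent to sending $b_1$ to a unit. However, there is a genuine gap in the second paragraph. You write ``By \Cref{lem:z-is-primitive}, $\bar{z}$ is a coalgebra primitive'' and then deduce the exact equality $\bar{z}=c\cdot b_1$ in $\Estarcts\KZhpo$. But \Cref{lem:z-is-primitive} only asserts that the image of $z$ is primitive in \emph{Morava $K$-homology} $K_*(\KFph)$; it says nothing about primitivity in $\Estarcts\KZhpo$ over $E_*$. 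Primitivity over $E_*$ is a strictly stronger claim, and the proof of \Cref{lem:z-is-primitive} (a zig-zag through $K\otimes K(h)$, using that $K_*$ is a field) does not port to $E$ in any obvious way. Consequently the identity $\bar z = c\cdot b_1$ in $\Estarcts\KZhpo$, and hence the equality $\tilde f(\bar z) = c\cdot\tilde f(b_1)$ you use in the final step, is not established.

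The good news is that the argument needs only the mod-$\mfrak$ version of this identity, and that is exactly what the paper proves. Since $E_*$ is a local ring, $\tilde f(\bar z)$ is a unit if and only if its image in $K_* = E_*/\mfrak$ is nonzero, and likewise for $\tilde f(b_1)$. So it suffices to know that $\bar z$ and $b_1$ have images in $K_*(\KZhpo)$ spanning the same one-dimensional $K_*$-line; this is legitimately obtained from \Cref{lem:z-is-primitive} together with the computation of the primitives of $K_*(\KZhpo)$, plus the nonvanishing you extract from \Cref{thm:BocksteinMain}. If you replace ``$\bar z = c\cdot b_1$ in $\Estarcts\KZhpo$'' with ``$\bar z \equiv c\cdot b_1 \bmod \mfrak\cdot\Estarcts\KZhpo$ for some $c$ whose residue in $K_*$ is a unit,'' the rest of your argument closes up and coincides with the paper's.
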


\begin{proof}
Under the identification given in the proof of \Cref{EstarAlgebra-maps-is-hom-to-Gm}, the subset of isomorphisms of formal groups $$\Isom(\Lambda^h\Gamma, \Gm) \subset \Hom(\Lambda^h\Gamma, \Gm)$$ corresponds to the subset of maps $f\in \EstarAlg( \Estarcts (\KZhpo), E_*)$ that send $b_1$ to a unit in $E_*$.  

On the other hand, let us choose a generator $y \in \Estarcts (\KZhpo)$ of the image of the map $$z: \Estarcts(Z) \to\Estarcts(\KZhpo).$$  Then we have an isomorphism $\Estarcts ( B(0)) \cong \Estarcts( \KZhpo) [y^{-1}]$, independent of the choice of generator $y$.  The subset $$\EstarAlg( \Estarcts (B(0)), E_*)\subset \EstarAlg( \Estarcts (\KZhpo), E_*)$$ corresponds to the maps where $y$ is sent to a unit.

It therefore suffices to show that $y$ is sent to a unit if and only if $b_1$ is sent to a unit.  To see this, it is enough to show that the images of $y$ and $b_1$ under the reduction map $ \Estarcts (\KZhpo) \to K_*(\KZhpo)$ generate the same 1-dimensional $K_*$-subspace.  This is true because the image of $y$ is primitive by \Cref{lem:z-is-primitive}, but since $K^*(\KZhpo)$ is a power series ring on one variable, the primitives of $K_*(\KZhpo)$ form a $1$-dimensional $K_*$-subspace generated by the image of $b_1$.
 \end{proof}

Combining \Cref{thm:goersshopkins} with Lemmas \ref{RW3.4}, \ref{K0B0-is-perfect}, \ref{EstarAlgebra-maps-is-hom-to-Gm}, \ref{lem:z-is-primitive}, and \ref{EstarAlgebra-maps-is-iso-to-Gm}, we have now proved \Cref{Einfty-B(0)-orientations}.  It now follows that an $\mathbb{E}_\infty$-ring homomorphism 
\[B(0) \to E_{k,\Gamma}\]
exists so long as $\widetilde{\Lambda^h \Gamma} \cong \Gm$.  By the following proposition, this condition is easily checkable:

\begin{prop}\label{lem:Gmrigid}
There is an isomorphism $\detGammatilde \cong \Gm$ of formal groups over $E_*$ if and only if there is an isomorphism $\detGamma \cong \Gm$ of formal groups over $K_*$.  
\end{prop}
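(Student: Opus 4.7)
The forward implication is immediate: reducing any isomorphism $\detGammatilde \cong \Gm$ of formal groups over $E_*$ modulo the maximal ideal $\mfrak \subset E_0$ yields an isomorphism $\detGamma \cong \Gm$ over $K_*$, using that $\detGammatilde$ is a deformation of $\detGamma$ as recalled in \Cref{HL3.4.1}, and that $\Gm$ over $E_*$ similarly reduces to $\Gm$ over $K_*$.

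For the converse, observe first that $\detGamma$ has height $1$ (being isomorphic to $\Gm$), and consequently so does its deformation $\detGammatilde$. Fixing an isomorphism $\detGamma \cong \Gm$ over $k$, one may view $\detGammatilde$ as a deformation of $\Gm$ over the complete local Noetherian ring $E_0 \cong W(k)\llbracket u_1,\ldots,u_{h-1}\rrbracket$. The crux of the argument is an appeal to Lubin--Tate deformation theory: the universal deformation ring of the height-$1$ formal group $\Gm$ over $k$ is simply $W(k)$, so deformations of $\Gm$ to any complete local $W(k)$-algebra $R$ with residue field $k$ are classified by $W(k)$-algebra maps $W(k) \to R$, of which there is exactly one. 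The unique such deformation is $\Gm$ itself over $R$. Applying this with $R = E_0$ produces the desired isomorphism $\detGammatilde \cong \Gm$ of formal groups over $E_0$.

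To conclude, I would transfer this isomorphism from $E_0$ to $E_*$ using that $E_* \cong E_0[u^{\pm 1}]$ is obtained by adjoining an invertible degree-$2$ element, so that both $\detGammatilde$ and $\Gm$ over $E_*$ are base changes of their $E_0$-counterparts; any isomorphism between the latter lifts uniquely to one between the former. The main subtlety I foresee is being careful about the distinction between $\star$-isomorphisms of deformations and general isomorphisms of formal groups, as well as keeping straight the graded vs.~ungraded conventions for formal groups over $E_*$ versus $E_0$. Neither complication affects the existence statement claimed, since a $\star$-isomorphism is in particular an isomorphism and the grading manipulation is entirely formal.
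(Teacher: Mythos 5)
Your proposal is correct and uses essentially the same approach as the paper: both proofs reduce modulo $\mfrak$ for the forward direction and appeal to Lubin--Tate deformation theory for the converse, using that the universal deformation of the height-$1$ formal group $\Gm$ over $k$ is $\Gm$ itself (over $W(k)$, or $W(k)[u^{\pm}]$ in the graded setting). The only cosmetic difference is that the paper works directly with graded formal groups over $W(k)[u^{\pm}]$, whereas you work over $E_0$ and then pass to $E_*$; this does not change the substance of the argument.
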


\begin{proof}
If $\detGammatilde\cong (\Gm)_{E_*}$, then reducing this isomorphism modulo $\mfrak$ yields an isomorphism $\detGamma\cong(\Gm)_{K_*}$. 
Conversely, suppose $\gamma \colon \detGamma\to(\Gm)_{K_*}$ is an isomorphism.  Then, the pair $(\detGammatilde, \phi)$ determines a deformation of $(\Gm)_{K_*}$.  Since the universal deformation of $(\Gm)_{K_*}$ is $(\Gm)_{W(k)[u^{\pm}]}$, we obtain a map of formal groups $$(f,\theta)\colon (W(k)[u^{\pm}],\Gm)\to (E_*,\detGammatilde)$$ where $f\colon W(k)[u^{\pm}]\to E_*$ is a graded ring homomorphism and $\theta\colon f^*\Gm\to \detGammatilde$ is an isomorphism of formal groups. Since $f^*(\Gm)_{W(k)[u^{\pm}]} = (\Gm)_{E_*}$ we see that there is an isomorphism $\detGammatilde\cong (\Gm)_{E_*}$ of formal groups over $E_*$.
\end{proof}

\subsection{$R_1$-orientations of Morava $E$-theories at height $2$}
\label{sub:r1}

In the preceding sections, we discussed when a Morava $E$-theory receives a $B(0)$-orientation.  
The goal of this section is to discuss when a $B(0)$-orientation extends to an $L_{K(2)}R_1$-orientation.  We have the following result: 

\begin{thm} \label{thm:LiftsExist}
Fix an odd integer $3 \le n \le 2p-1$.  Let $A$ denote a $K(2)$-local $\E_n$-ring spectrum with trivial odd homotopy groups (e.g., $A$ could be a height $2$ Morava $E$-theory).  Suppose that there is an $\E_n$-ring homomorphism
\[
\begin{tikzcd}
L_{K(2)} \suspinftypl \KZthree[z^{-1}] \arrow{r} & A.
\end{tikzcd}
\]
Then there exists a dotted arrow completing the following diagram in the category of $\E_{n-1}$-algebras:
\[
\begin{tikzcd}
\suspinftypl \KZthree \arrow{r} \arrow{d} & L_{K(2)} \suspinftypl \KZthree[z^{-1}] \arrow{r} &  A \\
\suspinftypl\OS{\BPn<1>}{2p+2}\arrow[dashed,end anchor=south west]{urr}
\end{tikzcd}
\]
In particular, there is an $\E_{n-1}$-ring homomorphism
\[
\begin{tikzcd}
R_1 \arrow{r} & A.
\end{tikzcd}
\]
\end{thm}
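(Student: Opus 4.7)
The plan is an obstruction-theoretic extension argument based on the fiber sequence of infinite loop spaces arising from the $v_1$-Bockstein. I would first observe that the $v_1$-Bockstein $\susp^3 \H\Z \to \susp^{2p+2} \BPn<1>$ is the connecting map of the cofiber sequence $\susp^{2p-2}\BPn<1> \xrightarrow{v_1} \BPn<1> \to \H\Z$ shifted by three, so applying $\loopsinfty$ yields a fiber sequence of $\E_\infty$-spaces
\[\KZthree \to \OS{\BPn<1>}{2p+2} \xrightarrow{v_1} \OS{\BPn<1>}{4}.\]
Via the $\suspinftypl \dashv \loopsinfty$ adjunction for $\E_{n-1}$-algebras, constructing the dashed $\E_{n-1}$-algebra map is equivalent to extending the $\E_{n-1}$-space map $\KZthree \to \GL(A)$ determined by $\phi$ to an $\E_{n-1}$-space map $\OS{\BPn<1>}{2p+2} \to \GL(A)$ along the fiber inclusion.

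Next, I would encode this extension problem via the long fiber sequence of $\E_{n-1}$-mapping spaces associated to the fiber sequence above. The primary obstruction to extending lies in $\pi_0$ of a mapping space whose source is $\OS{\BPn<1>}{4}$ and whose target is a delooping of $\GL(A)$, and higher obstructions inhabit analogous shifted groups. The key input for killing these obstructions is the $z$-inversion hypothesis: by the proof of \Cref{thm:BocksteinMain}, in $K(2)$-homology the Picard class $z$ is carried through the $p$-Bockstein and then the $v_1$-Bockstein to a nonzero scalar multiple of the Hurewicz image of the bottom cell of $\OS{\BPn<1>}{2p+2}$. Since $\phi$ inverts $z$ and $A$ is $K(2)$-local, this forces $v_1$ to act invertibly on the obstruction groups that are naturally modules for the $v_1$-map appearing in the fiber sequence, so these obstructions vanish.

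The main obstacle will be the precise bookkeeping of these obstructions in the $\E_{n-1}$-setting, where the full $\E_\infty$-Goerss--Hopkins machinery is unavailable. The range hypothesis $3 \le n \le 2p-1$ should enter to control Dyer--Lashof-type operations that could otherwise produce non-vanishing obstructions, while the assumed triviality of odd homotopy of $A$ eliminates odd-degree obstructions arising from cells in $\OS{\BPn<1>}{4}$ above the bottom. Once the $\E_{n-1}$-algebra map $\suspinftypl \OS{\BPn<1>}{2p+2} \to A$ is constructed, the bottom cell is automatically sent to a unit (being identified in $K(2)$-homology with a scalar multiple of the invertible class $z$), so the universal property of $\E_{n-1}$-algebra localization produces the further $\E_{n-1}$-ring map $R_1 \to A$.
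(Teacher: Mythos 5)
Your overall architecture (reduce to an $\E_{n-1}$-space extension problem via the Thom-spectrum adjunction, use the $v_1$-Bockstein fiber sequence, then invert the bottom cell at the end) matches the paper's, but the central step — the obstruction-theoretic extension — contains a genuine gap, and your account of where the hypotheses enter is off in a way that matters.

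The gap is in the sentence ``I would encode this extension problem via the long fiber sequence of $\E_{n-1}$-mapping spaces associated to the fiber sequence above.'' A fiber sequence $F \to E \to B$ in the \emph{source} does not induce a fiber sequence of mapping spaces $\Map(B,X) \to \Map(E,X) \to \Map(F,X)$: that only works for cofiber sequences in the source. And $\E_{n-1}$-spaces are not a stable $\infty$-category, so you cannot conflate the fiber sequence $\KZthree \to W_{1,2p+2} \to W_{1,4}$ with the cofiber-type data you would actually need to set up a relative obstruction theory for extending $\phi$ along the fiber inclusion $\KZthree \hookrightarrow W_{1,2p+2}$. As a result, the claimed obstruction groups (``$\pi_0$ of a mapping space with source $\OS{\BPn<1>}{4}$'') are not what actually governs the extension, and the subsequent $v_1$-invertibility argument has nothing to attach to. In fact the existence of the dashed arrow does \emph{not} use the $z$-inversion hypothesis at all.

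What the paper does instead is deloop the given $n$-fold loop map $\KZthree \to \GL(A)$ exactly $n$ times, giving a space-level map $\Krm(\Z,3+n)\to \B^n\GL(A)$, and then compose with the Postnikov projection $\OS{\BPn<1>}{3+n}\to \Krm(\Z,3+n)$. The point of the constraint $3\le n\le 2p-1$, $n$ odd, is purely a parity matter: it guarantees that $3+n$ is even and $\le 2\nu(1)$, so $\OS{\BPn<1>}{3+n}$ is a Wilson space with only even cells, while $\B^n\GL(A)$ has only odd homotopy groups because $\pi_{\mathrm{odd}}(A)=0$ and $n$ is odd. Hence the composite is null, the resulting commuting square can be filled, and comparing homotopy fibers gives a map $\OS{\BPn<1>}{3+n+2p-2}\to\B^{n-1}\GL(A)$; looping $n-1$ times yields the $\E_{n-1}$-map $W_{1,2p+2}\to\GL(A)$ extending $\phi$. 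So the range hypothesis is not about controlling Dyer--Lashof operations, and the odd-homotopy hypothesis is not killing ``odd-degree obstruction cells'' — both enter through the single vanishing statement $[\OS{\BPn<1>}{3+n},\,\B^n\GL(A)]_*=0$. Your final step (inverting the bottom cell, using \Cref{thm:BocksteinMain} and the $z$-inversion to pass to $R_1\to A$) is correct and is in fact the only place the $z$-inversion hypothesis is used.
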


\begin{proof}
The final part of this theorem follows by combining the first part with \Cref{thm:BocksteinMain}.  It remains to demonstrate the existence of the dashed arrow.  The assumed $\E_n$-ring map $L_{K(2)}\suspinftypl \KZthree[z^{-1}] \to A$ is determined by an $n$-fold loop map
\[\Krm(\Z,3) \to \GL(A).\]
Delooping this $n$ times yields a map of pointed spaces
\[\Krm(\Z,3+n) \to \B^n\GL(A).\]
Note that, because $n$ is odd and at most $2p-1$, $\OS{\BPn<1>}{3+n}$ has even cells.  On the other hand, $\B^n\GL(A)$ has only odd homotopy groups,
and so the following diagram commutes in the homotopy category of pointed spaces:
\begin{align*}
\begin{tikzcd}[ampersand replacement=\&]
\OS{\BPn<1>}{3+n} \arrow{r} \arrow{d} \& * \arrow{d} \\
\Krm(\Z,3+n) \arrow{r} \& \B^n\GL(A).
\end{tikzcd}
\tag{$1$}
\end{align*}
Choosing an explicit homotopy filling ($1$), we may compare the homotopy fibers along the two vertical arrows to obtain a map
\[
\begin{tikzcd}
\hofib\left( \Omega^{\infty} \Sigma^{3+n} \BPn<1> \to \Krm(\Z,3+n)  \right)
= \Omega^{\infty} \Sigma^{3+n+2p-2} \BPn<1>\arrow{r} & \B^{n-1}\GL(A).
\end{tikzcd}
\]
Looping $n-1$ times gives the desired map.
\end{proof}

\begin{rmk} As pointed out by the anonymous referee, the following more general statement is true, with an analogous proof: Suppose that $n,h,$ and $i$ are integers such that $n\leq 2p^{i+1} + i+1-h$, $n$ is odd and $h-i$ is even. If $A$ is an even $K(h)$-local $\mathbb{E}_n$ ring receiving an $\mathbb{E}_n$ $B(i)$ orientation, then that $B(i)$ orientation automatically extends to an $\mathbb{E}_{n-1}$ $B(i+1)$ orientation in at least one way.  The inequalities on $n,h$ and $i$ here are such that $\BP\langle i+1 \rangle_{2\nu(i)+h+n-i-1}$ is even. When $i$ does not have the same parity as $h$, it is not clear whether $B(i)$ orientations extend to $B(i+1)$ orientations.
\end{rmk}

We saw in \Cref{Westerland5.5-strengthened} that, if a Morava $K$-theory $K_{k,\Gamma}$ receives a homotopy $B(0)$-orientation, then the top exterior power of $\Gamma$ must be isomorphic to $\Gm$.  
A corollary of \Cref{thm:LiftsExist} is that, at height $2$, this condition is necessary and sufficient for the corresponding Morava $E$-theory to receive an $R_1$-orientation.  

\begin{cor} \label{cor:homotopy-orientations}
Let $\EkG$ denote the Morava $E$-theory corresponding to a height $2$ formal group $\Gamma$ over a perfect field $k$ of characteristic $p$.  Then there exists a homotopy commutative ring map
\[R_{1} \to \EkG\]
if and only if $\Lambda^2 \Gamma \cong \Gm$.
\end{cor}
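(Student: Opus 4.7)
The plan is to deduce this corollary by assembling previously established results: the forward direction is a sandwich argument through the Bockstein tower that reduces the question to Theorem \ref{Westerland5.5-strengthened}, while the reverse direction builds an $\E_\infty$-orientation from $B(0)$ using Theorem \ref{Einfty-B(0)-orientations} and then lifts it across the top layer of the Bockstein tower using Theorem \ref{thm:LiftsExist}.

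For the forward direction, suppose we are given a homotopy commutative ring map $f\colon R_1 \to \EkG$. Because $\EkG$ is $K(2)$-local, $f$ factors through $L_{K(2)} R_1$, which by Theorem \ref{thm:BocksteinMain} is equivalent to $B(1)$. Precomposing with the Bockstein-tower map $B(0) \to B(1)$ and postcomposing with the quotient $q\colon \EkG \to \KkG$, we obtain a homotopy ring map $B(0) \to \KkG$. Restricting further along the canonical map $\suspinftypl \KZthree \to B(0)$, the resulting homotopy ring map $\suspinftypl \KZthree \to \KkG$ is non-trivial, because $z$ is inverted in $B(0)$ (so its image is a unit in $\EkG$ and hence in $\KkG$) and thus cannot land in the augmentation ideal. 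Theorem \ref{Westerland5.5-strengthened} then forces $\Lambda^2 \Gamma \cong \Gm$.

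For the reverse direction, suppose $\Lambda^2 \Gamma \cong \Gm$. By Proposition \ref{lem:Gmrigid}, the deformation $\detGammatilde$ is also isomorphic to $\Gm$ over $E_*$. Theorem \ref{Einfty-B(0)-orientations} then provides
\[\EinftyAlg(B(0), \EkG) \simeq \Isom(\detGammatilde, \Gm) \ne \emptyset,\]
so in particular we may choose an $\E_\infty$-ring map $B(0) \to \EkG$. This is a $K(2)$-local $\E_\infty$-ring map out of $L_{K(2)} \suspinftypl \KZthree[z^{-1}]$ landing in the $\E_\infty$-ring $\EkG$, whose odd homotopy groups vanish. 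We are thus in position to apply Theorem \ref{thm:LiftsExist} with $n = 3$ — this choice satisfies $3 \le n \le 2p - 1$ at every prime $p$ — which lifts the given $B(0)$-orientation to an $\E_2$-ring map $R_1 \to \EkG$. Any such map is, in particular, homotopy commutative.

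The main obstacle is really just bookkeeping: the two delicate points to verify carefully are (i) the non-triviality of the induced map $\suspinftypl \KZthree \to \KkG$ in the forward direction, which as noted follows because inverting $z$ makes it a unit, and (ii) the parity constraint on $n$ in Theorem \ref{thm:LiftsExist}, which is met at $n = 3$ uniformly in $p$. No new computation or construction is needed; the corollary is obtained by threading together the Bockstein identification of \Cref{sec:Bockstein}, the Dieudonn\'e-theoretic analysis of $B(0)$-orientations of \Cref{sub:dieudonne}--\Cref{subsec:structuredB0}, and the obstruction-theoretic lifting argument earlier in \Cref{sub:r1}.
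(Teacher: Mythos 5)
Your proof is correct and follows the same route as the paper's: forward direction through the Bockstein tower to \Cref{Westerland5.5-strengthened}, reverse direction via \Cref{lem:Gmrigid}, \Cref{Einfty-B(0)-orientations}, and \Cref{thm:LiftsExist}. You spell out the non-triviality step in the forward direction more explicitly than the paper does (the paper sketches this only in the passage following \Cref{Westerland5.5-strengthened}), and in the lifting step you take $n=3$ where the paper implicitly takes $n = 2p-1$ to record the strongest $\E_{2p-2}$ structure; both choices satisfy the parity constraint and yield homotopy commutativity, which is all the corollary needs.
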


\begin{proof}
Given a homotopy commutative ring map $R_1 \to E_{k, \Gamma}$, we obtain a homotopy ring map $B(0) \to K_{k, \Gamma}$.  By \Cref{Westerland5.5-strengthened}, this implies that $\Lambda^2 \Gamma \cong \Gm$.  

Conversely, suppose that $\Lambda^2 \Gamma \cong \Gm$.  By \Cref{lem:Gmrigid}, 
it is also true that $\widetilde{\Lambda^2 \Gamma} \cong \Gm$.  It follows by \Cref{Einfty-B(0)-orientations} that there exists an $\E_{\infty}$-ring map $B(0) \to E_{k, \Gamma}$.  Finally, applying \Cref{thm:LiftsExist}, we conclude that there is an $\E_{2p-2}$-ring map $R_1 \to E_{k, \Gamma}$.  In particular,  there is a homotopy commutative ring map.
\end{proof}

\begin{qst}
Let $\Gamma$ denote a height $h$ formal group over a perfect field $k$ of characteristic $p$, and suppose that $\Lambda^h \Gamma \cong \Gm$.  Must the Morava $E$-theory $\EkG$ receive an $\Einfty$-ring homomorphism from $R_{h-1}$?  In particular, is this true when $h=2$?
\end{qst}

In the next section we will explore structured orientations of $\EkG$ in the special case that the formal group $\Gamma$ arises from completing a supersingular elliptic curve.  In this case there is a canonical isomorphism $\Lambda^2 \Gamma \cong \Gm$, given by the Weil pairing of the elliptic curve. 

\section{Elliptic orientations} \label{sec:EllipticOrientation}

\begin{dfn}
An \emph{elliptic Morava $E$-theory} is the data of a triple $(E, X , f)$ where $E=E_{k,\Gamma} $ is a height $2$ Morava $E$-theory, $X$ is an elliptic curve over $E_0(*)$, and 
\[f\colon \mathrm{Spf} E^0(\mathbb{CP}^{\infty}) \cong \widehat{X}\]
is an isomorphism of the formal group of $E$ with the formal completion of $X$ at the identity.
\end{dfn}

As we explore below, the theory of \Cref{sec:Orientation} has particularly interesting consequences in the case of elliptic Morava $E$-theories.  This is powered by the Ando--Hopkins--Strickland--Rezk $\sigma$-orientation \cite{AHS, Ando-Hopkins-Rezk}, which associates to any elliptic Morava $E$-theory $E$ a canonical $\Einfty$-ring homomorphism
\[\sigma_E\colon \MUsix \to E.\]

\begin{ntn*}
For the remainder of the section, we fix a perfect field $k$ of characteristic $p$.  We additionally fix an elliptic Morava $E$-theory $(E,X,f)$, which we will sometimes refer to simply as $E$.  The reduction of $X$ modulo the maximal ideal is a supersingular elliptic curve over $k$, which we will denote by $C$.  We will implicitly identify $\widehat{C}$ with the formal group $\Gamma$ defining $E$ via the isomorphism $f$.
\end{ntn*}

\begin{cnstr}\label{cnstr:sigmanull}
The map $\sigma_E$ can be identified with a nullhomotopy of the composite of infinite loop maps \cite{ABGHR}
\[\BUsix \to \BSU \to \BU \stackrel{J}{\to} \BGLS \to \BGLE.\]
This nullhomotopy deloops to a nullhomotopy of a map of spectra $\Sigma^6 \ku \to \Sigma \glE$, which corresponds to a specific choice of dotted map in the diagram below:
\[
\begin{tikzcd}
\Sigma^6 \ku \arrow{r}{\beta} & \Sigma^4 \ku \arrow{r}{J} \arrow{d} & \Sigma \glS \arrow{r} & \Sigma \glE \\
& \Sigma^4 \HZ \arrow[urr, dashed]
\end{tikzcd}
\]
because $\Sigma^4 \HZ$ is the cofiber of the map $\beta\colon \Sigma^6 \ku \to \Sigma^4 \ku$.  By the universal property of $\glE$ \cite{ABGHR}, the resulting map $\Sigma^4 \HZ \to \Sigma \glE$ of spectra is the same data as an $\Einfty$-ring homomorphism
\[
w_{E}\colon \suspinftypl \KZthree \to E.
\]
\end{cnstr}

\begin{thm}\label{prop:agprop}
After $K(2)$-localization, the map $w_{E}$ constructed above sends $z$ to a unit in $\pi_{\star}(E)$,
and therefore induces a map of $\E_{\infty}$-ring spectra
\[w_E\colon L_{K(2)}\suspinftypl\KZthree [z^{-1}]=B(0) \to E.\]
\end{thm}

Most of this section, beginning with \Cref{sub:weil}, is dedicated to the proof of \Cref{prop:agprop}; our arguments are simply an unpacking of work of Ando, Hopkins, Rezk, and Strickland \cite{AHS, Ando-Hopkins-Rezk}.  Before introducing the algebraic geometry needed to prove \Cref{prop:agprop}, we give the topological consequences:

\begin{cor} \label{cor:NiceLifts}
In the $\infty$-category of $\Einfty$-ring spectra, the space $\EinftyAlgw(R_1, E)$ of lifts in the diagram
\[
\begin{tikzcd}
B(0) \arrow{r}{w_E} \arrow{d} & E \\
L_{K(2)}R_1 \arrow[dashed]{ru}
\end{tikzcd}
\]
is a collection of path components inside the space of $\Einfty$-ring maps $R_1 \to E$.
\end{cor}

\begin{proof}
This is a consequence of the fact that the space of $\Einfty$-ring maps from $B(0)$ to $E$ is discrete (\Cref{Einfty-B(0)-orientations}).  Indeed, the desired space of lifts can be described as a homotopy pullback
\[
\begin{tikzcd}
\EinftyAlgw(R_1, E) \arrow{r} \arrow{d} & \EinftyAlg(R_1,E) \arrow{d} \\
* \arrow{r} & \EinftyAlg(B(0),E),
\end{tikzcd}
\]
where the bottom arrow picks the point $w_E$ out of the discrete set $\EinftyAlg(B(0),E)$. We recall that, since $E$ is $K(2)$-local, the space of $\mathbb{E}_{\infty}$-ring maps from $L_{K(2)} R_1$ to $E$ is canonically identified with the space of $\mathbb{E}_{\infty}$-ring maps from $R_1$ to $E$.
\end{proof}

\begin{dfn} \label{dfn:WeilSpace}
We refer to the space $\EinftyAlgw(R_1, E)$ of \Cref{cor:NiceLifts} as \emph{the space of $\Einfty$-ring maps $R_1 \to E$ that respect the Weil pairing}.
\end{dfn}

\begin{thm} \label{thm:elliptic-orientations-main}
The space $\EinftyAlgw(R_1, E)$ of $\Einfty$-ring maps $R_1 \to E$ that respect the Weil pairing is naturally homotopy equivalent to
\begin{enumerate}
\item If $p=2$, the space of $\Einfty$-ring homomorphisms $\MSU \to E$.
\item If $p=3$, the space of $\Einfty$-ring homomorphisms $\MSpin \to E$.
\item If $p>3$, the space of $\Einfty$-ring homomorphisms $\suspinftypl W_{1,4} \to E$.
\end{enumerate}
\end{thm}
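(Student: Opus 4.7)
My plan is to combine the $v_1$-cofiber sequence of spectra with the Ando-Hopkins-Strickland-Rezk $\sigma$-orientation. Applying $\Map(-, \glE)$ to the cofiber sequence of spectra
\[\Sigma^{2\nu(1)}\BPn<1> \xrightarrow{v_1} \Sigma^4\BPn<1> \longrightarrow \Sigma^4\HZ\]
yields a fiber sequence of $\Einfty$-mapping spaces
\[\EinftyAlg(\suspinftypl W_{1,4}, E) \longrightarrow \EinftyAlg(\suspinftypl W_{1}, E) \xrightarrow{\beta_1^*} \EinftyAlg(\suspinftypl \Krm(\Z, 3), E),\]
where $\beta_1^*$ is precomposition with the infinite loop Bockstein $\Krm(\Z, 3) \to W_1$. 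By \Cref{dfn:WeilSpace}, $\EinftyAlgw(R_1, E)$ is the fiber of $\beta_1^*$ over the point $w_E$, after one checks that the $x$-inversion condition is automatic under the Weil constraint, via a $K(2)$-Hurewicz argument analogous to the one appearing in the proof of \Cref{thm:BocksteinMain}.

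Next, I would exhibit a specific lift of $w_E$ along $\beta_1^*$, showing the fiber is non-empty. Extending the Puppe sequence one step, the obstruction lies in $[\Sigma^3 \BPn<1>, \glE]$, and I would verify it vanishes by tracing through the explicit cofiber-sequence factorization in \Cref{cnstr:sigmanull}: the $\sigma$-orientation's factorization of $\Sigma^4 \BPn<1> \to \Sigma\glE$ through $\Sigma^4 \HZ$ supplies the required trivialization. Since the fiber sequence consists of infinite loop spaces, translating by this specific lift produces an equivalence $\EinftyAlgw(R_1, E) \simeq \EinftyAlg(\suspinftypl W_{1,4}, E)$. This establishes part (3) of the theorem when $p > 3$.

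For $p = 2, 3$, we use the identifications $W_{1,4} \simeq \BSU$ at $p=2$ and $W_{1,4} \simeq \BSpin_{(3)}$ at $p=3$ (using $\BSp_{(3)} \simeq \BSpin_{(3)}$), and further identify $\EinftyAlg(\suspinftypl \BSU, E)$ with $\EinftyAlg(\MSU, E)$ (resp. $\EinftyAlg(\MSpin, E)$). Since $\MSU$ is the Thom spectrum of $\BSU \to \BGLS$, ABGHR presents $\EinftyAlg(\MSU, E)$ as the space of nullhomotopies of the $J$-homomorphism $J|_{\BSU}\colon \BSU \to \BGLE$, which is a torsor over $\EinftyAlg(\suspinftypl \BSU, E)$. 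The $\sigma$-orientation supplies a canonical base point of this torsor: its nullhomotopy of $J|_{\BUsix}$ extends to one of $J|_{\BSU}$, the extension obstruction lying on the fiber $\Krm(\Z, 4)$ of $\BUsix \to \BSU$ being precisely the class trivialized by $w_E$.

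The main obstacle will be carefully verifying the vanishing of these obstruction classes. Specifically, one must confirm (i) that $w_E$ lifts along $\beta_1^*$ (an obstruction in $[\Sigma^3 \BPn<1>, \glE]$), and (ii) for $p = 2, 3$, that the $\sigma$-nullhomotopy on $\BUsix$ canonically extends to $\BSU$ or $\BSpin$. Both vanishings should follow from the explicit cofiber-sequence construction of $w_E$ in \Cref{cnstr:sigmanull}, but require detailed Puppe-sequence bookkeeping to execute cleanly.
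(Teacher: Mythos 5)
Your proposal contains a genuine gap that makes the argument fail precisely in the cases $p=2,3$, which are the most interesting ones. You claim to "exhibit a specific lift of $w_E$ along $\beta_1^*$" by showing that the obstruction in $[\Sigma^3\BPn<1>, \glE]$ vanishes. But unwind what this obstruction is: using the factorization in \Cref{cnstr:sigmanull}, the composite $\Sigma^4\BPn<1> \to \Sigma^4\HZ \xrightarrow{w_E} \Sigma\glE$ equals $\Sigma^4\BPn<1> \to \Sigma^4\ku \xrightarrow{J} \Sigma\glS \to \Sigma\glE$, i.e., the restricted $J$-homomorphism. At $p=2$ (where $\BPn<1> = \ku$) this map is emphatically \emph{not} null — if it were, $\MSU$ would split off a sphere summand, and $\EinftyAlg(\MSU, E)$ would canonically equal $\EinftyAlg(\suspinftypl\BSU, E)$, neither of which is true. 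The paper's own remark after this theorem flags the existence of lifts as an \emph{open conjecture}; your proof strategy requires resolving it. The same circularity infects your final step: "further identify $\EinftyAlg(\suspinftypl\BSU, E)$ with $\EinftyAlg(\MSU, E)$" requires a chosen nullhomotopy of $J|_{\BSU}$, which is exactly the thing not available.

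The paper's proof sidesteps all of this. It identifies $\EinftyAlgw(R_1, E)$ — as the space of extensions of $w_E \colon \Sigma^4\HZ \to \Sigma\glE$ along the fiber sequence $\Sigma^4\BPn<1> \to \Sigma^4\HZ \to \Sigma^{2p+3}\BPn<1>$ — \emph{naturally} with the space of nullhomotopies of $\Sigma^4\BPn<1> \xrightarrow{J} \Sigma\glE$, which by ABGHR is $\EinftyAlg(W_{1,4}^\gamma, E)$ for the Thom spectrum $W_{1,4}^\gamma$. This equivalence holds whether or not either side is non-empty, and never uses the torsor structure or a choice of lift. The final step is then to identify the Thom spectrum: $W_{1,4}^\gamma \simeq \MSU$ at $p=2$, $\MSpin$ at $p=3$, and (using the stable Adams conjecture to kill the $J$-bundle on the Adams summand) $\suspinftypl W_{1,4}$ at $p>3$. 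Your "translate by a specific lift" move also loses the naturality demanded by the theorem statement, whereas the paper's space-of-nullhomotopies description is manifestly natural in $E$. Only in the $p>3$ case, where the $J$-bundle is trivial and the obstruction genuinely vanishes, does your outline recover the correct answer — but you would still need the stable Adams conjecture argument, which you do not mention, to justify even that.
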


\begin{proof}
Consider the diagram
\[
\begin{tikzcd}
\Sigma^{4} \BPn<1> \arrow{r} \arrow{d} & \Sigma^4 \ku \arrow{r}{J} & \Sigma \glS \arrow{r} & \Sigma \glE \\
\Sigma^4 \HZ  \arrow[rrru, end anchor=-165] \arrow[d] \\
\Sigma^{2p+3} \BPn<1>, \arrow[dashed,uurrr, end anchor=-150, shorten >=7pt]
\end{tikzcd}
\]
where the leftmost horizontal arrow is the inclusion of an Adams summand for $p>2$ and the canonical equivalence when $p=2$.  By the universal property of $\glE$ \cite{ABGHR}, the space of lifts indicated by the dashed arrow is equivalent to the space of lifts
\[
\begin{tikzcd}
\suspinftypl \KZthree \arrow{r}{w_E} \arrow{d} & E \\
\suspinftypl W_1 \arrow[dashed]{ur}
\end{tikzcd}
\]
in the $\infty$-category of $\Einfty$-rings.  By \Cref{prop:agprop} this is the same as the space of lifts
\[
\begin{tikzcd}
B(0) \arrow{r}{w_E} \arrow{d} & E, \\
L_{K(2)}R_1 \arrow[dashed]{ur}
\end{tikzcd}
\]
which we have defined as $\EinftyAlgw(R_1, E)$.

Since the vertical sequence
\[\Sigma^4 \BPn<1> \to \Sigma^4 \HZ \to \Sigma^{2p+3} \BPn<1>\]
is a fiber sequence of spectra, the space $\EinftyAlgw(R_1, E)$ is naturally equivalent to the space of nullhomotopies of the composite
\[\Sigma^4 \BPn<1> \to \Sigma^4 \ku \stackrel{J}{\to} \Sigma \glS \to \Sigma \glE.\]
By the theory of Thom spectra \cite[Corollary 4.12]{ABGHR1}, this is in turn equivalent to the space of $\Einfty$-ring homomorphisms
\[W_{1,4}^{\gamma} \to E,\]
where $W_{1,4}^{\gamma}$ is the Thom spectrum of the spherical bundle $W_{1,4} \to \BGLS$ obtained by applying $\Omega^{\infty}$ to the map
$\Sigma^4 \BPn<1> \to \Sigma \glS$.

To finish the proof, it suffices to identify $W_{1,4}^{\gamma}$ when $p=2$, when $p=3$, and when $p>3$:
\begin{itemize}
\item When $p=2$, $\BPn<1> \simeq \ku $, so $W_{1,4}\simeq \OS{\ku}{4} \simeq \BSU$ and the Thom spectrum of $J\colon \BSU\to \BGLS$ is $W_{1,4}^{\gamma} \simeq \MSU$.
\item When $p=3$, $\BPn<1>\simeq \ko$, so $W_{1,4} \simeq \OS{\ko}{4} \simeq \BSpin$ and the Thom spectrum of $J\colon \BSpin\to\BGLS$ is $W_{1,4}^{\gamma} \simeq \MSpin$.
\item When $p>3$, there is a splitting $\ku=\bigvee_{i=0}^{p-2}\susp^{2i}\BPn<1>$. Let $l$ denote an integer that reduces to a primitive $(p-1)$st root of unity modulo $p$. By the stable Adams conjecture \cite{Friedlander, BhatKitch}, if $\psi^l$ is the Adams operation corresponding to $l$, then the composite $J\circ (\psi^l-1)\colon \ku \to \ku \to \Sigma \glS$ is null.
The map $\psi^l-1$ is also an isomorphism on the summands $\Sigma^{2i} \BPn<1>$ for which $i$ is not divisible by $p-1$.  It follows that, on these summands, the $J$-homomorphism is null.  In particular, when $p>3$, the map $\Sigma^4 \BPn<1> \to \Sigma \glS$ is null, and so there is an equivalence of $\E_{\infty}$-ring spectra $W_{1,4}^{\gamma}\simeq \suspinftypl W_{1,4}$. 
\end{itemize}
\end{proof}
\begin{rmk}
We conjecture that every elliptic Morava $E$-theory at the prime $2$ receives at least one $\Einfty$-ring homomorphism from $\MSU$, and that every elliptic $E$-theory at $p=3$ receives an $\Einfty$-ring map from $\MSpin$.
At the prime $3$, Dylan Wilson \cite{Wilson-Orientations} has proved that the Ochanine genus factors through an $\Einfty$-ring homomorphism
\[\MSpin \to \tmf_0(2).\]
In forthcoming joint work with Andrew Senger, the middle author will produce an $\Einfty$-ring homomorphism
\[\MSU \to \tmf_1(3)^{\wedge}_2.\] 
\end{rmk}

\begin{rmk}
The equivalence of Theorem \ref{thm:elliptic-orientations-main} can be described fairly explicitly, using the fact that $\pi_0\EinftyAlg(\MUsix,E)$ is a torsor (see, e.g., \cite[Section 2.3]{Ando-Hopkins-Rezk}) for the abelian group
\[\pi_0\EinftyAlg(\suspinftypl \BUsix,E) \cong \pi_0\Spectra(\tau_{\ge 6} \KU,\glE).\]
Consider for example the case when $p=2$.  Given an $\Einfty$-ring homomorphism
\[f\colon \MSU \to E,\]
we can use the canonical map $\MUsix \to \MSU$ to pull this back to a map $f'\colon\MUsix \to E$.
The difference of $f'$ and the $\sigma$-orientation is then a well defined $\Einfty$-ring homomorphism
\[f'-\sigma_E \colon \suspinftypl \BUsix \to E.\]
The bottom cell $x_6 \in \pi_6(\BUsix)$ will map to an invertible class in $\pi_6 E$, and thus there is a unique extension of $f'-\sigma_E $ to a map
\[f'-\sigma_E \colon \suspinftypl \BUsix[x_6^{-1}]=R_1 \to E.\]
\end{rmk}

\subsection{Weil pairings}\label{sub:weil}
Our goal in this section is to prove \Cref{prop:agprop} assuming one statement (\Cref{prop:weilag}) that we address in the next section.

We start by making a reduction from Morava $E$-theory to Morava $K$-theory.  Let $K=K_{k,\Gamma}$ denote the Morava $K$-theory associated to $E$ and let $q\colon  E \to K$ denote the associated homotopy ring map (cf. \Cref{dfn:moravaek}).  

\begin{lem}\label{lem:picinvert}
In the situation of \Cref{prop:agprop}, it suffices to show that the composite \[Z\xrightarrow{z} L_{K(2)}\suspinftypl \KZthree \xrightarrow{w_E}E \xrightarrow{q} K\]
is a unit in $\pi_{\star}(K)$.
\end{lem}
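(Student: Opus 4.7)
The plan is to reformulate the claim about Picard-graded homotopy of $E$ as a claim about ordinary graded homotopy, then invoke a Nakayama-style argument. Unwinding definitions, the image $w_E(z) \in \pi_{\star}(E)$ is a unit if and only if the associated $E$-module map
\[\tilde{z}\colon L_{K(2)}(Z \otimes E) \longrightarrow E\]
is an equivalence of $K(2)$-local $E$-modules, so the goal reduces to showing that this equivalence can be detected after applying $- \otimes_E K$.

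First I would use that $Z$ is $\otimes$-invertible in the $K(2)$-local category, so that $L_{K(2)}(Z \otimes E)$ is an invertible $K(2)$-local $E$-module. By the standard classification of the $K(2)$-local Picard group of $E$-modules as $\Z/2$ (due to Hopkins--Mahowald--Sadofsky; this follows from the fact that $\pi_*(E)$ is regular local and $E$ is even-periodic), there is an equivalence $L_{K(2)}(Z \otimes E) \simeq \Sigma^{\epsilon} E$ for some $\epsilon \in \{0,1\}$. Under any such equivalence, $\tilde{z}$ is classified by multiplication by an element $\alpha \in \pi_{\epsilon}(E)$, and $\tilde{z}$ is an equivalence if and only if $\alpha$ is a unit in the graded ring $\pi_{*}(E)$.

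Next I would invoke Nakayama. The ring $\pi_{*}(E) = W(k)\llbracket u_1 \rrbracket[u^{\pm}]$ is graded local with maximal homogeneous ideal $\mfrak = (p, u_1)$, and the map $q\colon E \to K$ induces the quotient $\pi_{*}(E) \twoheadrightarrow \pi_{*}(K) = \pi_{*}(E)/\mfrak$. Since $\pi_0(E)$ is a complete local ring, $\alpha$ is a unit in $\pi_{*}(E)$ if and only if its image in $\pi_{*}(K)$ is nonzero. Under the parallel identification $L_{K(2)}(Z \otimes K) \simeq \Sigma^{\epsilon} K$, this image corresponds precisely to the class $q \circ w_E(z) \in \pi_{\star}(K)$, completing the reduction.

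I do not foresee any major obstacles; the argument is essentially Nakayama's lemma applied Picard-equivariantly, with the key external input being the standard identification of the $K(2)$-local Picard group of $E$-modules. Some care may be needed at $p = 2$ since $K$ is not homotopy commutative there, but the underlying fact — that an $E$-module map is an equivalence if and only if it is an equivalence after $- \otimes_E K$ — holds uniformly.
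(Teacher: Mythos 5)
Your proof is correct, but it takes a genuinely different route from the paper. The paper exploits the fact, already established in \Cref{rmk:KFphsplitting}, that $Z^{\otimes(p-1)} \simeq L_{K(2)}\Sbb$: thus $z$ is a unit if and only if $z^{p-1}$ is, and $z^{p-1}$ lives in ordinary $\pi_0(E)$, where the Nakayama-style criterion for units in the complete local ring $\pi_0(E) = W(k)\llbracket u_1\rrbracket$ applies directly. You instead pass to the adjoint $E$-module map $L_{K(2)}(Z\otimes E) \to E$ and invoke the classification of the $K(2)$-local Picard group of $E$-modules as $\Z/2$ to write $L_{K(2)}(Z\otimes E) \simeq \Sigma^\epsilon E$, reducing to the same Nakayama criterion on $\pi_\epsilon(E)$. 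The paper's route is more self-contained (it needs only the already-quoted order of $Z$ in the $K(h)$-local Picard group), whereas yours buys generality: it would work verbatim for any invertible $K(2)$-local $Z$, not just one of finite order prime to $p$. One small point worth noting: you leave $\epsilon$ undetermined, but in fact $\epsilon=0$ since $K(2)_*(Z) = K(2)_*\{f_0\}$ is evenly concentrated; and even if $\epsilon$ were $1$, the lemma would hold vacuously since $\pi_1(K)=0$ forces both sides to fail. Your remark about $p=2$ is unnecessary — only the $E$-module structure of $K$ is used, not its multiplication, and at $p=2$ one has $Z\simeq L_{K(2)}\Sbb$ anyway.
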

\begin{proof}
Observe that an element $\gamma \in \pi_0(E)$ is invertible if and only if the composite $q\gamma \in \pi_0(K)$ is invertible.  The lemma then follows by noting that $z$ is a unit if and only if $z^{p-1}$ is a unit, and $Z^{\otimes p-1} \simeq S^0$ (cf. \Cref{rmk:KFphsplitting}).
\end{proof}

Thus, in the remainder of the section, we will work with Morava $K$-theory and analyze the map $w_K$ defined as the composite
\[w_K\colon  \suspinftypl \KZthree \xrightarrow{w_E} E \xrightarrow{q} K.\]
Work of Ando and Strickland \cite{AS} relates such ring maps $\suspinftypl \KZthree \to K$ to a structure on the formal group $\Gamma$ known as a \emph{Weil pairing}.  We briefly recall their work:

\begin{dfn}\label{dfn:weilpairing}
A \emph{Weil pairing} on $\Gamma$ is a collection of bilinear pairings
\[\beta_i \colon  \Gamma[p^i] \times \Gamma[p^i] \to \mu_{p^i}\]
for each $i\geq 0$, such that for any $x,y\in \Gamma[p^{i+1}]$, one has the relations \[\beta_i(px,py) =\beta_{i+1}(x,y)^p\]
and
\[\beta_{i+1}(x,y) = \beta_{i+1}(y,x)^{-1}.\]
The Weil pairings on $\Gamma$ can be organized into a group scheme over $k$, which we denote by $W(\Gamma)$ following \cite{AS}.
\end{dfn}

\begin{exm}\label{exm:weilell}
The Weil pairing of the elliptic curve $C$ determines a Weil pairing on the formal group $\Gamma \cong \widehat{C}$ of $C$ in the sense of \Cref{dfn:weilpairing}.  We refer to this simply as ``the canonical Weil pairing on $\Gamma$." 
\end{exm}

\begin{prop}[\cite{AS}, Proposition 2.9]\label{prop:weilkz3}
There is an isomorphism
\[b\colon  \Spec (K_0(\KZthree)) \to W(\Gamma)\]
of group schemes over $k$.
\end{prop}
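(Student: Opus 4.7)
The plan is to construct the map $b$ on $R$-points using a natural topological Bockstein--cup-product pairing, and to verify it is an isomorphism using the Ravenel--Wilson computation of $K_*(\K(\Z,\s))$ as a Hopf ring. An $R$-point of $\Spec K_0(\KZthree)$ is a Pontryagin-ring homomorphism $\phi \colon K_0(\KZthree) \to R$. To produce a Weil pairing from such a $\phi$, I would use for each $i \ge 0$ the composite
\[
p_i \colon \K(\Z/p^i,1) \wedge \K(\Z/p^i,1) \xrightarrow{\smile} \K(\Z/p^i,2) \xrightarrow{\delta} \KZthree,
\]
where the first map is the cup product and $\delta$ is the Bockstein associated to $0 \to \Z \to \Z \to \Z/p^i \to 0$. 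This induces a map
\[
(p_i)_* \colon K_0(\K(\Z/p^i,1)) \otimes K_0(\K(\Z/p^i,1)) \to K_0(\KZthree),
\]
and composing with $\phi$ yields a bilinear pairing whose Cartier dual may, via the identification $\Spf K^0(\K(\Z/p^i,1)) \cong \Gamma[p^i]$, be interpreted as a map $\Gamma[p^i] \times \Gamma[p^i] \to \mu_{p^i}$. The factorization through $\K(\Z/p^i,2)$ is what forces the target to land in the $p^i$-torsion.

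Next I would check that the family $\{\beta_i\}$ satisfies the Weil pairing axioms of \Cref{dfn:weilpairing} and that $b$ is a morphism of group schemes. The compatibility $\beta_i(px,py) = \beta_{i+1}(x,y)^p$ follows from naturality of the Bockstein with respect to the $p$-power maps on Eilenberg--MacLane spaces. The antisymmetry follows from the graded-commutativity of the cup product applied to the two degree-$1$ input classes, which introduces a sign upon swapping factors. That $b$ respects the group structure follows by comparing two descriptions: the group law on $\Spec K_0(\KZthree)$ comes from the diagonal on $\KZthree$, which under the Bockstein--cup-product construction corresponds to pointwise multiplication of pairings in $\mu_{p^i}$.

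Finally, to prove $b$ is an isomorphism, I would appeal to the Ravenel--Wilson Hopf-ring computation, which presents $K_*(\K(\Z,n))$ as a free commutative Hopf algebra generated by explicit circle-products $a_{(i_1)} \c \cdots \c a_{(i_{n-1})}$ of the classes $a_{(i)} \in K_{2p^i}(\CPinfty)$. For $n=3$ and height $h=2$, this yields a tractable presentation of $K_0(\KZthree)$. On the algebraic side, one can concretely describe the coordinate ring of $W(\Gamma)$ using the Dieudonn\'e module of $\Gamma$ via \Cref{cnstr:exterior}: an alternating pairing is determined by its restriction to a $W(k)$-basis of $\DM(\Gamma)$ subject to Frobenius--Verschiebung compatibility. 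The main obstacle is to match these two presentations --- specifically, to verify that the Hopf-ring generator $a_{(0)} \c a_{(1)}$ is sent by $b$ to the universal alternating pairing, rather than to some proper sub-functor. Once this identification of generators is established, injectivity and surjectivity of $b$ follow from a dimension (or equivalently Cartier-dual rank) count using the height $h=2$ hypothesis.
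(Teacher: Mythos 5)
The paper does not prove this proposition: it is quoted verbatim from Ando--Strickland \cite{AS} (their Proposition 2.9) and used as a black box, so there is no in-paper proof to compare your attempt against.

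That said, your sketch is a reasonable reconstruction of the shape of the Ando--Strickland argument. The Bockstein--cup-product composite $p_i\colon \K(\Z/p^i,1)\wedge\K(\Z/p^i,1)\to\K(\Z/p^i,2)\to\KZthree$ is the right topological input, and the Ravenel--Wilson Hopf-ring computation is the right engine on both sides. Two caveats. First, the passage from $\phi\circ(p_i)_*\colon K_0(\K(\Z/p^i,1))^{\otimes2}\to R$ to a pairing $\Gamma[p^i]\times\Gamma[p^i]\to\mu_{p^i}$ is not literal Cartier duality as stated: a linear functional on $K_0(\K(\Z/p^i,1))^{\otimes 2}$ is not a biadditive pairing unless one keeps careful track of the Hopf-algebra structure. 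What one actually wants is to show that $(p_i)_*$ induces a morphism of group schemes $\Spec K_0(\KZthree)\to \mathrm{Bil}(\Gamma[p^i],\Gm)$, using that $\Spec K_0(\K(\Z/p^i,1))\cong\Gamma[p^i]^\vee$ and that $p_i$ is biadditive; this should be spelled out, since it is also where the $\mu_{p^i}$-valuedness is really certified. Second, and more seriously, the step you flag as ``the main obstacle'' --- showing that the Hopf-ring generator $a_{(0)}\circ a_{(1)}$ corresponds to the \emph{universal} alternating pairing, and that the induced map on coordinate rings is an isomorphism rather than merely a surjection onto a degenerate quotient --- is not a routine dimension count; it is where the bulk of Ando--Strickland's argument lives, and requires explicitly matching the Dieudonn\'e-theoretic coordinate ring of $W(\Gamma)$ against the Ravenel--Wilson presentation of $K_0(\KZthree)$. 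You correctly identify the hard part, but your proposal leaves it as an acknowledged gap rather than a proof.
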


A homotopy ring map $\suspinftypl \KZthree\to K$ induces a map of rings $$K_0(\suspinftypl \KZthree) \to \pi_0(K)\cong k,$$ which determines a $k$-point of the scheme $\Spec(K_0(\KZthree))$.  By \Cref{prop:weilkz3}, this in turn gives rise to a Weil pairing.  By design, the Weil pairing corresponding to $w_K$ is canonical:

\begin{prop}\label{prop:weilag}
Under the isomorphism $b$ of \Cref{prop:weilkz3}, the map 
\[w_K\colon  \suspinftypl \KZthree  \to K\]
corresponds to the canonical Weil pairing on $\Gamma$ (up to a possible sign).
\end{prop}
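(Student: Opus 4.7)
The plan is to unpack $w_K$ in terms of the $\sigma$-orientation and match it against the Ando--Strickland algebro-geometric model of $K_0(\KZthree)$, reducing the statement to a compatibility already implicit in \cite{AHS} and \cite{AS}.

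First, I would invoke the fundamental input from Ando--Hopkins--Strickland: the $\sigma$-orientation $\sigma_E\colon \MUsix \to E$ corresponds to a cubical structure $s$ on the theta line bundle $I(e)^{-1}$ of the elliptic curve $C$, i.e., a trivialization of the $3$-fold Mumford bundle $\Theta^3(I(e)^{-1})$ over $C^3$ satisfying the usual cocycle identities. The canonical Weil pairing of $C$ can be recovered from $s$ by restriction to the $p^n$-torsion: for $x,y \in C[p^n]$, the ratio $s(x,y,z)/s(x,y,0)$ is independent of the auxiliary point $z \in C$ by the cubical identity, takes values in $\mu_{p^n}$ on the torsion, and assembles into the canonical Weil pairing $\beta_n$ up to a sign.

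Next, I would explain how the topological construction of $w_E$ in \Cref{cnstr:sigmanull} is the topological shadow of this algebraic restriction. The cofiber sequence $\Sigma^6\ku \to \Sigma^4\ku \to \Sigma^4\HZ$, together with the $\sigma$-orientation's nullhomotopy of $\Sigma^4 \ku \to \Sigma\glE$, factors the $\sigma$-orientation through the ``degree-three'' part $\suspinftypl\KZthree$. Algebraically, this corresponds to extracting from a trilinear cubical structure its bilinear restriction to torsion. Thus the $k$-point of $\Spec K_0(\KZthree)$ associated to $w_K$ is, essentially by construction, the bilinear pairing obtained from $s$ by restricting to $p^n$-torsion.

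The key remaining step is to check that this matches the Ando--Strickland isomorphism $b$. Their proof of \Cref{prop:weilkz3} exhibits $K_0(\KZthree)$ as the universal ring classifying Weil pairings on $\Gamma$, using an $\MU$-cohomological description that directly parallels the cubical-structure description of $\MUsix$. Under this identification, the composite $\MUsix \to \suspinftypl\BUsix \to \suspinftypl\KZthree$ realizes on $K$-cohomology precisely the restriction map from cubical structures to Weil pairings. Combining this with the previous step yields the desired identification of Weil pairings.

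The main obstacle will be the bookkeeping of signs and normalizations, which is precisely why the statement allows the caveat ``up to a possible sign.'' Several choices---the sign in the cubical structure, the particular nullhomotopy chosen in \Cref{cnstr:sigmanull}, and the Ando--Strickland normalization of $b$---each introduce potential $\pm 1$ ambiguities, but these do not affect the underlying identification of the Weil pairing as the canonical one arising from the elliptic curve $C$.
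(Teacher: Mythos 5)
Your high-level strategy is the right one --- reduce the claim to (i) the Ando--Hopkins--Strickland identification of $\sigma_E$ with a cubical structure, (ii) the algebraic fact that a cubical structure determines a Weil pairing, and (iii) a topological compatibility showing that $w_K$ is extracted from $\sigma_K$ in a way that matches the algebraic restriction. However, step (iii) as you have stated it contains a genuine gap: you invoke a composite $\MUsix \to \suspinftypl\BUsix \to \suspinftypl\KZthree$, but there is no natural map $\MUsix \to \suspinftypl\BUsix$. The spectrum $\MUsix$ is the Thom spectrum of a nontrivial spherical bundle over $\BUsix$, and $\Spec K_0(\MUsix)$ is a \emph{torsor} over the group scheme $\Spec K_0(\BUsix)$, not the same object; identifying the two requires choosing a Thom class, i.e., a complex orientation. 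Your plan silently assumes such an identification exists canonically, which is precisely the kind of elision the statement cannot survive, and it is more than a sign ambiguity.

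The paper repairs this by fixing an auxiliary homotopy ring map $\alpha\colon \MU\to E$ and using ``difference with $\alpha$'' maps $t_{q\alpha}$ and $s_{q\alpha}$ to trivialize the torsor on both the topological side (from $\Spec K_0(\MUsix)$ to $\Spec K_0(\BUsix)$) and the algebraic side (from $C^3(\Gamma;\Ical(0))$ to $C^3(\Gamma;\Ocal_\Gamma)$), then checking that the big diagram commutes. The real work that your plan omits is \Cref{sigma-maps-to-w}: one must actually prove that under the torsor structure and the restriction $\suspinftypl\BUsix \to \suspinftypl\KZthree$, the point $\sigma_E$ is sent to $w_E$. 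The paper proves this by upgrading $\alpha$ to an $\E_2$-ring map and running a careful obstruction argument with Thom spectra in $\E_2$-spaces, comparing nullhomotopies of $J$-homomorphisms. Without this lemma (or some substitute), the claim that the topological construction of $w_E$ from \Cref{cnstr:sigmanull} is ``the topological shadow of the algebraic restriction'' remains an assertion rather than an argument, and it cannot be dismissed as bookkeeping.
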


\begin{rmk}\label{rmk:sign}
The work we build on to prove \Cref{prop:weilag} leaves a possible ambiguity in the sign \cite[Remark 4.3]{AS} which we do not resolve here and will not be important for our theorems.
\end{rmk}

We defer the proof of this proposition to the next section and turn instead to the proof of \Cref{prop:agprop} assuming \Cref{prop:weilag}.

\begin{proof}[Proof of \Cref{prop:agprop}]
Combine \Cref{lem:picinvert} and the following corollary of \Cref{prop:weilag}.
\end{proof}

\begin{cor}\label{cor:weildegen}
The map $w_K\colon  \suspinftypl \KZthree \to K$ sends the element 
\[z\in \pi_Z(L_{K(2)}\suspinftypl \KZthree) \]
to a unit in $\pi_{\star}( K)$.
\end{cor}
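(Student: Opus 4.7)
The plan is to combine \Cref{prop:weilag} with the classical non-degeneracy of the Weil pairing on an elliptic curve and a Morava $K$-theoretic analog of \Cref{EstarAlgebra-maps-is-iso-to-Gm}. By \Cref{prop:weilag}, the homotopy ring map $w_K$ corresponds, under the Ando--Strickland isomorphism $b\colon\Spec K_0(\Krm(\Z,3))\xrightarrow{\sim} W(\Gamma)$, to the canonical Weil pairing on $\Gamma\cong\widehat{C}$ (up to the sign ambiguity of \Cref{rmk:sign}, which is immaterial for our purposes). Because $C$ is an elliptic curve, this Weil pairing is non-degenerate --- a classical fact of algebraic geometry.

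Next I would translate non-degeneracy into a statement about formal groups. A Weil pairing on $\Gamma$ in the sense of \Cref{dfn:weilpairing} amounts to an alternating bilinear pairing $\Gamma\times\Gamma\to\Gm$, which factors through a formal group homomorphism $\Lambda^2\Gamma\to\Gm$; non-degeneracy of the Weil pairing corresponds precisely to this homomorphism being an isomorphism. I would then observe that the argument of \Cref{EstarAlgebra-maps-is-iso-to-Gm} carries over verbatim modulo $\mfrak$ with $E_*$ replaced by $K_*$ and $\detGammatilde$ replaced by its special fiber $\Lambda^2\Gamma$. This yields that a $K_*$-algebra map $f\colon K_*(\Krm(\Z,3))\to K_*$ corresponds to a formal group homomorphism $\Lambda^2\Gamma\to\Gm$, and $f$ sends the Hurewicz image of $z$ to a unit if and only if it sends $b_1$ to a unit, if and only if the associated formal group map is an isomorphism. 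That $z$ is detected by a nonzero $K_*$-multiple of $b_1$ uses \Cref{lem:z-is-primitive} together with the observation that the primitives in $K_0(\Krm(\Z,3))$ form a one-dimensional $K_0$-module spanned by $b_1$ (since the $K$-cohomology of $\Krm(\Z,3)$ is a formal power series ring on one coordinate).

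Assembling the three steps, non-degeneracy of the canonical Weil pairing on $\widehat{C}$ forces $w_K$ to send $z$ to a unit in $\pi_\star(K)$, which is the desired conclusion.

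The main technical obstacle is verifying a compatibility between two a priori different pieces of structure on $\Spec K_0(\Krm(\Z,3))$: the Ando--Strickland identification with $W(\Gamma)$, and the Hopf-algebraic identification of its $k$-points with formal group homomorphisms $\Lambda^2\Gamma\to\Gm$ that is used in the proof of \Cref{EstarAlgebra-maps-is-iso-to-Gm}. What is needed is precisely that, under $b$, the subset of non-degenerate Weil pairings is carried to the subset of isomorphisms $\Lambda^2\Gamma\xrightarrow{\sim}\Gm$. This compatibility is expected and should follow by unwinding the construction of \cite{AS}, but it is the one step of the proof that requires genuine algebro-geometric care rather than homotopy-theoretic bookkeeping.
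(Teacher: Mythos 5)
Your proposal is a correct-in-spirit alternative, but it takes a genuinely different and somewhat heavier route than the paper. The paper does not work directly with $\KZthree$, the Ando--Strickland scheme $W(\Gamma)$, and the Cartier-duality identification of $K_*$-algebra maps with formal group homomorphisms $\Lambda^2\Gamma\to\Gm$; instead it factors the problem through $\KFptw$. Concretely: since $L_{K(2)}\suspinftypl\KFptw$ is generated as a homotopy ring by the single Picard class $z$ (via the splitting of \Cref{rmk:KFphsplitting}), if $w_K$ killed $z$ then the entire ring map $K_*(\KFptw)\to K_*$ would factor through the augmentation. Ando--Strickland identify ring maps $K_*(\KFptw)\to K_*$ with \emph{$e_p$-pairings} $\Gamma[p]\times\Gamma[p]\to\Gm$, and the augmentation corresponds to the zero pairing. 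But \Cref{prop:weilag} says $w_K$ carries the canonical Weil pairing, whose restriction to $p$-torsion is nonzero --- contradiction. The gain of this route is that one only needs the pairing to be \emph{nonzero}, not an isomorphism, and the identification used (ring maps out of $K_*(\KFptw)$ as $e_p$-pairings, from the same Ando--Strickland paper as \Cref{prop:weilkz3}) sits cleanly next to the one already invoked for \Cref{prop:weilag}.

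Your route, by contrast, requires two extra substantive inputs. First, the compatibility you honestly flag: that the Ando--Strickland identification $b\colon\Spec K_0(\KZthree)\cong W(\Gamma)$ is compatible with the Cartier-duality description of $K_*$-algebra maps as elements of $\Hom(\Lambda^2\Gamma,\Gm)$, in such a way that non-degenerate Weil pairings go to isomorphisms. Second, the assertion that non-degeneracy of the Weil pairing on the elliptic curve implies the induced homomorphism $\Lambda^2\Gamma\to\Gm$ is an \emph{isomorphism} rather than merely nonzero (a nonzero map of height-one formal groups over a perfect field can factor through a power of Frobenius, so ``nonzero'' and ``isomorphism'' genuinely differ). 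Both points are plausible and are in the spirit of Hopkins--Lurie, but each needs its own argument; your plan would really need a clean statement (with proof or reference) of ``$b$ carries perfect Weil pairings to isomorphisms $\Lambda^2\Gamma\xrightarrow{\sim}\Gm$.'' The paper's detour through $\KFptw$ and $e_p$-pairings is precisely the device that avoids ever having to prove this. Your invocation of \Cref{lem:z-is-primitive} and of the one-dimensionality of the primitives in $K_0(\KZthree)$ to relate $z$ and $b_1$ is sound and parallels the proof of \Cref{EstarAlgebra-maps-is-iso-to-Gm}.
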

\begin{proof}
Consider the $K_*$-module map $w\colon  K_*Z \to K_*$ induced by the composite $w_Kz \colon  Z \to K$.  Since $K_*Z$ is a free $K_*$-module of rank $1$, it suffices to show that this map $w$ is nonzero.

Recall from \Cref{sub:Z} that the element $z$ factors as a composite
 \[Z\to L_{K(2)} \suspinftypl \KFptw\to L_{K(2)}\suspinftypl \KZthree, \]
where the first map extends to a $K(2)$-local splitting
\[L_{K(2)}\suspinftypl \KFptw \simeq L_{K(2)} \Sbb \vee Z \vee Z^{\otimes 2} \vee \cdots \vee Z^{\otimes p-1}\]
such that $L_{K(2)} \suspinftypl \KFptw$ is generated as a homotopy ring by the inclusion of $Z$.

Suppose for the sake of contradiction that the map $w\colon  K_*Z \to K_*$ is zero.  Then, since the composite
\begin{equation}\label{eqn:weildegen}
K_*(\KFptw) \to K_*(\KZthree) \xrightarrow{w_k} K_*
\end{equation}
is a ring map, the previous remarks imply that it factors through the augmentation.
By \cite[\S 2]{AS}, the set of maps $K_*(\KFptw) \to K_*$ can be identified with what are called
\emph{$e_p$-pairings} on $\Gamma$, which are certain pairings
\[\Gamma[p] \times \Gamma[p] \to \Gm.\]
The map (\ref{eqn:weildegen}) factoring through the augmentation corresponds to the zero $e_p$-pairing.
On the other hand, by \Cref{prop:weilag}, the pairing arising from $w_K$ is the underlying $e_p$-pairing of the
canonical Weil pairing on an elliptic curve (up to a sign), which is nonzero, contradicting our assumption.
\end{proof}


\subsection{Cubical structures and the proof of \Cref{prop:weilag}}\label{sub:cub}


We give here a very brief recollection of the results of \cite{AHS} in order to extract a proof of \Cref{prop:weilag}; we do not intend for this to be a thorough exposition of the material and refer the reader to \cite{AHS, AS, Breen} for further details.

\begin{rec}[\cite{AHS} Definition 2.40]\label{rec:cubical}
Given a line bundle $\L$ on $\Gamma$, there is the notion of a \emph{cubical structure} on $\L$.  Let $\Theta^3(\L)$ denote the line bundle on $\Gamma^{\times 3}$ whose fiber at $(x,y,z)\in \Gamma^{\times 3}$ is given by the formula
\[\Theta^3(\L)_{(x,y,z)} = \frac{\L_{x+y} \otimes \L_{x+y}\otimes \L_{y+z} \otimes \L_0 }{\L_{x+y+z} \otimes \L_x \otimes \L_y \otimes \L_z}.\]
A cubical structure is a trivialization of $\Theta^3(\L)$ satisfying certain conditions.  We will denote the set of cubical structures on $\L$ by $C^3(\Gamma;\L)$.  These can be assembled into an affine group scheme over $k$, which we give the same name. 
\end{rec}

\begin{rec}[\cite{Breen} \S 2, see also \cite{Grieve} \S 5]\label{rec:cubicaltoweil}
Given a line bundle $\L$ on $\Gamma$ and a cubical structure $\gamma \in C^3(\Gamma; \L)$, one can extract a Weil pairing which we will denote by $\beta_{\L}(\gamma)$.  This procedure determines a map
\[\beta_{\L}\colon  C^3(\Gamma; \L)\to W(\Gamma)\]
of $k$-group schemes.

The association of a Weil pairing to a cubical structure is \emph{natural} in the sense that an isomorphism $\psi\colon  \L \simeq \L'$ of line bundles induces an identification $\psi_*\colon  C^3(\Gamma; \L) \to C^3(\Gamma\colon  \L')$ under which we have an equality of pairings $\beta_{\L}(\gamma) = \beta_{\L'}(\psi_* \gamma)$.
\end{rec}


These notions have incarnations in topology by the following results from \cite{AHS, AS}:

\begin{thm}[\cite{AS}, Theorem 1.1, Corollary 4.4]\label{thm:busixag}
There is an isomorphism
\[f\colon \Spec K_0(\BUsix ) \cong C^3(\Gamma; \Ocal_{\Gamma})\]
of group schemes over $k$ fitting into a square
\begin{equation*}
\begin{tikzcd}
\Spec K_0(\BUsix) \arrow[r]\arrow[d,"f"', "\cong"] & \Spec K_0(\KZthree) \arrow[d,"b"', "\cong"] \\
C^3(\Gamma ; \Ocal_{\Gamma}) \arrow[r,"\beta_{\Ocal_{\Gamma}}"'] & W(\Gamma).
\end{tikzcd}
\end{equation*}
which commutes up to a possible sign (cf. Remark \ref{rmk:sign}).
Here, the top map is induced by the map $\KZthree \to \BUsix$ in the Postnikov tower for $\BUsix$ and $b$ is the identification from \Cref{prop:weilkz3}.
\end{thm}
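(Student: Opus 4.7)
The plan is to recognize this theorem as a direct transcription of Ando--Strickland \cite{AS}: the isomorphism $f$ is \cite[Theorem 1.1]{AS}, while the commutativity of the square (up to sign) is \cite[Corollary 4.4]{AS}. My task is mainly to see that their setup matches the one we have introduced, and to package the result in a form useful for the forthcoming proof of \Cref{prop:weilag}.

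For the isomorphism $f$, I would follow Strickland's framework for computing $\Spec K_0(X)$ of Wilson-type infinite loop spaces functorially in the formal group $\Gamma$. Since $\BUsix$ is a Wilson space, Ravenel--Wilson's calculation of $K_0(\BUsix)$ as a Hopf algebra is of exactly the form that Strickland's formalism converts into a representable moduli problem over $\Gamma$. The identification of this moduli problem with $C^3(\Gamma; \Ocal_{\Gamma})$ is natural: $\BUsix \simeq \Omega^\infty \tau_{\geq 6} \KU$ classifies rank-zero virtual bundles equipped with trivializations of both $c_1$ and $c_2$, and the formal-group avatar of such a trivialization is precisely a cubical structure on the trivial line bundle $\Ocal_{\Gamma}$ (\Cref{rec:cubical}); this is the heart of Ando--Hopkins--Strickland's geometric interpretation of the $\sigma$-orientation.

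For the commutativity of the square, the map $\KZthree \to \BUsix$ in the Postnikov tower for $\BUsix$ should correspond, under $f$ and $b$, to the map that takes a cubical structure $\gamma$ and extracts its associated Weil pairing $\beta_{\Ocal_{\Gamma}}(\gamma)$. The algebraic procedure --- passing from a cubical structure to its associated symmetric biextension $\Lambda(\Ocal_{\Gamma})$, and then restricting to $p^i$-torsion to obtain pairings $\Gamma[p^i] \times \Gamma[p^i] \to \mu_{p^i}$ --- is the content of \Cref{rec:cubicaltoweil}. The heart of \cite[Corollary 4.4]{AS} is verifying that this algebraic recipe agrees with the topological Postnikov map on $K$-homology.

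The main obstacle is precisely this last verification: reconciling the topological differentiation (the Postnikov map $\KZthree \to \BUsix$) with the algebraic ``quadratic-to-bilinear'' passage (cubical structure to Weil pairing). It is a sign-sensitive computation in the Hopf algebra $K_0(\BUsix)$, and it is the source of the possible sign ambiguity recorded in \Cref{rmk:sign}. Since \cite{AS} executes this comparison in detail and the resulting sign is immaterial for our applications, we may simply cite their work; the only remaining care we must take is to ensure we have used the formal-group coordinate on $\Gamma = \widehat{C}$ compatible with the elliptic-curve structure, which is built into our standing conventions.
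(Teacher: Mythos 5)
Your proposal is correct and matches the paper's treatment: the paper states this result as a direct citation to Ando--Strickland (Theorem 1.1 for $f$, Corollary 4.4 for the square up to sign) and gives no proof of its own. The additional context you supply about cubical structures, the moduli interpretation of $\Spec K_0(\BUsix)$, and the Postnikov map is accurate motivation but not required, since the paper simply invokes the cited results.
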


Consider the following two torsors for $\Spec K_0(\BUsix)$:
\begin{enumerate}
\item The $k$-scheme $\Spec K_0(\MUsix)$, which is a torsor for $\Spec K_0(\BUsix)$ because $\MUsix$ is a Thom spectrum over $\BUsix$ and the ring $K$ is complex orientable.
\item The $k$-scheme $C^3(\Gamma ;\Ical(0))$, which is a torsor for $C^3(\Gamma ;\Ocal_{\Gamma}) \cong \Spec K_0(\BUsix)$, where $\Ical(0)$ denotes the ideal sheaf of functions vanishing at the identity.  Roughly, the torsor structure comes from the bundle $\Ical(0)$ being trivializable and any two cubical structures on $\Ical(0)$ differing by a cubical structure on the trivial bundle (see \cite[Proposition 2.43]{AHS}).
\end{enumerate}

\begin{thm}[\cite{AHS}, Theorem 2.48]\label{thm:musixcub}
There is a map (and therefore, isomorphism) of torsors for the $k$-group scheme $\Spec K_0(\BUsix)$:
\[g\colon  \Spec K_0(\MUsix) \to C^3(\Gamma ;\Ical(0)).\]
\end{thm}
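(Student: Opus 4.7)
My plan rests on the observation that both sides of the statement are torsors for the same $k$-group scheme, namely $\Spec K_0(\BUsix) \cong C^3(\Gamma; \Ocal_{\Gamma})$ via \Cref{thm:busixag}. Since any equivariant map of torsors over a common group scheme is automatically an isomorphism, it suffices to construct an equivariant map $g$, which I will do by producing compatible basepoints of the two torsors together with a verification of equivariance.

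The key point is that a single complex orientation of $K$ furnishes compatible basepoints on both sides simultaneously. Concretely, such an orientation is the data of a ring map $\MU \to K$; restriction to $\MUsix$ gives a Thom class $t \colon \MUsix \to K$, and hence a $k$-point of $\Spec K_0(\MUsix)$. The same orientation provides a coordinate $x$ on the formal group $\Gamma$, which is precisely the data of a trivialization $\Ocal_{\Gamma} \cong \Ical(0)$. Combined with the basepoint of $C^3(\Gamma; \Ocal_{\Gamma})$ corresponding to the identity element of the group scheme $\Spec K_0(\BUsix)$ under \Cref{thm:busixag}, this yields a basepoint of $C^3(\Gamma; \Ical(0))$. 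I would define $g$ by sending these two basepoints to one another and extending uniquely using the torsor structures.

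The main obstacle is verifying that $g$ so defined is well-defined, that is, independent of the choice of complex orientation, or equivalently, equivariant with respect to the two torsor actions. A different choice of orientation changes the underlying coordinate by a unit power series $f(x) \in \pi_0(K)\llbracket x \rrbracket^{\times}$, which simultaneously rescales the Thom class (translating the basepoint on the $\MUsix$ side) and rescales the trivialization of $\Ical(0)$ (translating the basepoint on the cubical side). The content of the theorem is that both rescalings translate into the same action on $\Spec K_0(\BUsix) \cong C^3(\Gamma; \Ocal_{\Gamma})$. Unwinding the explicit form of the isomorphism of \Cref{thm:busixag}, I expect this to reduce to matching, on the one hand, the cocycle identity defining a trivialization of $\Theta^3(\Ical(0))$ and, on the other, the module relations that identify $K_0(\MUsix)$ as a free rank-one $K_0(\BUsix)$-module; the match is then traced through the Postnikov stages between $\CPinfty$, $\BU$, $\BSU$, and $\BUsix$, where the corresponding statements for $\CPinfty$ and $\BU$ are the classical theory of complex orientations. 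Once this equivariance is in hand, $g$ is automatically an isomorphism of torsors.
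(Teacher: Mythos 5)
The paper does not prove this statement; it is recorded as a citation to Ando--Hopkins--Strickland (their Theorem 2.48), and the surrounding text only recalls the two torsor structures from \cite{AHS}. So there is no internal argument to compare yours against, and the relevant comparison is with \cite{AHS} itself. There, $g$ is constructed directly and functorially, without ever choosing a complex orientation: a ring map $\MUsix \to K$ yields a Thom class in $K^0(\MUsix)$, and the Thom diagonal $\MUsix \to \MUsix \wedge (\BUsix)_+$, together with the splitting-principle computation of $K^0(\BUsix)$, lets one read off a cubical structure on $\Ical(0)$. Equivariance then falls out of naturality rather than being a separate verification. Your plan --- choose an orientation, mark compatible basepoints, match them, and extend --- is a genuinely different route.

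That route can be made to work in principle, but the part you leave as a sketch (``I expect this to reduce to\dots'') is the entire mathematical content of the theorem. You must show that changing the orientation by a unit power series $f(x)\in\pi_0(K)\llbracket x\rrbracket^\times$ --- which need not itself be a $k$-point of $\Spec K_0(\BUsix)$ --- translates the two basepoints by the \emph{same} element of $\Spec K_0(\BUsix)(k)$, namely the one produced by applying $\Theta^3$ to the induced trivialization of $\Ical(0)$. Until that is filled in, the proposal is a plan for a proof rather than a proof. A smaller point: you assert that independence of the choice of orientation is ``equivalently'' equivariance of $g$. These are not the same. Any $g$ defined by matching basepoints and extending by the torsor actions is \emph{automatically} equivariant; the thing that actually needs proof is that the rule producing compatible basepoints is natural in the orientation, which is the condition that makes $g$ canonical rather than merely one of the many equivariant bijections between two abstractly isomorphic torsors.
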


For the following, we will need the following definition/construction: 

\begin{dfn} \label{dfn:talpha}
    Let $R$ be a homotopy commutative ring and $\alpha: MU\to R$ be a homotopy ring map.  
    By \cite[\S 2.3]{Ando-Hopkins-Rezk} (more precisely, the analogue for homotopy rings), the space $\Map_{\hAlg}(\MUsix, R)$ is a torsor for $\Map_{\hAlg}(\Sigma^{\infty}_+ \BUsix,R)$ and so the composite $\MUsix \to \mathrm{MU} \xrightarrow{\alpha} R$ determines an isomorphism
    \[
    t_{\alpha}:  \pi_0 \Map_{\hAlg}(\MUsix, R) \to \pi_0 \Map_{\hAlg}(\BUsix, R)
    \]
    given by ``taking the difference with $\alpha$'' in the torsor structure.  
\end{dfn}

The key property of the $\sigma$-orientation that we use is the following:

\begin{rec}\label{rec:sigmaweil}
The sheaf $\Ical(0)$ on $\Gamma$ arises as the restriction of a sheaf on all of $C$, which we will denote by $\Ical_C(0)$.
By the theorem of the cube (see, for instance, \cite{MumAb}), the ideal sheaf $\Ical_C(0)$ acquires a unique cubical structure $\gamma_C$, which determines a cubical structure $\gamma \in C^3(\Gamma; \Ical(0))$ having the following properties:
\begin{itemize}
\item Consider the composite $\sigma_K\colon  \MUsix \xrightarrow{\sigma_E} E \to K$.  Then under the identification $g$ of \Cref{thm:musixcub}, $\sigma_K$ corresponds to the cubical structure $\gamma \in C^3(\Gamma; \Ical(0))$.
\item  Since the cubical structure $\gamma_C$ corresponds to the symmetric biextension determined by the Poincare bundle on $C$ (cf. \cite{MumAb, MumBiext} for the relevant background), the Weil pairing $\beta_{\Ical (0)}(\gamma)$ associated to the cubical structure $\gamma$ (cf. \Cref{rec:cubical}) is exactly the canonical Weil pairing on $\Gamma$.
\end{itemize}
\end{rec}

This concludes our recollections of \cite{AHS, AS}.  We now use these ideas to prove \Cref{prop:weilag}.  

\begin{proof}[Proof of \Cref{prop:weilag}]
Fix a homotopy commutative ring map $\alpha\colon MU \to E$.  
This determines a map $\MUsix \to MU \xrightarrow{\alpha} E \xrightarrow{q} K$, which determines via \Cref{thm:musixcub} a commutative square of isomorphisms
\begin{equation*}
\begin{tikzcd}
\Hom_{k}(K_0(\MUsix), k) \arrow[r,"\cong","t_{q\alpha}"']\arrow[d,"g"',"\cong"] & \Hom_{k}(K_0(\BUsix),k) \arrow[d,"f"',"\cong"]\\
C^3(\Gamma; \Ical(0))(k) \arrow[r,"\cong","s_{q\alpha}"'] & C^3(\Gamma; \Ocal_{\Gamma})(k),
\end{tikzcd}
\end{equation*}
where the map $s_{q\alpha}$ can be described in the language of \cite{AHS} as follows: the map $q\alpha\colon MU \to K$ provides a trivialization of the bundle $\Ical(0)$ and $s_{q\alpha}$ is induced by the resulting identification $\Theta^3(\Ical(0))\cong \Theta^3(\Ocal_{\Gamma})$.  Combining this square with \Cref{thm:busixag}, we obtain a commutative diagram
\begin{equation*}
\begin{tikzcd}
\Hom_{k}(K_0(\MUsix), k) \arrow[r,"\cong","t_{q\alpha}"']\arrow[d,"g"',"\cong"] & \Hom_{k}(K_0(\BUsix),k) \arrow[d,"f"',"\cong"] \arrow[r] & \Hom_{k}(K_0(\KZthree), k) \arrow[d,"\cong","b"']\\
C^3(\Gamma; \Ical(0))(k) \arrow[r,"\cong","s_{q\alpha}"'] & C^3(\Gamma; \Ocal_{\Gamma})(k) \arrow[r,"\beta_{\Ocal_{\Gamma}}"'] &W(\Gamma)(k)
\end{tikzcd}
\end{equation*}
Recall that we are interested in analyzing a map $\sigma_K\colon  \MUsix \to K$, which induces on $K$-homology a map of $k$-algebras $(\sigma_K)_*\colon  K_0(\MUsix) \to k$ corresponding to a point in the top left set.  Our strategy will be to determine its image in $W(\Gamma)(k)$ under the upper and lower composites.

We start with the lower composite.  Unwinding the definitions, the horizontal composite $\beta_{\Ocal_{\Gamma}} \circ s_{q\alpha}$ takes a cubical structure $\gamma$ on $\Ical(0)$, regards it as a cubical structure on $\Ocal_{\Gamma}$ via the trivialization of $\Ical(0)$ provided by $\alpha$, and then takes the associated Weil pairing.  By \Cref{rec:cubicaltoweil}, this coincides with directly taking the Weil pairing $\beta_{\Ical(0)}(\gamma)$, which is the canonical Weil pairing on $\Gamma$ by \Cref{rec:sigmaweil}.

It therefore suffices to show that the top composite
\[\Hom_{k}(K_0(\MUsix), k) \xrightarrow{t_{q\alpha}} \Hom_{k}(K_0(\BUsix),k) \to \Hom_{k}(K_0(\KZthree), k)\]
 takes $(\sigma_K)_*\colon  K_0(\MUsix) \to k$ to the map induced on $K$-homology by $w_K\colon  \suspinftypl \KZthree\to K$.  This follows from an analogous statement at the level of spectra and before reducing from $E$ to $K$, which we prove as \Cref{sigma-maps-to-w}.
\end{proof}

\begin{lem}
\label{sigma-maps-to-w}
Let $\alpha\colon \MU \to E$ as before, and let
\[t_{\alpha}\colon  \Map_{\hAlg}(\MUsix, E) \to \Map_{\hAlg}(\suspinftypl \BUsix, E)\]
be the map of \Cref{dfn:talpha}.  Here,  $\Map_{\hAlg}$ denotes the set of homotopy commutative ring maps.  Then, under the composite
\[
    \Map_{\hAlg}(\MUsix, E) \xrightarrow{t_{\alpha}} \Map_{\hAlg}(\suspinftypl \BUsix, E) \to \Map_{\hAlg}(\suspinftypl \KZthree, E),
\]
the $\sigma$-orientation $\sigma_E$ is sent to $w_E$.
\end{lem}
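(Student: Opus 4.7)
The plan is to translate the entire problem into the stable category via ABGHR's universal property of $\glE$ \cite{ABGHR}, after which the lemma reduces to a diagram chase involving the cofiber sequence $\Sigma^6\ku \xrightarrow{\beta} \Sigma^4\ku \to \Sigma^4\HZ$. Recall that, for a connective infinite loop space $X$, homotopy commutative ring maps $\suspinftypl X \to E$ correspond to maps of spectra $X \to \glE$, while homotopy commutative orientations of a Thom spectrum $X^\xi$ correspond to nullhomotopies of the classifying map to $\Sigma\glE$. Under these equivalences, the torsor structure of orientations of $\MUsix$ over $\suspinftypl\BUsix$-maps to $E$ becomes the standard torsor structure on the space of nullhomotopies of $J\circ\beta\colon \Sigma^6\ku \to \Sigma\glE$, acted on by $[\Sigma^6\ku,\glE]$ via gluing of nullhomotopies.

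With this dictionary in hand, I would first unwind the three pieces of data in the lemma. The orientation $\alpha\colon \MU \to E$ corresponds to a nullhomotopy $n_\alpha$ of $J\colon \Sigma^2\ku \to \Sigma\glE$, which restricts to a nullhomotopy of $J\colon \Sigma^4\ku \to \Sigma\glE$, and further restricts along $\beta$ to a nullhomotopy of $J\circ\beta$. The $\sigma$-orientation $\sigma_E$ corresponds to the nullhomotopy $n_\sigma$ of $J\circ\beta$ constructed in \Cref{cnstr:sigmanull}, namely the one obtained by composing the canonical null of $\Sigma^6\ku \to \Sigma^4\ku \to \Sigma^4\HZ$ with a specific map $w_E'\colon \Sigma^4\HZ \to \Sigma\glE$ whose image under the universal property of $\glE$ is $w_E$. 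The restriction $\Map_{\hAlg}(\suspinftypl\BUsix,E) \to \Map_{\hAlg}(\suspinftypl\KZthree,E)$ corresponds to precomposition with the connecting map $\Sigma^3\HZ \to \Sigma^6\ku$ of the fiber sequence $\Sigma^3\HZ \to \Sigma^6\ku \xrightarrow{\beta} \Sigma^4\ku$.

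Under these identifications, $t_\alpha(\sigma_E) \in [\Sigma^6\ku,\glE]$ is the difference $n_\sigma - (n_\alpha\circ\beta)$. Using the cofiber sequence $\Sigma^6\ku \xrightarrow{\beta} \Sigma^4\ku \to \Sigma^4\HZ$, both nullhomotopies correspond to extensions of $J$ over $\Sigma^4\HZ$: $n_\sigma$ to $w_E'$ itself, and $n_\alpha\circ\beta$ to an extension $\widetilde{n_\alpha}$ built from the fact that $n_\alpha$ already nullhomotopes $J$ on all of $\Sigma^4\ku$. The difference $w_E' - \widetilde{n_\alpha}\colon \Sigma^4\HZ \to \Sigma\glE$ is null on $\Sigma^4\ku$ and so factors through the rotated cofiber $\Sigma^4\HZ/\Sigma^4\ku \simeq \Sigma^7\ku$, yielding $t_\alpha(\sigma_E)$ as a class in $[\Sigma^7\ku,\Sigma\glE] = [\Sigma^6\ku,\glE]$.

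Finally, one chases through the rotated cofiber sequence $\Sigma^3\HZ \to \Sigma^6\ku \to \Sigma^4\ku$ to see that precomposition of this class with $\Sigma^3\HZ \to \Sigma^6\ku$ recovers $w_E'$ up to a suspension shift, i.e.\ the map $w_E\colon \suspinftypl\KZthree \to E$. The contribution of $\widetilde{n_\alpha}$ vanishes under this restriction, because $\widetilde{n_\alpha}$ genuinely extends over $\Sigma^4\ku$ (via the null of $J$) and is therefore trivial on the $\Sigma^7\ku$-summand. I expect the main obstacle to be precisely this last cancellation: verifying it requires a careful application of the octahedral axiom intertwining the two relevant cofiber sequences to identify the connecting maps and confirm that only the $w_E'$-piece survives on restriction to $\Sigma^3\HZ$.
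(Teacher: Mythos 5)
Your very first step — translating homotopy commutative ring maps $\suspinftypl X \to E$ into spectrum maps $X \to \glE$ via the ABGHR universal property — is not available in the homotopy category. The adjunction between $\suspinftypl\Omega^\infty$ and $\glE$, and the associated Thom-spectrum orientation theory, live in the world of \emph{structured} ($\E_\infty$, or $\E_n$ for appropriate $n$) ring maps; a merely homotopy multiplicative (even homotopy commutative) ring map does not generally arise from a spectrum map into $\glE$. The paper's own proof opens by handling exactly this point: it invokes Chadwick--Mandell to lift $\alpha$ to an $\E_2$-ring map, then replaces each $\Map_{\hAlg}$ by $\Map_{\E_2}$, applies the ABGHR dictionary at the $\E_2$-level, and finally observes that the $\E_2$-statement implies the homotopy statement by forgetting structure. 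Without that reduction, the translation to ``nullhomotopies of $J \circ \beta$ in $[\Sigma^6\ku, \Sigma\glE]$'' that your whole argument runs on is unjustified.

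Once the $\E_2$-lift is in place, your stable-category diagram chase with the cofiber sequences $\Sigma^6\ku \xrightarrow{\beta} \Sigma^4\ku \to \Sigma^4\HZ$ and its rotation is essentially a linearized version of the paper's $3$-dimensional diagram of $\E_2$-spaces (where $\GLE$ is formed as a pullback and $\psi_1, \psi_2$ are extracted by passing to fibers). Both approaches come down to showing that the nullhomotopy contributed by $n_\alpha$ drops out upon restriction to the fiber $\KZthree$, since it extends over all of $\BSU$/$\Sigma^4\ku$. You yourself flag the final octahedral-axiom cancellation as unverified; the paper's diagram does carry out this bookkeeping, so if you want to complete the proof you should either reproduce that diagram or fill in the octahedral argument you sketched. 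But the decisive gap is the structured-vs-homotopy ring map issue, which must be addressed before any of the rest applies.
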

\begin{proof}
In order to use the theory of Thom spectra \cite{ABGHR}, it is convenient to lift $\alpha$ to an $\E_2$-ring map, which can always be done by \cite[Theorem 1.2]{ChadwickMandell}.  We then compute instead the composite
\begin{equation}\label{eqn:e2composite}
\Map_{\E_2}(\MUsix, E) \xrightarrow{t_{\alpha}} \Map_{\E_2}(\suspinftypl \BUsix, E) \to \Map_{\E_2}(\suspinftypl \KZthree, E)
\end{equation}
which is analogous to the composite in the lemma statement with homotopy ring maps replaced by $\E_2$-ring maps.  By the theory of Thom spectra \cite{ABGHR}, the $\E_2$-map $\sigma_E\colon  \MUsix \to E$ can be identified with a nullhomotopy of the composite $\BUsix \to \BSU \xrightarrow{J} \BGLE$ in $\E_2$-spaces.  Similarly, $\alpha\colon \MU \to E$ restricts to a map $\MSU \to \MU \xrightarrow{\alpha} E$ corresponding to a nullhomotopy (in $\E_2$-spaces) of $\BSU \xrightarrow{J} \BGLE$.  These induce a homotopy coherent diagram in $\E_2$-spaces
\begin{equation*}
\begin{tikzcd}
 \BUsix \arrow[rr] \arrow[dd]  & & \BSU \arrow[lldd, Leftrightarrow, "\sigma_E"] \arrow[rd, bend left=20]\arrow[dd,"J"',""{name=U}] & \\
  & & & * \arrow[ld, bend left=20]\arrow[Leftrightarrow, "\alpha", to=U]\\
 * \arrow[rr] & & \BGLE &
\end{tikzcd}
\end{equation*}
where the homotopies have been labeled by the corresponding orientations of $E$.  This extends to a larger diagram
\begin{equation*}
\begin{tikzcd}
\KZthree\arrow[dd,dashed,"\psi_2"]\arrow[rr, dashed] \arrow[rd, dashed, "\psi_1"]& &\BUsix \arrow[rr]\arrow[rd, dashed]   & & \BSU  \arrow[rd]& \\
&\GLE \arrow[ld, dashed, "\simeq"'] \arrow[rr, dashed]&  & \GLE \arrow[rr] \arrow[ld]& & * \arrow[ld]\\
\GLE \arrow[rr,dashed]& & * \arrow[rr] \arrow[from=uu, crossing over]& & \BGLE \arrow[from=uu, crossing over]&
\end{tikzcd}
\end{equation*}
where $\GLE$ is constructed as a pullback, and the remaining dotted arrows are constructed by taking fibers.  To finish, we note that $\psi_2\colon  \KZthree\to \GLE$ is adjoint to the map $w_E\colon  \suspinftypl \KZthree\to E$ of \Cref{cnstr:sigmanull} and $\psi_1$ is adjoint to the image of $\sigma_E$ under the composite (\ref{eqn:e2composite}) above.  The diagram shows that these are homotopic and the lemma is proved.
\end{proof}

\section{The upper bound on chromatic height}\label{sec:UpperBound}
Our goal in this section is to complete the proof of the following theorem:

\begin{thm}\label{thm:height}
Let $n$ and $h$ be nonnegative integers.  Then the $\Einfty$-ring spectrum
\[L_{K(n)}R_{h}\]
vanishes if and only if $n>h+1$.  That is, the ring $R_{h}$ has chromatic height $h+1$.
\end{thm}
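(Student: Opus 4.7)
The plan is to combine the lower-bound computation from \Cref{sec:Bockstein} with a Hopf-ring analysis of $K(h+2)$-homology for the upper bound, invoking the main result of \cite{HinfBousfield} to handle intermediate chromatic heights. As noted in the introduction, that Bousfield-class reduction shows that the claim ``$R_h$ has chromatic height exactly $h+1$'' is equivalent to the two extremal statements
\[K(h+1) \otimes R_h \neq 0 \qquad \text{and} \qquad K(h+2) \otimes R_h = 0.\]
The first is precisely the shifted version of \Cref{cor:lowerbound} (replacing $h$ by $h+1$), whose proof established the identity $x^p = (-1)^h v_{h+1} x$ in $K(h+1)_*(W_h)$ by transporting the Ravenel--Wilson relation $f_0^p = (-1)^h v_{h+1} f_0$ of \Cref{Kh-homology-of-KFph} along the Tamanoi Bockstein. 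Once $x$ is inverted to form $R_h$, this identity immediately gives $K(h+1)_*(R_h) \neq 0$.

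For the upper bound $K(h+2)_*(R_h) = K(h+2)_*(W_h)[x^{-1}] = 0$, the approach is to compute in the Ravenel--Wilson Hopf ring $K(h+2)_*(\OS{\BPh}{*})$ of \cite{Ravenel-Wilson,Ravenel-Wilson-Yagita} and to show that the Hurewicz image of $x$ is nilpotent in $K(h+2)_*(W_h)$. Up to a unit, this Hurewicz image equals the iterated circle product
\[x \;=\; a_{(0)} \circ a_{(1)} \circ \cdots \circ a_{(h)},\]
where the generators $a_{(i)} \in K(h+2)_{2p^i}(\CPinfty)$ are the standard Hopf-ring classes coming from $\CPinfty \simeq \OS{\BP}{2}$. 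The key Hopf-ring relation to exploit is the vanishing, in $K(h+2)_*(\OS{\BPh}{*})$, of the $p$-series of the universal $\BP$-formal group law with $v_{h+1}, v_{h+2}, \ldots$ truncated to zero, together with the formula $[p]_{F_{K(h+2)}}(t) = v_{h+2} t^{p^{h+2}}$ in the formal group law of $K(h+2)$. I expect these to force a bounded $*$-power of $a_{(0)} \circ \cdots \circ a_{(h)}$ to vanish; in particular, in contrast to the height $h+1$ case where the available unit $v_{h+1}$ supported the non-nilpotent identity $x^p = \pm v_{h+1} x$, at height $h+2$ no analogous stabilizing relation survives the truncation to $\BPh$.

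The main obstacle will be executing the Hopf-ring computation carefully: the interplay of the $*$- and $\circ$-products with the formal group law of $K(h+2)$ is intricate, and one must track precisely which Hopf-ring generators and relations survive the combined specialization from $\OS{\BP}{*}$ to $\OS{\BPh}{*}$ and from $\BP_*$-coefficients to $K(h+2)_*$-coefficients. Once nilpotence of $x$ is established in $K(h+2)_*(W_h)$, inverting $x$ to form $R_h$ yields $K(h+2)_*(R_h) = 0$, and the full equivalence $L_{K(n)} R_h = 0 \iff n > h+1$ then follows from the Bousfield-class reduction of \cite{HinfBousfield}.
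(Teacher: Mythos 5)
Your overall plan coincides with the paper's: reduce via \cite{HinfBousfield} to the pair of extremal statements $K(h+1)_*(R_h)\neq 0$ and $K(h+2)_*(R_h)=0$, obtain the former from \Cref{cor:lowerbound}, and prove the latter by showing the Hurewicz image of $x_{2\nu(h)}$ is nilpotent in $K(n)_*(W_h)$ for $n>h+1$ via Ravenel--Wilson Hopf-ring relations. Your intuition that the stabilizing relation present at height $h+1$ dies because $[v_{h+1}]=0$ in $\BPn<h>$ is also exactly the paper's mechanism.

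However, the key computational step is garbled in two ways, and this is a genuine gap rather than an omitted routine verification. First, your identification of the Hurewicz image as $a_{(0)}\circ\cdots\circ a_{(h)}$ with $a_{(i)}\in K(h+2)_{2p^i}(\CPinfty)$ is not correct as stated. The $a$-classes in this paper (cf.\ the proof of \Cref{thm:BocksteinMain}) live in $K_*(\Krm(\Fp,1))$ — they are Hopf-ring generators for the Eilenberg--MacLane Hopf ring of $\HFp$, not classes on $\CPinfty$ — and in any case $\CPinfty\not\simeq\OS{\BP}{2}$ (the latter is $\Omega^\infty\Sigma^2\BP$, which receives an orientation map from $\CPinfty$ but is not homotopy equivalent to it). The paper's actual identification is $u=b_{(0)}^{\circ\nu(h)}\in K(n)_{2\nu(h)}\OS{\BPn<h>}{2\nu(h)}$, where $b_{(0)}$ is the push-forward of $\beta_1\in K(n)_2\CPinfty$ along the complex orientation $\CPinfty\to\OS{\BPn<h>}{2}$. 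Note the bidegree: $b_{(0)}^{\circ\nu(h)}$ lands in the correct Wilson space $\OS{\BPn<h>}{2\nu(h)}$, whereas a $\circ$-product of $h+1$ classes each in $\circ$-weight $1$ would land in $\OS{\BPn<h>}{2(h+1)}$.

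Second, the actual nilpotence argument is nontrivial and is not supplied. The paper's \Cref{prop:xpzero} establishes $u^p=\left(b_{(0)}^{\circ\nu(h)}\right)^{*p}=[p]\circ b_{(1)}^{\circ\nu(h)}$ and then runs a chain of substitutions $[v_r]\circ b_{(1)}^{\circ p^r}\equiv-[v_{r+1}]\circ b_{(0)}^{\circ p^{r+1}}\pmod{[I_r]}$ for $0\le r<h$, terminating at $[v_{h+1}]=0$. The delicate point is that these identities are first derived only on indecomposables modulo the Hopf-ring ideals $[I_r]$ (\Cref{vh*b1^p^h}), and one needs \Cref{Ih*b0^p^k=0} — that $[I_r]\circ b_{(0)}^{\circ k}=0$ for $k\ge\nu(r-1)$ — to lift them to honest equalities after $\circ$-multiplication by enough copies of $b_{(0)}$. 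Your proposal signals awareness that such bookkeeping is required, but without the correct identification of $u$ in the $\BPn<h>$ Hopf ring and without the lifting lemma, there is no proof of the upper bound.
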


By the main result of \cite{HinfBousfield}, this is equivalent to showing both that $L_{K(h+1)}R_{h} \not\simeq *$ and that $L_{K(h+2)}R_{h} \simeq *$.  The former statement is \Cref{cor:lowerbound}, so we will focus on the latter, which we will prove by showing that $K(n)_*(R_{h}) \cong 0$ for $n > h + 1$.   Throughout the proof, we will assume familiarity with the theory of Hopf rings, and, in particular, the paper \cite{Ravenel-Wilson}.  For the convenience of the reader, \Cref{sub:rwreview} lists some of the key formulas that we will use.

\subsection{The $K(n)$-homology of $R_h$}
Throughout this section, we fix nonnegative integers $n$ and $h$ with $n>h+1$.
By definition,
\[R_h = \suspinftypl \BPph [x_{2\nu(h)}^{-1}],\]
where $x_{2\nu(h)}\in \pi_{2\nu(h)}(\suspinftypl \BPph)$ denotes a bottom cell.  Thus, in order to show $K(n)_*(R_{h}) \cong 0$, it suffices to show that the Hurewicz image of $x_{2\nu(h)}$ is nilpotent.  We prove this as \Cref{prop:xpzero} below.

\begin{rmk}
Although we do not use it in our proof, this computation in $K(n)$-homology essentially follows from knowledge of  $\HFp_*(\BPph)$, which, as an algebra, is a tensor product of a divided power algebra on infinitely many generators and a polynomial algebra on infinitely many generators.  This computation of $\HFp$-homology is known to Ravenel and Wilson, but the authors were unable to find it in the literature.
\end{rmk}

\begin{ntn*}
For a Hopf ring $E_{*}\OS{F}{2*}$ we denote by $QE_*\OS{F}{2*}$ the $*$-indecomposables. This forms a graded ring under the $\circ$ product.
\end{ntn*}

\begin{thm}\label{prop:xpzero}
Let $u$ denote the Hurewicz image of $x_{2\nu(h)}$ in $K(n)_*(\BPph)$. Then $u^p = 0.$
\end{thm}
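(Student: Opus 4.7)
The plan is to compute $u^{*p}$ explicitly in the Hopf ring $K(n)_*(\BPh_*)$ of Ravenel--Wilson and show that it vanishes. Since the $\Einfty$-ring structure on $\suspinftypl W_h$ arises from the infinite loop space structure on $W_h$, the Pontryagin $p$-th power $u^{*p}$ agrees with the usual $p$-th power in the ring $K(n)_*(R_h) \cong K(n)_*(W_h)[u^{-1}]$, so $u^{*p} = 0$ immediately yields $K(n)_*(R_h) = 0$.

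First I would recall Ravenel--Wilson's description: the family $\{K(n)_*(\BP_k)\}_{k \ge 0}$ carries the structure of a free Hopf ring over $K(n)_*$ generated by classes $a_{(i)} \in K(n)_{2p^i}(\BP_2)$ (arising from $\CPinfty \to \BP_2$) and by Hurewicz images $[v_j]$ of the Hazewinkel generators, with relations dictated by the $K(n)$-formal group law. Specializing to $\BPh$ additionally imposes $[v_j] = 0$ for all $j > h$, since these $v_j$ vanish in $\pi_* \BPh$. I would then identify the bottom class $u \in K(n)_{2\nu(h)}(W_h)$ in this Hopf ring. Using a degree count, the interaction of the Hurewicz map with the Hopf ring structure, and the analogy with the class $f_0 = a_{(0)} \circ a_{(1)} \circ \cdots \circ a_{(h-1)}$ appearing in Section 5, $u$ should be identified (up to a unit in $K(n)_*$) with an explicit circle product of the $a_{(i)}$'s of total degree $2\nu(h)$.

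Next I would compute $u^{*p}$ using the $*$/$\circ$ distributivity in the Hopf ring together with the $[p]$-series relation $[p]_F(x) = v_n x^{p^n}$ of the $K(n)$-formal group law. Translating the $[p]$-series into a Hopf ring identity for $a_{(i)}^{*p}$ and iterating through the circle-product factors of $u$ expresses $u^{*p}$ as a sum of Hopf ring monomials, each of which contains some class $[v_j]$ as a $\circ$-factor (with index $j$ determined by $n$ and the indices of the $a_{(i)}$'s appearing in $u$). The hypothesis $n > h+1$ forces every such index $j$ to exceed $h$, so every summand vanishes in $K(n)_*(\BPh_*)$ and we conclude $u^{*p} = 0$.

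The main obstacle is executing this Hopf-ring computation carefully enough to verify that \emph{every} monomial in $u^{*p}$ picks up a factor $[v_j]$ with $j > h$. This requires close attention to Ravenel--Wilson's explicit identities for $*$-powers of circle products and to the precise form of the $[p]_F$-series for the Honda formal group of height $n$. The hypothesis $n \ge h+2$ enters in an essential way: it ensures that the $[p]_F$-expansions ``reach above'' the vanishing threshold $j = h$ imposed by the truncation defining $\BPh$, so that no residual term survives.
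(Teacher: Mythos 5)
Your overall strategy — work in the Ravenel--Wilson Hopf ring $K(n)_*\OSBPh{2*}$, identify $u$ as a circle-product monomial, apply the $p$-series relation $b([p]_{K(n)}(s))=[p]_{[\BPn<h>]}(b(s))$, and exploit the truncation $[v_j]=0$ for $j>h$ — is the paper's strategy too. But your description of the mechanism is off in ways that would derail the computation. First, a notational but substantive slip: the classes $a_{(i)}$ live in $K(n)_*\underline{\HFp}_*$, not in $K(n)_*\OSBPh{2*}$; in the latter the $\circ$-generators are the $b_{(i)}$'s coming from $\CPinfty\to\OSBPh{2}$, and the bottom class is the \emph{pure power} $u=b_{(0)}^{\circ\nu(h)}$, not a mixed product $a_{(0)}\circ\cdots\circ a_{(h)}$. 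More importantly, your central claim — that $u^{*p}$ directly expands into a sum of Hopf-ring monomials each carrying a $\circ$-factor $[v_j]$ with $j>h$ — does not hold. The paper's proof begins with the identity $u^{*p}=[p]\circ b_{(1)}^{\circ\nu(h)}$, whose only ring-ring factor is $[p]=[v_0]$, and then runs a recursive chain of $h+1$ rewritings $u^{*p}=\pm[v_r]\circ b_{(0)}^{\circ p\nu(r-1)}\circ b_{(1)}^{\circ(\nu(h)-\nu(r-1))}$, $r=0,\ldots,h$; only at the terminal step $r=h$ does $[v_{h+1}]=0$ enter, and it enters inside the Ravenel--Wilson relation rather than as a visible factor in a monomial expansion. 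So there is no single "careful expansion" that exhibits $[v_j]$-factors with $j>h$ term by term; the vanishing is forced only after the full chain.

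Second, you omit the crucial technical ingredient that makes the chain legitimate. The Ravenel--Wilson relation is a statement in the indecomposables $QK(n)_*\OSBPh{2*}$ and, as the paper uses it, only modulo the ideals $[I_r]=([p],[v_1],\ldots,[v_{r-1}])$. Passing from those congruences in $QK(n)_*/[I_r]$ back to honest identities in $K(n)_*\OSBPh{2*}$ requires the lifting lemma (\Cref{Ih*b0^p^k=0}): any element of the augmentation ideal that dies in $QK(n)_*/[I_r]$ is killed by $\circ$-multiplication with $b_{(0)}^{\circ k}$ once $k\geq\nu(r-1)$. Without this, the chain of relations does not assemble into a proof. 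Finally, note where $n>h+1$ is used: not to force indices $j>h$ to appear, but to ensure $[p]_{K(n)}(x)=v_nx^{p^n}$ contributes no $x^{p^{r+1}}$ term for any $r\leq h$, which is exactly what lets each step of the recursion trade $[v_r]\circ b_{(1)}^{\circ p^r}$ for $-[v_{r+1}]\circ b_{(0)}^{\circ p^{r+1}}$ (\Cref{vh*b1^p^h}).
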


We first outline the strategy.  In the Ravenel-Wilson basis, the element $u$ is given by $b_{(0)}^{\c \nu(h)}$.   It follows from the distributivity law for Hopf rings that
\[u^p = \left(b_{(0)}^{\c \nu(h)}\right)^{*p} = [p]\circ b_{(1)}^{\c \nu(h)}.\]
We start by showing in \Cref{vh*b1^p^h} that, in the ring $QK(n)_*(\OSBPh{2*})$, the following relations hold:
\begin{align*}
[v_r]\circ b_{(1)}^{\c p^{r}}
&=-[v_{r+1}]\circ b_{(0)}^{\c p^{r+1}}\pmod{[I_r]}\quad\text{for $1\leq r <h-1$ and}\\
[v_{h-1}]\circ b_{(1)}^{\circ p^{j-1}} &= 0
\phantom{[v_{r+1}]\circ b_{(0)}^{\c p^{r+1}}}\pmod{[I_{h-1}]},
\end{align*}
where $[I_{r}]$ denotes the ideal $([p], [v_1], \ldots, [v_{r-1}])$.
Then we lift this to a statement before quotienting by $[I_r]$ using \Cref{Ih*b0^p^k=0}.  The relations for various $r$ link up to show that $u^p = [p]\c b_{(1)}^{\c \nu(h)}= 0$.

\begin{lem}
\label{bi^p^r}
%
%
Let $r<h$ be a nonnegative integer. In $QK(n)_*(\OSBPh{2*})$,
\[[v_r]\c b_{(0)}^{\c p^r} \in [I_r]\]
\end{lem}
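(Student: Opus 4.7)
The plan is to induct on $r$, using the Ravenel--Wilson Hopf-ring presentation of $K(n)_*(\OS{\BPh}{2*})$. The key input is the fundamental relation encoding the $p$-series of the formal group law of $\BPh$, which expresses $[p] \circ b_{(0)}$ as a formal-group sum of the terms $[v_i] \circ b_{(0)}^{\circ p^i}$ for $0 \leq i \leq h$ (with $v_0 = p$):
\[ {\sum}^{F}_{i=0}^{h}\, [v_i] \circ b_{(0)}^{\circ p^i} \;=\; 0. \]

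For the base case $r = 0$ (where $[I_0] = (0)$), the claim is $[p] \circ b_{(0)} = 0$ in $QK(n)_*$, and in fact the stronger statement holds already in $K(n)_*(\OS{\BPh}{2})$. I would expand $[p] = [1]^{*p}$ and apply the distributivity law for $\circ$ over $*$ against the iterated coproduct of $b_{(0)}$; since $b_{(0)}$ is primitive in the relevant degree, the only surviving summands are the $p$ copies of $[1] \circ b_{(0)} = b_{(0)}$, giving $p \cdot b_{(0)} = 0$ because $p=0$ in $K(n)_*$.

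For the inductive step, I would reduce the Ravenel--Wilson relation modulo $[I_r]$ inside $QK(n)_*$. There, the formal-group summation $\sum^F$ collapses to an ordinary sum modulo correction terms that are products of at least two of the $[v_j]$'s. Terms indexed by $i < r$ vanish automatically because $[v_i] \in [I_r]$ for such $i$, so what survives is an identity of the form
\[ [v_r] \circ b_{(0)}^{\circ p^r} \;\equiv\; -\sum_{i>r} [v_i] \circ b_{(0)}^{\circ p^i} \;+\; (\text{mixed } [v_j][v_k]\text{-corrections}) \pmod{[I_r]}. \]
The higher-index terms and the correction terms are then shown to lie in $[I_r]$: the mixed corrections whose indices include some $j < r$ are manifestly in $[I_r]$, while the remaining terms are handled by a simultaneous argument treating all indices $\geq r$ at once --- effectively proving the stronger statement that the entire tail of the $p$-series expansion, beyond index $r-1$, vanishes modulo $[I_r]$.

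The main obstacle is the bookkeeping of the formal-group correction terms and the contributions from indices $i > r$, since a naive induction on $r$ alone controls only the lower-index terms. The cleanest resolution is likely to equip $K(n)_*(\OS{\BPh}{2*})$ with a secondary filtration by depth in the Landweber ideal sequence $I_0 \subset I_1 \subset \cdots$ and to verify that the Ravenel--Wilson relation respects this filtration, so that the formal-group expansion terms lie in progressively deeper Landweber ideals and can be swept into $[I_r]$ in one pass.
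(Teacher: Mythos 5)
Your base case $r=0$ is a correct and pleasant observation (distributivity of $\circ$ over $*$, primitivity of $b_{(0)}$, and $p=0$ in $K(n)_*$ give $[p]\c b_{(0)}=p\cdot b_{(0)}=0$), and that really is a more elementary argument than the paper needs for that case. But the inductive step has a genuine gap, which you yourself flag and then defer to a ``simultaneous argument'': after reducing your master relation modulo $[I_r]$, you are left with the higher-index terms $[v_i]\c b_{(0)}^{\c p^i}$ for $i>r$, and you need these to lie in $[I_r]$. The induction hypothesis is useless here --- it only concerns indices $i<r$ --- and even the statement of the lemma, applied to $i>r$, would only put $[v_i]\c b_{(0)}^{\c p^i}$ in $[I_i]$, which is a \emph{larger} ideal than $[I_r]$, not a smaller one. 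Thus nothing in the proposal actually shows the tail vanishes modulo $[I_r]$, and the appeal to a ``secondary filtration by depth in the Landweber ideal sequence'' is not a proof sketch so much as a restatement of the difficulty.

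The reason the paper's proof is so much shorter is that it never faces this problem: it is \emph{not} an induction. Rather than packaging the $p$-series into a single Hopf-ring identity and then trying to peel it apart, the paper keeps the Ravenel--Wilson relation $b([p]_{K(n)}(s))=[p]_{[\BPn<h>]}(b(s))$ as an equality of \emph{power series in $s$} and simply extracts the coefficient of $s^{p^r}$. On the left, $[p]_{K(n)}(s)$ is $O(s^{p^n})$ with $n>h>r$, so that coefficient is zero. On the right, modulo $[I_r]$, the $\BPn<h>$ $p$-series begins at $v_r x^{p^r}$, so the coefficient of $s^{p^r}$ is exactly $[v_r]\c b_{(0)}^{\c p^r}$ --- the higher-index terms you worry about never enter, because $[v_i]\c b_{(0)}^{\c p^i}$ for $i>r$ contributes only to coefficients of $s^{p^i}$, a strictly higher power. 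In your formulation, this is the observation that the terms of your master relation lie in pairwise distinct internal degrees $2p^i$, so a single homogeneous component already isolates the one term you want; once you see that, there is nothing left to induct on.
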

\begin{proof}
The $p$-series of $K(n)$ is
\[[p]_{K(n)}(x) = O(x^{p^{h}}) \pmod{I_h}.\]
The $p$-series of $\BPh$ is
\[[p]_{\BPh}(x) = v_rx^{p^r} + O(x^{p^{r+1}})\pmod{I_r}\]
Thus, the Ravenel-Wilson relation $b([p]_{K(n)}(x)) = [p]_{[\BPh]}(b(x))$ says in this case that
\[O(x^{p^h}) = [v_r]\c \sum_{i} b_i^{\c p^{r}} x^{ip^{r}} + O(x^{p^{r+1}}) \pmod{[I_r], I_h}\]
We extract the $x^{p^r}$ term from both sides. Since $r<h$, the coefficient of $x^{p^r}$ on the left hand side is $0$ and we deduce that $[v_r]\circ b_{(0)}^{\c p^r}=0\pmod{[I_r]}$.
\end{proof}

\begin{lem}
\label{Ih*b0^p^k=0}
    Let $r\leq h$ be a nonnegative integer and suppose $k\geq \nu(r-1)$. If $\alpha$ is an element of the augmentation ideal of $K(n)_*\OSBPh{2k-2p^r+2}$ which maps to zero in $QK(n)_*(\OSBPh{2k-2p^r+2})/[I_{r}]$ then $\alpha\c b_{(0)}^{\c k}=0$.
\end{lem}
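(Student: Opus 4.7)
The plan is to argue by induction on $r$. At the inductive step, I would decompose any $x$ satisfying the hypothesis as $x = \sum_{i=0}^{r-1} [v_i] \c w_i + D$, where $D$ is $*$-decomposable -- this is precisely the form that the hypothesis ``$x$ maps to zero in $QK(n)_*/([I_r])$'' provides. For each non-decomposable summand, the key trick is to use commutativity and associativity of the $\c$-product to rewrite
\[[v_i] \c w_i \c b_{(0)}^{\c k} = w_i \c \bigl([v_i] \c b_{(0)}^{\c p^i}\bigr) \c b_{(0)}^{\c (k - p^i)},\]
which is valid because $k \ge \nu(r-1) \ge p^i$ for $i \le r - 1$. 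By \Cref{bi^p^r}, the inner factor $[v_i] \c b_{(0)}^{\c p^i}$ lies in $[I_i] \subseteq [I_{r-1}]$ modulo $*$-decomposables. After substitution, each such term becomes, up to decomposables, an element of $[I_{r-1}]$ composed with $b_{(0)}^{\c (k-p^i)}$, which vanishes by the inductive hypothesis at level $r - 1$; the requisite degree bound $k - p^i \ge \nu(r-2)$ holds since $k \ge \nu(r-1) = \nu(r-2) + p^{r-1}$ and $p^{r-1} \ge p^i$.

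The remaining $*$-decomposable contributions -- the piece $D$ originally present in $x$ together with those arising via \Cref{bi^p^r} -- will be handled using Hopf ring distributivity
\[(a * b) \c c = \sum (a \c c') * (b \c c''), \qquad \Delta c = \sum c' \otimes c'',\]
which pushes the $\c b_{(0)}^{\c k}$ operation past the $*$-factors. Each of the resulting $*$-factors lies in a lower-degree piece of the Hopf ring, and on these pieces one either reapplies the inductive step or lands on the base case, in which $x$ is in the augmentation ideal and the coproduct on $b_{(0)}^{\c k}$ forces vanishing via the unit properties of $\c$.

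The main obstacle I anticipate is the bookkeeping required to manage these $*$-decomposable contributions through both the substitution and the distributivity steps -- verifying that every term that appears either vanishes directly, falls strictly below $r$ in the filtration, or is decomposable in a form that can be recursed upon. The algebraic content beyond this bookkeeping is supplied entirely by \Cref{bi^p^r} together with the Hopf ring formulas reviewed in \Cref{sub:rwreview}, so I do not expect a genuinely new computation to be required.
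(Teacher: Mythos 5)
Your proposal is correct and is essentially the argument in the paper: induction on $r$, the factorization $b_{(0)}^{\c k} = b_{(0)}^{\c p^i}\c b_{(0)}^{\c(k-p^i)}$ to make \Cref{bi^p^r} applicable, and distributivity of $\c$ over $*$ to dispose of decomposables. The one cosmetic difference is that the paper invokes the inductive hypothesis directly for the summands with $i < r-1$ (since $[v_i]$ for $i \le r-2$ already lies in $[I_{r-1}]$, no factoring of $b_{(0)}^{\c k}$ is needed there), reserving the factoring trick for the single edge case $i = r-1$; you instead apply the factoring trick uniformly to every $i \le r-1$, which also works at the cost of a bit more bookkeeping. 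The one place you could tighten is the decomposable piece $D$: no recursion is needed, because $b_{(0)}^{\c k}$ is a coalgebra primitive ($b_{(0)}$ is primitive, and the $\c$-product of positive-degree primitives is again primitive), so distributivity gives $b_{(0)}^{\c k}\c(a*b) = ([0]\c a)*(b_{(0)}^{\c k}\c b) + (b_{(0)}^{\c k}\c a)*([0]\c b)$, which vanishes outright from $y\c[0]=\eta\epsilon(y)$ — this single observation is precisely the paper's $r=0$ base case, and it handles all $*$-decomposable contributions (including those thrown off by \Cref{bi^p^r}) in one step.
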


\begin{proof}
We work by induction on $r$. When $r=0$, the ideal $[I_r]$ is trivial and we need to check that $b_{(0)}\c \alpha = 0$ if $\alpha$ is decomposable. Say $\alpha=\beta\s \gamma$ where $\beta$ and $\gamma$ are in the augmentation ideal. Since $\Delta(b_{(0)})=[0_2]\otimes b_{(0)} + b_{(0)}\otimes [0_2]$, distributivity says $b_{(0)}\c (\beta\s \gamma) = ([0_2]\c \beta)\s (b_{(0)}\c \gamma) + (b_{(0)}\c \beta) \s ([0_2] \c \gamma) = 0$ since for any $y$, $y\c[0]=\eta(\epsilon(y))$.

Now suppose $r>0$ and $\alpha\in (\ker(\epsilon)^{\s 2}, [I_{r}])$. By induction we know that $(\beta\s \gamma)\c b_{(0)}^{\c k}=0$ and $[v_i]\c b_{(0)}^{\c k}=0$ for $i<r-1$, so it suffices to show this for $\alpha=[v_{r-1}]$.  We write
\[
[v_{r-1}]\c b_{(0)}^{\c k} =  \left([v_{r-1}]\c b_{(0)}^{\c p^{r-1}}\right) \c b_{(0)}^{\c (k - p^{r-1})}.
\]
By \Cref{bi^p^r}, $[v_{r-1}]\c b_{(0)}^{\c p^{r-1}}\in [I_{r-1}]$. Since $k-p^{r-1}\geq \nu(r-1)-p^{r-1}=\nu(r-2)$ the inductive hypothesis applies to show that
\[\left([v_{r-1}]\c b_{(0)}^{\c p^{r-1}}\right) \c b_{(0)}^{\c (k - p^{r-1})}=0.\]
\end{proof}

\begin{lem}
\label{vh*b1^p^h}
Let $r+2 \leq n$ be a nonnegative integer. In $QK(n)_*(\OSBPh{2*})/([I_{r}])$,
\[[v_r]\c b_{(1)}^{\c p^r} = \begin{cases}
-[v_{r+1}]\c b_{(0)}^{\c p^{r+1}} & r < h\\
0 & r = h.
\end{cases}\]
\end{lem}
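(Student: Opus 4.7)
The plan is to follow the template of the proof of \Cref{bi^p^r}: start from the Ravenel--Wilson identity
\[b\bigl([p]_{K(n)}(x)\bigr)=[p]_{[\BPh]}(b(x))\]
and extract the coefficient of a specific power of $x$. Where \Cref{bi^p^r} extracted the coefficient of $x^{p^r}$, our identity will come out of extracting the coefficient of $x^{p^{r+1}}$.

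Modulo $[I_r]$, the Hazewinkel presentation of $\BPh$ rewrites $[p]_{[\BPh]}(b(x))$ as the Hopf-ring FGL-sum $\bigast_{i=r}^{h}[v_i]\c b(x)^{\c p^i}$. Passing to the quotient $QK(n)_*$ of indecomposables turns this FGL-sum into an ordinary sum: each true FGL cross term is a $\c$-product of at least two distinct summands $[v_i]\c b(x)^{\c p^i}$ and $[v_j]\c b(x)^{\c p^j}$ with $i\neq j$ and $i,j\geq r$, so its minimal $x$-degree is at least $p^i+p^j\geq p^r+p^{r+1}>p^{r+1}$, and it therefore cannot affect the coefficient of $x^{p^{r+1}}$. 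Using the same Ravenel--Wilson expansion $b(x)^{\c p^i}=\sum_j b_j^{\c p^i}x^{jp^i}$ that underlies the proof of \Cref{bi^p^r}, the coefficient of $x^{p^{r+1}}$ in $[v_i]\c b(x)^{\c p^i}$ is nonzero only when $jp^i=p^{r+1}$ has a positive integer solution, which forces $i\in\{r,r+1\}$. The $i=r$ term contributes $[v_r]\c b_p^{\c p^r}=[v_r]\c b_{(1)}^{\c p^r}$, and the $i=r+1$ term (present exactly when $r<h$, since $\BPh$ has no generator $v_{h+1}$) contributes $[v_{r+1}]\c b_1^{\c p^{r+1}}=[v_{r+1}]\c b_{(0)}^{\c p^{r+1}}$.

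On the other side, $b\bigl([p]_{K(n)}(x)\bigr)=b(v_n x^{p^n})$ is a power series whose nonzero $x$-monomials all have exponent at least $p^n$. Since $n>h+1\geq r+1$, no such monomial contributes to $x^{p^{r+1}}$. Equating the two coefficient extractions in $QK(n)_*(\OSBPh{2*})/[I_r]$ yields $[v_r]\c b_{(1)}^{\c p^r}+[v_{r+1}]\c b_{(0)}^{\c p^{r+1}}\equiv 0\pmod{[I_r]}$ when $r<h$, and $[v_h]\c b_{(1)}^{\c p^h}\equiv 0\pmod{[I_h]}$ when $r=h$, giving both cases of the lemma. The only potentially subtle point is the control on FGL cross terms; this is the same phenomenon that let the $O(x^{p^{r+1}})$ error absorb these terms in the proof of \Cref{bi^p^r}, and once it is granted, the rest of the argument is a direct coefficient extraction.
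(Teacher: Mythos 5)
Your proof is correct and follows the same route as the paper's: apply the Ravenel--Wilson relation with $[p]_{K(n)}(x)=v_n x^{p^n}$, reduce modulo $[I_r]$ and pass to indecomposables, and extract the coefficient of $x^{p^{r+1}}$. What you've done is unpack the step the paper states tersely as ``the Ravenel--Wilson relation says $0=[v_r]\c b(x)^{\c p^r}+[v_{r+1}]\c b(x)^{\c p^{r+1}}+O(x^{p^{r+2}})$'', by explaining why the Hopf-ring FGL-sum collapses to an ordinary sum plus cross terms of high degree. Two small points worth flagging: first, your assertion that every FGL cross term involves at least two \emph{distinct} summands (rather than, say, a $\c$-power of a single summand) is true but deserves a word of justification --- it follows because $F(z,0)=z$ forces every genuine cross term of an iterated formal group sum to involve at least two distinct variables, which is what makes the degree bound $p^{i}+p^{j}\ge p^{r}+p^{r+1}>p^{r+1}$ applicable; second, the identification of the Hazewinkel $p$-series with the FGL-sum $\sum^{F}_{i\ge r}v_i z^{p^i}$ modulo $I_r$ is literally the Araki formula, which only matches the Hazewinkel generators modulo $p$, so for $r\ge 1$ (where $p\in I_r$) it is fine but the $r=0$ case is glossed over. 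The paper is equally brief on both points, so your proof is at the same level of rigor and is essentially the same argument.
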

\begin{proof}
We have
\[[p]_{K(n)}(x) = v_nx^{p^{n}} = O(x^{p^n}).\]
Thus, in $QK(n)_*(\OS{\BPn<h>}{2*})/([I_{r}])$ the Ravenel-Wilson relation says
\[0 = [v_r]\c b(x)^{\c p^r} + [v_{r+1}]\c b(x)^{\c p^{r+1}}+ O(x^{p^{r+2}}).\]
The $x^{p^{r+1}}$ coefficient on the right hand side is $[v_r]\c b_{(1)}^{\c p^r}+[v_{r+1}]\c b_{(0)}^{\c p^{r+1}}$, so $[v_r]\c b_{(1)}^{\c p^r} = -[v_{r+1}]\c b_{(0)}^{\c p^{r+1}}$ as claimed. When $r=h$, we have $v_{h+1}=0\in \BPn<h>$ so $[v_{h+1}]=0$.
\end{proof}

\begin{lem}
\label{x^p=vn*x-step}
Let $r\leq h$. In $K(n)_*(\OSBPh{2*})$,
\[[v_r]\c b_{(0)}^{\c p\nu(r-1)}\c b_{(1)}^{\c p^r} =
\begin{cases}
-[v_{r+1}]\c b_{(0)}^{\c p\nu(r)} & r<h\\
0 & r=h\\
\end{cases}\]
\end{lem}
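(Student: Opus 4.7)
The plan is to combine \Cref{vh*b1^p^h}, which gives the desired relation modulo $[I_r]$, with \Cref{Ih*b0^p^k=0}, which promotes such a relation to an honest equality after circle-multiplying by a sufficient power of $b_{(0)}$.

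First, I would reshape both sides of the claim so that they share a common factor of $b_{(0)}^{\c p\nu(r-1)}$. Using the identity $p\nu(r) = p\nu(r-1) + p^{r+1}$ together with associativity and commutativity of the circle product,
\[-[v_{r+1}] \c b_{(0)}^{\c p\nu(r)} = -\bigl([v_{r+1}] \c b_{(0)}^{\c p^{r+1}}\bigr) \c b_{(0)}^{\c p\nu(r-1)}.\]
Combining with the LHS using bilinearity of $\c$, the case $r<h$ of the claim reduces to
\[\bigl([v_r] \c b_{(1)}^{\c p^r} + [v_{r+1}] \c b_{(0)}^{\c p^{r+1}}\bigr) \c b_{(0)}^{\c p\nu(r-1)} = 0,\]
and the case $r=h$ reduces to $\bigl([v_h] \c b_{(1)}^{\c p^h}\bigr) \c b_{(0)}^{\c p\nu(h-1)} = 0$.

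Next, \Cref{vh*b1^p^h} asserts that in each case the parenthesized factor lies in the ideal $(\ker\epsilon^{\s 2}, [I_r])$ of the Hopf ring. Finally, apply \Cref{Ih*b0^p^k=0} with $k = p\nu(r-1)$: the hypothesis $k \geq \nu(r-1)$ is automatically satisfied since $p \geq 2$, and the conclusion is precisely the required vanishing.

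The main subtlety lies in the degree bookkeeping needed to invoke \Cref{Ih*b0^p^k=0}: one must confirm that the parenthesized factor sits in the suspension-index piece $\OSBPh{2k - 2p^r + 2}$ that the lemma expects as input. This reduces to a short direct computation unpacking the conventions on $b_{(i)}$; in case the indices do not align on the nose, one can instead apply \Cref{Ih*b0^p^k=0} separately to each of the two summands (using bilinearity of $\c$) and then reassemble.
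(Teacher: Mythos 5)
Your high-level strategy matches the paper's proof exactly: the paper's entire proof is ``Combine \Cref{vh*b1^p^h} and \Cref{Ih*b0^p^k=0},'' and you have correctly spelled out the factorization $p\nu(r)=p\nu(r-1)+p^{r+1}$, the reduction to a single vanishing via bilinearity of $\c$, and the identification (via \Cref{vh*b1^p^h}) of the parenthesized factor as an element of $(\ker\epsilon^{\s 2},[I_r])$.

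There is, however, a gap in the last step. You invoke \Cref{Ih*b0^p^k=0} with $k=p\nu(r-1)$, but as stated that lemma requires its input $x$ to lie in the augmentation ideal of $K(n)_*\OSBPh{2k-2p^r+2}$, i.e.\ at suspension index $k-p^r+1=p\nu(r-1)-p^r+1$. The parenthesized factor
\[[v_r]\c b_{(1)}^{\c p^r}+[v_{r+1}]\c b_{(0)}^{\c p^{r+1}}\quad\text{(resp.\ $[v_h]\c b_{(1)}^{\c p^h}$)}\]
actually sits at suspension index $1$, so the degrees agree only when $p\nu(r-1)=p^r$, i.e.\ when $r\leq 1$. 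For $r\geq 2$ the hypothesis of \Cref{Ih*b0^p^k=0} is not met with your choice of $k$. Your fallback of applying the lemma to the two summands separately cannot repair this: both summands live in the \emph{same} bidegree, so splitting the sum does nothing to the suspension-index mismatch.

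The correct repair is to invoke \Cref{Ih*b0^p^k=0} with $k=p^r$ instead. Then the degree constraint reads $k-p^r+1=1$, which matches, and $p^r\geq\nu(r-1)=1+p+\cdots+p^{r-1}$ holds automatically for $p\geq 2$. This yields
\[\bigl([v_r]\c b_{(1)}^{\c p^r}+[v_{r+1}]\c b_{(0)}^{\c p^{r+1}}\bigr)\c b_{(0)}^{\c p^r}=0\quad\text{(resp.\ $\bigl([v_h]\c b_{(1)}^{\c p^h}\bigr)\c b_{(0)}^{\c p^h}=0$).}\]
Since $p\nu(r-1)=p+p^2+\cdots+p^r\geq p^r$ for $r\geq 1$, circle-multiplying this identity by $b_{(0)}^{\c(p\nu(r-1)-p^r)}$ produces exactly the statement of the lemma. (When $r=0$ the exponent $p\nu(-1)$ is zero, there are no $b_{(0)}$'s on the left-hand side, and the argument above degenerates; that edge case rests only on \Cref{vh*b1^p^h} with the trivial ideal $[I_0]$, and in the intended application in \Cref{prop:xpzero} it is used only after circle-multiplying by additional $b_{(1)}$'s.)
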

\begin{proof}
Combine \Cref{vh*b1^p^h} and \Cref{Ih*b0^p^k=0}.
\end{proof}

\begin{proof}[Proof of \Cref{prop:xpzero}]
\Cref{x^p=vn*x-step} gives the following chain of equalities:
\begin{align*}
u^p = (b_{(0)}^{\c \nu(h)})^{\s p}
    &= [p]\c b_{(1)}^{\c \nu(h)}\\
    &=  -[v_1]\c b_{(0)}^{\c p} \c b_{(1)}^{\c (\nu(h)-1)} \\
    &=  +[v_2]\c b_{(0)}^{\c p\nu(1)}\c b_{(1)}^{\c (\nu(h)-\nu(1))}\\
    &\vdotswithin{=}\\[-3pt]
    &= \pm[v_{h-1}]\c b_{(0)}^{\c p\nu(h-2)} \c b_{(1)}^{\c (p^h+p^{h-1})}\\
    &= \mp[v_{h}]\c b_{(0)}^{\c p\nu(h-1)} \c b_{(1)}^{\c p^h}\\
    &=0.
\end{align*}
\end{proof}

\subsection{A Review of Hopf Rings and the Ravenel-Wilson relation} \label{sub:rwreview}
We list here some of the notation and formulas from \cite{Ravenel-Wilson} that we use in the above proof; the reader is referred to the original source for additional details.  

\begin{rec}
A \emph{Hopf ring} is a bigraded abelian group $A_{i,j}$ with the structures of
\begin{itemize}
\item coproduct $\Delta\colon A_{i,j} \to \bigoplus_{k + l = j} A_{i,k}\otimes A_{i,l}$,
\item $*$-product $*\colon A_{i,j}\otimes A_{i,k}\to A_{i,j+k}$, and
\item $\circ$-product $\circ\colon A_{i,j}\otimes A_{k,l}\to A_{i+k,j+l}$.
\end{itemize}
These satisfy various properties, including:
\begin{itemize}
\item The $*$-product and coproduct make $A_{i,*}$ into a commutative, cocommutative Hopf algebra with unit element denoted by $[0]_i$.
\item The $\circ$-product satisfies the distributivity relation
    \[a\circ (b*c) = \sum (a^{(1)}\circ c)*(a^{(2)}\circ c)\]
 and $a\circ [0]_i = \eta_{i+j}(\epsilon_j(a))$ where $\epsilon_j\colon A_{j,*}\to k$ is the augmentation map and $\eta_{i+j}\colon k\to A_{i+j,*}$ is the unit map.
\item There is a $\circ$-unit in $A_{0,0}$ denoted $[1]$.
\end{itemize}
For instance, there are elements $[n]=[1]*\cdots *[1]$ for $n$ a positive integer and we denote by $[-n]$ the antipode on $[n]$. The distributivity axiom implies that $[n]\circ x = \sum x^{(1)}*\cdots*x^{(n)}$.
\end{rec}

\begin{ntn*}
For any pair of rings $R$ and $S$ we may form the \emph{ring-ring} $R[S]$ by taking the group algebra of the additive group of $S$ and defining the $\circ$ product by $[s]\circ [s']=[ss']$.
\end{ntn*}

\begin{exm}
Given two homotopy commutative ring spectra $E$ and $F$, the bigraded abelian group $E_{*}\OS{F}{2*}$ forms a Hopf ring, provided there is a Kunneth isomorphism
\[E_*(\OS{F}{2i}\times\OS{F}{2j})\cong E_*(\OS{F}{2i})\otimes E_*(\OS{F}{2j}).\]
The diagonal map is induced by the diagonal of $\OS{F}{2i}$, the $*$-product is induced by the loop addition of $\OS{F}{2i}$, and the $\circ$-product is induced by the multiplication map $\susp^{i}F\sm \susp^{j}F\to \susp^{i+j}F$.
\end{exm}

\begin{ntn*}
In this situation, given an element $x\in\pi_n(F)$, we denote the Hurewicz image in $E_{0}\OS{F}{-n}$ by $[x]$. If we choose for $x$ the element $n$ in the image of the map $\Z\to \pi_0(F)$, then $[x]$ is the same $[n]$ we defined before. The elements $[x]$ satisfy $\Delta([x])=[x]\otimes [x]$, $[x]*[y]=[x+y]$, and $[x]\circ [y] = [xy]$. We get an injective map of Hopf rings from the ring-ring $E_*[\pi_*(F)]$ to $E_*\OS{F}{2*}$.
\end{ntn*}

Ravenel and Wilson study the case where both $E$ and $F$ are complex oriented.  In this situation, an additional relation, known as the \emph{Ravenel-Wilson relation}, is satisfied.
\begin{dfn}
Suppose $E$ and $F$ are complex oriented.  Then the complex orientation of $E$ gives an isomorphism $E_*(\CPinfty)\cong E_*\{\beta_{1},\beta_{2},\ldots\}$ where $|\beta_i|=2i$. Pushing these forward along the complex orientation map $f\colon \CPinfty \to \OS{F}{2}$ for $F$, we obtain elements
\[b_{i} := f_*(\beta_i) \in E_{2i}\OS{F}{2}.\]
We denote $b_{p^j}$ by $b_{(j)}$.
\end{dfn}

\begin{form*}[Ravenel-Wilson Relation]
Let $b(s)$ be the generating function $\sum b_i s^i$, and denote by $+_{E}$ the formal group addition of $E$, so $x +_{E} y = \sum a_{ij}^E x^iy^j$. Denote by $+_{[F]}$ the expression $\bigast_{i,j} [a_{ij}^F]\circ x^{\circ i}\circ y^{\circ j}$. The Ravenel-Wilson relation is
\[b(s+_{E}t) = b(s)+_{[F]}b(t).\]
This is an equality of generating functions, so it indicates that the coefficient of $s^{i}t^j$ on the left hand side is equal to the coefficient of the same monomial on the right hand side. We will only need to work out relations between the $b_{(j)} := b_{p^j}$, and so we will only need the following special case of this relation involving the $p$-series:
\[b([p]_{E}(s)) = [p]_{[F]}(b(s)).\]
\end{form*}

Ravenel and Wilson completely characterize the Hopf ring $E_{*}\OS{F}{2*}$ when $F$ is Landweber flat. They show that it is nearly free.
Recall that the free Hopf ring over $E_*[\pi_*(F)]$ on the $b_i$'s is formed by adjoining the elements $b_i$ with their copolynomial coproduct $\Delta(b_i) = \sum_{j+k=i} b_j\otimes b_k$, forming all symbols using $*$ and $\circ$ on the $b_i$'s, and then quotienting by an ideal to impose all of the Hopf ring axioms.
\begin{thm}[{\cite[Corollary 4.7]{Ravenel-Wilson}}]
When $E$ is complex oriented, the Hopf ring $E_{*}\OS{\BP}{2*}$ is the free Hopf ring on the $b_i$'s over the ring ring $E_*[\pi_*(\BP)]$ modulo the Ravenel-Wilson relation.
\end{thm}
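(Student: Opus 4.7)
The plan is to first verify that the Ravenel--Wilson relation holds in $E_*\OS{F}{2*}$, then compute both sides by induction on the space-grading and show that the resulting comparison map is an isomorphism. For the relation itself, note that the $\circ$-product pairing $\OS{F}{2}\times\OS{F}{2}\to\OS{F}{2}$ restricts along the complex orientation $f\colon\CPinfty\to\OS{F}{2}$ to the multiplication $\CPinfty\times\CPinfty\to\OS{F}{2}$ classified by $+_F$ composed with $f$. Pushing $\beta_i\otimes\beta_j$ forward to $E_*\OS{F}{2}\otimes E_*\OS{F}{2}$ and then across the $\circ$-product yields, on one hand, the generating function $b(s)+_{[F]}b(t)$ (from the formal-group description of the product on $\OS{F}{2}$ in terms of the $[a^F_{ij}]$ classes) and, on the other, $b(s+_Et)$ (from the naturality of $b(-)$ along the $+_E$ addition on $\CPinfty$ at the level of the $\beta_i$'s). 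Chasing the corresponding square gives the Ravenel--Wilson relation.

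Writing $H$ for the free Hopf ring on generators $b_i$ over the ring-ring $E_*[\pi_*(F)]$ modulo the Ravenel--Wilson relation, we obtain a comparison map $\phi\colon H\to E_*\OS{F}{2*}$ of bigraded Hopf rings. To prove $\phi$ is an isomorphism, I would induct on the space-degree $2k$. The slice $k=0$ is essentially trivial: $\OS{F}{0}$ has the homotopy type of the discrete set $\pi_0(F)$ together with its component structure, and Landweber flatness gives $E_*\OS{F}{0}=E_*[\pi_0(F)]$. The case $k=1$ is the heart of the base step: Landweber flatness forces the Atiyah--Hirzebruch spectral sequence for $E_*\OS{F}{2}$ to collapse onto the $\CPinfty$-generators $\beta_i$, identifying $E_*\OS{F}{2}$ as a polynomial Hopf algebra on the $b_i$'s over $E_*[\pi_0(F)]$ with coproduct $\Delta b_i=\sum b_j\otimes b_{i-j}$, which matches the degree-$2$ slice of $H$.

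For the inductive step, I would use the Rothenberg--Steenrod (bar) spectral sequences
\[\Tor^{E_*\OS{F}{2k}}(E_*,E_*)\Rightarrow E_*\OS{F}{2k+1}\quad\text{and}\quad \Tor^{E_*\OS{F}{2k+1}}(E_*,E_*)\Rightarrow E_*\OS{F}{2k+2}\]
arising from $\OS{F}{2k+1}\simeq B\OS{F}{2k}$ and $\OS{F}{2k+2}\simeq B\OS{F}{2k+1}$, and compare them with parallel algebraic bar resolutions on the $H$-side. The main obstacle is showing that these spectral sequences degenerate and are free of extension problems, forcing the Hopf algebra at level $2k+2$ to be generated by the $\circ$-products of $b_{(j)}$'s (and elements of $E_*[\pi_*(F)]$) with generators already constructed at lower space-degree, subject only to the Hopf-ring axioms and the Ravenel--Wilson relation. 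Landweber flatness of $F$ is the crucial input here: it keeps both sides in a tractable form as tensor products of exterior, divided-power, and polynomial Hopf algebras on explicit generators, so that the $E^2$-pages admit an explicit comparison and the topological permanent cycles can be identified with the iterated $\circ$-products predicted on the $H$-side. Once every level matches, assembling across $k$ yields the Hopf-ring isomorphism $\phi\colon H\cong E_*\OS{F}{2*}$.
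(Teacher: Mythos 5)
The paper does not prove this statement: it is stated as a recollection and cited directly to Ravenel--Wilson, so there is no ``paper's own proof'' to compare against. Judging your proposal on its own terms, the first two rungs of your induction are both false. At $k=0$, the space $\OS{F}{0}=\Omega^{\infty}F$ is an infinite loop space with $\pi_j\cong\pi_j(F)$ for all $j\ge 0$; it is nowhere near discrete, and $E_*\OS{F}{0}$ is much larger than $E_*[\pi_0(F)]$. At $k=1$, the comparison target is not merely a polynomial Hopf algebra on the $b_i$'s over $E_*[\pi_0(F)]$: the free Hopf ring over the ring-ring $E_*[\pi_*(F)]$ already contains classes like $[v_j]\circ b_{(i_1)}\circ\cdots\circ b_{(i_m)}$ of total $\circ$-degree $2$, so the $k=1$ slice includes contributions from $\pi_{>0}(F)$ that your description omits. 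Thus even the statement of what you are trying to match at the base of the induction is wrong, before the issue of proving it.

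Beyond the base cases, the inductive mechanism mischaracterizes what Landweber flatness buys. In the Ravenel--Wilson argument the core of the work is a direct computation of the Hopf ring $H_*(\underline{\MU}_*)$ (respectively $H_*(\underline{\BP}_*)$ in the $p$-local case) via Thom-space and bar-construction methods for those specific spectra; Landweber flatness of a general $F$ is then used only as a flat base-change statement, identifying $E_*\underline{F}_*$ with $E_*\underline{\MU}_*\otimes_{\MU_*[\MU_*]}E_*[F_*]$ so that the freeness for $\MU$ transfers to $F$. Your plan treats Landweber flatness as a hypothesis that makes each Rothenberg--Steenrod spectral sequence for $F$ itself manageable, which is both not how the hypothesis is used and not something you can expect to control directly for arbitrary $F$; moreover the induction as written must pass through the odd spaces $\OS{F}{2k+1}$, whose $E$-homology is not covered by the statement you are trying to prove and whose degeneration you would also need to establish. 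Both to fix the base cases and to make the inductive step work, you should reorganize the argument around the $\MU$ (or $\BP$) computation plus base change, rather than a self-contained induction on space degree for general $F$.
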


\begin{rmk}
Note that $\BPh$ is not Landweber flat. The Hopf ring $E_*\OSBPh{2*}$ still contains the elements $b_{(i)}$ which still satisfy the Ravenel-Wilson relations, but it is neither true that the $b_{(i)}$ generate $E_*\OSBPh{2*}$ nor that the Ravenel-Wilson relations generate the relations in $E_*\OSBPh{2*}$.  However, the map $\BP\to \BPh$ induces a map of Hopf rings $E_*\OS{\BP}{2*}\to E_*\OSBPh{2*}$. Through $\circ$-degree $2\nu(h)$, this map is surjective with kernel $([v_{h+1}],[v_{h+2}],\ldots)$ and hence $E_*\OSBPh{2*}$ is isomorphic to the Ravenel-Wilson Hopf ring through $\circ$-degree $2\nu(h)$. This means that Ravenel and Wilson's work completely characterizes $E_{*}W_h$.
\end{rmk}

\appendix

%
%

\bibliographystyle{alpha}
\bibliography{bibliography}

\end{document}